\numberwithin{equation}{section}
\theoremstyle{definition}
\newtheorem*{definition*}{Definition}
\newtheorem{definition}{Definition}
\theoremstyle{remark}
\newtheorem{remark}{Remark}
\newtheorem*{note*}{Note}
\newtheorem*{example*}{Example}
\newtheorem*{question*}{Question}
\newtheorem*{blank*}{}
\theoremstyle{plain}
\newtheorem{theorem}{Theorem}
\newtheorem*{theorem*}{Theorem}
\newtheorem*{corollary*}{Corollary}
\newtheorem{lemma}{Lemma}
\newtheorem*{lemma*}{Lemma}
\newtheorem{proposition}{Proposition}
\newtheorem*{proposition*}{Proposition}
\newtheorem{conjecture}{Conjecture}
\newtheorem*{conjecture*}{Conjecture}
\newtheorem*{assump*}{Assumption}
\newcommand{\ba}{\mathbb A}
\newcommand{\bc}{\mathbb C}
\newcommand{\bk}{\mathbb K}
\newcommand{\bn}{\mathbb N}
\newcommand{\br}{\mathbb R}
\newcommand{\bt}{\mathbb T}
\newcommand{\mrg}{\mathrm G}
\newcommand{\mrm}{\mathrm M}
\newcommand{\mrn}{\mathrm N}
\newcommand{\mrp}{\mathrm P}
\newcommand{\mrr}{\mathrm R}
\newcommand{\ma}{\mathcal A}
\newcommand{\mc}{\mathcal C}
\newcommand{\mf}{\mathcal F}
\newcommand{\mh}{\mathcal H}
\newcommand{\mo}{\mathcal O}
\newcommand{\mpp}{\mathcal P} 
\newcommand{\ms}{\mathcal S}
\newcommand{\muu}{\mathcal U} 
\newcommand{\mv}{\mathcal V}
\newcommand{\rar}{\rightarrow}
\newcommand{\lrar}{\longrightarrow}
\newcommand{\hrar}{\hookrightarrow}
\newcommand{\wtilde}{\widetilde}
\newcommand{\pprime}{{\prime\prime}}
 \DeclareMathOperator\Hom{Hom}
\DeclareMathOperator\Res{Res}   \DeclareMathOperator\Ind{Ind}
     \DeclareMathOperator\ad{ad}
\DeclareMathOperator\Gal{Gal} 
\DeclareMathOperator\re{Re}   
\DeclareMathOperator\Sym{Sym} 
\DeclareMathOperator\Tr{Tr} 
 \DeclareMathOperator\sgn{sgn}
\newcommand{\leftup}[2]{{^{#1}}\mspace{-2.5mu}#2}
\newcommand{\isoto}{\stackrel{\sim}{\lrar}}
\newcommand{\dd}{\mathrm{d}}
\newcommand{\SO}{\mathrm{SO}}
\newcommand{\GO}{\mathrm{GO}}
\newcommand{\GSO}{\mathrm{GSO}}
\newcommand{\PGO}{\mathrm{PGO}}
\newcommand{\Sp}{\mathrm{Sp}}
\newcommand{\GSp}{\mathrm{GSp}}
\newcommand{\PGSp}{\mathrm{PGSp}}
\newcommand{\GL}{\mathrm{GL}}
\newcommand{\SL}{\mathrm{SL}}
\newcommand{\SK}{\mathrm{SK}}
\newcommand{\PGL}{\mathrm{PGL}}
\newcommand{\OO}{\mathrm{O}}
\newcommand{\U}{\mathrm{U}}
\newcommand{\GU}{\mathrm{GU}}
\newcommand{\PGU}{\mathrm{PGU}}
\newcommand{\HPS}{\mathrm{HPS}}
\newcommand{\Nm}{\mathrm{Nm}}
\newcommand{\Pa}{\mathrm{Pa}}
\newcommand{\So}{\mathrm{So}}
\newcommand{\disc}{\mathrm{disc}}
\newcommand{\pr}{\mathrm{pr}}
\newcommand{\Vol}{\mathrm{Vol}}
\newcommand{\ord}{\mathrm{ord}}
\newcommand{\ep}{{e^\prime}}
\begin{document}

\title{The Bessel Period Functional on $\SO_5$: The Nontempered Case}
\author[ Yannan Qiu]{Yannan Qiu}
\address{Department of Mathematics, Southern University of Science and Technology, 1088 Xueyuan Avenue, Shenzhen 518055, China}
\email{yannan.qiu@gmail.com} 
\maketitle
\begin{abstract}
For automorphic representations in the nontempered cuspidal spectrum of $\SO_5$, we establish a precise Bessel period formula, in which the square of the global Bessel period is decomposed as an Euler  product of regularized local period integrals of matrix coefficients. This is the first answer to the refined global Gross-Prasad problem for nontempered cuspidal representations, in the case of Bessel subgroups with nontrivial unipotent part. 
\end{abstract}


The global Gan-Gross-Prasad conjecture \cite{gross_prasad92, gross_prasad94, gan_gross_prasad2012} predicts a deep relation between the non-vanishing of periods of automorphic forms and the non-vanishing of special values of automorphic $L$-functions.  It is important to refine this relation into a precise period formula for various possible arithmetic applications. Generally, it is expected that the square of the global period functional is the product of local period functionals up to a precise constant.

When the involved automorphic representations are tempered cuspidal, there has been a lot of important work in formulaing and establishing cases of period formulas, for example, \cite{wald85}, \cite{ichino08}, \cite{ii10}, \cite{wgan_ichino11}, \cite{zhangw14}, \cite{liuyf2016}.  One convenience of assuming the tempered cuspidal condition is that the local period functionals are readily available as local integrals of matrix coefficients.

On the other hand, very little work is done when the involved automorphic representations are nontempered cuspidal. This is partly because the local Gross-Prasad restriction problem is more complicated for nontempered local packets and partly because the local period functionals are not readily available due to the possible divergence of local integrals of matrix coefficients.

Our paper \cite{qiu14} is the first period formula paper dealing with nontempered cuspidal representations in the framework of Ichino-Ikeda conjecture \cite{ii10}. There, we proved a precise formula for the $\SO_4$-period of  nontempered cuspidal $\SO_5$-representations of Saito-Kurokawa type; we defined the local period functionals by using a local height function to regularize the local integrals of matrix coefficient. (Ichino \cite{ichino05} previously considered the $\SO_4$-period of Saito-Kurokawa forms of Level $1$ but his period formula is not in terms of the Saito-Kurokawa forms themselves but in terms of  their lifts to $\wtilde{\SL}_2$ and $\PGL_2$.)

The current paper considers the Bessle subgroups $R=U\rtimes \SO_2$ of $\SO_5$, the paralell case of  $\SO_4$, and proves a precise Bessel period formula for the whole nontempered cuspidal spectrum of $\SO_5$ (including inner forms). We introduce below the general setting to describe our results.\\

Let $F$ be a number field and $\ba:=\ba_F$ be the ring of $F$-adels. 
Let $\SO_n$ denote the special orthogonal group over a non-degenerate quadratic space of dimension $n$. When the $F$-rank of $\SO_{m+2r+1}$ is greater than or equal to $r$, there are corank-$r$ Bessel subgroups $\mrr=U_r\rtimes \SO_m$, where $U_r$ is certain unipotent subgroup.  One chooses a (nonimportant) generic character $\psi_r$ of $[U_r]$ and defines the Bessel period functional $P$ on an irreducible cuspidal $\SO_{m+2r+1}(\ba)\times \SO_m(\ba)$-representation $\Pi\boxtimes \Pi^\prime$ by
\[
P(f,f^\prime)=\int_{[U_r\rtimes \SO_m]}f(uh)f^\prime(h)\psi_r(u)\dd u\dd h,\quad f\in \Pi, f^\prime\in \Pi^\prime.
\]
The global period functional $P$ belongs to the space $\Hom_{\mrr(\ba)}(\Pi\otimes (\Pi^\prime\boxtimes \psi_r),\bc)$.

When $\Pi\boxtimes \Pi^\prime$ is tempered, the global Gross-Prasad conjecture \cite[Conj. 24.1, 26.1]{gan_gross_prasad2012} states:
\begin{itemize}
\item[(i)] $P$ is nonzero if and only if $L(\frac{1}{2},\Pi\times \Pi^\prime)$ is nonzero and the local homomorphism space $\Hom_{\mrr(F_v)}(\Pi_v\otimes (\Pi_v^\prime\boxtimes \psi_{r,v}),\bc)$ is nonzero at every local place,
\item[(ii)] When $L(\frac{1}{2},\Pi\times \Pi^\prime)$ is nonzero, the local root numbers $\epsilon(\frac{1}{2},\Pi_v\times \Pi_v^\prime)$ select out a unique inner form of $\SO_{m+2r+1}\times \SO_{m}$ so that the transfer of $\Pi\boxtimes \Pi^\prime$ to this inner form has nonzero global Bessel period.
\end{itemize}
On the local side, for tempered local representations, one can define a local Bessel period functional on $\Pi_v\otimes \Pi_v\otimes \Pi_v^\prime\otimes \Pi_v^\prime$ by integrating matrix coefficients  \cite{ii10, liuyf2016}, which is an element of $\Hom_{\mrr(F_v)}(\Pi_v\otimes\overline{\Pi}_v\otimes  (\Pi_v^\prime\boxtimes \psi_{r,v})\otimes (\overline{\Pi^\prime}_v\boxtimes \psi_{r,v}^{-1}),\bc)$. It is expected that
\begin{itemize}
\item[(iii)] When $\Pi_v\boxtimes \Pi_v^\prime$ is tempered irreducible unitary, the space $\Hom_{\mrr(F_v)}(\Pi_v\otimes (\Pi_v^\prime\boxtimes \psi_{r,v}),\bc)$ is nonzero if and only the local Bessel period functional is nonzero.
\end{itemize}
In this framework, the period formula problem is to express the nonzero global Bessel period functional as a product of local Bessel period integrals with explicit proportion constant.

When $\Pi\boxtimes \Pi^\prime$ is nontempered, the phenomenon is quite different. On the qualitative aspect, according to our study on $\SO_5\times \SO_2$ in this paper, we expect the following:
\begin{itemize}
\item[(1)] The assertion in (i) holds with $L(\frac{1}{2},\Pi\times \Pi^\prime)$ replaced by $L(0,\Pi,\Pi^\prime)$, where $L(s,\Pi,\Pi^\prime):=\frac{L(s+\frac{1}{2},\Pi\times \Pi^\prime)}{L(s+1,\Pi,\ad)L(s+1,\Pi^\prime,\ad)}$.
\item[(2)] The assertion in (ii) fails. When $L(0,\Pi,\Pi^\prime)\neq 0$, there may have more than one inner forms and more than one global representations in the global packet with nonvanishing global Bessel periods.
\item[(3)] In order for $\Hom_{\mrr(F_v)}(\Pi_v\otimes (\Pi_v^\prime\boxtimes \psi_{r,v}),\bc)\neq 0$, one needs to have $\ord_{s=0}L(s,\Pi_v,\Pi_v^\prime)\leq 0$ and nonvanishing regularized local Bessel period integrals. The former condition can be interpreted as $L(0,\Pi_v,\Pi_v^\prime)\neq 0$.
\end{itemize}

On the quantitative aspect, we treat the whole nontempered cuspidal spectrum of $\SO_5$. For $\SO(3,2)\cong \PGSp_4$, the classification of the discrete spectrum is presented in \cite{arthur04} and established in the recent book \cite{arthur13}, conditional on the stabilization of the twisted trace formula for $\GL_n$. According to this classification, the nontempered cuspidal representations fall in the Saito-Kurokawa packets and the Soudry packets (which include the Howe-Piatetski-Shapiro packets); these are CAP representations and were first constructed by the method of theta lifting \cite{PS83, hps79, Soudry88}. For the inner forms $\SO(4,1)$, the classification theorem is not established yet but these nontempered Arthur packets have been directly constructed by using theta correspondence in \cite{wgan2008} and a recent preprint \cite{gs2013}. We study the Bessel period functional on these packets.

Now let $(V,q)$ be a $5$-dimensional quadratic space over $F$ with positive Witt index and of determinant $1\cdot {F^\times}^2$. Decompose $V=Fv_+\oplus U\oplus Fv_-$, where $v_+,v_-$ are isotropic with $q(v_+,v_-)=1$ and $U$ is their orthogonal complement. Decompose further $U=Fv_0\oplus V_0$, where $v_0$ is non-isotropic and orthogonal to $V_0$. The Bessel subgroup associated to $v_0$ is
\[
\mrr=U\rtimes \SO(V_0).
\]
It is embedded into $\SO(V)$ via the parabolic subgroup $P$ stabilizing $v_+$ (see Sect. ~\ref{bgroup}). There is $\SO(V_0)\cong E_1^\times$ with $E=F(\sqrt{q(v_0)})$ and $\SO(V)$ is of type $\SO(3,2)$ or $\SO(4,1)$.

Let $\Pi$ be an irreducible cuspidal automorphic representation of $\SO(V)_\ba$ and $\chi$ be a character of $[\SO(V_0)]$. Choose a nontrivial character $\psi$ of $\ba/F$ and let $\psi_{v_0}(u):=\psi\big(q(v_0,u)\big)$ be the associated character on $[U]$. The global Bessel period functional $P$ on $\Pi$ with respect to $\chi\boxtimes\psi_{v_0}$ is
\begin{equation}\label{gbp}
P(f):=\int_{[U\rtimes \SO(V_0)]}f(uh)\chi(h)\psi_{v_0}(u)\dd u\dd h,\quad f\in \Pi.
\end{equation}

Locally, consider the formal integral of matrix coefficients,
\begin{equation*}\label{lbp_u}
\mpp_v(f_{1,v},f_{2,v})=\int_{U(F_v)\rtimes \SO(V_0)_{F_v}}(\Pi_v(u_vh_{v})f_{1,v}, f_{2,v})\psi_{v_0}(u_v)\chi_v(h_v)\dd u_v\dd h_{v},\quad f_{i,v}\in \Pi_v.
\end{equation*}
When $\Pi_v$ is square-integrable and $V_0(F_v)$ is nonsplit, the local integral is absolutely convergent and defines an element of $\Hom_{\mrr(F_v)}(\Pi_v\otimes \big(\psi_{v_0,v}\boxtimes \chi_v\big),\bc)$. In all other situations, it is divergent and needs a suitable regularization that respects the action of $\mrr(F_v)$. We suggest that this $\mrr(F_v)$-integral be regarded as an iterated integral, with the $\SO(V_0)_{F_v}$-integration after the $U(F_v)$-integration, and that the two integrations be regularized separately according to the different natures of their divergence. Consider an $s$-family:
\[
\mpp_v(s;f_{1,v},f_{2,v})=\int_{\SO(V_0)_{F_v}}\big[\int_{U(F_v)}(\Pi_v(u_vh_{v})f_{1,v},f_{2,v})\psi_{v_0}(u_v)\dd u_v\big]\chi_v(h_v)\Delta(h_v)^s\dd h_{v},
\]
where $\Delta(h_v)$ is a height function on $\SO(V_0)_{F_v}$ derived from the doubling method (See Sect. ~\ref{subsec_lhf}).
\begin{itemize}
\item[(a)] We use the Fourier transform of tempered distributions to regularize the inner integral over $U(F_v)$, for all irreducible unitary $\Pi_v$ and for general $V$. (See Sect. ~\ref{subsec_lbp}.)

\item[(b)] The outer $\SO(V_0)_{F_v}$-integral is absolutely convergent when $\re(s)>>0$ because of the height function $\Delta(h)$ and it has meromorphic continuation to $s\in\bc$. When $\Pi_v$ is the local component of a global representation in the Saito-Kurokawa packets or Soudry packets, we find (b1) $\ord_{s=0}L(s,\Pi_v,\chi_v)\leq 0$ is a necessary condition for $\Hom_{\mrr(F_v)}(\Pi_v\otimes \big(\psi_{v_0,v}\boxtimes \chi_v\big),\bc)$ to be nonzero, (b2) when $\ord_{s=0}L(s,\Pi_v,\chi_v)\leq 0$, there is $\ord_{s=0}\mpp_v(s;f_{1,v},f_{2,v})\geq \ord_{s=0}L(s,\Pi_v,\chi_v)$ and the regularized local Bessel period functional on $\Pi_v\otimes \Pi_v$,
\begin{equation}\label{rlbp}
\mpp^\sharp_v(f_{1,v},f_{2,v})\overset{\triangle}{=}\frac{\mpp_{v}(s;f_{1,v},f_{2,v})}{\zeta_{F_v}(2)\zeta_{F_v}(4)L(s,\Pi_v,\chi_v)}\big|_{s=0},
\end{equation}
respects the action of $\mrr(F_v)\times \mrr(F_v)$. These properties are put as a local conjecture for general corank-$1$ Bessel subgroup in Section ~\ref{ss_lc}.
\end{itemize}

Now we state the main theorem. Let $\pi$ be an irreducible cuspidal automorphic representation of $\PGL_2(\ba)$, $\mu$ be a quadratic character of $\ba^\times/F^\times$, $K/F$ be a quadratic extension with $\Gal(K/F)=\{1,\iota\}$, and $\eta$ be a character of $\ba^\times_K/K^\times$ with $\eta|_{\ba^\times}=1$. Let $\Pi$ be an irreducible automorphic representation in the discrete spectrum of $\SO(V)$. We say $\Pi$ belongs to the Saito-Kurokawa packet $\SK(\pi,\mu)$ if its partial $L$-function $L^S(s,\Pi)$ is $L^S(s,\pi)L^S(s+\frac{1}{2},\mu)L^S(s-\frac{1}{2},\mu)$. The Soudry packet $\So(\eta)$ consists of those $\Pi$ with $L^S(s,\Pi)=L^S(s+\frac{1}{2},\eta)L^S(s-\frac{1}{2},\eta)$.


\begin{theorem*}
Suppose that $\Pi$ is a cuspidal member of $\SK(\pi,\mu)$ or $\So(\eta)$.
\begin{itemize}
\item[(1)] $\Hom_{\mrr(F_v)}\big(\Pi_v\otimes (\psi_{v,v_0}\boxtimes \chi_v),\bc\big)$ is nonzero if and only if  $\mathrm{ord}_{s=0}L(s,\Pi_v,\chi_v)\leq 0$ and $\mpp^\sharp_v$ is nonzero on $\Pi_v\otimes \Pi_v$.

\item[(2)]  $P$ is nonzero on $\Pi$ if and only if  $\Hom_{\mrr(F_v)}(\Pi_v\otimes \big(\psi_{v,v_0}\boxtimes \chi_v\big),\bc)$ is nonzero for all $v$ and $L(0,\Pi,\chi)$ is nonzero.

\item[(3)] Suppose that $P$ is nonzero on $\Pi$, then $P\otimes \overline{P}=2^\beta \zeta_F(2)\zeta_F(4)L(0,\Pi,\chi)\prod_v 2^{-\beta_v}\mpp_v^\sharp$. When $\pi\in \SK(\pi,\mu)$, the constants $\beta$ is $0$ while the constants $\beta_v$ are all $1$; when $\pi\in \So(\eta)$, the constants are
\[
\beta=
\begin{cases}
1, &\eta^\iota\neq \eta,\\
2, &\eta^\iota=\eta.
\end{cases},\quad 
\beta_v=
\begin{cases}
1, &\eta_v^{\iota_v}\neq \eta_v,\\
2, &\eta_v^{\iota_v}=\eta_v.
\end{cases}
\]
For decomposable vectors $f_1, f_2\in \Pi$, there is $\mpp_v^\sharp(f_{1,v},f_{2,v})=2^{\beta_v}$ for almost all $v$. 
\end{itemize}
\end{theorem*}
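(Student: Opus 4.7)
The strategy is to exploit the known theta-lift construction of the nontempered packets. Members of $\SK(\pi,\mu)$ arise as theta lifts of $\pi\otimes\mu$ from $\PGL_2(\ba)$ (or its inner form) to $\SO(V)_\ba$, while members of $\So(\eta)$ arise as theta lifts of $\eta$ from the orthogonal group of the norm form on $K$ to $\SO(V)_\ba$. Substituting the theta-series representation $f=\theta(\varphi,\phi)$ into the global Bessel integral (\ref{gbp}) and unfolding against the $U$-Fourier expansion of the theta kernel reduces the Bessel period of $\Pi$ to a simpler period of the source representation: a Waldspurger toric period of $\pi$ (twisted by $\mu$ and $\chi$) in the SK case, and a Hecke-character period for $\eta$ in the Soudry case. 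This seesaw reduction is the engine behind parts (2) and (3).

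For the global formula I would proceed as follows. First, express a decomposable $f\in\Pi$ as $\theta(\varphi,\phi)$ with $\varphi$ on the source group $H$ and $\phi$ a factorizable Schwartz function. Second, unfold $\int_{[U]}f(uh)\psi_{v_0}(u)\dd u$ using the orbit decomposition of the theta kernel along $U$, and perform the remaining $\SO(V_0)$-integral to recognize a toric or Hecke period of $\varphi$. Third, apply Waldspurger's formula (in the SK case) or the direct Hecke factorization (in the Soudry case) to express $P(f)\overline{P(f)}$ as $L(0,\Pi,\chi)$ times an Euler product of normalized local matrix-coefficient integrals on $H$. Fourth, apply the local Rallis inner product formula together with the doubling identity for $\mpp_v(s;\cdot,\cdot)$ to transfer each local factor on $H$ into the regularized Bessel integral $\mpp_v^\sharp$ on $\SO(V)_{F_v}$. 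The constant $\zeta_F(2)\zeta_F(4)$ comes from the doubling $L$-factor normalizing the Weil representation for $\SO_5$; the combinatorial constant $2^\beta$ encodes the size of the theta-lift fiber, which is $1$ for $\SK(\pi,\mu)$ and equals $1$ or $2$ in the Soudry case according as $\eta^\iota\neq\eta$ or $\eta^\iota=\eta$, reflecting whether $\eta$ and $\eta^\iota$ produce distinct theta lifts (and similarly at each place, giving $2^{\beta_v}$).

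Part (1) is purely local. The doubling analysis of $\mpp_v(s;f_{1,v},f_{2,v})$ shows that as a meromorphic function of $s$ its singularities near $s=0$ are controlled by $L(s,\Pi_v,\chi_v)$. If $\Hom_{\mrr(F_v)}(\Pi_v\otimes(\psi_{v_0,v}\boxtimes\chi_v),\bc)\neq 0$, then the regularized functional $\mpp_v^\sharp$, being $\mrr(F_v)\times\mrr(F_v)$-invariant, must be a scalar multiple of the unique-up-to-scalar generator of this at-most-one-dimensional Hom; the asymptotic expansion then shows that this scalar is nonzero exactly when $\ord_{s=0}\mpp_v(s;\cdot,\cdot)\geq\ord_{s=0}L(s,\Pi_v,\chi_v)$, and the necessity $\ord_{s=0}L(s,\Pi_v,\chi_v)\leq 0$ follows from the reducibility structure of the standard module realizing $\Pi_v$ in the theta correspondence. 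Conversely, when $\ord_{s=0}L\leq 0$ and $\mpp_v^\sharp\neq 0$, the functional $\mpp_v^\sharp$ itself exhibits a nonzero element of the Hom space.

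The main technical obstacle is step (b) of the program set out in the introduction: for every local component of an SK or Soudry packet one must establish that (i) the $\SO(V_0)_{F_v}$-integral in $\mpp_v(s;\cdot,\cdot)$ admits meromorphic continuation whose poles near $s=0$ are captured exactly by $L(s,\Pi_v,\chi_v)$, and (ii) the resulting regularized bilinear form is truly $\mrr(F_v)\times\mrr(F_v)$-invariant. Property (i) is obtained by reducing $\mpp_v(s;f_{1,v},f_{2,v})$, through Weil-representation unfolding and the doubling construction, to a zeta integral whose analytic behavior is governed by $L(s,\Pi_v\otimes\chi_v)$. Property (ii) is delicate because the naive change of variables proving bi-$\mrr(F_v)$-invariance is illegitimate at $s=0$ when the inner $U$-integral diverges; invariance must be verified for $\re(s)\gg 0$ and transported by analytic continuation, which in turn requires a careful treatment of the regularized $U$-integral as the Fourier transform of a tempered distribution. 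These local issues, combined with the unramified computation giving $\mpp_v^\sharp=2^{\beta_v}$ at almost all places, constitute the bulk of the technical work.
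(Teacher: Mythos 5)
The general framework you describe---unfold the Bessel period against the theta kernel to reduce it to a period on the source group, apply a known period formula there, and transfer local factors via the Rallis inner product formula---is the one the paper follows, and your discussion of step (b) (invariance via the tempered-distribution Fourier transform, meromorphic continuation, and the unramified value $2^{\beta_v}$) correctly identifies the crux of the local work. However, your identification of the reduction target in the Saito--Kurokawa case is wrong. The SK packet is built by the \emph{two-step} lift $\pi\mapsto\sigma\in Wd_\psi(\pi)$ on $\wtilde{\SL}_2(\ba)$ and then $\sigma\mapsto\Pi$ on $\SO(V)_\ba$; the unfolding (Proposition~\ref{gperiod}) lands on the \emph{Whittaker coefficient} of $\sigma$ on $\wtilde{\SL}_2$, not on a toric/Waldspurger period of $\pi$ on $\PGL_2$. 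Accordingly, the global input is the Whittaker period formula on $\wtilde{\SL}_2$ (Proposition~\ref{wfdsl2}, from \cite{baruch_mao07,qiu4a}), not the toric period formula. The distinction is not cosmetic: step~4 of your outline has to match the source-side \emph{Whittaker} functionals with the regularized $U(F_v)$-integral, and the identity that makes this possible is not "the doubling identity" but the $L^2$-isometry of the quadratic Fourier transform $\mf_q$ (Proposition~\ref{qft_V2}, the content of Section~\ref{qft}). Without that tool the chain of re-orderings of integration in Proposition~\ref{prop_ftdistribution}---the core of the local SK computation---cannot be carried out, and your outline provides no substitute.

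In the Soudry case you similarly omit the measure decompositions of Sections~\ref{ss_md2} and \ref{fiberation2}, which convert Fourier inversion on $\Sym_2$ (resp.\ on $D_0$) into an $\OO(V_K)$- (resp.\ $\U(\mv_e)$-) integral; these play the role the quadratic Fourier transform plays in the SK case and are indispensable. Your reading of $2^\beta$ is also off: in the SK case $\beta=1$, so the constant is $2$, and it is not the cardinality of a theta-lift fiber (which is a singleton); it comes from the factor $2$ in the Rallis inner product constant $\prod_v\frac{C_{\sigma_v,\Pi_v}L(\frac32,\pi_v)}{\zeta_{F_v}(4)}=\frac{2L(\frac32,\pi)}{\zeta_F(4)}$ of Proposition~\ref{IPF_SO5}. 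Finally, Part~(1) is not proved by analyzing "the reducibility structure of the standard module"; the paper pins down $\Hom_{\mrr(F_v)}\neq 0$ by citing the explicit twisted Jacquet module criteria for these packets (\cite[Thm.~9.1]{gan_gurevich09} for SK and \cite[Prop.~7.6]{gs2013} for Soudry), and then checks directly that the same condition governs nonvanishing of the explicit $\mpp_v^\sharp$ obtained from the local formulas.
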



The proof of the theorem is based on the theta lifting construction of the packets. First, the Saito-Kurokawa packets are constructed from the Waldspurger packets on $\wtilde{\SL}_2$ by using theta correspondence for $(\wtilde{\SL}_2,\SO(V))$. In this case, we transfer both the global and (regularized) local Bessel periods to integrals on $\SL_2$ and compare them there. The local period transfer is the more difficult part because one needs to carefully change the order of integration for a multivarible iterated integral; our tool is the inner product formula and the isometry property of quadratic Fourier transform established in Section ~\ref{qft}. The global period formula in this case is reduced to the Whittaker period formula for cuspidal $\wtilde{\SL}_2(\ba)$-representations, which was proved in \cite{baruch_mao07} for totally real $F$ and extended by the author with a different method to all number fields in \cite{qiuy2019}.

Second, the Soudry packets are constructed with the theta lifting from $\GO_2$ to $\GSp_4$ when $\SO(V)=\SO(3,2)$ and from a skew-Hermitian $\GU_1(D)$ to a Hermitian $\GU_2(D)$ when $\SO(V)=\SO(4,1)$, where $D$ is a nonsplit quaternion algebra over $F$. We transfer the global and local periods to integrals over $\GO_2$, $\GU_1(D)$ and compare them there. The tool for transferring local periods is the inner product formula and the Fourier inversion formula with the help of a local measure decomposition in Section ~\ref{ss_md2}. When $\eta=\eta^{\iota}$, the packet $\So(\eta)$ is the Howe-Piatetski-Shapiro packet and contains CAP representations with respect to the Borel subgroup. When $\eta\neq \eta^{\iota}$, the packet $\So(\eta)$ is the genuine Soudry packet and contains CAP representations with respect to the Klingen parabolic subgroup. We treat them uniformly in this paper.



The author would like to thank Gan Wee Teck for a number of very helpful conversations concerning the topic and National University of Singapore for hosting his visit while he worked on this paper.

\section{Preliminaries}\label{pre}

In this section, let $(V,q)$ be a non-degenerate $m$-dimensional quadratic space over a field $k$ of characteristic zero. Set $q(x,y):=q(x+y)-q(x)-q(y)$ and let $\Sym_n$ denote the space of symmetric $n\times n$-matrices. We also use $q$ to denote the map
\[
q: V^n\rar \Sym_n,\quad \vec{x}=(x_1,\cdots,x_n)\rar q(\vec{x}):=(q(x_i,x_j))_{n\times n}.
\]
Set $\Sym_n^\circ:=\Sym_n\cap \GL_n$ and  $V^{n,\circ}=q^{-1}(\Sym_n^\circ)$; specifically, put $V^\circ=V^{1,\circ}$.

Let $\psi$ be a nontrivial character of $k$ when $k$ is a local field of characteristic zero and a character of $\ba/F$ when $k=F$ is a number filed. For $a\in k^\times$, let $\psi_a$ be the $a$-twist of $\psi$ and $\chi_a=<a,\cdot>$ be the associated quadratic character, where $<,>$ denotes the Hilbert symbol.

\subsection{Corank-$1$ Bessel subgroup}\label{bgroup}

When $m\geq 3$ and $V$ has positive Witt index, choose a pair of isotropic vectors $v_+, v_-$ with $q(v_+,v_-)=1$, let $U$ be their orthogonal complement, and write the decomposition $V=Fv_+\oplus U\oplus Fv_-$. For a non-isotropic vector $v_0\in U$, let $V_0$ denote its orthogonal complement in $U$ and define the Bessel group associated to $v_0$,
\[
\mrr:=U\rtimes \SO(V_0).
\]
The embedding $\mrr_{v_0}\hrar \SO(V)$ is via the parabolic subgroup $P=U\rtimes M$ stabilizing $Fv_+$, where $M\cong \GL_1\times \SO(U)$ and $U$ is identified with the unipotent radical of $P$ by sending $u\in U$ to the following element,
\[
v_+\rar v_+,\quad u^\prime\in U\rar -q(u,u^\prime)v_++u^\prime,\quad v_-\rar -q(u)v_++u+v_-.
\]

\subsection{$\SO(3,2)$ and its inner forms}

A group of type $\SO(m_1,m_2)$ ($m_1\geq m_2$) refers to the group $\SO(V)$ over any quadratic space $(V,q)$ of dimension $m_1+m_2$ and Witt index $m_2$. In this paper, we study $\SO(3,2)$, $\SO(4,1)$ and will uitilize their alternative realizations in terms of symplectic groups and quaternion unitary groups.

\subsubsection{$\SO(3,2)$}\label{so32}

The following $5$-dimensional quadratic space is of Witt index $2$ and determinant $1\cdot{k^\times}^2$,
\begin{align}\label{so32r}
&V=\left\{Y=\smalltwomatrix{X}{x^\prime J_1}{-x^{\prime\prime}J_1}{\leftup{t}{X}}\in M_{4\times 4}(k) \ |\ x^\prime, x^{\prime\prime}\in F, \mathrm{Tr}(X)=0\right\},\\
\nonumber &q(Y)=\frac{1}{4}\mathrm{Tr}(Y^2)=x^\prime x^\pprime-\det X,
\end{align}
Set $J_n=\smalltwomatrix{}{1_n}{-1_n}{}$ and define $\GSp_{2n}:=\left\{g\in \GL_{2n}: gJ_n\leftup{t}{g}=\nu(g)J_n\right\}$, then $\GSp_4$ acts on $V$ by $g\circ Y=gYg^{-1}$ and this action leads to an isomorphism $\PGSp_4\isoto \SO(V)=\SO(3,2)$.

\subsubsection{The inner forms $\SO(4,1)$}\label{so41}

Given a quaternion algebra $D$ over $k$ with the main involution $X\rar \overline{X}$, one associates a $5$-dimensional quadratic space of determinant $1\cdot{k^\times}^2$,
\begin{align*}
&V_D=\left\{Y=\smalltwomatrix{X}{x^\prime1_D}{x^{\prime\prime}1_D}{\overline{X}}\in M_{2\times 2}(D)\ |\ x^\prime, x^{\prime\prime}\in F, \mathrm{Tr}_{D/k}(X)=0\right\},\\
&q_D(Y)=\frac{1}{4}\Tr_{D/k}\big[\Tr(Y^2)\big]=x^\prime x^\pprime-\Nm_{D/k}(X).
\end{align*}
The space $V_D$ is of Witt index $2$ (resp. $1$) when $D$ is split (resp. non-split). When $D$ varies, the groups $\SO(V_D)$ are distinctive and exhaust all groups of type $\SO(3,2)$ and $\SO(4,1)$.

Define the quaternion unitary group
\[
\GU_2(D):=\left\{g\in \GL_2(D): \leftup{t}{\bar{g}}\smalltwomatrix{}{1_D}{1_D}{}g=\nu(g) \smalltwomatrix{}{1_D}{1_D}{},\, \nu(g)\in k^\times\right\}.
\]
It acts on $V_D$ by $g\circ Y=gYg^{-1}$ and there is an induced isomorphism $\PGU_2(D)\isoto \SO(V_D)$.
\begin{remark}
When $D=M_{2\times 2}(F)$, the group $\GU_2(D)$ naturally sits inside $\GL_4$ and is equal to the similitude symplectic group defined as $\big\{g\in \GL_4: g\smalltwomatrix{}{J_1}{J_1}{}\leftup{t}{g}=\nu(g)\smalltwomatrix{}{J_1}{J_1}{}\big\}$.
\end{remark}

\subsection{$\OO_2$ and Skew-Hermitian $\GU_1(D)$}

They are used lated to define Soudry packets .

\subsubsection{$\OO_2$}\label{o2group}

For a quadratic algebra $k^\prime$ of $k$, let $(V_{k^\prime},q_{k^\prime})=(k^\prime,\Nm_{k^\prime/k})$ denote the associated quadratic space. There is $\SO(V_{k^\prime})={k^\prime_1}^\times$ and $\OO_2(V_{k^\prime})=\SO(V_{k^\prime})\rtimes \Gal(k^\prime/k)$. Write $\Gal(k^\prime/k)=\{1,\iota\}$, then Hilbert's Theorem 90 yields a short exact sequence $1\rar k^\times\rar {k^\prime}^\times\overset{\pr}{\rar} {k_1^\prime}^\times\rar 1$, where $\pr(x^\prime)={x^\prime}^\iota/x^\prime$ is the projection map.

Generally, for a $2$-dimensional quadratic space $V$ over $k$, write $k^\prime=k(\sqrt{-\det V})$ and consider the associated fiberation $q:V^2\rar \Sym_2$. Set $\Sigma_T:=\{X\in V^2:q(X)=T\}$; for $T\in q(V^{2,\circ})$, there is $\Sigma_T\cong \OO(V)={k_1^\prime}^\times$. Furthermore, 
\begin{align}\label{sm}
\nonumber q_{k^\prime}(V_{k^\prime}^{2,\circ})=&\{T: \exists\, g\in \GL_2(k)\ \text{s.t.}\ gT\leftup{t}{g}=\smalltwomatrix{1}{}{}{\disc(k^\prime)}\},\\
\sqcup_{a\in k^\times /\Nm({k^\prime}^\times)}aq_{k^\prime}(V_{k^\prime}^{2,o})=&\{T:\det T\in -\disc(k^\prime)\cdot {k^\times}^2\}.
\end{align}

\subsubsection{Skew-Hermitian $\GU_1(D)$}\label{gu1dgroup}

Let $D$ be a quaternion algebra over $k$ with the main involution $x\rar \bar{x}$. An $\epsilon$-Hermitian space over $D$ is a right $D$-module $\mv$ with a pairing $\mv\times \mv\rar D$ satisfying $(y,x)=\epsilon \overline{(x,y)}$ and $(xa,yb)=\overline{a}(x,y)b$. Define
\[
\GU(\mv)=\{g\in \GL_D(\mv): (gx,gy)=\nu(g)(x,y),\, \nu(g)\in k^\times\}
\]
and set $\U(\mv)=\{g\in \GU(\mv): \nu(g)=1\}$, $\PGU(\mv)=\GU(\mv)/k^\times$.

Let $D_0$ be the set of trace zero elements in $D$. For $e\in D_0$ with nonzero norm, let $\mv_e$ denote the $1$-dimensional skew-Hermitian space over $D$ whose underlying space is $D$ and whose pairing is given by $(1,1)=e$. To describe $\GU(\mv_e)$ and $\U(\mv_e)$, we put $k^\prime=k\oplus ke$, choose $e^\prime\in D_0$ with nonzero norm and satisfying $ee^\prime=-e^\prime e$, and write
\[
D=k^\prime\oplus k^\prime\ep=k\cdot 1\oplus ke\oplus ke^\prime\oplus kee^\prime,
\]
There is 
\[
\GU(\mv_e)={k^\prime}^\times \sqcup {k^\prime}^\times e^\prime,\quad \U(\mv_e)=
\begin{cases}
{k^\prime_1}^\times, &\text{$D$ is nonsplit},\\
{k^\prime_1}^\times\rtimes \mu_2, &\text{$D$ is split}.
\end{cases}
\]
Note the $\nu(\GU(\mv_e))=\Nm({k^\prime}^\times)\cup \ep^2\Nm({k^\prime}^\times)$.

\begin{remark}
$D$ is split if and only if $\ep^2\in \Nm({k^\prime}^\times)$. When $D$ is split, choose $h_{e^\prime}\in {k^\prime}^\times$ satisfying $\Nm(h_{e^\prime})=-\ep^2$, then the group $\mu_2$ occuring above can be realized as $\mu_2=\{1,h_{e^\prime}e^\prime\}$. Note that $\U(\mv_e)$ is a non-connected $k$-group and that it has the same set of $k$-rational points as its connected component $\U(\mv_e)^c$ when $D$ is nonsplit.
\end{remark}

Consider the associated fiberation $\wtilde{q}_e: \mv_e\rar D_0$, $x\rar (x,x)=\bar{x}e x$. Put $\mv_e^\circ=\{x\in \mv_e: (x,x)\neq 0\}$, $D^\circ=\{x\in D: \Nm(x)\neq 0\}$ and $D_0^\circ=D^\circ\cap D_0$. Set $\Sigma_y:=\{x\in \mv_e: (x,x)=y\}$; for $y\in \wtilde{q}_{e}(\mv_e^\circ)$, there is $\Sigma_y\cong \U(\mv_e)$. Furthermore,
\begin{equation}\label{sm2}
\sqcup_{a\in k^\times/\nu(\GU(\mv_e))}a\wtilde{q}_{e}(\mv_e^\circ):=\{y\in D_0^\circ: \Nm(y)\in \Nm(e){k^\times}^2\}.
\end{equation}

\subsection{Tamagawa Measure}\label{s_tmeasure}

For a reductive group $\mrg$ over a number field $F$, we write $[\mrg]=\mrg(F)\backslash \mrg(\ba)$ and always use the Tamagawa measure $\dd g$ on $\mrg(\ba)$.

(i) The Tamagawa measure on $\ba$ and $\ba^\times$ are $\dd t=\prod_v \dd t_v$ and $\dd^\times t= \frac{1}{\Res_{s=1}\zeta_F(s)}\prod_v \zeta_{F_v}(1)\frac{\dd t_v}{|t_v|}$ respectively, where $\dd t_v$ is the Haar measure on $F_v$ that is self-dual with respect to $\psi_v$.

(ii) For $(V,q)$ over $F$, the Tamagawa measure on $V(\ba)$ is $\dd x=\prod_v \dd x_v$, where $\dd x_v$ is the Haar measure on $V(F_v)$ that is self-dual with respect to the character of second degree $\psi_v\circ q_v$. 

(iii) For a quadratic algebra $E$ over $F$, let $c_E$ be $L(1,\chi_{E/F})$ when $E$ is nonsplit and $\Res_{s=1}\zeta_F(s)$ when $E$ is split. The Tamagawa measure over $\SO(V_E)_\ba=E_{1,\ba}^\times\cong \ba_E^\times/\ba^\times$ is $$\dd h=c_E^{-1}\prod_v L(1,\chi_{E_v/F_v})\dd^\sharp h_v,$$ where $\dd^\sharp h_v=\frac{\dd e_v}{|e_v|}/\frac{\dd t_v}{|t_v|}$ is the natural quotient measure on $E_{v,1}^\times\cong E_v^\times/F_v^\times$ and $\dd e_v, \dd f_v$ are self-dual measures on $E_v, F_v$ as in (i).

Write $\OO_m=\SO_m\rtimes \mu_2$. Extend the local measure $\dd h_v$ on $\SO_m(F_v)$ to $\OO_m(F_v)$ by giving $\mu_2(F_v)$ the probability measure $1$, then the product of the extended local measures is the Tamagawa measure on $\OO_m(\ba)$. There is $\mathrm{Vol}([\OO_m])=1$ when $m\geq 2$ and $\OO_m\neq \OO_{1,1}$.

(iv) Suppose that $D$ is a nonsplit quaternion algebra over $F$ and $e\in D_0$ has nonzero norm. Let $S_D$ denote the set of nonsplit places of $D$ and write $K=F\oplus Fe$, then
\[
\U(\mv_e)_\ba=\prod_v \U(\mv_e)_{F_v}=(\prod_{v\in S_D} K_{1,v}^\times)\times \big(\prod_{v\not\in S_D} K_{1,v}^\times\rtimes \mu_2\big)
\]
We follow \cite[Sect. 10.7]{gan_yu2000} to specify the Tamagawa measure on the non-connected group $\U(\mv_e)$. Let $\dd^\sharp h_{1,v}$ be the natural quotient measure on $K_{1,v}^\times$. When $v\in S_D$, $\U(\mv_e)_{F_v}=K_{1,v}^\times$ and we set $\dd^\sharp h_v=\frac{1}{2} \dd^\sharp h_{1,v}$; when $v\not\in S_D$, $\U(\mv_e)_{F_v}=K_{1,v}^\times\rtimes \mu_2$ and we let $\dd^\sharp h_v$ be the extension of $\dd^\sharp h_{1,v}$ by giving $\mu_2$ the probability measure. The Tamagawa measure on $\U(\mv_e)_\ba$ is $\dd h=c_K^{-1}\prod_v L(1,\chi_{K_v/F_v})\dd^\sharp h_v$. There is $\mathrm{Vol}([\U(\mv_e)])=2^{1-|S_D|}$.

\subsection{Measure decomposition}

We describe three measure decompositions.

\subsubsection{The fiberation $V\rar k$}\label{s_md1}

Define the $r$-sphere $S_r=\{x\in V:q(x)=r\}$, $r\in k$.

Now let $k$ be a local field and $\dd x$, $\dd r$ be the self-dual Haar measures on $V$, $k$ respectively. The map $q:V\rar k$ induces $\OO(V)$-invariant measures $\dd\sigma_r$ on $S_r$ ($r\in k^\times$) such that
\begin{equation}\label{md1}
\int_V f(x)\dd x=\int_{k^\times} \int_{S_r}f(x)\dd\sigma_r |r|^{\frac{m-2}{2}}\dd r,\quad f\in L^1(V).
\end{equation}
There is $\int_V f(X)dX=\int_{k^\times} \wtilde{f}(r)\dd r$, where $\wtilde{f}(r):=|r|^{\frac{m-2}{2}}\int_{S_r}f(x)\dd \sigma_r$, $r\in k^\times$.

We describe the measure $\dd \sigma_r$ for $r\in q(V)\cap k^\times$:
\begin{itemize}
\item[(i)] $\dim V=1$. $S_r\cong\OO_1(k)$ and $\dd \sigma_r=2|2|_k^{-1}\dd h$, where $\dd h$ is the probability measure on $\OO_1(k)$.

\item[(ii)] $\dim V=2$. $S_r\cong \SO(V)$ and $\dd \sigma_r=\dd h^\sharp$, where $\dd^\sharp h$ denotes the natural quotient measure on $\SO(V)={k^\prime_1}^\times\cong {k^\prime}^\times/k^\times$ and $k^\prime=k(\sqrt{-\det V})$.
\end{itemize}


\subsubsection{The fiberation $V^2\rar \Sym_2$}\label{ss_md2}

Assume $\dim V=2$. Write $k^\prime=k(\sqrt{-\det V})$ and consider the fiberation $q:V^2\rar \Sym_2(k)$ in Section ~\ref{o2group}.

Suppose that $k$ is a local field. Let $\dd t$ be the self-dual Haar measure on $k$ and $\dd T=\dd t_1 \dd t_2 \dd t_3$ be the according measure at $T=\smalltwomatrix{t_1}{t_3}{t_3}{t_2}\in \Sym_2(k)$. Let $\dd x$ be the self-dual Haar measure on $V$ and $\dd X=\dd x_1\dd x_2$ be the according measure at $X=(x_1,x_2)\in V^2$. Recall that $\dd^\sharp h_1$ denotes the quotient measure $\frac{\dd x^\prime}{|x^\prime|}\big/\frac{\dd t}{|t|}$ on $\SO(V)\cong {k^\prime_1}^\times\cong {k^\prime}^\times/k^\times$. Write $\OO(V)={k^\prime}_1^\times\rtimes \Gal(k^\prime/k)$ and extend $\dd h_1^\sharp$ to a measure $\dd h^\sharp$ on $\OO(V)$ by giving $\Gal(k^\prime/k)$ the probability measure $1$.

\begin{lemma}\label{l_md2}
There exist $\OO(V)$-invariant measures $\dd \sigma_T$ on $\Sigma_T$ such that
\begin{equation}\label{md2}
\int_{V^2}F(X)\dd X=\int_{\Sym_2^\circ(k)} \int_{\Sigma_T}F(X)\dd \sigma_T |\det T|^{-\frac{1}{2}}\dd T,\quad F\in L^1(V^2).
\end{equation}
For $T\in q(V^{2,\circ})$, there is $\dd \sigma_T=2\dd^\sharp h$ with respect to the identification $\Sigma_T\cong \OO(V)$.
\end{lemma}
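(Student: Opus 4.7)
The plan is to prove the existence of the invariant fiber measures by a standard submersion argument and then determine the explicit constant by a direct Jacobian calculation at a convenient base point.

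For existence, identify $V\cong k^2$ by a choice of basis so that $V^2\cong M_2(k)$ with $q(X)=\leftup{t}{A}QA$ for $X$ represented by the matrix $A$ and $Q$ the Gram matrix of the bilinear form $q(\cdot,\cdot)$ on $V$. The identity $\det q(X)=\det Q\cdot (\det A)^2$ shows that $q$ restricts to a submersion on $V^{2,\circ}=\{A\in M_2:\det A\neq 0\}$. By Witt's extension theorem, each nonempty fiber $\Sigma_T$ with $T\in q(V^{2,\circ})$ is a single orbit of the diagonal $\OO(V)$-action with trivial stabilizer (since $x_1,x_2$ span $V$), so $\Sigma_T\cong \OO(V)$ as an $\OO(V)$-torsor. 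A standard descent of the self-dual measure $\dd X$, using the $\OO(V)$-equivariance of $q$ and the $\OO(V)$-invariance of $\dd X$, produces $\OO(V)$-invariant measures $\dd\sigma_T$ on $\Sigma_T$ together with a smooth weight $\rho$ on $\Sym_2^\circ$ such that $\dd X=\rho(T)\,\dd\sigma_T\,\dd T$ on $V^{2,\circ}$.

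For the weight and the explicit constant, I would compute locally at a base point $X_0\in V^{2,\circ}$ with $T_0=q(X_0)$. The differential is $dq_{X_0}(Y)=\leftup{t}{Y}QX_0+\leftup{t}{X_0}QY$; the successive substitutions $Y=X_0 Z$ and $W=T_0 Z$ convert it to the universal symmetrization $W\mapsto W+\leftup{t}{W}$ from $M_2$ to $\Sym_2$, whose Jacobian is a pure constant. Chasing the Jacobians through this chain and using $|\det X_0|^2=|\det T_0|/|\det Q|$ produces the weight $\rho(T)=|\det T|^{-1/2}$. The complementary measure on the fiber, read off the kernel of $dq_{X_0}$, is a constant multiple of the pushforward of $\dd^\sharp h_1$ along the free $\SO(V)$-action on $\Sigma_T$; I would pin the constant down in the split case with $V=k^2$, $Q=\smalltwomatrix{0}{1}{1}{0}$, $X_0=I$ (so $T_0=Q$ with $|\det T_0|=1$), where the orbit direction is parametrized by $\lambda$ via $U(\lambda)=\smalltwomatrix{\lambda}{0}{0}{\lambda^{-1}}$ and $\dd^\sharp h_1=\dd\lambda/|\lambda|$ matches directly with the coordinate read off the kernel complement.

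Since $\dd^\sharp h$ on $\OO(V)$ is defined by extending $\dd^\sharp h_1$ on $\SO(V)$ and giving $\mu_2=\Gal(k'/k)$ total mass $1$, one has $\dd^\sharp h=\tfrac{1}{2}\dd^\sharp h_1$ on each $\SO(V)$-coset; the factor of $2$ in $\dd\sigma_T=2\,\dd^\sharp h$ is exactly what compensates this halving, so the claim reduces to verifying $\dd\sigma_T = \dd^\sharp h_1$ on the identity component of $\Sigma_T\cong\OO(V)$. The main obstacle is the careful bookkeeping of all the constants --- self-dual normalizations on $V$ and on $k$, the $|\det X_0|$ factors from the successive matrix substitutions, the Jacobian of the symmetrization map, and the $\mu_2$-extension factor. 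A single explicit Jacobian computation in the split case with $X_0=I$ suffices to identify the constant, and the general statement follows by the $\OO(V)$-equivariance of both sides together with smoothness of $\rho$.
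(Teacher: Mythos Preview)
Your approach is valid but genuinely different from the paper's. The paper does not linearize $q$ via matrix Jacobians; instead it reduces (by $\GL_2$-compatibility and scaling of $q$) to the model space $(V_{k'},q_{k'})$ and works in explicit coordinates there. On the open set $\{q(x_1)\neq 0\}$ it writes $x_2=a'x_1+b'(x_1e)$, invokes the one-dimensional sphere decomposition $db'=2|2|^{-1}|r'|^{-1/2}\,dh_0\,dr'$ from Section~\ref{s_md1}(i) for the $b'$-variable, and then the two-dimensional decomposition $dx_1=d^\sharp h_1\,dt_1$ from Section~\ref{s_md1}(ii) for $x_1$; the factor $2$ emerges directly from the first of these two steps, and the measures $d^\sharp h_1$ and $dh_0$ combine into $d^\sharp h$. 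Your differential-of-$q$ argument is conceptually cleaner and avoids this iterated coordinate work, at the price of the constant bookkeeping you flag.

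There is one genuine gap. Your claim that ``a single explicit Jacobian computation in the split case suffices to identify the constant, and the general statement follows by the $\OO(V)$-equivariance of both sides together with smoothness of $\rho$'' is not justified: $\OO(V)$-equivariance acts within a fixed $V$ and says nothing about passing from the split form to the anisotropic one, and smoothness of $\rho$ on $\Sym_2^\circ$ likewise does not relate different quadratic spaces. The fiber-measure constant a priori depends on $(V,q)$ through $|\det Q|$ and through the self-dual normalization of $dx$ on $V$. The fix is easy: either carry out your Jacobian chase for an arbitrary Gram matrix $Q$---tracking that the self-dual $dx$ on $V\cong k^2$ is $|\det Q|^{1/2}$ times the coordinate measure, which is exactly what cancels the residual $|\det Q|$ from your substitutions $Y=AZ$, $W=T_0Z$---or, as the paper does, reduce once to the single model $V_{k'}=k\oplus ke$, whose coordinate computation treats the split and nonsplit cases uniformly.
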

\begin{proof}
The lemma means that with regard to the fiberation $\OO(V)\rar V^{2,\circ}\rar q(V^{2,\circ})$, there is
\begin{equation}\label{md2p}
\dd X=2|\det T|^{-\frac{1}{2}} \dd^\sharp h \dd T.
\end{equation}
Because Equality (\ref{md2p}) is compatible with the action of $\GL_2(k)$ on $V^{2,\circ}$ and $q(V^{2,\circ})$, 
it suffices that we prove (\ref{md2p}) over $\muu=\{(x_1,x_2)\in V^{2,\circ}: q(x_1)\neq 0\}$. Equality (\ref{md2p}) is also compatible with the scaling of the quadratic form $q$, whence one may suppose $(V,q)=(V_{k^\prime},q_{k^\prime})$.

Write $k^\prime=k\oplus ke$ with $\Tr(e)=0$ and $\Nm(e)=d\in \disc(k^\prime){k^\times}^2$. For $x=a+be\in k^\prime$, there is $\dd x=|2||d|^{1/2}\dd a\dd b$, where $\dd a, \dd b$ are the self-dual Haar measures on $k$. Consider the map
\[
p: \muu\rar V^\circ\times k\times k,\quad X=(x_1,x_2)\rar (x_1, t_2,t_3)=(x_1,q(x_2,x_2),q(x_1,x_2)) 
\]
Write $x_1=a_1+b_1e$, $x_2=a_2+b_2 e$. Set $y_1=x_1 e=-b_1d+a_1e$, then $x_1,y_1$ are orthongal and form a frame of $V$. It is easy to see that for $X\in p^{-1}(x_1,t_2,t_3)$, there is $x_2=a^\prime x_1+b^\prime y_1$ with
\[
a^\prime=\frac{t_3}{q(x_1,x_1)},\quad {b^\prime}^2=\frac{t_3^2-t_2q(x_1,x_1)}{dq(x_1,x_1)^2}.
\]
By changing the coordinates $(x_1,x_2)$ on $\muu$ to $(x_1,a^\prime,b^\prime)$, we have
\[
\dd X=\dd x_1\dd x_2=|2||d|^{1/2}\dd x_1 \dd a_2\dd b_2=|2||d|^{1/2}|\Nm(x_1)|\dd x_1\dd a^\prime \dd b^\prime.
\]
Set $r^\prime={b^\prime}^2$. By Section ~\ref{s_md1} (i), there is $\dd b^\prime=2 |2|^{-1}|r^\prime|^{-1/2}\dd h_0\dd r^\prime$, with $\dd h_0$ being the probability measure on $\OO(x_1^\perp)$. Hence
\[
\dd X=2|d|^{1/2}|\Nm(x_1)||r^\prime|^{-1/2}\dd x_1\dd a^\prime  \dd h_0 \dd r^\prime=2|t_3^2-t_2q(x_1,x_1)|^{-1/2}\dd x_1 \dd h_0 \dd t_2\dd t_3.
\]

Consider the fiberation $V^\circ\rar 2q(V^\circ)\subset k$, $x_1\rar t_1=q(x_1,x_1)$, for which the fibers are homeomorphic to $\SO(V)$. By Section ~\ref{s_md1} (ii), there is $\dd x_1=\dd^\sharp h_1 \dd t_1$. Therefore,
\[
\dd X=2|t_3^2-t_1t_2|^{-1/2}\dd^\sharp h_1\dd t_1 \dd h_0\dd t_2\dd t_3=2|\det T|^{-1/2}\dd^\sharp h \dd T.
\]
Here the measure $\dd^\sharp h_1$ on $\SO(V)$ is combined with $\dd h_0$ to get the measure $\dd^\sharp h$ on $\OO(V)$.
\end{proof}

Now let $k=F$ be a number field and write $K=F(\sqrt{-\det V})$. For $T\in q(V^{2,\circ})$, there is $\Sigma_{v,T}\cong \OO(V)_{F_v}$ at each local place; let $\dd h=\prod_v \dd h_v$ be the Tamagawa measure on $\OO(V)_\ba$ and write $\dd\sigma_{v,T}=2c_{K_v} \dd h_v$. The following lemma is a direct consequence of Lemma ~\ref{l_md2}.

\begin{lemma}\label{md2_r}
$c_{K_v}=L(1,\chi_{K_v/F_v})^{-1}$ for almost all $v$ and $\prod_v c_{K_v}L(1,\chi_{K_v/K_v})=c_K$
\end{lemma}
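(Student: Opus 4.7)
The plan is to match two descriptions of the Tamagawa measure on $\SO(V)_\ba \cong K_{1,\ba}^\times$ (extended by the probability measure on $\mu_2$ to give $\OO(V)_\ba$): the adelic Haar measure $\dd h = \prod_v \dd h_v$ that was used to define $c_{K_v}$, and the explicit formula $\dd h = c_K^{-1}\prod_v L(1,\chi_{K_v/F_v})\dd^\sharp h_v$ from Section~\ref{s_tmeasure}(iii). Combining the defining equation $\dd\sigma_{v,T} = 2c_{K_v}\dd h_v$ with the local equality $\dd\sigma_{v,T} = 2\dd^\sharp h_v$ of Lemma~\ref{l_md2}, one reads off $\dd h_v = c_{K_v}^{-1}\dd^\sharp h_v$. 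Thus $c_{K_v}$ is precisely the comparison constant between the local Tamagawa factor $\dd h_v$ and the intrinsic quotient measure $\dd^\sharp h_v$, and both assertions become statements about how these two normalizations differ at the local and global levels.

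For the first assertion, at any finite place $v$ that is unramified in $K/F$ with $\psi_v$ of conductor $\mo_{F_v}$, a standard volume computation of the maximal compact subgroup of $K_{v,1}^\times$ against both $\dd^\sharp h_v$ (built from the self-dual measures on $K_v$ and $F_v$) and the local Tamagawa factor $\dd h_v$ yields the normalization $\dd h_v = L(1,\chi_{K_v/F_v})\dd^\sharp h_v$. Hence $c_{K_v} = L(1,\chi_{K_v/F_v})^{-1}$ for almost all $v$, and in particular the product in the second assertion only has finitely many factors different from $1$.

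For the second assertion, I equate the two global descriptions of $\dd h$: from $\prod_v \dd h_v = c_K^{-1}\prod_v L(1,\chi_{K_v/F_v})\dd^\sharp h_v$ and $\dd h_v = c_{K_v}^{-1}\dd^\sharp h_v$, I cancel the local factors $\dd^\sharp h_v$ — rigorously by testing both sides against a decomposable positive integrable function and invoking the convergence of $\prod_v L(1,\chi_{K_v/F_v})$ guaranteed by the first assertion — to obtain $\prod_v c_{K_v}^{-1} = c_K^{-1}\prod_v L(1,\chi_{K_v/F_v})$, which rearranges to $\prod_v c_{K_v} L(1,\chi_{K_v/F_v}) = c_K$.

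The only genuinely computational step is the local volume identification at unramified places; this is a standard exercise but is the unique point where one must be careful with self-dual measure conventions and the unramified formula $L(1,\chi_{K_v/F_v}) = (1 - \chi_{K_v/F_v}(\varpi_v) q_v^{-1})^{-1}$. Everything else is formal bookkeeping on measure factors, so I would not expect any serious obstacle.
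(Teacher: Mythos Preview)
Your proposal is correct and follows essentially the same approach as the paper, which simply declares the lemma ``a direct consequence of Lemma~\ref{l_md2}.'' You have spelled out exactly the intended mechanism: the comparison $\dd\sigma_{v,T}=2\dd^\sharp h_v$ from Lemma~\ref{l_md2} against the defining relation $\dd\sigma_{v,T}=2c_{K_v}\dd h_v$ gives $\dd h_v=c_{K_v}^{-1}\dd^\sharp h_v$, and then both assertions drop out of the explicit Tamagawa decomposition $\dd h=c_K^{-1}\prod_v L(1,\chi_{K_v/F_v})\dd^\sharp h_v$ recorded in Section~\ref{s_tmeasure}(iii). The only remark is that your ``standard volume computation'' at unramified places is already packaged in the paper's Section~\ref{s_tmeasure}(iii), so you could cite that directly rather than redo it.
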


\subsubsection{The fiberation $\mv_e\rar D_0$}\label{fiberation2}

Consider the fiberation $\wtilde{q}_e: \mv_e\rar D_0$, $x\rar \bar{x}ex$ in Section ~\ref{gu1dgroup}. Recall the decomposition $D=k^\prime \oplus k^\prime e^\prime$ with $k^\prime=k\oplus ke$ and $ee^\prime=-e^\prime e$.

Suppose the $k$ is a local field. Let $\dd x$ be the self-dual Haar measure on the quadratic space $(\mv_e, \Nm)$ with respect to $\psi$ and $\dd y$ be the restricted measure on the subspace $D_0$. For $y\in \wtilde{q}_e(\mv_e^\circ)$, there is identification $\Sigma_y\cong \U(\mv_e)$ and we let $\dd h^\sharp$ be the local measure on $\U(\mv_e)$ as chosen in Section ~\ref{s_tmeasure} (iv).

\begin{lemma}\label{md2p_d}
There exist $\U(\mv_e)$-invariant measures $\dd \sigma_y$ on $\Sigma_y$ such that
\[
\int_{\mv_e}F(x)\dd x=\int_{D_0^\circ}\int_{\Sigma_y} f(x)\dd \sigma_y |\Nm(y)|^{-1/2}\dd y,\quad F\in L^1(\mv_e).
\]
For $y\in \wtilde{q}_e(\mv_e^\circ)$, there is $\dd \sigma_y=2|4\Nm(e)|^{-1}\dd h^\sharp$ with respect to the identification $\Sigma_y\cong \U(\mv_e)$.
\end{lemma}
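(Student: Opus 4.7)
The proof proceeds in the spirit of Lemma~\ref{l_md2}. Both sides of the claimed identity are $\U(\mv_e)$-invariant by construction, and both scale by the same factor $|t|^4$ under the central rescaling $x\mapsto tx$ by $k^\times\subset \GU(\mv_e)$ (using $y = \bar{x}ex \mapsto t^2 y$ together with $\dim \mv_e = 4$ and $\dim D_0 = 3$). It therefore suffices to verify the identity on the open dense subset $\mv_e^\circ\cap\{x=\alpha+\beta\ep:\alpha\in{k^\prime}^\times\}$ and to fix the constant on one representative fiber.

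Using the decomposition $D=k^\prime\oplus k^\prime\ep$ together with the commutation rule $\ep\gamma=\gamma^\iota\ep$ for $\gamma\in k^\prime$, a direct expansion of $\wtilde{q}_e(x)=\bar{x}ex$ yields
\[
\wtilde{q}_e(x)=\bigl(\Nm(\alpha)-\Nm(\ep)\Nm(\beta)\bigr)e+2\re(\alpha^\iota\beta)\,e\ep-2\Nm(e)\im(\alpha^\iota\beta)\,\ep,
\]
where we write $\alpha^\iota\beta=\re(\alpha^\iota\beta)+\im(\alpha^\iota\beta)\cdot e\in k^\prime$. The orthogonality of $\{e,\ep,e\ep\}$ in $(D_0,\Nm_{D/k})$ combined with the identity $\re(\alpha^\iota\beta)^2+\Nm(e)\im(\alpha^\iota\beta)^2=\Nm(\alpha)\Nm(\beta)$ then produces the key norm identity $\Nm(\wtilde{q}_e(x))=\Nm(e)\Nm(x)^2$, which simultaneously confirms the image description~(\ref{sm2}).

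For the Jacobian step, apply Section~\ref{s_md1} to the two fiberations $\mv_e\to k$ and $D_0^\circ\to k^\times$ by the norm map, obtaining $\dd x=|r|\,\dd\omega_r\,\dd r$ and $\dd y=|s|^{1/2}\,\dd\tau_s\,\dd s$ with $r=\Nm(x)$, $s=\Nm(y)=\Nm(e)r^2$. The substitution $\dd s=|2\Nm(e)r|\,\dd r$ reduces the lemma to verifying $\dd\omega_r=\dd h^\sharp\,\dd\tau_s$ on each level set $\{\Nm(x)=r\}$, which is established at $x_0=1$, $y_0=e$ by computing the differential $d\wtilde{q}_e(\xi)=\bar{\xi}e+e\xi$, identifying its kernel $\Lie(\U(\mv_e))=ke$, and comparing the resulting $3\times 3$ Jacobian on a complementary subspace against the self-dual normalizations on $\mv_e$ and $D_0$ and the Haar measure $\dd h^\sharp$ of Section~\ref{s_tmeasure}~(iv).

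The main obstacle is uniformly treating $D$ split versus $D$ nonsplit: when $D$ is split, $\U(\mv_e)={k_1^\prime}^\times\rtimes\mu_2$ is disconnected and the measure $\dd h^\sharp$ incorporates the probability measure on $\mu_2$; this extra component corresponds to the second preimage in the relation $\Nm(y)=\Nm(e)\Nm(x)^2$, and producing the constant $2|4\Nm(e)|^{-1}$ uniformly in both cases requires careful bookkeeping of the normalizations.
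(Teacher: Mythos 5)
Your approach is genuinely different from the paper's, and it is worth spelling out the contrast. The paper parametrizes the dense open set $\muu=\{z_1+z_2\ep\,:\,z_1,z_2\in{k'}^\times\}$ explicitly, changes coordinates to $(h_1,r_1,z_2)$ via the one--variable fibration ${k'}^\times\to\Nm({k'}^\times)$, and computes the pullback of $|\Nm(y)|^{-1/2}\dd y$ directly; the split/nonsplit dichotomy then falls out of whether the final parametrizing map $\wtilde p$ of $D_0^\circ$ is injective or two--to--one, matching exactly the two normalizations of $\dd h^\sharp$ in Section~\ref{s_tmeasure}(iv). You instead want to compare the two norm fibrations $\mv_e\to k$ and $D_0\to k$ via the intertwining $\Nm(\wtilde q_e(x))=\Nm(e)\Nm(x)^2$ and fix a single Jacobian at $(1,e)$. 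Your expansion of $\bar xex$ in the basis $\{e,\ep,e\ep\}$, the norm identity, and the identification $\ker d\wtilde q_e|_1=ke=\Lie(\U(\mv_e))$ are all correct, and the norm identity is a genuinely nice shortcut that the paper does not use.

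There is, however, a real gap in the reduction step. Left $\U(\mv_e)$--invariance together with the central $k^\times$--rescaling only moves $y$ within the one--parameter family $\{t^2y\}$ inside the three--dimensional image $\wtilde q_e(\mv_e^\circ)$, so these invariances do not pin the candidate density $c(y)$ in $\dd x=c(y)|\Nm(y)|^{-1/2}\dd h^\sharp\dd y$ down to a constant. What actually makes ``fix the constant on one fiber'' legitimate is the right $D^\times$--action $x\mapsto xg$, which is transitive on $\mv_e^\circ=D^\times$, scales $\dd x$ by $|\Nm(g)|^2$, sends $y\mapsto\Nm(g)g^{-1}yg$, and preserves $|\Nm(y)|^{-1/2}\dd y$ up to the same $|\Nm(g)|^2$; together with the full $\GU(\mv_e)$ (including the ${k'}^\times\ep$ component, which is needed in the split case to reach the whole of $\wtilde q_e(\mv_e^\circ)$) this is the analogue of the $\GL_2(k)$--equivariance that Lemma~\ref{l_md2} invokes explicitly. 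You have not stated this symmetry, and without it the reduction is incomplete.

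Finally, the step you call ``verifying $\dd\omega_r=\dd h^\sharp\,\dd\tau_s$'' conflates exactly the content of the lemma into one deferred calculation, and the target is already off: inserting $\dd s=|2\Nm(e)r|\dd r$ into the desired conclusion $\dd x=2|4\Nm(e)|^{-1}\dd h^\sharp\,\dd\tau_s\,\dd s$ leaves $\dd\omega_r=2|2|^{-1}\dd h^\sharp\dd\tau_s$, not $\dd\omega_r=\dd h^\sharp\dd\tau_s$. Moreover $\wtilde q_e|_{S_r}$ need not surject onto the full level set $\{\Nm(y)=s\}$ (the two preimage spheres $S_{\pm r}$ share it), and the factor of $2$ in $\dd\sigma_y$ ultimately has to be reconciled with $\dd h^\sharp=\tfrac12\dd^\sharp h_1$ in the nonsplit case versus $\dd h^\sharp=\dd^\sharp h_1\dd h_0$ in the split case. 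You flag the split/nonsplit uniformity as a remaining obstacle, but it is precisely there, together with the Jacobian bookkeeping, that the lemma lives; the paper's explicit coordinate change on $\muu$ carries out that bookkeeping, whereas the proposal as written postpones it.
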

\begin{proof}
Put $e^2=-\delta$ and $\ep^2=\delta^\prime$. Write elements in $\mv_e$ as $x=z_1+z_2e^\prime$ with $z_1, z_2\in k^\prime$, then $\dd x= |\delta^\prime|\dd z_1 \dd z_2$; write elements in $D_0$ as $y=te+zee^\prime$ with $t\in k$ and $z\in k^\prime$, then $\dd y=|\delta|^{3/2}|\delta^\prime| \dd t\dd z$. Accordingly, the map $\wtilde{q}_e(x)=\bar{x}ex$ is expressed as
\[
\wtilde{q}_e: \mv_e\rar D_0,\quad x=(z_1,z_2)\rar y=(t,z)=\big(\Nm(z_1)+\delta^\prime \Nm(z_2), 2\bar{z}_1z_2\big).
\]
It suffices that we verify the measure decomposition $\dd x=2\dd h^\sharp \dd y$ over the open subset $\muu=\{z_1+z_2e^\prime\in \mv_e^\circ: z_1,z_2\in{k^\prime}^\times\}$.

Over $\muu$, we change coordinates and write $x=z_1+z_1z_2e^\prime$; ffurthermoe, consider the fiberation ${k^\prime}^\times \rar \Nm({k^\prime}^\times)\subset k$, $z_1\rar r_1=\Nm(z_1)$, for which the fibers are homeomorphic to ${k_1^\prime}^\times$, then $\dd x=|\delta^\prime||\Nm(z_1)|\dd z_1 \dd z_2=|\delta^\prime||r_1|\dd^\sharp h_1\dd r_1 \dd z_2$, where $\dd^\sharp h_1$ is the natural quotient measure on ${k_1^\prime}^\times$. On the other hand, the smooth map $\wtilde{q}_e|_{U}$ is given by
\[
x=(h_1, r_1, z_2)\rar y=(t,z)=(r_1(1+\delta^\prime \Nm(z_2)),2r_1z_2).
\]
The differential form $|\Nm(y)|^{-1/2}\dd y$ is pulled back to $|4\delta\delta^\prime||r_1|\dd r_1\dd z_2$.

$\wtilde{q}_e|_{U}$ factors through $\wtilde{p}: \Nm({k^\prime}^\times)\times {k^\prime}^\times\rar D_0^\circ$, $(r_1,z_2)\rar y=(r_1(1+\delta^\prime \Nm(z_2)),2r_1z_2)$. When $\delta\not\in \Nm({k^\prime}^\times)$, $\wtilde{p}$ is injective and $D$ is nonsplit, whence $(h_1,y)$ can be used as coordinates of $x$ and $\dd x=|4\delta|^{-1}\dd^\sharp h_1 |\Nm(y)|^{-1/2}\dd y$. Therefore, $\dd\sigma_y=|4\delta|^{-1}\dd^\sharp h_1=2|4\delta|^{-1}\dd^\sharp h$. When $\delta\in \Nm({k^\prime}^\times)$, $D$ is split and each nonempty fiber of $\wtilde{p}$ is a two-point set: if $(r_1,z_2)$ is on a fiber, then $(r_1\delta^\prime\Nm(z_2),\frac{1}{\delta^\prime \bar{z}_2})$ is also on the fiber. We identify the nontempty fibers with $\mu_2$, then $|r_1|\dd r_1 \dd z_2=2|4\delta\delta^\prime|^{-1}\dd h_0\dd y$, where $\dd h_0$ is the probability measure on $\mu_2$. It follows that $\dd x=2|4\delta|^{-1}\dd^\sharp h_1\dd h_0 |\Nm(y)|^{-1/2}\dd y$, whence $\dd\sigma_y=2|4\delta|^{-1}\dd^\sharp h_1\dd h_0=2|4\delta|^{-1}\dd^\sharp h$.
\end{proof}

\begin{remark} 
Let $k=F$ be a number field and write $K=F\oplus Fe$. For $y\in \wtilde{q}_e(\mv_e^\circ)$, there is $\Sigma_{v,y}\cong \U(\mv_e)$ at every place; let $\dd h=\prod_v \dd h_v$ be the Tamagawa measure on $\U(\mv_e)_\ba$ as chosen in Section ~\ref{s_tmeasure} and write $\dd \sigma_{v,y}=2c_{K_v}\dd h_v$. Lemma ~\ref{md2p_d} implies that the constants $c_{K_v}$ satisfy the assertions in Lemma ~\ref{md2_r}.
\end{remark}

\subsection{Theta correspondence}

We review the notion of theta correspondence for the dual pair $(\Sp_{2n},\OO(V))$.

\subsubsection{Local theta correspondence}\label{ltc}

Suppose that $k$ is a local field of characteristic zero. We briefly recall the notation of abstract Howe duality (c.f. \cite{roberts96}). Let $G$ and $H$ be reductive groups over $k$ and $\rho$ be a smooth representation of $G\times H$. For any smooth irreducible representation $\tau$ of $G$ (resp. $H$), the maximal $\tau$-isotypic component of $\rho$ has the form $\pi\otimes \theta_0(\pi)$ for some smooth representation $\theta_0(\pi)$ of $H$ (resp. $G$). We say that the Howe duality holds for the triplet $(\rho, G,H)$ if the maximal semisimple quotient $\theta(\pi)$ of $\theta_0(\pi)$ is irreducible whenever $\theta_0(\pi)$ is nonzero. 

Classical Howe duality concerns the situation that $\rho$ is the Weil representation $\omega$. Let $\wtilde{\Sp}_{2n}(k)$ be the $2$-fold metaplectic cover of $\Sp_{2n}(k)$. Let $(V,q)$ be a quadratic space over $F$ of dimension $m$ and choose a non-trivial additive character $\psi$ of $k$. There is a Weil representation $\omega:=\omega_{\psi,V}$ of $\OO(V)\times \wtilde{\Sp_n}(F)$ on $\ms(V^n)$, the space of Bruhat-Schwartz functions on $V^n$:
\begin{align*}
&\omega(h)\phi(X)=\phi(h^{-1}X),\quad h\in \OO(V),\\
&\omega\big(\smalltwomatrix{1_n}{N}{}{1_n},\epsilon\big)\phi(X)=\epsilon^m \psi\big(2^{-1}\Tr(Nq(X))\big)\phi(X),\quad N\in \Sym_{n\times n}(F),\\
&\omega\big(\smalltwomatrix{A}{}{}{\leftup{t}{A^{-1}}},\epsilon\big)\phi(X)=\epsilon^m\chi_{\psi,V}(\det A)|\det A|^{\frac{m}{2}}\phi(XA),\quad A\in \GL_n(F),\\
&\omega(\smalltwomatrix{}{1_n}{-1_n}{},\epsilon)\phi(X)=\epsilon^m \gamma(\psi, V)\int_{V^n(F)}\phi(Y)\phi(q(X,Y))\dd Y.
\end{align*}
Here $X\in V^n$ is written as a row vector $(x_1,\cdots, x_n)$ with $x_i\in V$, $\gamma(\psi,V)$ is a constant of norm $1$ and $\chi_{\psi,V}:F^\times \rar S^1$ is a function satisfying $\chi_{\psi,V}(a_1a_2)=\chi_{\psi,V}(a_1)\chi_{\psi,V}(a_2)<a_1,a_2>^m$. We use the notions $\omega_\psi, \omega_V, \omega_{\psi,V}$ to denote the Weil representations, depending on the context.

The Howe duality for the triplet $(\omega,\wtilde{\Sp}_{2n},\OO(V))$ holds in general when $k$ is archimedian and when $k$ is nonarchimedean but not dyadic. For $n=1$ with $m\leq 5$ and for $n=2$ with $m\leq 2$, the Howe duality is also known for dyadic fields. 

\begin{remark}
By Conservation Principle, the Howe duality for the pair $(\wtilde{\SL}_2,\OO_m)$ is actually for the pair $(\wtilde{\Sp}_2,\SO_m)$ when $m\geq 3$. 
\end{remark}

When $m$ is even, the action of $\wtilde{\Sp}_{2n}$ via $\omega$ factors through $\Sp_{2n}$ and the Howe duality is actually between $(\Sp_{2n}, \OO(V))$. Furthermore, let $\nu$ denote the similitude characters of $\GSp_{2n}$, $\GO(V)$ and define $\GSp_{2n}^+=\{g\in \GSp_{2n}: \nu(g)\in \nu(\GO(V))\}$. The Weil representation can naturally extend to the group $R:=\{(g,h)\in \GSp_{2n}^+\times \GO(V): \nu(h)=\nu(g)\}$,
\begin{equation}\label{wr_s}
\omega(g,h)\phi(X)=|\nu(h)|^{-\frac{mn}{4}}\omega_{\psi,V}\big(g\smalltwomatrix{1}{}{}{\nu(g)^{-1}}\big)\phi(h^{-1}X),\quad (g,h)\in R.
\end{equation}
Put $\Omega^+=\Ind_R^{\GSp_{2n}^+\times \GO(V)}\omega$. By \cite{roberts96}, the Howe duality for $(\Omega^+, \GSp_{2n}^+,\GO(V))$ follows from the Howe duality for $(\omega, \GSp_{2n},\OO(V))$. Specifically, the Howe duality for $(\GSp_4^+,\GO_2)$ holds.

\subsubsection{Global theta lifting}

Suppose that $k=F$ is a number field. Let $\omega:=\omega_{\psi,V}=\otimes \omega_{\psi_v,V}$ be the Weil representation of $\wtilde{\Sp}_{2n}(\ba)\times \OO(V)_\ba$ on $\ms(V^n_\ba)$, the space of Bruhat-Schwartz functions on $V^n_\ba$. For $\phi\in \ms(V_\ba^n)$, the associated theta kernel function
\[
\theta_\phi(g,h)=\sum_{\xi\in V^n(F)}\omega_{\psi,V}(g,h)\phi(\xi).
\]
is a slowly increasing function on $[\wtilde{\Sp}_{2n}]\times [\OO(V)]$. When $m$ is even, $\omega$ can be extended to $R(\ba)=\GSp_{2n}^+(\ba)\times \OO(V)_\ba$ and the function $\theta_\phi$ is defined on $[R]$.

For rapidly decreasing automorphic forms $\varphi(g)$ on $[\wtilde{\Sp}_{2n}]$ and $F(h)$ on $[\OO(V)]$, we generally define their global theta lifts via $\phi$ by
\[
\Theta_\phi(\varphi)(h)=\int_{[\Sp_{2n}]}\overline{\varphi(g)}\Theta_\phi(g,h)\dd g,\quad \Theta_\phi(F)(h)=\int_{[\OO(V)]}\overline{F(h)}\Theta_\phi(g,h)\dd h.
\]
Specifically, for the pair $(\wtilde{\SL}_2,\OO(V))$ with $m=3$ or $5$, we lift a rapidly decreasing form $F(h)$ on $[\SO(V)]$ to $\wtilde{\SL}_2(\ba)$ by setting $\Theta_\phi(F)(h)=\int_{[\SO(V)]}\overline{F(h)}\Theta_\phi(g,h)\dd h$.

\section{Regularized Local Bessel Period Integral}

Let $k$ be a local field of characteristic zero and $(V,q)$ be a quadratic space over $k$ of dimension $m\geq 3$ and positive Witt index. Recall from Section ~\ref{bgroup} the decomposition of $V$
\[
V=kv_+\oplus U\oplus kv_-=kv_+\oplus (kv_0\oplus V_0)\oplus kv_-,
\]
the Bessel subgroup $\mrr=U\rtimes \SO(V_0)$, and the parabolic subgroup $P=U\rtimes M$ stabilizing $v_0$.

Let $\Pi$ and $\Pi^\prime$ be irreducible unitary representation of $\SO(V)$ and $\SO(V_0)$ respectively. Let $\psi$ a be nontrivial character of $k$ and $\psi_{v_0}(u)=\psi(q(v_0,u))$ be the associated character on $U$. For $f_1,f_2\in \Pi$ and $f_1^\prime,f_2^\prime\in \Pi^\prime$, the local Bessel period functional on $\mrr$ is formally defined as
\[
\mpp(f_1,f_2,f_1^\prime,f_2^\prime)=\int_{U\rtimes \SO(V_0)} (\Pi(uh)f_1,f_2)(\Pi^\prime(h)f_1^\prime,f_2^\prime)\psi_{v_0}(u)\dd u \dd h.
\]
We describe in this section a method to regularize the above integral and condense the expected properties into Conjecture ~\ref{conj_lbp}.

\subsection{Tempered distributions}\label{subsec_td}

\subsubsection{Definition} The space of Bruhat-Schwartz function $\ms(k^m)$ consists of rapidly decreasing functions when $k$ is archimedean and of locally constant compactly supported functions when $k$ is nonarchimedean. It carries a standard complete topology when $k$ is archimedean.

Let $\ms^\prime(k^m)$ denote the space of continuous linear maps $T:\ms(k^m)\rar \bc$ when $k$ is archimedean and the space of linear maps when $k$ is nonarchimedean. It is called the space of tempered distributions on $k^m$. (When $k$ is archimedean, this is the classical notion.)

A functional $T\in \ms^\prime(k^m)$ is said to be represented over an open subset $U\subset k^m$ by a function $f\in L^1_{loc}(U)$ if $T(\phi)=\int_{U}f(X)\phi(X)dX$ for all $\phi\in C^\infty_c(U)$. When $f$ is required to be continuous, it is unique if it exists.

\subsubsection{Application to quadratic spaces}

The Fourier transform on $L^1(V)$ is extended to $\ms^\prime(V)$ by setting $\mf T:=T\circ \mf$. 
For $\Psi\in C^\infty_c(k^\times \times \SO(V))$ and $\phi\in \ms(V)$, $T\in \ms^\prime(V)$, set
\begin{align*}
&\Psi\diamond \phi(X)=\int_{k^\times \times \SO(V)}\Psi(a,h)\phi(a^{-1}h^{-1}X)\dd^\times a\dd h,\\
&\Psi\diamond T(\phi)=T(\Psi^\vee\diamond \phi),\quad \text{where}\quad \Psi^\vee(a,h)=\Psi(a^{-1},h^{-1}).
\end{align*}
\begin{proposition}\label{reg_V}
$\Psi\diamond T$ is represented over $V^\circ$ by a smooth function.
\end{proposition}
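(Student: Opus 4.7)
The strategy is to exploit the fact that $G := k^\times \times \SO(V)$ acts on $V$ via $(a,h)\cdot x = ahx$ locally transitively on $V^\circ$. Concretely, the differential of the orbit map $G \to V$, $(a,h) \mapsto ahx_0$, at $(1,e)$ sends $(\alpha, X) \in k \oplus \mathfrak{so}(V)$ to $\alpha x_0 + Xx_0$; since $\mathfrak{so}(V)\cdot x_0 = x_0^\perp$ whenever $q(x_0) \neq 0$, this image is $kx_0 \oplus x_0^\perp = V$, so the map is a submersion at every preimage of $V^\circ$. Hence the $G$-orbits meeting $V^\circ$ are open subsets of $V$, a property that will let me smooth $T$ over $V^\circ$ using only $\Psi \in C_c^\infty(G)$.

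Using the submersion, I realize $\phi \mapsto \Psi^\vee \diamond \phi$, restricted to $C_c^\infty(V^\circ)$, as an integral operator with smooth, compactly supported Schwartz kernel on $V^\circ \times V^\circ$. Fixing $x \in V^\circ$, the map $\pi_x: G \to V$, $\pi_x(a,h) = ahx$, is a submersion onto an open neighborhood of $x$, and its fibers, of dimension $\dim G - \dim V = \binom{m-1}{2}$, carry intrinsic quotient measures relative to $d^\times a\, dh$ and the self-dual measure $dZ$ on $V$. Pushing $\Psi$ forward along $\pi_x$ yields a function $K_\Psi(x,\cdot) \in C_c^\infty(V^\circ)$, and a change of variables gives
\[
\Psi^\vee \diamond \phi(x) \;=\; \int_{V^\circ} K_\Psi(x, Z)\, \phi(Z)\, dZ, \qquad \phi \in C_c^\infty(V^\circ).
\]
The submersion property, now varying $x$ as well, together with the smoothness of $\Psi$, yields the joint smoothness of $K_\Psi$ on $V^\circ \times V^\circ$, with $(x,Z)$-support controlled by $\supp \Psi$.

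To finish, I apply $T$. The family $\{K_\Psi(\cdot, Z)\}_{Z \in \supp\phi}$ is a smoothly varying, uniformly compactly supported family of test functions in $\ms(V)$, so the continuity of $T \in \ms^\prime(V)$ permits swapping $T$ with $\int dZ$:
\[
\Psi \diamond T(\phi) \;=\; T\bigl(\Psi^\vee \diamond \phi\bigr) \;=\; \int_{V^\circ} \phi(Z)\, T\bigl(K_\Psi(\cdot, Z)\bigr)\, dZ.
\]
This exhibits $\Psi \diamond T$ on $V^\circ$ as the function $f(Z) := T(K_\Psi(\cdot, Z))$, smooth in $Z$ by the continuity of $T$ applied to the smoothly varying family.

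The main technical obstacle is the construction and joint smoothness of the kernel $K_\Psi$, together with the justification of the swap of $T$ and the $Z$-integration. In the archimedean case this calls for local coordinates on $G$ adapted to the submersion $\pi_x$ and for uniform Fr\'echet-seminorm estimates on the family $K_\Psi(\cdot, Z)$ as $Z$ varies in compacta of $V^\circ$. In the non-archimedean case a partition of unity on $G$ combined with the local constancy of $\Psi$ reduces $K_\Psi(x,\cdot)$ to a finite sum of elementary kernels, and the continuity of $T$ then holds automatically.
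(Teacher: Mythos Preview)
Your proposal is correct and follows essentially the same route as the paper: both exploit that the orbit map $k^\times \times \SO(V) \to V^\circ$ is a submersion (the paper invokes the implicit function theorem, you invoke fiber integration), produce a jointly smooth compactly supported kernel so that $\Psi^\vee\diamond\phi$ is given by integration against it, and then swap $T$ with the parameter integral using continuity (archimedean) or local constancy (nonarchimedean). The only difference is organizational: the paper works first with integrals over the group $\tilde H$ using explicit local sections and partitions of unity and converts to $V^\circ$ at the end, whereas you package the same computation directly as a kernel $K_\Psi(x,Z)$ on $V^\circ\times V^\circ$; your formulation is a bit cleaner but proves the same thing by the same mechanism.
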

\begin{proof}
Write $\wtilde{H}=k^\times \times \SO(V)$; for $\tilde{h}=(a,h)$, set $|\tilde{h}|=|a|^m$. It suffices to find an open cover of $V^\circ$ so that $\Psi\diamond T$ is represented over each member of the cover by a smooth function.

(1) $k$ is nonarchimedean. $\Psi$ is left invariant under certain compact open subgroup $W$ of $\wtilde{H}$. Write $\mathrm{Supp}\Psi=\sqcup_{i=1}^\ell W\tilde{h}_i$ and $V^\circ=\sqcup_j Wv_j$ as the disjoint union of $W$-orbits. Observe that each $Wv_j$ is open in $V^\circ$ and they form an open cover of $V^\circ$. By definition,
\[
\Psi^\vee \diamond \phi(v)
=\int_{k^\times \times \SO(V)} \Psi(\tilde{h})\phi(\tilde{h}v)d\tilde{h}
=\sum_{i=1}^\ell \Psi(\tilde{h}_i)\int_W \phi(w\tilde{h}_iv)dw
\]
For $\phi\in C^\infty_c(Wv_j)$, there is $\int_W \phi(w\tilde{h}_iv)dw=\big[\int_W \phi(wv_j)dw\big]\cdot 1_{\tilde{h}_i^{-1}Wv_j}(v)$, whence
\[
\Psi\diamond T(\phi)=T(\Psi^\vee \diamond \phi)=\big[\sum_{i=1}^\ell \Psi(\tilde{h}_i)T(1_{\tilde{h_i}^{-1}Wv_j})\big]\int_W \phi(wv_j)dv
\]
Therefore $\Psi\diamond T$ is represented over $Wv_j$ by the constant $\frac{\mathrm{Vol}(W)}{\mathrm{Vol}(Wv_j)}\sum_{i=1}^\ell \Psi(\tilde{h}_i)T(1_{\tilde{h_i}^{-1}Wv_j})$.

(2) $k$ is archimedean. Given $v_0\in V^\circ$, we will find an open neighborhood $\muu_0$ of $v_0$ in $V^\circ$ such that $\Psi\diamond T$ is represented over $U_0$ by a smooth function.

Let $V_0^\prime$ be the orthogonal complement of $v_0$ in $V$. Observe that the orbit map $\wtilde{H}\rar V^\circ$, $\wtilde{h}\rar \tilde{h}v_0$ has a Jacobian of rank $m$ at the identity element of $\wtilde{H}$. By implicit function theorem, there is an open neighborhood $\muu$ of $v_0$ in $V^\circ$, an open neighborhood $\muu^\prime$ of $1_{V_0^\prime}$ in $\SO(V_0^\prime)$, and a smooth map $\varphi:\muu\times W^\prime \rar \tilde{H}$ such that (i) $W:=\varphi(\muu\times W^\prime)$ is open in $\wtilde{H}$ and $\varphi:\muu\times W^\prime \rar W$ is a diffeomorphism, (ii) $\varphi(v,h^\prime)v_0=v$ for $u\in \muu, h^\prime\in W^\prime$. Choose an open neighborhood $\muu_0$ of $v_0$ in $\muu$ and an open neighborhood $W_0$ of $1_{\tilde{H}}$ in $W$ such that the closure of $W_0^{-1}\muu_0$ is contained in $\muu$.

We first show that there exists a function $\eta\in C^\infty_c(\tilde{H})$ such that $\Psi\diamond T(\phi)=\int_{\tilde{H}}\eta(\tilde{h})\phi(\tilde{h}v_0)d\tilde{h}$ for $\phi\in C^\infty_c(\muu_0)$. Using partition of unity, one can write $\Psi=\sum_{i=1}^\ell \Psi_i$, with each $\Psi_i$ being smooth and supported in $W_0\tilde{h}_i$ for some $\tilde{h}_i$. Suppose $\phi\in C^\infty_c(\muu_0)$, then $\Psi_i^\vee \diamond \phi(v)=\int_{\tilde{H}} \Psi_i(w\tilde{h}_i)\phi(w\tilde{h}_iv)dw$ is supported in $\tilde{h}_i^{-1}W_0^{-1}\muu_0\subset \tilde{h}_i^{-1}\muu_1$. Let $\lambda(v)$ be a smooth function that takes value $1$ on $W_0^{-1}\muu_0$ and vanishes outside $U$; set $\varphi_1(v)=\varphi(1_{V^\prime_0}, v)$ for $v\in U$, then
\begin{align*}
\Psi_i^\vee \diamond \phi(v)=& \lambda(\tilde{h}_iv)\int_{\tilde{H}}\Psi_i(w\tilde{h}_i)\phi(w\tilde{h}_iv)dw\\
=&\lambda (\tilde{h}_iv)\int_{\tilde{H}}\Psi_i(w\tilde{h}_i)\phi\big(w \varphi_1(\tilde{h}_iv)v_0\big)dw\\
=&\int_{\tilde{H}}\lambda (\tilde{h}_iv)\Psi_i(w[\varphi_1(\tilde{h}_iv)]^{-1}\tilde{h}_i)\phi\big(w v_0\big)dw.
\end{align*}
Set $\Phi_i(w,v)=\lambda (\tilde{h}_iv)\Psi_i(w[\varphi_1(\tilde{h}_iv)]^{-1}\tilde{h}_i) \in C^\infty(\tilde{H}\times V^\circ)$, then $\Psi_i^\vee \diamond \phi=\int_{\tilde{H}}\Phi_i(\tilde{h},v)\phi(\tilde{h}v_0)d\tilde{h_0}$. By the continuity of $T$ on $\ms(V)$ or by the Schwartz representation theorem for tempered distributions, one gets
\[
\Psi_i\diamond T(\phi)=T(\Psi_i^\vee\diamond \phi)=T\big(\int_{\tilde{H}}\Phi_i(\tilde{h},v)\phi(\tilde{h}v_0)d\tilde{h}\big)=\int_{\tilde{H}}\eta_i(w)\phi(wv_0)dw,
\]
where $\eta_i(\tilde{h})=T(\Phi_i(\tilde{h},\cdot))\in C^\infty_c(\tilde{H})$. Hence the function $\eta=\sum_{i=1}^\ell \eta_i$ meets the requirement.

We next show the existence of a function $\phi_0\in C^\infty(\muu_0)$ such that $\Psi\diamond T(\phi)=\int_{V^\circ}\phi_0(v)\phi(v)$ for $\phi\in C^\infty_c(\muu_0)$. Consider an open subset $W_1=\varphi(\muu_1\times W_1^\prime)$ of $W$, with $\muu_1, W_1^\prime$ being open in $V^\circ, \SO(V^\prime_0)$ respectively and $\overline{\muu_1}\subset U$, $\overline{W_1^\prime}\subset W^\prime$. Let $\lambda_1\in C^\infty_c(U), \lambda_2\in C^\infty_c(W^\prime)$ be functions such that $\lambda_1|_{\muu_1}=1$ and $\lambda_2|_{W_1^\prime}=1$. With respect to the diffeomorphism $\varphi:U\times W^\prime \rar W$, there is a smooth non-zero function $J(v,h^\prime)$ such that the Haar measure $d\tilde{h}$ is pulled back to be $J(v,h^\prime)\dd v\dd h^\prime$.

Using partition of unity, one can write $\eta=\sum_{j=1}^{\ell^\prime} \wtilde{\eta}_j$ as a finite sum, where each $\wtilde{\eta}_j\in C^\infty_c(\tilde{H})$ is supported in $t_jW_1$ for certain $t_j$. So $\Psi\diamond T(\phi)=\sum_j \int_{\tilde{H}} \wtilde{\eta}_j(\tilde{h})\phi(\tilde{h}v_0)d\tilde{h}$. Observes that
\begin{align*}
\int_{\tilde{H}}\wtilde{\eta}_j(\tilde{h})\phi(\tilde{h}v_0)d\tilde{h}=&\int_{W_1}\wtilde{\eta}_j(t_j\tilde{h})\phi(t_j\tilde{h}v_0)d\tilde{h}\\
=&\int_{V^\circ \times W^\prime}\lambda_1(v)\lambda_2(h^\prime)\wtilde{\eta}_j(t_j\varphi(v,h^\prime))\phi(t_j v)dvdh^\prime,
\end{align*}
where one makes a change of variable $\wtilde{h}=\varphi(v,h^\prime)$ at the second equality and uses the fact $\varphi(v,u^\prime)v_0=v$. Note that $\tilde{\phi}_j(v,h^\prime):=\lambda_1(v)\lambda_2(h^\prime)\wtilde{\eta}_j(t_j\varphi(v,h^\prime))\in C^\infty_c(V^\circ\times W^\prime)$. Hence
\[
\int_{\tilde{H}}\wtilde{\eta}_j(\tilde{h})\phi(\tilde{h}v_0)d\tilde{h}=\int_{V^\circ}\phi_j(v)\phi(v)dv,
\]
with $\phi_j(v)=\int_{W^\prime}\tilde{\phi}_j(t_j^{-1}v,h^\prime)|t_j|^{-1}dh^\prime \in C^\infty_c(V^\circ)$. $\phi_0=\sum_{j=1}^{\ell^\prime} \phi_j$ meets the requirement.
\end{proof}

\subsection{Local height function on $\OO(V)$}\label{subsec_lhf}

Let $(\wtilde{V},\wtilde{q})=(V,q)\oplus (V,-q)$ be the doubled quadratic space and consider the natural embedding $i: \OO(V)\times \OO(V)\hrar \OO(\wtilde{V})$. Let $\wtilde{P}$ be the parabolic subgroup of $\OO(\wtilde{V})$ stabilizing the isotropic subspace $V^d:=\{(v,v):v\in V\}$. Let $K$ and $\wtilde{K}$ be maximal compact subgroups of $\OO(V)$ and $\OO(\wtilde{V})$ respectively such that $i(K\times K)\subset \wtilde{K}$. Let $\Ind_{\wtilde{P}}^{\OO(\wtilde{V})}|\cdot|^s$ denote the representation of $\OO(\wtilde{V})$ (unitarily) induced from the quasi-character $\tilde{p}\rar |\det \tilde{p}|_{V^d}|^s$. Write the modulus character of $\wtilde{P}$ as $\delta_{\wtilde{P}}(\tilde{p})=|\det \tilde{p}|_{V^d}|^{2\rho_{\wtilde{P}}}$.

\begin{definition}\cite{qiu14}
Choose a spherical section $f_{s}\in \Ind_{\wtilde{P}}^{\OO(\wtilde{V})}|\cdot|^s$ with value $1$ on $\wtilde{K}$. Define
\[
\Delta_{\OO(V)}(h)=f_{-\rho_{\wtilde{P}}+1}(h,1).
\]
\end{definition}
The local  height function $\Delta_{\OO(V)}(h)$ is biinvariant by $K$ and tends to zero when $h\rar \infty$. When the group is apparent, we omit the subscript $\OO(V)$.

Now suppose $\dim V_0=2$ and we explicitly describe the height functionon on $\SO(V_0)$. When $V_0$ is anisotropic, $\Delta(h)$ takes constant value $1$ on $\SO(V_0)$. When $V_0$ is isotropic, there is $\SO(V_0)\cong k^\times$ and, for $a\in k^\times$,
\begin{equation*}
\Delta(a)=\begin{cases}
\min\{|a|,|a|^{-1}\}, &\text{$k$ is nonarchimedean},\\
\big[(1+|a|^2)(1+|a|^{-2})\big]^{-1/2}, &\text{$k$ is archimedean}.
\end{cases}
\end{equation*}

\subsection{Local Bessel period integral}\label{subsec_lbp}

We consider an iterated integral with parameter $s$:
\begin{equation}\label{lbpi_s}
\mpp(s;f_1,f_2,f_1^\prime,f_2^\prime)=\int_{\SO(V_0)}\big[\int_U (\Pi(uh)f_1,f_2)\psi_{v_0}(u)du\big] (\Pi^\prime(h)f_1^\prime,f_2^\prime)\Delta(h)^sdh,
\end{equation}
where $f_1, f_2\in \Pi$, $f_1^\prime, f_2^\prime\in \Pi^\prime$ and $\Delta(h)$ is the local height function on $\SO(V_0)$. The inner $U$-integral is interpreted according to Definition ~\ref{def_ui} below.

\subsubsection{The $U$-integration}\label{ss_uintegration}

Because $\Pi$ is unitary, the matrix coefficient $(\Pi(h)f_1,f_2)$ is bounded and smooth on $\SO(V)$, whence the associated functional $T_{(\Pi(u)f_1,f_2)}$ on $\ms(U)$ belongs to $\ms^\prime(V)$,
\[
T_{(\Pi(u)f_1,f_2)}(\phi):=\int_U (\Pi(u)f_1,f_2)\phi(u\dd u.
\]
The integral $\int_U (\Pi(u)f_1,f_2)\psi_{v_0}(u)du$ is the formal expression for the value of $\mf (\Pi(u)f_1,f_2)$ at $v_0$. We shall interpret $\mf (\Pi(u)f_1,f_2)$ from the view of $\mf T _{(\Pi(u)f_1,f_2)}$.

\begin{proposition}
$\mf T_{(\Pi(u)f_1,f_2)}$ is represented over $U^\circ$ by a unique smooth function $W_{f_1,f_2,\psi}$.
\end{proposition}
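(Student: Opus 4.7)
The plan is to reduce the claim to Proposition~\ref{reg_V} applied to $U$ regarded as a quadratic space in its own right. The key geometric observation is that the conjugation action of the Levi $M\cong\GL_1\times\SO(U)$ of $P$ on the abelian unipotent radical $U$ coincides with its natural linear action: a direct computation from the explicit embedding of $U$ into $\SO(V)$ (Section~\ref{bgroup}) shows that $(a,g)\in M$ conjugates the unipotent element associated to $u\in U$ to the one associated to $agu$. Thus $M$ realises inside $\SO(V)$ precisely the smoothing group $k^\times\times\SO(U)$ appearing in Proposition~\ref{reg_V}.

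The first step is to produce $\Psi\in C^\infty_c(M)$ with $\Psi\diamond T_{\phi}=T_{\phi}$, where $\phi(u):=(\Pi(u)f_1,f_2)$. Using unitarity and the identity $\Pi(m\cdot u)=\Pi(m)\Pi(u)\Pi(m)^{-1}$ for $m\in M$, one obtains
\[
\phi(m\cdot u)=(\Pi(u)\Pi(m)^{-1}f_1,\Pi(m)^{-1}f_2).
\]
In the non-archimedean case, take a compact open subgroup $K_0\subset\SO(V)$ fixing $f_1,f_2$ and set $\Psi:=\Vol(K_0\cap M)^{-1}\cdot 1_{K_0\cap M}$; the displayed identity gives $\phi(m\cdot u)=\phi(u)$ for $m\in K_0\cap M$, so $T_\phi$ is $(K_0\cap M)$-invariant and hence $\Psi\diamond T_\phi=T_\phi$. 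In the archimedean case, the same identity shows that $\phi$ depends on $(f_1,f_2)$ only through a diagonal $M$-action; applying Dixmier--Malliavin to the restriction to $M$ of the smooth representation on $\Pi\otimes\overline\Pi$ writes $f_1\otimes\overline{f_2}$ as a finite sum $\sum_i(\Pi\otimes\overline\Pi)(\Psi_i)(f_{1,i}\otimes\overline{f_{2,i}})$ with $\Psi_i\in C^\infty_c(M)$, which yields a decomposition $T_\phi=\sum_i\Psi_i\diamond T_{\phi_i}$ for corresponding matrix-coefficient distributions.

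Finally, pass to Fourier transforms. Since $\mf$ intertwines the $M$-action on $U$ with its contragredient on $\hat U\cong U$ (which, for $M=k^\times\times\SO(U)$, differs from the original only by inversion on the $\GL_1$-factor, orthogonal transformations being self-contragredient up to the standard identification via $\psi\circ q$), the relation $\Psi\diamond T_\phi=T_\phi$ transforms into an identity $\Psi'\diamond\mf T_\phi=\mf T_\phi$ for some $\Psi'\in C^\infty_c(M)$, and analogously in the archimedean case. Proposition~\ref{reg_V} applied to the quadratic space $U$ with smoothing group $M$ then produces, for each summand, a smooth representative over $U^\circ$; summing gives the required function $W_{f_1,f_2,\psi}$, and uniqueness follows because $C^\infty_c(U^\circ)$-test functions separate continuous functions on $U^\circ$. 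The main obstacle lies in the archimedean case: one must verify that the smooth vectors of $\Pi\otimes\overline\Pi$ form a Fréchet space to which Dixmier--Malliavin applies under the restricted $M$-action, and one must check that the $\Psi'$ produced by Fourier duality indeed lies in $C^\infty_c(M)$ rather than merely in $\ms(M)$. Both are standard once the explicit action of $\mf$ on the $\GL_1$-factor (by $a\mapsto a^{-1}$ with Jacobian $|a|^{\dim U}$) and the self-duality of $\SO(U)$ are unraveled.
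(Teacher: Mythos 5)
Your proof follows essentially the same route as the paper: identify the Levi $M\cong\GL_1\times\SO(U)$ with the smoothing group $k^\times\times\SO(U)$ of Proposition~\ref{reg_V}, reduce to a finite sum $T_\phi=\sum_i\Psi_i\diamond T_{\phi_i}$ via a compact open subgroup fixing $f_1,f_2$ (nonarchimedean) or Dixmier--Malliavin on $\Pi\otimes\overline\Pi$ (archimedean), transport through $\mf$ with the inversion twist $\Psi'_i(a,h)=\Psi_i(a^{-1},h)$ on the $\GL_1$-factor, and apply Proposition~\ref{reg_V} to the quadratic space $U$. You spell out the Fourier-duality twist a little more explicitly than the paper does, but the argument is the same.
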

\begin{proof}
(ii) $k$ is nonarchimedean. $f_1, f_2$ are invariant under certain compact open subgroup $\wtilde{W}$ of $\SO(V)$. Note that $M^+=k^\times \times \SO(U)$ and set $W=\wtilde{W}\cap (k^\times\times \SO(U))$. Then $(\Pi(u)f_1,f_2)=\frac{1}{\mathrm{Vol}(W)}1_W \diamond (\Pi(u)f_{1i},f_{2i})$ and $\mf T_{(\Pi(u)f_1,f_2)}=\frac{1}{\mathrm{Vol}(W)}1_W \diamond \mf T_{(\Pi(u)f_{1i},f_{2i})}$. The assertion then follows from Proposition ~\ref{reg_V}.

(ii) $k$ is archimedean. Applying Dixmier-Malliavin theorem to the smooth action of $k^\times \times \SO(U)$ to $\Pi\otimes \Pi$, one sees that there exist smooth vector $f_{1i}, f_{2i}\in \Pi$ and functions $\Psi_i\in C^\infty_c(k^\times \times \SO(U))$ such that  
\[
f_1\otimes f_2=\sum_{i=1}^\ell \int_{k^\times \times \SO(U)} \Psi_i(a,h) \cdot \big[\Pi(a,h)f_{1i}\otimes \Pi(a,h)f_{2i}\big] \dd^\times a \dd h.
\]
Hence $(\Pi(u)f_1,f_2)=\sum_{i=1}^\ell \Psi_i\diamond (\Pi(u)f_{1i},f_{2i})$ and therefore
\[
\mf T_{(\Pi(u)f_1,f_2)}=\sum_{i=1}^\ell \Psi_i^\prime\diamond \mf T_{(\Pi(u)f_{1i},f_{2i})},
\]
with $\Psi^\prime_i(a,h)=\Psi_i(a^{-1},h)$. The assertion then follows from Proposition ~\ref{reg_V}.
\end{proof}

\begin{definition}\label{def_ui}
For $v_0\in U^\circ$, define $\int_{U}(\Pi(u)f_1,f_2)\psi_{v_0}(u)du\overset{\triangle}{=}W_{f_1,f_2,\psi}(v_0)$.
\end{definition}

\subsubsection{The local conjecture}\label{ss_lc}

Set $L(s,\Pi,\Pi^\prime):=\frac{L(s+\frac{1}{2},\Pi\times \Pi^\prime)}{L(s+1,\Pi,\ad)L(s+1,\Pi^\prime,\ad)}$ and define
\begin{equation*}
\Delta_V=
\begin{cases}
\prod_{i=1}^{(m-1)/2}\zeta_k(2i), &\text{$m$ is odd},\\
L(\frac{m}{2},\chi_{\disc(V)})\prod_{i=1}^{m/2-1}\zeta_k(2i), &\text{$m$ is even.}
\end{cases}
\end{equation*}
Recall that $\disc(V)=(-1)^{\frac{m}{2}}\det V$ when $m$ is even.

\begin{conjecture}\label{conj_lbp}
\begin{itemize}
\item[(i)]  $\mpp(s;f_1,f_2,f_1^\prime,f_2^\prime)$ absolutely converges when $\re(s)>>0$ and has meromorphic continuation to $\bc$.

\item[(ii)] When $\mathrm{ord}_{s=0}L(s,\Pi,\Pi^\prime)\leq 0$, there is $\ord_{s=0}\mpp(s;f_1,f_2,f_1^\prime,f_2^\prime)\geq \ord_{s=0}L(s,\Pi,\Pi^\prime)$. The (possibly zero) functional $\mpp^\sharp(-):=\frac{\mpp(s;-)}{\Delta_V L(s,\Pi,\Pi^\prime)}\big|_{s=0}$ respects the action of $\mrr\times \mrr$: for $u_1, u_2\in U$ and $h_1, h_2 \in \SO(V_0)$, there is
\[
\quad\quad\quad P^\sharp(u_1h_1\circ f_1,u_2h_2\circ f_2, h_1\circ f_1^\prime, u_2h_2\circ f_2^\prime)=P^\sharp(f_1,f_2,f_1^\prime,f_2^\prime)\psi_{v_0}(u_2-u_1).
\]

\item[(iii)] $\mathrm{Hom}_\mrr \big(\Pi\otimes (\psi_{v_0}\boxtimes \Pi^\prime)\big)$ is nonzero if and only if $\mathrm{ord}_{s=0}L(s,\Pi,\Pi^\prime)\leq 0$ and $P^\sharp\neq 0$.
\end{itemize}
\end{conjecture}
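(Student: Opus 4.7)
The plan is to treat the three parts in order, deferring the main difficulty to (iii).

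For (i), I would first use the equivariance of the distributional Fourier inversion in Section 3.1 to rewrite the inner integral as $h\mapsto W_{\Pi(h)f_1,f_2,\psi}(v_0)$, a smooth function on $\SO(V_0)$. Its growth in $h$ is controlled by the asymptotic expansion of matrix coefficients of the unitary representation $\Pi$ along the parabolic $P=U\rtimes M$ stabilizing $v_+$: since the leading exponents lie in the Casselman spectrum dictated by $\Pi$, one obtains uniform polynomial bounds in suitable coordinates on $\SO(V_0)$. Combined with the unitary bound on $(\Pi'(h)f_1',f_2')$ and the polynomial decay at infinity of the doubling height $\Delta(h)^s$ for $\re(s)$ large, this gives absolute convergence in a right half-plane. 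Meromorphic continuation is obtained by embedding $\Delta(h)^s$ into the full family of $K$-finite sections of the degenerate principal series $\Ind_{\wtilde P}^{\OO(\wtilde V)}|\cdot|^s$ attached to the doubled space $\wtilde V=V\oplus(-V)$. The resulting doubling-type zeta integral then extends meromorphically to $s\in\bc$ by the Piatetski-Shapiro--Rallis machinery: analytic continuation of the normalized intertwining operator on the archimedean side, and Bernstein's principle of meromorphic continuation on the $p$-adic side.

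For (ii), the goal is to establish a factorization
\[
\mpp(s;f_1,f_2,f_1',f_2')=\Delta_V\,L(s,\Pi,\Pi')\,\mpp^\sharp(s;f_1,f_2,f_1',f_2')
\]
with $\mpp^\sharp(s;\cdot)$ holomorphic at $s=0$. The numerator $L(s+\tfrac12,\Pi\times\Pi')$ of $L(s,\Pi,\Pi')$ appears at unramified data from the standard doubling unramified calculation; the denominator $L(s+1,\Pi,\ad)L(s+1,\Pi',\ad)$ comes out of the local Plancherel formula, which is the standard source of adjoint $L$-factors when working with matrix coefficients (as in the Ichino--Ikeda set-up); and $\Delta_V$ collects the local zeta factors from the doubling Siegel--Weil framework. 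The order-of-vanishing bound in (ii) is immediate from this factorization. The $\mrr\times\mrr$-equivariance of $\mpp^\sharp$ at $s=0$ is inherited from the formal equivariance of $\mpp(s;\cdot)$ on the domain of absolute convergence, which follows from Fubini together with a change of variables on $U$ producing the character $\psi_{v_0}(u_2-u_1)$; this equivariance is an algebraic identity and therefore survives meromorphic continuation and normalization by the holomorphic factor $\Delta_V L(s,\Pi,\Pi')$.

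For (iii), one direction is immediate from (ii): if $\mpp^\sharp\neq 0$ then, combined with its $\mrr\times\mrr$-equivariance, it defines a nonzero element of $\Hom_\mrr(\Pi\otimes(\psi_{v_0}\boxtimes\Pi'),\bc)$, and the order condition $\ord_{s=0}L(s,\Pi,\Pi')\le 0$ is precisely what makes the ratio defining $\mpp^\sharp$ finite. The converse is the main obstacle. The strategy is to combine multiplicity one for local Bessel models with an explicit test-vector construction: uniqueness of the Bessel model forces $\mpp^\sharp$ to be a scalar multiple of any given nonzero element of $\Hom_\mrr$, so nonvanishing of $\Hom_\mrr$ reduces to exhibiting vectors $f_1,f_2,f_1',f_2'$ where $\mpp^\sharp$ is nonzero. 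In the paper this converse is carried out only for the Saito--Kurokawa and Soudry packets, via theta correspondence: the local Bessel period is transferred to an integral on $\wtilde{\SL}_2(F_v)$, $\GO_2(F_v)$, or $\GU_1(D)(F_v)$ using the inner-product formula and the quadratic Fourier inversion developed in Section 3, after which nonvanishing is a direct computation. A proof of (iii) in full generality would require both a case-free multiplicity-one statement at every local place and a uniform test-vector construction; this is the main obstacle, and the reason the statement is left as a conjecture beyond the packets treated in the main theorem.
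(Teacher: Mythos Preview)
The statement is a \emph{conjecture}; the paper does not prove it in general. What the paper establishes is that Conjecture~\ref{conj_lbp} holds for $\Pi_v$ when $\Pi$ lies in a Saito--Kurokawa or Soudry packet (Theorems~\ref{lc_skp} and~\ref{lc_sp_split}), and the method is entirely different from your outline: via theta lifting and the inner product formula, the local period $\mpp_v(s;-)$ is computed explicitly (Propositions~\ref{prop_ftdistribution}, \ref{lbp_sp}, \ref{lbp_spd}) and shown to equal a constant times the elementary integral $\mpp(s,\chi_v')=\int_{\SO(V_0)_{F_v}}\chi_v'(h)\Delta(h)^s\,dh$. Parts (i)--(iii) then reduce to properties of this one-variable integral (Lemma~\ref{integralsov0}) together with the local criterion for Bessel models coming from the Jacquet-module computation in \cite{gan_gurevich09,gs2013}. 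Your remark that (iii) is ``the reason the statement is left as a conjecture'' misplaces the difficulty: parts (i) and (ii) are equally conjectural in general and are only verified in the paper through this packet-by-packet reduction.

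Your proposed general arguments for (i) and (ii) have genuine gaps. In (i), the height function $\Delta$ lives on $\SO(V_0)$, so the relevant doubling space is $V_0\oplus(-V_0)$, not $V\oplus(-V)$; more importantly, $\mpp(s;-)$ is not a doubling zeta integral at all, since its integrand is the distributional object $W_{\Pi(h)f_1,f_2,\psi}(v_0)\cdot(\Pi'(h)f_1',f_2')$ rather than a matrix coefficient paired against a Siegel section. Neither the Piatetski-Shapiro--Rallis theory nor Bernstein's continuation principle applies to this as stated --- you would need an asymptotic expansion of $h\mapsto W_{\Pi(h)f_1,f_2,\psi}(v_0)$ along $\SO(V_0)$, which is not supplied. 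In (ii), the claimed factorization $\mpp(s;-)=\Delta_V\,L(s,\Pi,\Pi')\,\mpp^\sharp(s;-)$ with $\mpp^\sharp$ holomorphic is simply asserted: the ``doubling unramified calculation'' produces standard $L$-factors for a doubling integral, not for this Bessel integral, and the local Plancherel formula yields adjoint $L$-factors in the Ichino--Ikeda setting only because the tempered matrix-coefficient integral is absolutely convergent and computable via Macdonald's formula --- none of which is available here. The equivariance argument in (ii) is fine as far as it goes, but it presupposes the meromorphic continuation you have not established.
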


Recall from \cite[Coro. 15.3]{gan_gross_prasad2012} and \cite{jsz10} that $\mathrm{Hom}_\mrr \big(\Pi\otimes (\psi_{v_0}\boxtimes \Pi^\prime)\big)$ is at most $1$-dimensional. Based on Conjecture ~\ref{conj_lbp}, when $\mathrm{ord}_{s=0}L(s,\Pi,\Pi^\prime)\leq 0$, we call $P^\sharp$ the regularized local Bessel period functional on $\Pi\otimes \Pi\otimes \Pi^\prime\otimes \Pi^\prime$. Furthermore, when $\mathrm{Hom}_\mrr \big(\Pi\otimes (\psi_{v_0}\boxtimes \Pi^\prime)\big)$ is nonzero and $\Pi, \Pi^\prime$ are unitary spherical with $f\in \Pi, f^\prime\in \Pi^\prime$ being spherical of norm $1$, we expect  $P^\sharp(f,f,f^\prime,f^\prime)=2^{-\beta}$, where $\beta$ is an integer depending on the Arthur packets of $\Pi,\Pi^\prime$.

\subsection{An $s$-parameterized $\SO_2$-integral}\label{s_so2i}

When $\dim V=5$, the outer integral in $\mpp(s;-)$ is a special $\SO_2$ integral and we discuss it her for later use. Let $k^\prime$ be a quadratic algebra over $k$ and $\chi$ be a character of ${k^\prime_1}^\times={k^\prime}^\times/k^\times$. Define
\begin{equation}\label{so2i}
\mpp(s,\chi):=\int_{{k^\prime_1}^\times}\chi(h)\Delta(h)^s\dd h,\quad \mpp^\sharp(\chi):=\frac{\zeta_k(2s)}{L(s,\chi)}\mpp(s,\chi)\big|_{s=0}.
\end{equation}

\begin{lemma}\label{integralsov0}
$\mpp(s,\chi)$ is absolutely convergent when $\re(s)>>0$ and has meromorphic continuation to $\bc$ with $\ord_{s=0}\mpp(s,\chi)\geq \ord_{s=0}\frac{L(s,\chi)}{\zeta_k(2s)}$. The space $\Hom_{\SO(V_0)}(\chi,1)$ is nonzero if and only if $\chi=1$, which happens if and only if $\ord_{s=0}\frac{L(s,\chi)}{\zeta_k(2s)}\leq 0$.
\end{lemma}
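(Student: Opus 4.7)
The strategy is to split into two cases depending on whether $k'$ is a field (i.e., $V_0$ anisotropic) or $k' = k \oplus k$ splits ($V_0$ isotropic), compute $\mpp(s,\chi)$ essentially explicitly in each, and compare orders at $s=0$ with $L(s,\chi)/\zeta_k(2s)$.

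If $k'$ is a field, $k'_1{}^\times$ is compact and $\Delta(h) \equiv 1$ by the explicit formulas in Section~\ref{subsec_lhf}, so $\mpp(s,\chi) = \int_{k'_1{}^\times}\chi(h)\,dh$ is constant in $s$, equal to $\mathrm{Vol}(k'_1{}^\times)$ when $\chi = 1$ and $0$ otherwise by orthogonality of characters; meromorphic continuation is immediate. I will interpret $L(s,\chi)$ as the local $L$-factor of the Hecke character $\tilde\chi$ of $k'^\times$ trivial on $k^\times$ induced by $\chi$ via $k'^\times/k^\times \isoto k'_1{}^\times$. For $\chi = 1$, $L(s,1) = \zeta_{k'}(s) = \zeta_k(s) L(s,\chi_{k'/k})$ has a simple pole at $s=0$ cancelling the simple pole of $\zeta_k(2s)$, while for $\chi \neq 1$, $L(s,\chi)$ is holomorphic and nonzero at $s=0$. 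The order inequality and the three equivalences follow in this case.

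If $k' = k \oplus k$ splits, fix an isomorphism $k'_1{}^\times \cong k^\times$ so that $\chi$ becomes a unitary character of $k^\times$ and $\Delta$ is the explicit function of Section~\ref{subsec_lhf}. For $k$ nonarchimedean, decompose $k^\times = \bigsqcup_{n \in \bz} \varpi^n\mathcal{O}^\times$: the $\mathcal{O}^\times$-integral of $\chi$ vanishes for ramified $\chi$ (so $\mpp \equiv 0$), and for unramified $\chi$ with $\alpha := \chi(\varpi)$ summation of geometric series gives
\begin{equation*}
\mpp(s,\chi) = \mathrm{Vol}(\mathcal{O}^\times)\Bigl[1 + \frac{\alpha q^{-s}}{1-\alpha q^{-s}} + \frac{\alpha^{-1}q^{-s}}{1-\alpha^{-1}q^{-s}}\Bigr],
\end{equation*}
absolutely convergent for $\re(s) > 0$ (since $|\alpha|=1$) and meromorphic on $\bc$, with a simple pole at $s=0$ when $\alpha=1$ and a simple zero when $\alpha \neq 1$. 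On the $L$-factor side, $\tilde\chi = \chi \boxtimes \chi^{-1}$ on $k^\times \times k^\times = k'^\times$ yields $L(s,\tilde\chi) = L(s,\chi)L(s,\chi^{-1})$, of order $-2$ at $s=0$ when $\chi=1$ and $0$ otherwise, so $L/\zeta_k(2s)$ has order $-1$ in the trivial case and order $\geq 1$ otherwise, matching the computed order of $\mpp$ in each instance. The archimedean split case is analogous: $\mpp$ becomes a Mellin transform in the variable $\log|a|$, expressible through Gamma functions, with identical order behavior.

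Finally, $\Hom_{\SO(V_0)}(\chi,1)$ is nonzero iff $\chi = 1$ since both are one-dimensional representations. The equivalence with $\ord_{s=0}L(s,\chi)/\zeta_k(2s) \leq 0$ is the bookkeeping assembled above: $\zeta_k(2s)$ has a simple pole at $s=0$ in every subcase, which is dominated by a pole of $L(s,\chi)$ precisely when $\chi = 1$. The main obstacle is only normalizational: fixing the convention for $L(s,\chi)$ and verifying the order bound uniformly across the split/nonsplit, ramified/unramified, and archimedean/nonarchimedean subcases.
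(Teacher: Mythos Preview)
The paper omits the proof of this lemma entirely, stating only that it ``is easy to verify.'' Your explicit case-by-case verification is precisely the intended elementary check, and your computations are correct; in particular, in the split nonarchimedean unramified case your expression simplifies to $\mathrm{Vol}(\mathcal{O}^\times)\cdot L(s,\chi)L(s,\chi^{-1})/\zeta_k(2s)$, which makes the order inequality an equality there and confirms your pole/zero claims.
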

Lemma ~\ref{integralsov0} is easy to verify and we omit the proof. When $\chi=1$, we define a local constant $c^\prime_{k^\prime}=\mpp^\sharp(1_{{k^\prime_1}^\times})$, whose value depends on the local measure.

\begin{lemma}\label{cep}
Let $E$ be a quadratic algebra over a number field $F$ and $\dd h=\prod_v \dd h_v$ be the Tamagawa measure on $\ba_{E,1}^\times$, then $c_{E_v}^\prime=1$ for almost all $v$ with $\prod_v c^\prime_{E_v}=c_E^{-1}$.
\end{lemma}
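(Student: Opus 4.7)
The plan is to prove the two assertions separately: the almost-everywhere identity $c_{E_v}'=1$ by direct local computation at unramified places, and the global product identity $\prod_v c_{E_v}'=c_E^{-1}$ by combining the Tamagawa-number identity $\Vol([E_1^\times],\dd h)=2$ with the completed functional equation of $L(s,\chi_{E/F})$.

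At an unramified nonarchimedean place $v$ I evaluate $\mpp_v(s,1)$ directly. In the nonsplit case $E_{1,v}^\times$ is compact and $\Delta\equiv 1$, so $\mpp_v(s,1)=\Vol(E_{1,v}^\times,\dd h_v)$; the measure normalization $\dd h_v = L(1,\chi_{E_v/F_v})\dd^\sharp h_v$ together with the elementary identity $\Vol(E_{1,v}^\times,\dd^\sharp h_v)=1+q_v^{-1}=L(1,\chi_{E_v/F_v})^{-1}$ gives $\mpp_v(s,1)\equiv 1$. Since $L(0,\chi_{E_v/F_v})=\tfrac{1}{2}$ and $\lim_{s\to 0}\zeta_{F_v}(2s)/\zeta_{F_v}(s)=\tfrac{1}{2}$, one finds $\zeta_{F_v}(2s)/\zeta_{E_v}(s)\big|_{s=0}=1$, so $c_{E_v}'=1$. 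In the split case $E_v=F_v\oplus F_v$ one has $E_{1,v}^\times\cong F_v^\times$ and $\Delta(a)=\min(|a|,|a|^{-1})$; a short geometric-series calculation shows $\mpp_v(s,1)=\zeta_{E_v}(s)/\zeta_{F_v}(2s)$, so the zeta factors cancel in the definition of $\mpp_v^\sharp$ and again $c_{E_v}'=1$.

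For the global identity only a finite set $S$ (archimedean places together with the nonarchimedean places ramified in $E/F$) contributes nontrivially. In the nonsplit case the formula $c_{E_v}'=\Vol(E_{1,v}^\times,\dd h_v)/(2L(0,\chi_{E_v/F_v}))$ continues to hold at $v\in S$. Taking the product over $v\in S$: the volumes assemble, in light of the Tamagawa identity $\Vol([E_1^\times],\dd h)=2$, into a global adelic volume on $\ba_{E,1}^\times$, while the local $L$-values at $s=0$ multiply to $L(0,\chi_{E/F})$. Using the completed functional equation $\Lambda(s,\chi_{E/F})=\Lambda_E(s)/\Lambda_F(s)=\Lambda(1-s,\chi_{E/F})$ (which has root number $+1$ because the right-hand side is a ratio of self-dual completed Dedekind zeta functions), one rewrites $L(0,\chi_{E/F})$ in terms of $L(1,\chi_{E/F})=c_E$; after accounting for the archimedean Gamma factors and powers of $2$ absorbed into the Tamagawa normalization, the result is $\prod_v c_{E_v}'=c_E^{-1}$. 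The split case ($E=F\oplus F$, $c_E=\Res_{s=1}\zeta_F(s)$) is handled in parallel using the Laurent expansion of $\zeta_F$ at $s=1$ in place of the functional equation.

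The main obstacle is the bookkeeping at the bad places in the second step: explicitly evaluating $\Vol(E_{1,v}^\times,\dd h_v)$ at ramified nonarchimedean and archimedean places, computing the archimedean analogue of $\mpp_v(s,1)$ with the given $\Delta$ for the patterns $F_v=\br$ with $E_v=\bc$, $F_v=\br$ with $E_v=\br\oplus\br$, and $F_v=\bc$, and verifying that the resulting local epsilon factors, Gamma factors, and powers of $2$ combine precisely with the Tamagawa normalization to yield $c_E^{-1}$.
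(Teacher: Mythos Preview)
Your treatment of the almost-everywhere assertion is fine and matches what the paper does implicitly. The global argument, however, has a genuine gap.

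Your plan for $\prod_v c_{E_v}'$ is to use the formula $c_{E_v}'=\Vol(E_{1,v}^\times,\dd h_v)/(2L(0,\chi_{E_v/F_v}))$ at the bad places and then invoke the Tamagawa identity $\Vol([E_1^\times])=2$ together with the functional equation of $L(s,\chi_{E/F})$. This cannot work as stated. First, the volume formula is only valid at \emph{nonsplit} places; at split places $E_{1,v}^\times\cong F_v^\times$ is noncompact and $\Vol(E_{1,v}^\times)=\infty$, yet the bad set $S$ generally contains split archimedean places (every complex place of $F$ is split, and real places may be split). Second, even if all places in $S$ were nonsplit, the Tamagawa number $\Vol(E_1^\times(F)\backslash\ba_{E,1}^\times)=2$ is the volume of a quotient by the infinite discrete group $E_1^\times(F)$, not a product of local volumes, so the local volumes do not ``assemble'' into it. Third, the Euler product $\prod_v L(0,\chi_{E_v/F_v})$ diverges (at unramified split $v$ the factor is $\zeta_{F_v}(0)=\infty$), so you cannot pass from the finite product over $S$ to the global $L(0,\chi_{E/F})$ in the way you suggest.

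The paper avoids all of this by a purely local argument. It fixes concrete local Haar measures $\dd^\ast h_v:=L(1,\chi_{E_v/F_v})\frac{\dd e_v}{|e_v|}\big/\frac{\dd f_v}{|f_v|}$ (built from Lebesgue measure at archimedean places and the measure giving $\mo_v$ volume $1$ at nonarchimedean places), so that the Tamagawa measure is $c_E^{-1}\prod_v \dd^\ast h_v$ with the factor $c_E^{-1}$ sitting outside the product. It then checks directly, case by case, that
\[
\Big[\frac{\zeta_{F_v}(2s)}{\zeta_{E_v}(s)}\int_{E_{1,v}^\times}\Delta(h_v)^s\,\dd^\ast h_v\Big]_{s=0}=1
\]
at \emph{every} place $v$, not just almost all. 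This is exactly the ``bookkeeping'' you list as an obstacle, but it is a finite list of elementary calculations (nonarchimedean unramified split/inert, nonarchimedean ramified, and the three archimedean patterns), and once done the global identity $\prod_v c_{E_v}'=c_E^{-1}$ is immediate from the external $c_E^{-1}$. No Tamagawa number, no functional equation, no epsilon factors are needed.
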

\begin{proof}
For a local field $k$ of characteristic zero, let $\dd a$ be the Lebesgue measure when $k=\br$ and twice the Lebesgue measure when $k=\bc$; let $\dd a$ be the Haar measure with $\mathrm{Vol}(\mo_{k})=1$ when $k$ is nonarchimedean. Let $d f_v$ and $d e_v$ be the local measures on $F_v$ and $E_v$ thus chosen, then $\prod_v d f_v$ and $\prod_v d e_v$ are the Tamagawa measures on $\ba$ and  $\ba_E$. The Tamagawa measure on $\ba_{E,1}^\times$ is $c^{-1}_E\prod_v \dd^\ast h_v$, where $\dd^\ast h_v:=L(1,\chi_{E_v/F_v}){\frac{d  e_v}{|e_v|}}/\frac{d f_v}{|f_v|}$ on $E_{v,1}^\times\cong E_v^\times/F_v^\times$. Hence
\[
\prod_v c_{E_v}=c_E^{-1}\prod_v \big[\frac{\zeta_{F_v}(2s)}{\zeta_{E_v}(s)}\int_{E_{v,1}^\times}\Delta(h_v)^s\dd^\ast h_v\big]_{s=0}.
\]
One computes that $\big[\frac{\zeta_{F_v}(2s)}{\zeta_{E_v}(s)}\int_{E_{v,1}^\times}\Delta(h_v)^s\dd^\ast h_v\big]_{s=0}=1$ for all $v$, whence the lemma follows. 
\end{proof}


\section{Quadratic Fourier Transform}\label{qft}

We discuss the isometry property of quadratic Fourier transform, which is one key tool for computing the local Bessel period integrals of Saito-Kurokawa representations.

Suppose that $k$ is a local field of characteristic zero and let $\dd r$ be the self-dual Haar measure on $k$. Define the standard Fourier transform $\mf$ and quadratic Fourier transform $\mf_2$ by
\[
\mf \varphi(t)=\int_k \varphi(t)\psi(rt)\dd r,\quad \mf_2 \varphi(t)=\int_k \varphi(r)\psi(r^2t)\dd r,\quad \varphi\in L^1(k).
\]

For a quadratic space $(V,q)$ over $k$, let $\dd X$ be the self-dual Haar measure on $V$ and define the Fourier transform $\mf$ and quadratic Fourier transform $\mf_q$ on $L^1(V)$ as
\[
\mf f(X)=\int_V f(Y)\psi(q(X,Y))dY,\quad \mf_{q}f(t)=\int_V f(X)\psi(t\cdot q(X))dX,\quad f\in L^1(V).
\]
By the measure decomposition in (\ref{md1}), there is $\mf_q f(t)=\mf \wtilde{f}(t)$.

We will derive the isometry property of $\mf_q$ from the isometry property of $\mf$ by establishing certain asymptotic estimates for $\wtilde{f}$. The estimates are in two steps: first for bounded compactly supported functions and then for rapidly decreasing functions that vanish near $0$.

\begin{lemma}\label{estimate1}
If $f\in L^1(V)$ is bounded and vanishes outside a compact set $D$, then there exists $\delta, c$ such that \emph{(i)} $\wtilde{f}(r)$ vanishes when $|r|>\delta$, \emph{(ii)} $|\wtilde{f}(r)|$ is bounded by $c|r|^{\frac{m-2}{2}}$ when $V$ is anisotropic and bounded by $c(|\ln |r||+1)$ when $V$ is isotropic.
\end{lemma}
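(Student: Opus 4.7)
\textbf{Part (i).} The plan is simply to observe that since $f$ vanishes outside the compact set $D$, the continuous image $q(D)\subset k$ is bounded; setting $\delta:=\sup_{x\in D}|q(x)|$, the sphere $S_r$ is disjoint from $D$ whenever $|r|>\delta$, so the defining integral for $\wtilde f(r)$ vanishes.

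\textbf{Part (ii), anisotropic case.} My plan is to exploit the scaling covariance of $\dd\sigma_r$. The dilation $\varphi_t\colon x\mapsto tx$ sends $S_{r_0}$ onto $S_{t^2 r_0}$; testing the measure decomposition $\dd x=|r|^{(m-2)/2}\dd\sigma_r\dd r$ against a function of the form $\phi\circ q$ and changing variables $x=ty$ shows that $\sigma_r(S_r)$ depends only on the square class of $r$ in $k^\times/k^{\times 2}$. Since $q$ is anisotropic, each $S_r$ is compact (the form is proper), so $\sigma_r(S_r)<\infty$; and since $k^\times/k^{\times 2}$ is finite over a local field, there is a uniform bound $\sigma_r(S_r)\leq C_0$. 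With $M:=\|f\|_\infty$, the estimate $|\wtilde f(r)|\leq MC_0|r|^{(m-2)/2}$ follows immediately.

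\textbf{Part (ii), isotropic case.} I would pick a hyperbolic plane $H=ke\oplus kw$ with $q(e,w)=1$ inside $V$, and write $V=H\oplus V_0$, so that $q(xe+yw+v)=xy+q_0(v)$. Fubini together with the substitution $y\mapsto r:=xy+q_0(v)$ for fixed $x\neq 0$ (whose Jacobian is $1/|x|$) then yields
\[
\wtilde f(r)=\int_{V_0}\int_{k^\times}f\!\Bigl(xe+\tfrac{r-q_0(v)}{x}w+v\Bigr)\frac{\dd x}{|x|}\,\dd v.
\]
The compact support of $f$ confines $|x|$, $|(r-q_0(v))/x|$, and $\|v\|$ below some constant $C=C(D)$, so for fixed $v$ the $x$-integration is over the annulus $|r-q_0(v)|/C\leq|x|\leq C$, whose logarithmic measure is at most $c_1+c_2|\ln|r-q_0(v)||$. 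Integrating over $v$ gives $|\wtilde f(r)|\leq M\int_{\|v\|\leq C}\bigl(c_1+c_2|\ln|r-q_0(v)||\bigr)\dd v$.

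\textbf{Main obstacle.} The delicate step will be showing that $\int_{\|v\|\leq C}|\ln|r-q_0(v)||\,\dd v=O(|\ln|r||+1)$ uniformly in $r$. My plan is to push this integral forward by $q_0$ using the measure decomposition (\ref{md1}) on $V_0$, reducing the problem to an estimate of the form $\int_k|\ln|r-r'||\rho(r')\,\dd r'$ with $\rho$ compactly supported. When $V_0$ is anisotropic, the result of the previous case bounds $\rho(r')$ by $c|r'|^{(\dim V_0-2)/2}$, and a direct calculation splitting into the regions $|r'|<|r|/2$ and $|r'|\geq |r|/2$ completes the estimate; when $V_0$ itself remains isotropic, I would iterate the Witt decomposition, peeling off hyperbolic summands until only an anisotropic kernel remains, and absorbing the contribution of each stage into a single $O(|\ln|r||+1)$.
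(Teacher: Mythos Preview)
Your proof is correct. The paper's argument differs only in the choice of orthogonal splitting in the isotropic case: it peels off a one--dimensional anisotropic line rather than a hyperbolic plane, so the induction drops the dimension by $1$ instead of $2$, and the convolution kernel that appears is $|r-r_1|^{-1/2}$ (coming from the one--dimensional anisotropic bound) against the inductive bound $|\ln|r_1||+1$. Your hyperbolic--plane splitting produces instead the kernel $|\ln|r-r'||$ against the pushforward $\rho$ of the indicator of a ball in $V_0$. Both routes reduce to elementary one--variable convolution estimates of the same flavor. The paper's version has the mild advantage that the kernel is a single fixed function, so only one estimate is needed; in your setup $\rho$ changes shape (a power $|r'|^{(\dim V_0-2)/2}$, or $|\ln|r'||+1$) depending on whether $V_0$ is anisotropic or isotropic, which is why your final paragraph must branch. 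Your version has the advantage that the base case $\dim V_0=0$ (i.e.\ $V$ itself a hyperbolic plane) drops out immediately from your explicit formula, whereas the paper treats $m=2$ separately. Your ``iterate the Witt decomposition'' is simply induction on the Witt index; the needed bound $\int_{|r'|\leq C}|\ln|r-r'||\,(|\ln|r'||+1)\,\dd r'=O(1)$ follows by the same region--splitting you describe for the anisotropic case.
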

\begin{proof}
For (i), the number $\delta=\max_{X\in D}|q(X)|$ meets the requirement. For (ii), we Put $A=\sup_{X\in D}|f(X)|$ and distinguish two cases. 

(iia) $V$ is anisotropic. Each $S_r$ ($r\in k^\times$) in $V$ is compact with a finite measure dependent on the coset $r{k^\times}^2$. Set $a=\max\{m(S_r):r\in {k^\times}^2/{k^\times}\}$ and $c=Aa$, then $|\wtilde{f}(r)|\leq c|r|^{\frac{m-2}{2}}$.

(iib) $V$ is isotropic. We prove the estimate for $|\wtilde{f}|$ by induction on $\dim V$.

When $\dim V=2$, choose an isomorphism $V\cong k^2$ so that $q(X)=x_1x_2$, then $\wtilde{f}(r)=\int_{k^\times}f(x_1,rx_1^{-1})\dd^\times x_1$. Choose $\epsilon$ such that $D$ is contained in the set $\{|x_1|,|x_2|\leq \epsilon\}$, then
\[
|\wtilde{f}|\leq A\int_{|r|^{-1}\epsilon\leq |x_1|\leq \epsilon} \dd^\times x_1\leq Aa_1(|\ln |r||+1)
\]
for some constant $a_1$ depending on $\epsilon$. Thus one can choose $c=Aa_1$.

If $\dim V>2$, write $V=V_0\oplus V_1$, where $V_1$ is isotropic and $V_0$ is a non-isotropic line orthogonal to $V_1$. Choose non-negative compactly supported continuous functions $f_0, f_1$ on $V_0, V_1$ so that $|f|\leq f_0\otimes f_1$. Then
\[
|\wtilde{f}(r)|\leq \int_k \wtilde{f}_0(r-r_1)\wtilde{f}_1(r_1)dr_1
\]
As argued, there exist $\delta, \delta_1$ such that $\wtilde{f}(r)$ vanishes when $|r|>\delta$ and $\wtilde{f}_1(r_1)$ vanishes when $|r_1|>\delta_1$; because $V_0$ is anisotropic, there exists $c_0$ so that $|\wtilde{f}_0(r_0)|\leq c_0|r_0|^{-\frac{1}{2}}$; $V^\prime$ is isotropic, by induction, there exists $c_1$ such that  $|\wtilde{f}_1(r_1)|\leq c_1(|\ln |r_1||+1)$. Thus
\[
|\wtilde{f}(r)|\leq c_0c_1\int_{|r_1|\leq \delta_1} |r-r_1|^{-1/2}(|\ln |r_1||+1)dr_1.
\]
When $|r|<\delta$, thre exists a constant $c^\prime$ depending on $\delta_1, \delta$ such that
\[
\int_{|r_1|\leq \delta_1} |r-r_1|^{-1/2}(|\ln |r_1||+1)dr_1\leq c^\prime (|\ln |r||+1)
\]
Set $c=c_0c_1c^\prime$, then $|\wtilde{f}(r)|\leq c(|\ln |r||+1)$.
\end{proof}

\begin{lemma}
Suppose that $k$ is archimedean and write $V=k^m$, $q(X)=\sum_{i=1}^m c_i x_i^2$ with $|c_i|=1$. For $\epsilon\in \br_+$, $n\in \bn$, let $f_{\epsilon,n}(X)$ be the function taking vlaue $|x_1\cdots x_m|^{-n}$ when $|x_1|,\cdots, |x_m|\geq \epsilon$ and value $0$ elsewhere. There exist constants $B_, b, \delta\in \br_+$ depending on $\epsilon, n, c_i (1\leq i\leq m)$ such that $\wtilde{f_{\epsilon,n}}(r)$ is bounded by $B$ and, furthermore, $\wtilde{f_{\epsilon,n}}(r)\leq b |r|^{-\frac{n+1}{2}}$ when $|r|\geq \delta$.
\end{lemma}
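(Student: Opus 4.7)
The plan is to exploit the product structure of both $f_{\epsilon,n}(X) = \prod_{i=1}^m h(x_i)$, with $h(x) = |x|^{-n}\mathbf{1}_{|x|\geq \epsilon}$, and of $q(X) = \sum c_i x_i^2$ in order to reduce $\wtilde{f_{\epsilon,n}}$ to a one-dimensional convolution. Concretely, applying the two-to-one change of variables $y_i = c_i x_i^2$ in each coordinate (using that $h$ is even) and combining with the measure decomposition $\dd X = \wtilde{f}(r) \dd r$ on the fibers of $q$, one arrives at
\[
\wtilde{f_{\epsilon,n}}(r) \;=\; (\wtilde{h}_1 \ast \cdots \ast \wtilde{h}_m)(r), \qquad \wtilde{h}_i(y) = |y|^{-(n+1)/2}\,\mathbf{1}_{\sigma_i y \geq \epsilon^2},
\]
where $\sigma_i = \mathrm{sgn}(c_i)$. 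Equivalently, $\wtilde{f_{\epsilon,n}}(r)$ equals the integral of $\prod_i |y_i|^{-(n+1)/2}$ over the affine slice $\{y \in \br^m : y_1 + \cdots + y_m = r,\ \sigma_i y_i \geq \epsilon^2\ \forall i\}$ with the natural $(m-1)$-dimensional Lebesgue measure.

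With this representation, the uniform bound $B$ follows from Young's inequality when $n > 1$: each $\wtilde{h}_i$ is then in $L^1 \cap L^\infty$, so $\|\wtilde{h}_1 \ast \cdots \ast \wtilde{h}_m\|_\infty \leq \|\wtilde{h}_1\|_\infty \prod_{j \geq 2}\|\wtilde{h}_j\|_1 < \infty$. When $n \leq 1$ the individual $\wtilde{h}_i$ are not integrable, but the very first binary convolution $\wtilde{h}_1 \ast \wtilde{h}_2$ already yields an integrand of order $|y|^{-(n+1)}$ at infinity which is integrable for $n \geq 1$; grouping the first two factors brings one back into Young's-inequality territory and gives a finite sup bound. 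For the decay estimate, I rescale $y_i = r z_i$ on the hyperplane to obtain
\[
\wtilde{f_{\epsilon,n}}(r) = |r|^{(m-1) - m(n+1)/2} \int_{H_1(\Omega_r)} \prod_{i=1}^m |z_i|^{-(n+1)/2}\,\dd\mu_H(z),
\]
with $H_1(\Omega_r) = \{z : \sum z_i = \mathrm{sgn}(r),\ \sigma_i z_i \geq \epsilon^2/|r|\}$. The target exponent $-(n+1)/2$ then follows once the remaining integral is shown to grow no faster than $|r|^{(m-1)(n-1)/2}$. Iterating the one-dimensional estimate $\int_{\epsilon^2/|r|}^{\mathrm{const}} z^{-(n+1)/2}\,\dd z \sim |r|^{(n-1)/2}$ (with a logarithm when $n = 1$ and a bounded factor when $n < 1$) over the $m-1$ independent coordinates produces exactly this bound.

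The main technical obstacle is the careful bookkeeping in the iterated bound, particularly for indefinite signatures $(\sigma_i)$ where $H_1(\Omega_r)$ is noncompact, and at the borderline $n = 1$. A clean device is to decompose the region of integration according to which subset $S \subset \{1,\ldots,m\}$ of coordinates are \emph{small} (comparable to the cutoff $\epsilon^2/|r|$) versus \emph{large} (bounded away from zero): on the small coordinates the singular factor is harnessed to produce the growth $|r|^{(n-1)/2}$ per coordinate, whereas on the large coordinates the decay $|z_i|^{-(n+1)/2}$ supplies absolute convergence at infinity (which is available because a single convolution already regularises the tail, as noted above). The logarithmic losses occurring at $n = 1$ and the contributions from higher-codimension strata $\{z_{i_1} = \cdots = z_{i_k} = 0\}$ can be absorbed into the constant $b$, and the constants $B, b, \delta$ produced by the argument depend only on $\epsilon, n$ and the signs $\sigma_i$.
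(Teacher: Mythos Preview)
Your convolution approach is genuinely different from the paper's. The paper argues by induction on $m$ via a pigeonhole: on the sphere $\{q(X)=r\}$ at least one coordinate must satisfy $|c_ix_i^2|\le |r|/m$, so one covers $S_r$ by the sets $U_{i,r}$ where the $i$-th coordinate is small, integrates that coordinate out against $\int_{\epsilon\le|x_i|}|x_i|^{-n}\,dx_i$ (bounded for $n\ge2$), and applies the inductive estimate to $\wtilde{f_{\epsilon,n,V_i^\prime}}$ on the remaining $m-1$ variables. Both methods exploit the product structure $f_{\epsilon,n}=\prod_i h(x_i)$, but the paper's pigeonhole step handles indefinite signatures and $k=\bc$ uniformly with no extra bookkeeping (only the exponent $e=[k:\br]$ enters), whereas your scheme must treat each separately.

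Two concrete gaps. First, the claim that $\wtilde h_1\ast\wtilde h_2$ is of order $|y|^{-(n+1)}$ at infinity is false: the convolution of two half-line functions of size $|y|^{-(n+1)/2}$ is again of size $|y|^{-(n+1)/2}$ (times a logarithm when $n=1$), not $|y|^{-(n+1)}$. For $n=1$ with $\sigma_1=\sigma_2=+1$ one computes $(\wtilde h_1\ast\wtilde h_2)(y)=2y^{-1}\ln\bigl((y-\epsilon^2)/\epsilon^2\bigr)$, which is neither $O(y^{-2})$ nor in $L^1$, so ``grouping the first two factors'' does not put you back in Young's-inequality territory. Relatedly, the logarithms at $n=1$ cannot be ``absorbed into the constant $b$'': already for $m=2$ positive definite one has $\wtilde{f_{\epsilon,1}}(r)\asymp r^{-1}\ln r$, which violates any bound $b\,r^{-1}$. (The paper's own proof has the same defect at $n=1$: the assertion that $\int_{\epsilon^2\le|q(x_i)|\le m^{-e}|r|}|x_i|^{-n}\,dx_i$ is bounded by a constant $a$ fails there. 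The lemma is only applied with $n$ large, so this does not affect the downstream results.) For $n\ge2$ your Young-inequality argument for the sup bound is correct as written. Second, your write-up tacitly assumes $k=\br$: you set $\sigma_i=\sgn(c_i)$ and impose $\sigma_iy_i\ge\epsilon^2$. Over $\bc$ the substitution $y_i=c_ix_i^2$ lands in $\bc$ and the convolution is two-real-dimensional, a case you have not addressed.
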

\begin{proof}
We prove it by induction on $m$. When $m=1$, the statement obviously holds. Now suppose $m>1$. Let $e=\mathrm{deg}(k/\br)$ and $|x|^\prime=|x|^{2/e}$ be the adjusted norm on $k$. Write $V=V_i\oplus V_i^\prime=k\oplus k^{m-1}$, where $V_i$ is the $i$-th copy of $k$ and $V^\prime$ the orthogonal complement of $V_i$. One observes that if $q(X)=r$, then there must be some $i$ so that $|x_i^2|^\prime\leq \frac{1}{m}|r|^\prime$; for this $i$, there is $|q(x_i)|\leq m^{-e}|r|$ and $|q(X)-q(x_i)|\geq (1-\frac{1}{m})^e|r|$. This observation means that $S_{V,r}=\cup_i S_{V,r}\cap U_{i,r}$, where $S_{V,r}$ is the $r$-sphere in $V$ and $U_{i,r}=\{X\in V: |q(x_i)|\leq m^{-e}|r|\}$. Hence
\[
\wtilde{f_{\epsilon,n}}(r)=|r|^{\frac{m-2}{2}}\int_{S_{V,r}}f_{\epsilon,n}(X)d\sigma_{V,r}\leq \sum_i |r|^{\frac{m-2}{2}}\int_{S_{V,r}\cap U_{i,r}}f_{\epsilon,n}(X)d\sigma_{V,r}.
\]

With respect to the projection $S_{V,r}\rar x_i$, there is measure decomposition $|r|^{\frac{m-2}{2}}d\sigma_{V,r}=|r|^{\frac{m-3}{2}}d\sigma_{V_i^\prime, r-q(x_i)}dx_i$. Also, $f_{\epsilon,n}(X)=f_{\epsilon,n, V_i}(x_i)f_{\epsilon,n,V_i^\prime}(X_i^\prime)$ for $X=(x_i,X_i^\prime)$, where we add subscripts of underlying spaces to distinguish the functions. Hence
\begin{align*}
g_i(r):=|r|^{\frac{m-2}{2}}\int_{S_{V,r}\cap U_{i,r}}f_{\epsilon,n}(X)d\sigma_{V,r}=&\int_{|q(x_i)|\leq m^{-e}|r|}f_{\epsilon, n, V_i}(x_i) \wtilde{f_{\epsilon,n,V_i^\prime}}(r-q(x_i))dx_i\\
=&\int_{\epsilon^2\leq |q(x_i)|\leq m^{-e}|r|}|x_i|^{-n} \wtilde{f_{\epsilon,n,V_i^\prime}}(r-q(x_i))dx_i.
\end{align*}
By induction, there exist constants $B_i, b_i, \delta_i\in \br_+$ such that $\wtilde{f_{\epsilon,n,V_i^\prime}}(r)$ is bounded by $B_i$ and $\wtilde{f_{\epsilon,V_i^\prime}}(r)\leq b_i |r|^{-\frac{n+1}{2}}$ when $|r|\geq \delta_i$.

Observe that the integral $\int_{\epsilon^2\leq |q(x_i)|\leq m^{-e}|r|}|x_i|^{-n} dx_i$ is bounded by a constant $a$ independent of $i$. Hence $g_i(r)$ is bounded by $aB_i$. Recall that when $|q(x_i)|\leq m^{-e}|r|$, there is $|q(X)-q(x_i)|\geq (1-\frac{1}{m})^e|r|$. Hence when $|r|\geq (1-\frac{1}{m})^{-e}\delta_i$,
\[
g_i(r)\leq b_i\big[(1-\frac{1}{m})^e|r|\big]^{-\frac{n+1}{2}} \int_{\epsilon^2\leq |q(x_i)|\leq m^{-e}|r|}|x_i|^{-n} dx_i\leq ab_{\epsilon,i}(1-m^{-1})^{-\frac{(n+1)e}{2}}|r|^{-\frac{n+1}{2}}.
\]

Set $B=a\sum_i B_i$, $\delta=(1-\frac{1}{m})^{-e}\max\{\delta_i:i\}$, $b=a(1-m^{-1})^{-\frac{(n+1)e}{2}}\sum_i b_i$, then $\wtilde{f_{\epsilon.n}}(r)\leq \sum_i g_i(r)$ is bounded by $B$ and, if $|r|\geq \delta$, further bounded by $b|r|^{-\frac{n+1}{2}}$.
\end{proof}

\begin{lemma}\label{estimate2}
Suppose that $k$ is archimedean and $f\in \ms(V)$ vanishes in an open neighborhood of $0$, then $\wtilde{f}(r)$ is bounded on $k^\times$ and decreases faster near $\infty$ than $|r|^{-n}$ for all $n\in \bn$.
\end{lemma}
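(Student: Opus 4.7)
My plan is to first reduce the rapid-decay assertion to a boundedness statement, and then prove that boundedness by identifying $\wtilde{f}$ with the inverse Fourier transform of the quadratic Fourier transform $\mf_q f$, which I will show lies in $\ms(k)$ via non-stationary phase. For the reduction, $q^n f$ is Schwartz and vanishes in the same open neighborhood of $0$ as $f$, and from the definition of $\wtilde{\cdot}$ via the measure decomposition (\ref{md1}) one has $\widetilde{q^n f}(r) = r^n\,\wtilde{f}(r)$. Hence once it is established that $\wtilde{g}$ is bounded on $k^\times$ for every $g \in \ms(V)$ that vanishes in a neighborhood of $0$, applying this to $g = q^n f$ yields $|r|^n\,|\wtilde{f}(r)| \leq C_n$ on $k^\times$ for each $n$, which entails the claimed faster-than-polynomial decay at infinity.

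For the boundedness step, the measure decomposition (\ref{md1}) and Fubini give, for every $\phi \in C_c^\infty(k)$,
\[
\int_k \mf_q f(t)\,\phi(t)\,dt \;=\; \int_V f(X)\,\hat\phi\bigl(q(X)\bigr)\,dX \;=\; \int_{k^\times}\wtilde{f}(r)\,\hat\phi(r)\,dr,
\]
so $\mf_q f = \mf\wtilde{f}$ as tempered distributions on $k$. It therefore suffices to prove $\mf_q f \in \ms(k)$, since Fourier inversion on $k$ will then give $\wtilde{f} \in \ms(k) \subset L^\infty(k)$. Smoothness in $t$ is immediate from $\partial_t^n \mf_q f(t) = (2\pi i)^n\,\mf_q(q^n f)(t)$, which is continuous and bounded since $q^n f \in L^1(V)$. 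For the rapid decay in $t$, I apply non-stationary phase on $V$: on $\supp(f)$ the phase $X \mapsto q(X)$ has no critical point and $|\nabla q|^2 = 4\|X\|^2$ is bounded below by $4\epsilon^2$, where $\epsilon$ is the radius of a ball on which $f$ vanishes. Hence the first-order operator $L_t := (2\pi i\,t\,|\nabla q|^2)^{-1}\sum_j (\partial_j q)\,\partial_j$ is smooth on $\supp(f)$ and satisfies $L_t\,\psi(t\,q(X)) = \psi(t\,q(X))$. Integrating by parts $N$ times yields
\[
\mf_q f(t) \;=\; \frac{(-1)^N}{(2\pi i\,t)^N}\int_V g_N(X)\,\psi\bigl(t\,q(X)\bigr)\,dX,
\]
where $g_N \in \ms(V)$ is $t$-independent and still vanishes near $0$; hence $|\mf_q f(t)| \leq C_N\,|t|^{-N}$ for every $N$, and $\mf_q f \in \ms(k)$.

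The main technical point will be confirming that the distributional identity $\mf_q f = \mf\wtilde{f}$ faithfully represents $\wtilde{f}$ as a function on $k^\times$, rather than hiding a singular contribution concentrated at $r = 0$. In the anisotropic case this is immediate since $\wtilde{f}$ vanishes in a neighborhood of $0$. In the isotropic case, the measure decomposition together with $f \in L^1(V)$ yields $\wtilde{f} \in L^1_{\mathrm{loc}}(k)$, so the identity holds at the level of locally integrable functions and Fourier inversion applied to $\mf_q f \in \ms(k)$ produces the bounded continuous representative of $\wtilde{f}$, proving in fact the stronger statement that $\wtilde{f}$ extends to an element of $\ms(k)$.
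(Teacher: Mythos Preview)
Your argument is correct and takes a genuinely different route from the paper's proof. The paper proceeds by direct pointwise domination: it writes $q$ in diagonal coordinates, bounds $|f|$ by a finite sum $\sum_I c_{I,\epsilon,n}\,1_{D_{I,\epsilon}}(X_I)\,f_{\epsilon,n,V_I'}(X_I')$ (using that $f$ vanishes on the cube $\{|x_i|\le\epsilon\ \forall i\}$ and is Schwartz), and then invokes the preceding lemma's explicit bounds on $\widetilde{f_{\epsilon,n}}$ to control each summand. Your approach instead exploits the identity $\mf_q f=\mf\wtilde{f}$ (already noted in the paper just before Lemma~\ref{estimate1}) and obtains $\mf_q f\in\ms(k)$ by non-stationary phase, so that Fourier inversion gives $\wtilde{f}\in\ms(k)$. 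This is more conceptual, yields a stronger conclusion, and is logically independent of the auxiliary lemma on $f_{\epsilon,n}$; the paper's route, on the other hand, is entirely elementary and stays within the pointwise estimates developed in the section.

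Two small technical points are worth tightening. First, your explicit operator $L_t$ is written for the real case; for $k=\bc$ the phase is $\Tr_{\bc/\br}(\lambda t\,q(Z))$ and one should use the real gradient on $\br^{2m}$ (or Wirtinger derivatives with the conjugated coefficient $\overline{\partial_j q}$). The oscillatory-integral mechanism is identical, but the formula for $L_t$ needs the obvious adjustment. Second, your Fourier-inversion step shows that $\wtilde{f}$ coincides \emph{almost everywhere} with a Schwartz function, whereas the lemma asserts a pointwise bound on $k^\times$. To close this, note that $\wtilde{f}$ is continuous on $k^\times$: each $r\in k^\times$ is a regular value of $q$ (since $\nabla q$ vanishes only at $0$), so the level sets $S_r$ and the induced measures $d\sigma_r$ vary smoothly with $r$, and hence $r\mapsto\int_{S_r}f\,d\sigma_r$ is continuous for $f\in\ms(V)$. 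Two continuous functions agreeing a.e.\ on $k^\times$ agree everywhere there, which upgrades your a.e.\ conclusion to the pointwise one. (For the downstream applications in Propositions~\ref{qft_V} and~\ref{qft_V2} the a.e.\ statement already suffices, but the lemma as stated is pointwise.)
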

\begin{proof}
Choose an isomorphism $V=k^m$ so that $q(X)=\sum_{i=1}^m c_i x_i^2$ with $|c_i|=1$; let $k_{(i)}$ be the $i$-th copy of $k$. For a subset $I$ of $\{1,\cdots,m\}$, put $V_I=\oplus_{i\in I} k_{(i)}$ and $V_I^\prime=\oplus_{j\not\in I} k_{(j)}$.

Because $f$ vanishes in an open neighborhood of $0$, there exists $\epsilon\in \br^+$ so that $f(X)=0$ when all $|x_i|\leq \epsilon$. Put $D_{I,\epsilon}=\{X_I\in V_I: |x_{(i)}|\leq \epsilon, i\in I\}$, then the condition $f\in \ms(V)$ implies that given $n\in \bn$, there exist constants $\{c_{I,\epsilon,n}:I\subset\{1,\cdots,m\}\}$ so that
\[
|f|\leq \sum_I c_{I,\epsilon, n} 1_{D_{I,\epsilon}}(X_I)f_{\epsilon.n,V_I^\prime}(X_I^\prime)
\]
Thus,
\[
|\wtilde{f}(r)|\leq \wtilde{|f|}(r)\leq \sum_I c_I \int_{D_{I,\epsilon}} \wtilde{f_{\epsilon,n,V_I^\prime}}(r-q(X_I))dX_I.
\]
Applying the estimates for $\wtilde{f_{\epsilon,n,V_I^\prime}}(r)$, one easily sees that $|\wtilde{f}(r)|$ is bounded and that there exist $\delta_0, b_0\in \br_+$ so that $|\wtilde{f}(r)|\leq b_0|r|^{-\frac{n+1}{2}}$ when $|r|\geq \delta_0$. Because this works for all $n\in \bn$, $\wtilde{f}(r)$ decreases faster near $\infty$ than $|r|^{-n}$ for all $n\in \bn$.
\end{proof}

\begin{proposition}\label{qft_V}
If $\phi\in \ms(V)$ and $\dim V\geq 2$, then $\wtilde{\phi}(r)\in L^2(k)$.
\end{proposition}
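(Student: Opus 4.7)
The plan is to split $\phi$ into a piece supported near the origin and a piece vanishing near the origin, and handle the two contributions to $\wtilde{\phi}$ using Lemmas~\ref{estimate1} and ~\ref{estimate2} respectively. Throughout, the key point is that the estimates on $\wtilde{f}$ available for bounded compactly supported $f$ are just strong enough to give $L^2$-integrability near $r=0$ once $\dim V \geq 2$, while rapid decay at infinity is only an issue in the archimedean setting and is supplied by Lemma~\ref{estimate2}.

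First I would dispose of the nonarchimedean case separately. There $\phi \in \ms(V)$ is itself bounded and compactly supported, so Lemma~\ref{estimate1} applies directly to $\phi$: there exists $\delta > 0$ such that $\wtilde{\phi}$ vanishes outside $|r| \leq \delta$, and on this compact set $|\wtilde{\phi}(r)|$ is bounded by $c|r|^{(m-2)/2}$ in the anisotropic case and by $c(|\ln|r||+1)$ in the isotropic case. Since $m \geq 2$, the function $|r|^{m-2}$ is locally integrable around $0$, and $(|\ln|r||+1)^2$ is likewise locally integrable around $0$; combined with the compact support in $r$, this gives $\wtilde{\phi} \in L^2(k)$.

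For the archimedean case, I would fix a smooth cutoff $\chi \in C^\infty_c(V)$ that equals $1$ in a neighborhood of $0$ and write $\phi = \phi_1 + \phi_2$ with $\phi_1 := \chi \phi$ and $\phi_2 := (1-\chi)\phi$. The function $\phi_1$ is bounded and compactly supported, so Lemma~\ref{estimate1} gives exactly the same compact-support and pointwise bound for $\wtilde{\phi_1}$ as in the nonarchimedean case, and the same local-integrability argument shows $\wtilde{\phi_1} \in L^2(k)$. The function $\phi_2 \in \ms(V)$ vanishes in an open neighborhood of $0$, so Lemma~\ref{estimate2} produces a bound $|\wtilde{\phi_2}(r)| \leq B$ on all of $k^\times$ together with rapid decay faster than any $|r|^{-n}$ at infinity; in particular $\wtilde{\phi_2}$ is square-integrable away from $0$, and since it is already bounded and defined on the full $k^\times$, it lies in $L^2(k)$. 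Adding the two pieces gives $\wtilde{\phi} \in L^2(k)$.

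The only step that requires any care is checking integrability near the origin: one must verify that for $m \geq 2$ both $\int_{|r|\leq \delta}|r|^{m-2}\,\dd r$ and $\int_{|r|\leq \delta}(|\ln|r||+1)^2\,\dd r$ are finite. The first is $O(\delta^{m-1})$ when $m > 2$ and just a bounded constant times $\delta$ when $m = 2$; the second is finite by standard estimates on $\int_0^\delta (\log t)^2\,dt$. This is precisely where the hypothesis $\dim V \geq 2$ enters, and it is the only place in the argument with any subtlety.
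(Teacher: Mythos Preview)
Your proof is correct and follows essentially the same approach as the paper: handle the nonarchimedean case directly via Lemma~\ref{estimate1}, and in the archimedean case split $\phi=\phi_1+\phi_2$ with $\phi_1\in C^\infty_c(V)$ and $\phi_2$ vanishing near $0$, applying Lemma~\ref{estimate1} to $\wtilde{\phi}_1$ and Lemma~\ref{estimate2} to $\wtilde{\phi}_2$. You supply more detail than the paper does on the local $L^2$-integrability near $r=0$ (which is exactly where $\dim V\geq 2$ is used), but the argument is the same.
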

\begin{proof}
When $k$ is $p$-adic, it follows from Lemma ~\ref{estimate1}. When $k$ is archimedean, one writes $\phi=\phi_1+\phi_2$, with $\phi_1\in C^\infty_c(V)$ and $\phi_2$ vanishing in an open neighborhood of $0$, then $\wtilde{\phi}_1(r)$ is square-integrable by Lemma ~\ref{estimate1} and $\wtilde{\phi}_2(r)$ is square-integrable by Lemma ~\ref{estimate2}, whence $\wtilde{\phi}(r)=\wtilde{\phi}_1(r)+\wtilde{\phi}_2(r)$ is also square-integrable.
\end{proof}

\begin{proposition}\label{qft_V2}
Suppose that $W(x)\in L^1(k)$ is an even function and $\phi\in \ms(V)$. For $\delta\in k^\times$, set $V_{[\delta]}=\{X\in V:q(X)\in \delta {k^\times}^2\}$. If $W^2(x)|x|^{-1}\in L^1(k)$ and $\dim V\geq 2$, then
\begin{equation}\label{eq_qfti}
\int_k \mf_2W(\delta t)\mf_q\phi(-t)dt=2|2\delta|^{-1}\int_{V_{[\delta]}} W(\sqrt{\delta^{-1}q(X)})\phi(X)|q(X)|^{-\frac{1}{2}}dX.
\end{equation}
\end{proposition}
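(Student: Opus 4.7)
The plan is to reduce the identity to the ordinary Plancherel identity on $k$, by realising both $\mf_2 W(\delta\,\cdot\,)$ and $\mf_q\phi(-\,\cdot\,)$ as Fourier transforms of $L^2$-functions on $k$ and then unfolding the resulting one-dimensional integral back to a $V$-integral.

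For $\mf_q\phi$, the measure decomposition~(\ref{md1}) gives the identity $\mf_q\phi(t)=\mf\wtilde{\phi}(t)$ with $\wtilde{\phi}(r):=|r|^{(m-2)/2}\int_{S_r}\phi\,\dd\sigma_r$; Proposition~\ref{qft_V} then guarantees $\wtilde{\phi}\in L^2(k)$ since $\dim V\geq 2$. For $\mf_2 W(\delta t)$, I exploit that $W$ is even to push forward along the $2$-to-$1$ squaring map $r\mapsto r^2$, whose local Jacobian absorbs a factor $|2r|=|2||v|^{1/2}$ at $v=r^2$. This yields
\[
\mf_2 W(\delta t) \;=\; \frac{2}{|2|}\int_{(k^\times)^2} W(\sqrt{v})\,|v|^{-1/2}\,\psi(\delta v t)\,\dd v,
\]
and the substitution $u=\delta v$ then exhibits $\mf_2 W(\delta t)=\mf A(t)$ with $A$ supported on $\delta(k^\times)^2$ and
\[
A(u) \;=\; c_\delta\,W(\sqrt{u/\delta})\,|u|^{-1/2}\,\mathbf{1}_{\delta (k^\times)^2}(u),
\]
where $c_\delta$ is an explicit constant depending on $|2|$ and $|\delta|$. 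A direct computation using the same pushforward shows that the hypothesis $W^2(x)|x|^{-1}\in L^1(k)$ forces $A\in L^2(k)$.

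I would then apply the Parseval-type identity $\int_k \mf A(t)\,\mf B(-t)\,\dd t=\int_k A(r)\,B(r)\,\dd r$ with $B=\wtilde{\phi}$ to rewrite the left-hand side as
\[
c_\delta\int_{\delta(k^\times)^2} W(\sqrt{r/\delta})\,|r|^{-1/2}\,\wtilde{\phi}(r)\,\dd r.
\]
Substituting $\wtilde{\phi}(r)=|r|^{(m-2)/2}\int_{S_r}\phi\,\dd\sigma_r$ and applying the measure decomposition~(\ref{md1}) in reverse identifies this with $c_\delta\int_{V_{[\delta]}} W(\sqrt{\delta^{-1}q(X)})\,\phi(X)\,|q(X)|^{-1/2}\,\dd X$. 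Tracking the constant through the two changes of variables yields the announced prefactor.

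The principal technical obstacle is the clean bookkeeping of the Jacobians arising from the $2$-to-$1$ squaring map and the dilation $u=\delta v$, carried out uniformly across archimedean and non-archimedean $k$; a secondary but essential point is the $L^2$ integrability, where the hypothesis $W^2(x)|x|^{-1}\in L^1(k)$ together with Proposition~\ref{qft_V} exactly furnish the data needed to invoke Parseval.
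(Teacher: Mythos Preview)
Your proposal is correct and follows essentially the same route as the paper: both arguments recognise $\mf_2 W(\delta\,\cdot)$ and $\mf_q\phi$ as ordinary Fourier transforms of $L^2$-functions on $k$ (the paper writes these uniformly as $\mf\wtilde{W}(\delta\,\cdot)$ and $\mf\wtilde{\phi}$, your function $A$ being precisely $|\delta|^{-1}\wtilde{W}(\delta^{-1}\,\cdot)$), invoke Plancherel, and then unfold via the measure decomposition~(\ref{md1}). The only difference is packaging: the paper's $\wtilde{(\cdot)}$ notation absorbs your explicit squaring-map Jacobian computation into the one-dimensional case of Section~\ref{s_md1}(i).
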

\begin{proof}
One has $\int_k \mf_2W(\delta t)\mf_q\phi(-t)dt=\int_k \mf\wtilde{W}(\delta t)\mf\wtilde{\phi}(-t)dt$. Because $\dim V\geq 2$, the function $\wtilde{\phi}(r)\in L^2(k)$ by Proposition ~\ref{qft_V}. On the other hand, the condition $W^2(x)|x|^{-1}\in L^1(k)$ means that $\wtilde{W}(r)\in L^2(k)$. Since $\mf\wtilde{W}(\delta \cdot)=|\delta|^{-1}\mf(\wtilde{W}(\delta^{-1}\cdot))$, we apply the $L^2$-isometry property of $\mf$ to get
\[
\int_k \mf\wtilde{W}(\delta t)\mf\wtilde{\phi}(-t)dt=\int_k |\delta|^{-1}\wtilde{W}(\delta^{-1}r)\wtilde{\phi}(r)dr=|\delta|^{-1}\int_V \wtilde{W}(\delta^{-1}q(X))\phi(X)dX.
\]
Note that $\wtilde{W}(\delta^{-1}q(X))$ takes value $2|2|^{-1}W(\sqrt{\delta^{-1}q(X)})|q(X)|^{-\frac{1}{2}}$ when $q(X)\in V_{[\delta]}$ and vanishes when $X\in V^\circ-V_{[\delta]}$. Hence the proposition follows.
\end{proof}

\begin{remark}
When $\dim V=1$, the function $\wtilde{\phi}(r)$ associated to $\phi\in \ms(V)$ is square-integrable if and only if $\phi(0)=0$. Thus, extra conditions are needed to make Equality (\ref{eq_qfti}) hold. It is proved in \cite{qiuy2019} that $W^2(x)|x|^{-1}, W(x)|x|^{-1}, \mf_2W(t)|t|^{-\frac{1}{2}}\in L^1(k)$ are sufficient conditions.
\end{remark}

\section{Saito-Kurokawa Packets}

Now let $(V,q)$ be a $5$-dimensional quadratic space over a number field $F$ with positive Witt index and determinant $1\cdot{F^\times}^2$ as in the introduction. Recall from Section ~\ref{bgroup} the decomposition 
\[
V=Fv_+\oplus U\oplus Fv_-=Fv_+\oplus (Fv_0\oplus V_0)\oplus Fv_-
\]
the Bessel subgroup $\mrr$, and the parabolic subgroup $P$ stabilizing $Fv_+$. 

We will use the Weil representation of $\wtilde{\SL}_2(\ba)\times \SO(V)_\ba$ on the mixed model $\ms(U_\ba\oplus \ba^2)$. The intertwining map between the Schr\"{o}dinger model $\ms(V_\ba)$ and the mixed model is the partial Fourier transform given by
\[
\ms(V_\ba)\rar \ms(U_\ba\oplus \ba^2),\quad \phi(X)\lrar \widehat{\phi}(u;x,y)=\int_{\ba}\phi(zv_+ +u+ xv_-)\psi(zy)dz.
\]

\subsection{Saito-Kurokawa packets of $\SO(V)$}\label{subsec_skpackets}

Suppose that $\pi\subset \ma_{cusp}(\PGL_2)$ is irreducible. The Saito-Kurokawa packet $\SK(\pi)$ of $\SO(V)_\ba$ is constructed from the Waldspurger packet $Wd_\psi(\pi)$ of the metaplectic group $\wtilde{SL}_2(\ba)$.
\begin{itemize}
\item[(i)] Identify $\PGL_2$ with $\SO(U^\prime,q^\prime)$, where $U^\prime=\{X\in M_{2\times 2}(F):\Tr(X)=0\}$, $q^\prime(X)=-\det X$ and $\PGL_2$ acts on $U^\prime$ by $g\circ X=g^{-1}Xg$. Consider the Weil representation $\omega_{\psi,U^\prime}$ of $\wtilde{\SL}_2(\ba)\times \PGL_2(\ba)$ on $\ms(U^\prime_\ba)$ and the associated global theta lifting. Define
\[
Wd_\psi(\pi)=\{\Theta_{\wtilde{\SL}_2\times \PGL_2}(\pi\otimes \chi_a,\psi_a):a\in F^\times\}.
\]

\item[(ii)] Consider the Weil representation $\omega_{\psi,V}$ of $\wtilde{\SL}_2(\ba)\times \SO(V)_\ba$ on $\ms(V_\ba)$. Define
\[
\SK(\pi)=\{\Theta_{\wtilde{\SL}_2\times \SO(V)}(\sigma,\psi):\sigma\in Wd_\psi(\pi)\}.
\]
\end{itemize}

\begin{remark}
For $\sigma\in Wd_\psi(\pi)$, the lift $\Theta_{\wtilde{\SL}_2\times \SO(V)}(\sigma,\psi)$ is irreducible and in the discrete spectrum of $\SO(V)$. It is cuspidal when $\Theta_{\wtilde{\SL}_2\times \SO(U)}(\sigma,\psi)= 0$; it is the residue representation of the Eisenstein series associated to $\Ind_P^{\SO(V)}(|\cdot|^s\boxtimes \pi)$ at $s=\frac{3}{2}$ when $\Theta_{\wtilde{\SL}_2\times \SO(U)}(\sigma,\psi)\neq 0$.

Note that by the inner product formula (See Proposition ~\ref{IPF_SO5}) and the knowledge of degenerate principle series, one knows that $\Theta_{\wtilde{\SL}_2\times \SO(V)}(\sigma,\psi)\neq 0$ if and only if $\theta_{\wtilde{\SL}_2\times \SO(V)}(\sigma_v,\psi_v)\neq 0$ for all $v$. Let $S_U$ be the set of places $v$ such that $U(F_v)$ is anisotropic. For $v\not\in S_U$, there is always $\theta_{\wtilde{\SL}_2\times \SO(V)}(\sigma_v,\psi_v)\neq 0$. For $v\in S_U$, $\theta_{\wtilde{\SL}_2\times \SO(V)}(\sigma_v,\psi_v)\neq 0$ happens when (a) $\sigma_v$ is not the odd or even Weil representation,  if $v$ is finite, (b) $\sigma_v$ is not the odd or even Weil representation or local theta lifts of discrete series on $\PGL_2(F_v)$, if $v$ is real (c.f. \cite[Prop. 6.2]{wgan2008}).
\end{remark}

In general, for a quadratic character $\mu$ of $\ba^\times/F^\times$, define the packet
\[
\SK(\pi,\mu):=\{\Pi\otimes \mu: \Pi\in \SK(\pi\otimes \mu)\}.
\]
For $\Pi\in \SK(\pi,\mu)$, its partial $L$-function is $L^S(\Pi)=L(s,\pi)L(s+\frac{1}{2},\mu)L(s-\frac{1}{2},\mu)$. We will study the Bessel period functional on the packet $\SK(\pi)=\SK(\pi,1)$ in details; the according properties of $\SK(\pi,\mu)$ follow quickly and we skip the according discussion.

\subsubsection{Inner product formula}

Define the inner product pairing on $L^2([\SO(V)])$ by $(F_1,F_2)=\int_{[\SO(V)]}F_1(h)\overline{F_2(h)}\dd h$ and the pairing on $L^2([\wtilde{\SL}_2])$ by  $(\varphi_1,\varphi_2)=\int_{[\SL_2]}\varphi_1(g)\overline{\varphi}_2(g)dg$.

\begin{proposition}\label{IPF_SO5} Suppose that $\sigma\in Wd_\psi(\pi)$ and $\Pi=\Theta_{\wtilde{SL}_2\times \SO(V)}(\sigma,\psi)$ is cuspidal. For $\varphi_i\in \sigma$, $\phi_i\in \ms(V_\ba)$ (i=1,2), there is
\[
\big(\Theta(\phi_1,\varphi_1),\Theta(\phi_2,\varphi_2)\big)=\frac{2L(\frac{3}{2},\pi)}{\zeta_F(4)}\prod_v \frac{\zeta_{F_v}(4)}{L(\frac{3}{2},\pi_v)}\int_{\SL_2(F_v)}\overline{(\sigma(g_v)\varphi_{1,v},\varphi_{2,v}) }(\omega_v(g_v)\phi_{1,v},\phi_{2,v})dg_v.
\]
\end{proposition}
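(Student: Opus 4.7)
\textbf{Plan for the proof of Proposition \ref{IPF_SO5}.} The strategy is the Rallis inner product formula, i.e., the doubling method applied to the dual pair $(\wtilde{\SL}_2,\SO(V))$ with $\dim V = 5$. The aim is to transfer the global inner product on $[\SO(V)]$ into a doubling zeta integral on $[\wtilde{\SL}_2] \times [\wtilde{\SL}_2]$ via Siegel--Weil, and then unfold to produce the Euler product.

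First I would substitute the definition $\Theta_{\phi_i}(\varphi_i)(h) = \int_{[\wtilde{\SL}_2]}\overline{\varphi_i(g)}\theta_{\phi_i}(g,h)\,dg$ and, using rapid decay of the cuspidal $\varphi_i$ together with standard theta-kernel estimates, interchange integrations to get
\[
(\Theta(\phi_1,\varphi_1),\Theta(\phi_2,\varphi_2)) = \int_{[\wtilde{\SL}_2]^2}\overline{\varphi_1(g_1)}\varphi_2(g_2)\,\mathcal{I}(g_1,g_2)\,dg_1\,dg_2,
\]
where $\mathcal{I}(g_1,g_2) = \int_{[\SO(V)]}\theta_{\phi_1}(g_1,h)\overline{\theta_{\phi_2}(g_2,h)}\,dh$. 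The next step is to expand $\mathcal{I}$ as a sum over $V^2(F)$, decompose by $\SO(V)(F)$-orbits on $V^2$ (regular Gram matrices $T\in\Sym_2^\circ(F)$ plus boundary contributions), and identify the result with a Siegel Eisenstein series on $\wtilde{\Sp}_4$ restricted to the diagonal subgroup $\wtilde{\SL}_2 \times \wtilde{\SL}_2$. For $\dim V = 5$ and $n = 2$, the Siegel--Weil parameter is $s_0 = (5-2-1)/2 = 1 > 1/2$, so we sit strictly inside the convergent range and no regularization is needed; cuspidality of $\Pi = \Theta(\sigma,\psi)$ kills any contribution from the singular orbits via the usual tower argument.

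I would then apply the Piatetski-Shapiro--Rallis doubling method to
\[
\int_{[\wtilde{\SL}_2]^2}\overline{\varphi_1(g_1)}\varphi_2(g_2)\,E\big((g_1,g_2),s_0;f_{\phi_1\otimes\bar\phi_2}\big)\,dg_1\,dg_2.
\]
Unfolding against the diagonal period on $\wtilde{\SL}_2 \times \wtilde{\SL}_2$ collapses the integral to an Euler product of local zeta integrals of matrix coefficients times the standard doubling $L$-function $L(s_0,\sigma)$. At almost every unramified place the spherical section evaluates to $L_v(s_0,\sigma)\zeta_{F_v}(4)^{-1}$, giving the global factor $L(s_0,\sigma)/\zeta_F(4)$ and forcing the local correction $\zeta_{F_v}(4)/L_v(s_0,\sigma)$ at every place. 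Finally, Waldspurger's theta lifting $\sigma \leftrightarrow \pi$ identifies $L(s_0,\sigma) = L(s_0+\tfrac{1}{2},\pi) = L(\tfrac{3}{2},\pi)$; the extra constant $2$ is the standard Siegel--Weil normalization factor attached to odd-dimensional quadratic spaces (comparing the Tamagawa volume of $[\SO(V)]$ with the natural measure coming from the Weil representation).

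\textbf{Main obstacle.} The hard part will be precise normalizations rather than the overall shape of the argument: tracking (i) the Weil constants $\gamma(\psi,V_v)$ and $\chi_{\psi,V_v}$ that enter local matrix coefficients of $\omega_v$, (ii) the passage between local Eisenstein section normalizations and the doubling $L$-factor, and (iii) the Tamagawa measure conventions on $\wtilde{\SL}_2(\ba)$, $\SO(V)_\ba$ and $\wtilde{\Sp}_4(\ba)$. The convergent Siegel--Weil and PSR unfolding are standard, but extracting the exact scalar $2L(3/2,\pi)/\zeta_F(4)$ requires careful local bookkeeping, particularly at the archimedean and dyadic places where the local theta dichotomy needs to be invoked explicitly.
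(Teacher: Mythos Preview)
Your overall strategy---Rallis inner product via Siegel--Weil for the doubled pair $(\wtilde{\Sp}_4,\OO(V))$ followed by the Piatetski-Shapiro--Rallis unfolding---is exactly the approach behind the paper's statement. The paper does not spell out a proof here but simply cites prior work, distinguishing the two cases by Witt index: for Witt index $1$ (the $\SO(4,1)$ case) it invokes the convergent Siegel--Weil formula of Kudla--Rallis, while for Witt index $2$ (the $\SO(3,2)$ case) it invokes the \emph{regularized} Siegel--Weil formula established in the author's earlier paper.

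There is one genuine gap in your plan. Your assertion that ``we sit strictly inside the convergent range and no regularization is needed'' is wrong when $V$ has Witt index $2$. The relevant convergence criterion is not the value of the Eisenstein parameter $s_0$ but Weil's condition for absolute convergence of the theta integral $\int_{[\SO(V)]}\theta_{\phi_1}(g_1,h)\overline{\theta_{\phi_2}(g_2,h)}\,dh$, namely $m-r>n+1$ where $r$ is the Witt index. Here $m=5$, $n=2$, so for $r=2$ one has $m-r=3=n+1$ and the integral $\mathcal{I}(g_1,g_2)$ you write down diverges; your interchange of integrals is not justified. The sentence ``cuspidality of $\Pi$ kills any contribution from the singular orbits via the usual tower argument'' gestures toward the right mechanism but conflates two separate issues: the divergence of the theta integral (which must be regularized, e.g.\ by inserting a truncation or an auxiliary Schwartz function and taking a residue/limit) and the vanishing of boundary terms after regularization (which is where cuspidality and the tower property actually enter). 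For Witt index $1$ your argument goes through essentially as written; for Witt index $2$ you must replace the naive Siegel--Weil identity by its regularized version, and the constant $2$ and the precise $L$-factor emerge from that regularized first-term identity rather than from a direct orbit decomposition.
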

When $V$ is of Witt index $2$, the formula is proved in \cite{qiu14} using regularized Siegel-Weil formula. When $V$ is of Witt index $1$, it follows from the Siegel-Weil formula proved in \cite{kudla_rallis88-2}.

One may interpret Proposition ~\ref{IPF_SO5} in an alternative way. For $\Pi=\Theta_{\wtilde{\SL}_2\times \SO(V)}(\sigma,\psi)\in \SK(\pi)$ with $\sigma\in Wd_\psi(\pi)$, one chooses a local theta lifting $\theta_v:\ms(V(F_v))\otimes \sigma_v\rar \Pi_v$ at each place so that $\Theta(\phi,\varphi)=\otimes_v \theta_v(\phi_v,\varphi_v)$. Then for $f_{i,v}=\theta_v(\phi_{i,v},\varphi_{i,v})\in \Pi_v$, there is
\begin{equation}\label{lipfso5}
(f_{1,v},f_{2,v})=C_{\sigma_v,\Pi_v}\int_{\SL_2(F_v)}\overline{(\sigma_v(g_v)\varphi_{1,v},\varphi_{2,v})}(\omega_v(g_v)\phi_{1,v},\phi_{2,v})dg_v,
\end{equation}
where $C_{\sigma_v,\Pi_v}=\frac{\zeta_{F_v}(4)}{L(\frac{3}{2},\pi_v)}$ almost everywhere. The Proposition ~\ref{IPF_SO5} says $\prod_v \frac{C_{\sigma_v,\Pi_v}L(\frac{3}{2},\pi_v)}{\zeta_{F_v}(4)}=\frac{2L(\frac{3}{2},\pi)}{\zeta_F(4)}$.

\subsubsection{Whittaker functionals on $\wtilde{\SL}_2$}\label{wfsl2}

Let $B\subset \SL_2$ be the Borel subgroup of upper triangular matrices and $N$ be its unipotent radical. $N(\ba)$ can be lifted to $\wtilde{\SL}_2(\ba)$ and we denote its lift by the same notation. For $a\in \ba^\times$, write $\underline{a}$ for $(\smalltwomatrix{a}{}{}{a^{-1}},1)\in \wtilde{\SL}_2(\ba)$.

Let $\bk=\prod_v \bk_v$ be a maximal compact subgroup of $\SL_2(\ba)$. With respect to the Iwasawa decomposition $g=\smalltwomatrix{1}{n}{}{1}\smalltwomatrix{a}{}{}{a^{-1}}k$, we do the according measure decomposition $\dd g=|a|^{-2}\dd n \dd^\times a \dd k$, where $\dd k=\prod_v \dd k_v$ with $\int_{\bk_v}1\dd k_v=1$ for almost all $v$.

Define the $\psi$-Whittaker functional on $\ma_{cusp}(\wtilde{\SL}_2)$ by
\[
\ell_{\psi}(\varphi)=\int_{N(\ba)/N(F)}\varphi(n)\psi(-x)dx
\]

Suppose that $\sigma\subset \ma_{cusp}(\wtilde{\SL}_2)$ be an irreducible $\wtilde{SL}_2(\ba)$-representation. Write $\sigma=\otimes \sigma_v$ as a restricted tensor product and, for spherical $\sigma_v$, let $\varphi_{v,0}$ denote the spherical vector in $\sigma_v$ used to construct the restricted tensor product. There is decomposition $(,)_\sigma=\prod_v (,)_{\sigma_v}$, where $(,)_{\sigma_v}$ are local inner product pairings on $\sigma_v$ and $(\varphi_{v,0},\varphi_{v,0})_{\sigma_v}=1$ for almost all $v$. If $\ell_{\psi}|_\sigma\neq 0$, there is decomposition $\ell_\psi|_\sigma=\prod_v \ell_{\psi_v}$, where $\ell_{\psi_v}$ are local $\psi_v$-Whittaker functionals on $\sigma_v$ and $\ell_{\psi_v}(\varphi_{v,0})=1$ for almost all $v$.

For each local place $v$, choose a set of representatives $\{\delta_{v,i}\}$ of $F_v^\times/{F_v^\times}^2$ with $\delta_{v,1}=1$. For each $\delta_{v,i}$, choose a non-zero Whittaker functional $\ell_{\psi_{v,\delta_{v,i}}}$ on $\sigma_v$ with respect to $\psi_{v,\delta_{v,i}}$ if such functionals exist, or set $\ell_{\psi_{v,\delta_{v,i}}}=0$. For $\varphi_v\in \sigma_v$, put $W_{\varphi_v,\psi_{v,\delta_{v,i}}}(g_v)=\ell_{\psi_{v,\delta_{v,i}}}(\sigma_v(g_v)\varphi_v)$. It is known from \cite{baruch_mao03, baruch_mao05, qiuy2019} that there are real constants $c_{\sigma_v,\psi_{v,\delta_{v,i}}}$ such that
\begin{equation}\label{ipwfsl2}
(\varphi_{1,v},\varphi_{2,v})_{\sigma_v}=\sum_i 2^{-1}|2|_v c_{\sigma_v,\psi_{v,\delta_{v,i}}}\int_{F_v^\times} W_{\varphi_{1,v},\psi_{v,\delta_{v,i}}}\overline{W_{\varphi_{1,v},\psi_{v,\delta_{v,i}}}}(\underline{a_v})\dd^\times a_v.
\end{equation}
The constant $c_{\sigma_v,\psi_{v,\delta_{v,i}}}$ is determined by $\ell_{\psi_{v,\delta_{v,i}}}$ and $(,)_{\sigma_v}$;  it is zero when $\ell_{\psi_{v,\delta_{v,i}}}=0$ and positive when $\ell_{\psi_{v,\delta_{v,i}}}\neq 0$. By Propositions 3.1 and 4.2 of \cite{qiuy2019}, we have the following local-global relation.
\begin{proposition}\label{wfdsl2}\cite{qiuy2019}
Suppose that $\sigma\in Wd_\psi(\pi)$ and $\ell_\psi$ is non-zero on $\psi$. Then $c_{\sigma_v,\psi_v}=\frac{L(\frac{1}{2},\pi_v)\zeta_{F_v}(2)}{L(1,\pi_v,\ad)}$ for almost all $v$ and $\prod_v \frac{c_{\sigma_v,\psi_v}L(1,\pi_v,\ad)}{L(\frac{1}{2},\pi_v)\zeta_{F_v}(2)}=2\cdot \frac{L(1,\pi,\ad)}{L(\frac{1}{2},\pi)\zeta_F(2)}$
\end{proposition}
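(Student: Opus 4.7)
The plan is to prove Proposition~\ref{wfdsl2} by computing the Petersson norm $(\varphi,\varphi)_\sigma$ of a carefully chosen decomposable genuine cusp form $\varphi = \otimes_v \varphi_v \in \sigma$ with $\ell_\psi(\varphi)\neq 0$ in two different ways and matching global constants. The first (spectral) computation unfolds $(\varphi,\varphi)_\sigma = \int_{[\SL_2]}|\varphi(g)|^2 \dd g$ against the Fourier expansion of $\varphi$. Since a cuspidal genuine form on the metaplectic cover has a Whittaker expansion supported on the square classes of $F^\times$, one expects after applying the Iwasawa decomposition an identity of the form
\[
(\varphi,\varphi)_\sigma \;=\; \tfrac{1}{2}\sum_{\delta\in F^\times/{F^\times}^2}\int_{\ba^\times} \bigl|W_{\varphi,\psi_\delta}(\underline t)\bigr|^2\,\dd^\times t.
\]
Combined with the product decompositions $(,)_\sigma = \prod_v (,)_{\sigma_v}$ and \eqref{ipwfsl2}, the proposition reduces to identifying $\prod_v c_{\sigma_v,\psi_v}$, viewed as the coefficient of the trivial square class, with $\frac{L(\tfrac12,\pi)\zeta_F(2)}{L(1,\pi,\ad)}$ up to the stated ratio.

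The second computation uses theta lifting. Since $\sigma\in Wd_\psi(\pi)$ is realized as a theta lift from $\SO(U')\cong \PGL_2$ via $\omega_{\psi,U'}$, the Rallis inner product formula for the pair $(\wtilde\SL_2,\PGL_2)$ expresses $(\varphi,\varphi)_\sigma$ as $L(1,\pi,\ad)/\zeta_F(2)$ times an Euler product of local matrix coefficient integrals. Simultaneously, the see-saw identity $(\wtilde\SL_2,\PGL_2) \,\text{--}\, (\wtilde\SO_2,\SO_3)$ shows that the $\psi$-Whittaker coefficient of $\sigma=\Theta(\pi,\psi)$ is computed by a split toric period of $\pi$ on $\PGL_2$, and Waldspurger's classical toric period formula identifies the square of that period with $\tfrac12 L(\tfrac12,\pi)$ times local factors. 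Matching the two global expressions for $(\varphi,\varphi)_\sigma$ against \eqref{ipwfsl2} then forces the asserted global identity $\prod_v \tfrac{c_{\sigma_v,\psi_v}L(1,\pi_v,\ad)}{L(\frac12,\pi_v)\zeta_{F_v}(2)} = \tfrac{L(1,\pi,\ad)}{L(\frac12,\pi)\zeta_F(2)}$.

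For the ``almost all $v$'' assertion, I would do an explicit unramified computation. At a finite place $v$ where $\pi_v$, $\psi_v$ are unramified and $2\in\mo_{F_v}^\times$, the representation $\sigma_v$ is the unramified genuine constituent of a principal series whose parameter is pinned down by Waldspurger's local correspondence. A Macdonald/Casselman--Shalika type calculation gives closed-form expressions, rational in the Satake parameters of $\pi_v$, for both $(\varphi_{v,0},\varphi_{v,0})_{\sigma_v}$ and the Whittaker integral $\int_{F_v^\times}|W_{\varphi_{v,0},\psi_v}(\underline a)|^2\dd^\times a$. Dividing the two yields $c_{\sigma_v,\psi_v}= L(\tfrac12,\pi_v)\zeta_{F_v}(2)/L(1,\pi_v,\ad)$, making the infinite product in the global identity actually finite.

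The main obstacle will be to carefully track measure normalizations and genuine-cover conventions throughout: the $2^{-1}|2|_v$ in \eqref{ipwfsl2}, the discrepancy between $\SL_2$-invariant and $\PGL_2$-invariant measures appearing in the Rallis inner product formula, and the archimedean analysis needed to avoid restrictive assumptions on test vectors. The approach of Baruch--Mao handles totally real $F$ via explicit archimedean Whittaker computations; for a general number field I would instead follow the strategy of \cite{qiu4a}, running the comparison globally and isolating each local constant by varying the test vector at one place while keeping the others spherical, so that the identity is pinned down inductively without requiring explicit archimedean integrals.
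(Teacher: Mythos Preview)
The paper does not supply a proof of this proposition at all: it is quoted verbatim from \cite{qiu4a}, so there is no argument in the present paper to compare your proposal against. Your sketch is broadly in the spirit of that reference (Rallis inner product for the lift $\PGL_2\to\wtilde{\SL}_2$, see-saw to turn the $\psi$-Whittaker coefficient into a split toric period, Waldspurger/Hecke for the central $L$-value, plus an unramified Casselman--Shalika computation), and the last paragraph correctly identifies where the delicate work lies.

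That said, the first paragraph contains a genuine gap. Globally unfolding $(\varphi,\varphi)_\sigma$ yields a sum over \emph{global} square classes $\delta\in F^\times/{F^\times}^2$, whereas taking the product over $v$ of \eqref{ipwfsl2} yields a sum over $\prod_v F_v^\times/{F_v^\times}^2$. These two decompositions are not the same, and there is no way to ``read off $\prod_v c_{\sigma_v,\psi_v}$ as the coefficient of the trivial square class'' from the comparison. The constants $c_{\sigma_v,\psi_{v,\delta}}$ are pinned down purely locally (this is the content of the Baruch--Mao local spectral decomposition you cite); the global statement you need is a formula for the single quantity $|\ell_\psi(\varphi)|^2/(\varphi,\varphi)_\sigma$, not for the full norm. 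Your second paragraph already contains the correct mechanism---compute $(\varphi,\varphi)_\sigma$ by Rallis and $|\ell_\psi(\varphi)|^2$ by see-saw/Waldspurger, then take the ratio and factor it locally---so you should discard the first paragraph's unfolding step and run the argument directly on that ratio.
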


\subsection{Global Bessel periods}\label{subsec_gbp}

Suppose $\Pi\in \SK(\pi)$ is cuspidal. Write $\sigma=\Theta_{\wtilde{\SL}_2\times \SO(V)}(\Pi,\psi)$, then $\Theta_{\wtilde{\SL}_2\times \SO(U)}(\sigma,\psi)=0$ and $\ell_{\psi}|_\sigma=0$ because $\Pi$ is cuspidal. Recall the global Bessel period functional in (\ref{gbp}).
\begin{proposition}\label{gperiod}
$P(\cdot)$ vanishes on $\Pi$ if $q(v_0)\in {F^\times}^2$ or $\chi\neq 1$. When $q(v_0)\not\in {F^\times}^2$ and $\chi=1$, for $f=\Theta(\phi,\varphi)\in \Pi$ with $\phi\in \ms(V_\ba)$ and $\varphi\in \sigma$, there is
\begin{equation}\label{gp_sk}
P(f)=2\int_{N(\ba)\backslash \SL_2(\ba)}\overline{W_{\varphi,q(v_0)}(g)}\omega(g)\widehat{\phi}(v_0;0,1)dg.
\end{equation}
\end{proposition}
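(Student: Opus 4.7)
The plan is to substitute the theta lift expression $f = \Theta(\phi,\varphi)$ into the definition of $P(f)$, interchange orders of integration to move the $[\SL_2]$-integration outside, transfer the inner theta sum to the mixed model via partial Fourier transform, and then collapse the unipotent and orbit integrations into a Whittaker-type integral on $\wtilde{\SL}_2$. Concretely, using cuspidality of $\sigma$ and rapid decrease, one first obtains
\[
P(f) = \int_{[\SL_2]} \overline{\varphi(g)}\,J(g)\,dg,\qquad J(g) = \int_{[\SO(V_0)]}\!\!\chi(h)\!\int_{[U]} \psi_{v_0}(u)\!\sum_{\xi\in V(F)}\!\omega(g,uh)\phi(\xi)\,du\,dh.
\]

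The core computation is the evaluation of $J(g)$. Passing to the mixed model via $\phi\mapsto\widehat\phi$ and Poisson summation in the $v_+$-coefficient, $\sum_{\xi\in V(F)}\omega(g,h)\phi(\xi)$ becomes $\sum_{(u^\prime,x,y)\in U(F)\oplus F^2} [\omega(g,h)\phi]^{\widehat{\,}}(u^\prime;x,y)$, where the action of $u\in U$ translates $u^\prime$ proportionally to $x$ and twists by a $\psi$-character involving $q(u,u^\prime)$ and $y$. Orthogonality of characters on $[U]$ then collapses the sum to the stratum where the character condition imposed by $\psi_{v_0}$ matches $v_0$: nonzero contributions come only from $x\in F^\times$ with $u^\prime$ forced to lie in $x\cdot \SO(V_0)(F)\cdot v_0$, together with the $x=0$ stratum. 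The $x=0$ stratum, however, unfolds to the theta lift $\Theta_{\wtilde{\SL}_2\times \SO(U)}(\sigma,\psi)$, which vanishes since $\Pi$ is cuspidal.

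The main obstacle is bookkeeping of the remaining $\SO(V_0)(F)$-sum against the $[\SO(V_0)]$-integration and tracking the Weil representation cocycles in the mixed model. The $x\in F^\times$ stratum unfolds the $\SO(V_0)(F)$-sum against the quotient integral to produce an integral over $\SO(V_0)_{\ba}$ of a term involving $\widehat\phi$ evaluated at $v_0$. When $q(v_0)\in (F^\times)^2$, the stratum $u^\prime = xv_0$ is either incompatible with the $F$-rational orbit structure on $V$ or collapses into the already-vanishing $x=0$ stratum (since $v_0$ is then proportional to an isotropic vector in the relevant sense), giving $P(f)=0$. When $q(v_0)\notin (F^\times)^2$ but $\chi\neq 1$, the remaining $\SO(V_0)_\ba$-integral factors the Haar integral of $\chi$ over $[\SO(V_0)]$, which is zero. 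When $q(v_0)\notin (F^\times)^2$ and $\chi=1$, the $\SO(V_0)_\ba$-stabilizer of $v_0$ is trivial (anisotropic torus), the integration collapses to evaluation at $v_0$, and one is left with a double sum in $x\in F^\times$, $y\in F$ of $\omega(g)\widehat\phi(v_0;x,y)$ weighted by a $\psi$-character.

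To conclude, I would identify the remaining expression with the right-hand side of (\ref{gp_sk}). The $y$-sum together with the $[N]$-integration inherent in $[\SL_2]$ produces the Whittaker character $\psi_{-q(v_0)}$, so that
\[
\int_{[\SL_2]}\overline{\varphi(g)}\,\omega(g)\widehat\phi(v_0;x,y)\,dg
\]
reduces (after the $y$-Fourier expansion) to $\int_{N(\ba)\backslash\SL_2(\ba)}\overline{W_{\varphi,q(v_0)}(g)}\,\omega(g)\widehat\phi(v_0;x,0)\,dg$ type terms. The sum over $x\in F^\times$ is then absorbed by a change of variable $g\mapsto \underline{x}\,g$, using the $\GL_1$-action in the Weil representation, and collapses to a single evaluation at the point $(0,1)$. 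The factor of $2$ comes from the two $\SO(V_0)(F)$-cosets over the stabilizer, equivalently from identifying the quotient $\SO(V_0)(F)\backslash(F^\times v_0)$ as a two-sheeted cover, yielding precisely the formula (\ref{gp_sk}).
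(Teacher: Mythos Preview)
Your overall strategy---pass to the mixed model, kill the degenerate piece using cuspidality of $\Pi$, then unfold---is the paper's strategy too, but two of your key steps are genuinely wrong.

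First, your treatment of $\SO(V_0)$ is confused. By definition $\SO(V_0)$ is the stabilizer of $v_0$ in $\SO(U)$, so $\SO(V_0)$ \emph{fixes} $v_0$: there is no ``$\SO(V_0)(F)$-sum'' to unfold and no two-sheeted cover. In the mixed model, after the $[U]$-integration picks out $\xi'=v_0$, the function $h\mapsto\omega(g,h)\widehat\phi(v_0;0,1)$ is literally constant on $\SO(V_0)_\ba$. This immediately gives $P(f)=0$ when $\chi\neq 1$, and when $\chi=1$ the $[\SO(V_0)]$-integral simply contributes its Tamagawa volume. The factor of $2$ in the final formula is $\mathrm{Vol}([\SO(V_0)])=2$, the Tamagawa number of the anisotropic $\SO_2$, not a covering phenomenon. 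Second, your explanation for the vanishing when $q(v_0)\in{F^\times}^2$ is wrong: $v_0$ is never isotropic (we assume $q(v_0)\neq 0$), and whether $q(v_0)$ is a square has nothing to do with collapsing strata. The actual reason is that cuspidality of $\Pi$ forces $\Theta_{\wtilde{\SL}_2\times\SO(U)}(\sigma,\psi)=0$, hence $\ell_\psi|_\sigma=0$; when $q(v_0)=a^2$ the character $\psi_{q(v_0)}$ lies in the $\SL_2(F)$-orbit of $\psi$, so $W_{\varphi,\psi_{q(v_0)}}\equiv 0$ and the integral vanishes. Finally, the paper avoids your ad hoc $(x,y)$-stratification and change of variables $g\mapsto\underline{x}g$ by using the standard mixed-model decomposition
\[
\Theta_\phi(g,h)=\sum_{\gamma\in N(F)\backslash\SL_2(F)}\sum_{\xi'\in U(F)}\omega(\gamma g,h)\widehat\phi(\xi';0,1)+\sum_{\xi'\in U(F)}\omega(g,h)\widehat\phi(\xi';0,0),
\]
which already packages the nondegenerate $(x,y)$-orbit as an $N(F)\backslash\SL_2(F)$-sum; unfolding the $[\SL_2]$-integral against this yields $\int_{N(F)\backslash\SL_2(\ba)}$ and the Whittaker function in one step.
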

\begin{proof}
Suppose $f=\Theta(\phi,\varphi)\in \Pi$. Observe that the theta kernel $\Theta_\phi(g,h)$ is a sum,
\begin{equation}\label{thetakernel}
\Theta_\phi(g,h)
=\sum_{\gamma\in N(F)\backslash \SL_2(F)}\sum_{\xi^\prime\in U(F)}\omega(\gamma g,h)\widehat{\phi}(\xi^\prime;0,1)+\sum_{\xi^\prime\in U(F)}\omega(g,h)\widehat{\phi}(\xi^\prime;0,0).
\end{equation}
Because $\Theta_{\wtilde{\SL}_2\times \SO(U)}(\sigma,\psi)=0$, $\varphi(g)$ is orthogonal to the second summand, whence
\[
f(h)=\int_{[\SL_2]}\overline{\varphi(g)}\Theta_\phi(g,h)dg=\int_{N(F)\backslash \SL_2(\ba)}\overline{\varphi(g)}\sum_{\xi^\prime\in U(F)}\omega(\gamma g,h)\widehat{\phi}(\xi^\prime;0,1)dg.
\]

It follows that
\begin{align*}
&\int_{[U]}f(uh)\psi_{v_0}(u)dn\\
=&\int_{[U]}\int_{N(F)\backslash \SL_2(\ba)}\overline{\varphi(g)}\sum_{\xi^\prime\in U(F)}\omega(\gamma g,h)\widehat{\phi}(\xi^\prime;0,1)\psi(q(u,v_0-\xi^\prime)) \dd g \dd u\\
=&\int_{N(F)\backslash \SL_2(\ba)}\overline{\varphi(g)}\omega(\gamma g,h)\widehat{\phi}(v_0;0,1)dg\\
=&\int_{N(\ba)\backslash \SL_2(\ba)}\overline{W_{\varphi,\psi_{q(v_0)}}(g)}\omega(g,h)\widehat{\phi}(v_0;0,1)dg.
\end{align*}

When $q(v_0)\in {F^\times}^2$, the above integral is zero because $W_{\varphi,\psi_{q(v_0)}}(g)$ is zero, whence $P_{\Pi,\chi}=0$. When $q(v_0)\not\in {F^\times}^2$, the $h$-function $\int_{[U]}f(uh)\psi_{v_0}(u)du$ is constant on $[SO(V_0)]$, whence $P(f)$ is always zero when $\chi\neq 1$ and is equal to the following expression when $\chi=1$,
\begin{align*}
P(f)
=&\int_{[SO(V_0)]}\int_{[U]}f(uh)\psi_{v_0}(u)dudh\\
=&2\int_{N(\ba)\backslash \SL_2(\ba)}\overline{W_{\varphi,q(v_0)}(g)}\omega(g)\widehat{\phi}(v_0;0,1)dg.
\end{align*}
\end{proof}

We define a local functional $I_v$ on $\ms(V(F_v))\otimes \sigma_v$:
\[
I_v(\phi_v,\varphi_v):=\int_{N(F_v)\backslash \SL_2(F_v)}\overline{W_{\varphi_v,\psi_{v,q(v_0)}}(g_v)}\omega_v(g_v)\widehat{\phi}_v(v_0;0,1)dg_v.
\]

\subsection{The Local Bessel period}\label{local_bessel}

Suppose $\Pi\in \SK(\pi)$ is cuspidal. For $f_{1,v}, f_{2,v}\in \Pi_v$, we follow the regularization method in section ~\ref{subsec_lbp} and consider
\[
\mpp_v(s; f_{1,v},f_{2,v}):=\int_{\SO(V_0)_{F_v}}\big[\int_{U(F_v)}(\Pi_v(u_vh_{v})f_{1,v},f_{2,v})\psi_{v,v_0}(u_v)du_v\big]\chi_v(h_{v})\Delta_v(h_{v})^s \dd h_{v},
\]
where the $U(F_v)$-integration is interpreted in terms of distribution as in section ~\ref{ss_uintegration}.

In the current situation, the $L$-function $L(s,\Pi,\chi)$ is equal to
\begin{equation}\label{lf_skp}
\frac{L(s+\frac{1}{2},\pi\boxtimes \chi)L(s+1,\chi)L(s,\chi)}{L(s+1,\pi,\ad)L(s+\frac{3}{2},\pi)L(s+\frac{1}{2},\pi)\zeta_F(s+2)\zeta_F(s+1)\zeta_F(s)L(s+1,\chi_{E/F})}
\end{equation}
Accordingly, the regularized local Bessel period integral is
\begin{align*}
\mpp_v^\sharp(-):=&\frac{\zeta_{E_v}(1)L(\frac{1}{2},\pi_v)L(\frac{3}{2},\pi_v)L(1,\pi_v,\ad)}{\zeta_{F_v}(4)L(1,\chi_v)L(\frac{1}{2},\pi_v\boxtimes \chi_v)}\cdot \frac{\zeta_{F_v}(s)P_{v}(s,-)}{L(s,\chi_v)}\big|_{s=0}.
\end{align*}

\subsubsection{The $U(F_v)$-integration}\label{ss_ui}

To compute the $U(F_v)$-integration, one needs to find the smooth function $W_{f_{1,v},f_{2,v},\psi_v}(x_v)$ on $U(F_v)^\circ$ such that for all $f\in C^\infty_c(U(F_v)^\circ)$,
\begin{equation}\label{i_d}
\int_{U(F_v)}(\Pi_v(u_v)f_{1,v},f_{2,v})\widehat{f}(u_v)du_v=\int_{U(F_v)}W_{f_{1,v},f_{2,v},\psi_v}(x_v)f(x_v)dx_v.
\end{equation}

For $\phi_v\in \ms(V(F_v))$ and $\varphi_v\in \sigma_v$, define
\begin{equation}\label{func_I}
I_v(x_v;\phi_v,\varphi_v):=\int_{N(F_v)\backslash \SL_2(F_v)}\overline{W_{\varphi_v,\psi_{v,q(x_v)}}(g_v)}\omega_v(g_v)\widehat{\phi}_v(x_v;0,1)dg_v,\quad x_v\in U(F_v)^\circ.
\end{equation}

\begin{proposition}\label{prop_ftdistribution}
If $f_{i,v}=\theta_v(\phi_{i,v},\varphi_{i,v})$, the following function satisfies equation (\ref{i_d}),
\begin{equation}\label{ui_ft}
W_{f_{1,v},f_{2,v},\psi_v}(x_v)=C_{\sigma_v,\Pi_v}c_{\sigma_v,\psi_{v,q(x_v)}}c_v|q(x_v)|^{-1}I_v(x_v;\phi_{1,v},\varphi_{1,v})\overline{I_v(x_v;\phi_{2,v},\varphi_{2,v})}.
\end{equation}
As a consequence,
\begin{equation}\label{ui_skp_1}
\int_{U(F_v)}(\Pi_v(u_v)f_{1,v},f_{2,v})\psi_{v_0}(u_v)du_v=C_{\sigma_v,\Pi_v}c_{\sigma_v,\psi_{v,q(v_0)}}c_vI_v(\phi_{1,v},\varphi_{1,v})\overline{I_v(\phi_{2,v},\varphi_{2,v})}.
\end{equation}
\end{proposition}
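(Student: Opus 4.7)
The plan is to establish (\ref{ui_ft}) as an equality of smooth functions on $U^\circ$ that represent the tempered distribution $\mathcal{F}T_{(\Pi_v(u)f_{1,v},f_{2,v})}$; formula (\ref{ui_skp_1}) then follows immediately from Definition \ref{def_ui} together with the identification $I_v(v_0;\phi,\varphi) = I_v(\phi,\varphi)$ (the global functional from Section \ref{subsec_gbp} being the restriction of the local one at $x = v_0$). The strategy is to unfold matrix coefficients on both sides --- via the local inner product formula (\ref{lipfso5}) on the $\SO(V)$-side and via the Whittaker inner product formula (\ref{ipwfsl2}) on the $\wtilde{\SL}_2$-side --- and to evaluate the resulting $U_v$-Fourier transform by means of the quadratic Fourier transform machinery of Section \ref{qft}.

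First I would apply (\ref{lipfso5}) to express $(\Pi_v(u_v) f_{1,v}, f_{2,v})$ as an $\SL_2(F_v)$-integral of $\overline{(\sigma_v(g) \varphi_{1,v}, \varphi_{2,v})}$ against the Weil-representation matrix coefficient $(\omega_v(u_v) \omega_v(g) \phi_{1,v}, \phi_{2,v})$, and pass to the mixed model $\ms(U_v \oplus F_v^2)$ via the partial Fourier transform. In this model the $U_v$-action is
\[
\omega_v(u_v) \widehat{\phi}(w;t,y) = \psi(-y q(u_v, w) + ty q(u_v))\,\widehat{\phi}(w - tu_v; t, y),
\]
so the matrix coefficient has quadratic-exponential dependence on $u_v$. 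Taking the formal $U_v$-Fourier transform against $\psi(q(u_v,x))$ and performing the change of variable $w_1 = w + t u_v$ (valid for $t \neq 0$) yields a clean factorization: the total phase in $u_v$ separates into $w_0$-only and $w_1$-only pieces of the form $(1/t)[(q(w_1,x) - y q(w_1)) - (q(w_0,x) - y q(w_0))]$, giving
\[
\int_{U_v}(\omega_v(u_v)\omega_v(g)\phi_{1,v},\phi_{2,v})\psi(q(u_v,x))\,du_v = \int |t|^{-\dim U}A_1(g;t,y;x)\overline{A_2(t,y;x)}\,dt\,dy,
\]
where $A_i(\cdot;t,y;x):= \int_{U_v}[\omega_v(g_i)\widehat{\phi}_{i,v}](w;t,y)\psi(-(q(w,x)-y q(w))/t)\,dw$ with $g_1 = g$, $g_2 = 1$.

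Second, completing the square in $w$ identifies each $A_i$ as a quadratic Fourier transform in that variable, and Proposition \ref{qft_V2} --- applicable since $\dim U = 3 \geq 2$ --- combined with the $L^2$-isometry of Proposition \ref{qft_V} converts the $(t,y)$-integration of the pairing $A_1\overline{A_2}$ into an integral over the $q(x)$-sphere $\{w\in U_v:q(w)\in q(x) F_v^{\times 2}\}$; the resulting sphere-measure Jacobian $2|2q(x)|^{-1}$ from (\ref{eq_qfti}) is the source of the factor $|q(x)|^{-1}$ in (\ref{ui_ft}). On the $\wtilde{\SL}_2$-side, the Iwasawa decomposition $\SL_2(F_v) = N(F_v) A(F_v) K_v$ together with (\ref{ipwfsl2}) unfolds $\overline{(\sigma_v(g) \varphi_{1,v}, \varphi_{2,v})}$ into a sum of $N(F_v)\backslash\SL_2(F_v)$-integrals of $\overline{W_{\varphi_1,\psi_{v,\delta}}}\,W_{\varphi_2,\psi_{v,\delta}}$ indexed by $\delta \in F_v^\times/F_v^{\times 2}$. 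For $x \in U^\circ$ the sphere-fiber decomposition selects only the term $\delta \equiv q(x)$, leaving precisely the product $I_v(x;\phi_{1,v},\varphi_{1,v})\overline{I_v(x;\phi_{2,v},\varphi_{2,v})}$. Collecting the constants $C_{\sigma_v,\Pi_v}$ from (\ref{lipfso5}), $c_{\sigma_v,\psi_{v,q(x)}}$ from (\ref{ipwfsl2}), and a remaining measure constant $c_v$ from the Iwasawa-decomposition--quadratic-Fourier-transform interplay produces (\ref{ui_ft}).

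The main obstacle is the rigorous justification of the interchange of integration orders, since the $U_v$-integral, the $\SL_2(F_v)$-integral, and the sphere-fiber integrals are all only conditionally convergent at best; all manipulations must therefore be carried out in the sense of tempered distributions. The framework of Section \ref{subsec_td} addresses this: Proposition \ref{reg_V}, applied to the action of $F_v^\times \times \SO(U)_{F_v}$, lets one smear the vectors $f_{1,v}, f_{2,v}$ by $C^\infty_c$ densities (via Dixmier--Malliavin in the archimedean case, via averaging over a compact-open subgroup in the non-archimedean case), so that $(\Pi_v(u_v)f_{1,v},f_{2,v})$ defines a tempered distribution on $U_v$ whose Fourier transform is represented over $U^\circ$ by a smooth function. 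The $L^2$-isometry of the quadratic Fourier transform (Propositions \ref{qft_V}, \ref{qft_V2}) then legitimizes the key swap of the $U_v$-integration with the $(t,y)$- and $\SL_2$-integrations after this smearing, and the resulting identity of smooth functions on $U^\circ$ descends to the original vectors $f_{1,v},f_{2,v}$ by continuity in the smearing parameters.
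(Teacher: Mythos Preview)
Your overall strategy---local inner product formula (\ref{lipfso5}), Whittaker expansion (\ref{ipwfsl2}), quadratic Fourier transform from Section~\ref{qft}---is the same as the paper's, but the organization differs in a way that creates a gap at the step you describe most briefly. The paper first uses the identification $N(F_v)\backslash\SL_2(F_v)\cong F_v^2\setminus\{0\}$ (equation~(\ref{vc}), Lemma~\ref{lippi}) to reparametrize the extra $F_v^2$-variables of the mixed model as an $N\backslash\SL_2$-integral; after a change of variable in $g_v$ and the Iwasawa decomposition, the $N(F_v)$-integration pairs the $\sigma_v$-matrix coefficient (a one-dimensional quadratic Fourier transform via (\ref{ipwfsl2})) against a Schwartz function on $U(F_v)$ (a three-dimensional one), which is exactly the setup of Proposition~\ref{qft_V2}. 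This is carried out in Lemma~\ref{sl2iv}, and the constant $c_v$ is precisely the one from (\ref{vc}).

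Your route instead produces a $(t,y)$-pairing of $A_1\overline{A_2}$ in which \emph{both} factors are three-dimensional quadratic Fourier transforms (of the $U_v$-variable in $\widehat{\phi}_{1,v}$ and $\widehat{\phi}_{2,v}$). Proposition~\ref{qft_V2} is not designed for this; it converts a pairing $\int\mf_2W(\delta t)\mf_q\phi(-t)\,dt$ of a one-variable $W$ with a multi-variable $\phi$ into a sphere integral, and you have no one-variable factor at this stage. Moreover, your $A_2$ carries no $\SL_2$-variable, so the Whittaker data of $\varphi_{2,v}$ has no natural partner---the desired factorization $I_v(x;\phi_{1,v},\varphi_{1,v})\overline{I_v(x;\phi_{2,v},\varphi_{2,v})}$ requires an \emph{extra} $N\backslash\SL_2$-variable on the $\phi_2$-side, and this is exactly what the reparametrization (\ref{vc}) supplies. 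To complete your argument you would need to convert the $(t,y)$-integral into an $N\backslash\SL_2$-integral and merge it with the $\SL_2$-integral coming from (\ref{lipfso5}); once you do that, your argument becomes the paper's, and the ``remaining measure constant'' you mention is identified with the $c_v$ of (\ref{vc}) rather than left unspecified.
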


We will prove Proposition ~\ref{prop_ftdistribution} after preparing two lemmas. Recall the local constants $C_{\sigma_v,\Pi_v}$ in (\ref{lipfso5}) and $c_{\sigma_v,\psi_{v,{\delta_{v,i}}}}$ in (\ref{ipwfsl2}). With respect to the right translation action of $\SL_2(F_v)$ on $F_v\oplus F_v$ and the quotient measure $d\dot{g}_v$ on $N(F_v)\backslash \SL_2(F_v)$, there is a constant $c_v$ such that for all $\lambda\in L^1(F_v^2)$,
\begin{equation}\label{vc}
\int_{F_v^2}\lambda(y_v,z_v)dy_vdz_v=c_v \int_{N(F_v)\backslash \SL_2(F_v)}\lambda\big((0,1)\dot{g}_v\big)d\dot{g}_v.
\end{equation}
One has $\prod_v c_v=1$ with $c_v=\frac{1}{\zeta_{F_v}(2)}$ for almost all $v$.

For each $r_v\in {F_v^\times}^2$, choose one of its two square-roots and denote it by $(r_v)^{1/2}$. For $r_v\in F_v^\times$, define the $\psi_{v,r_v}$-Whittaker functional $\sigma_v$ as $\ell_{\psi_{v,r_v}}(\varphi_v)=\ell_{\psi_{v,\delta_{v,i}}}\big(\sigma_v(\underline{a}_v)\varphi_v\big)$ if $r_v\in\delta_{v,i}{F_v^\times}^2$ and $a_v=(\delta_{v,i}^{-1}r_v)^{1/2}$. Set $W_{\varphi,\psi_{v,r_v}}(g_v)=\ell_{\psi_{v,r_v}}\big(\sigma_v(g_v)\varphi_v\big)$.

\begin{lemma}\label{lippi}
For $f_{i,v}=\theta_v(\phi_{i,v},\varphi_{i,v})$, there is
\begin{align*}
(f_{1,v},f_{2,v})=&C_{\sigma_v,\Pi_v}c_v\int_{\SL_2(F_v)\times K_v}\int_{U(F_v)\times F_v^\times} \overline{(\sigma_v(g_v)\varphi_{1,v},\leftup{k_v}\varphi_{2,v})}\\
&\quad\quad \omega_v(g_v)\widehat{\phi_{1,v}}(x_v;0,a_v^{-1})\leftup{k_v}\overline{\widehat{\phi}_{2,v}(x_v;0,a_v^{-1})}|a_v|^{-2}dx_vd^\ast a_vdg_vdk_v.
\end{align*}
\end{lemma}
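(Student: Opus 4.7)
The starting point is the local inner product formula \eqref{lipfso5}, specialized to the chosen local theta lifts $f_{i,v}=\theta_v(\phi_{i,v},\varphi_{i,v})$. The plan is to pass from the Schr\"odinger to the mixed model of the Weil representation via the partial Fourier transform, then apply the change of variable \eqref{vc} together with the Iwasawa decomposition to reduce the $F_v^2$-integration, and finally perform a substitution in the outer $\SL_2(F_v)$-integration combined with the $K_v$-unitarity of $\sigma_v$ and $\omega_v$ to redistribute the $K_v$-variable onto the ``second'' factors.

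Concretely, since the partial Fourier transform $\phi\mapsto\widehat\phi$ intertwines the Schr\"odinger model $\ms(V(F_v))$ with the mixed model $\ms(U(F_v)\oplus F_v^2)$ and is an $L^2$-isometry by Plancherel, the Weil inner product $(\omega_v(g_v)\phi_{1,v},\phi_{2,v})$ becomes the mixed-model integral
\begin{equation*}
\int_{U(F_v)\times F_v^2}(\omega_v(g_v)\widehat\phi_{1,v})(u_v;x_v,y_v)\,\overline{\widehat\phi_{2,v}(u_v;x_v,y_v)}\,\dd u_v\,\dd x_v\,\dd y_v.
\end{equation*}
Applying \eqref{vc} to the inner $F_v^2$-integration parametrizes $F_v^2\setminus\{0\}$ via the $\SL_2(F_v)$-orbit through $(0,1)\cong N(F_v)\backslash\SL_2(F_v)$; doing Iwasawa $\dot g=\underline{a}k$ with $\dd\dot g=|a|^{-2}\dd^\ast a\,\dd k$, the orbit point $(0,1)\dot g$ becomes $(0,a_v^{-1})k_v$ and we acquire the factor $c_v|a_v|^{-2}$ in the measure.

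To bring the $K_v$-variable into the matrix-coefficient arguments, perform the substitution $g_v\mapsto k_v^{-1}g_v$ in the outer $\SL_2(F_v)$-integration (which preserves the Haar measure) and apply $K_v$-unitarity. On the $\sigma_v$-side this rewrites
\begin{equation*}
\overline{(\sigma_v(k_v^{-1}g_v)\varphi_{1,v},\varphi_{2,v})}=\overline{(\sigma_v(g_v)\varphi_{1,v},\leftup{k_v}\varphi_{2,v})},
\end{equation*}
while on the $\omega_v$-side, the operator $\omega_v(k_v^{-1})$ coming from the same substitution cancels the evaluation shift by $k_v$ on the first factor (so that $\omega_v(g_v)\widehat\phi_{1,v}$ is evaluated at the normalized point $(0,a_v^{-1})$) and correspondingly transfers that shift to the second factor, yielding $\leftup{k_v}\overline{\widehat\phi_{2,v}(x_v;0,a_v^{-1})}$. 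Reassembling the constants produces the formula.

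The main technical point is this final redistribution step: one must check that the Weil action of $k_v\in K_v$ on $\widehat\phi$ in the mixed model is compatible with the natural $\SL_2(F_v)$-right action on the $(x_v,y_v)$-slot, so that the $K_v$-variable from the Iwasawa decomposition and the action of $k_v^{-1}$ coming from the outer substitution do combine and transfer cleanly to the second factors. This requires the explicit Weil-representation formulas in the specific polarization underlying the mixed model, and is where the bookkeeping must be carried out with care.
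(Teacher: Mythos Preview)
Your proposal is correct and follows the same route as the paper: rewrite $(\omega_v(g_v)\phi_{1,v},\phi_{2,v})$ in the mixed model, apply \eqref{vc} and Iwasawa to the $F_v^2$-slot, then substitute $g_v\mapsto k_v^{-1}g_v$ and use unitarity to push $k_v$ onto the second factors. Two small clarifications: the compatibility you flag is not a pointwise identity but holds after the $U(F_v)$-integration, since the $\omega_{U}$-part of $\omega_v(k_v)$ acts unitarily on $\ms(U(F_v))$ and hence drops out of $\int_{U(F_v)}\leftup{k_v}{\widehat\phi_{1}}\,\overline{\leftup{k_v}{\widehat\phi_{2}}}\,dx_v$, leaving only the natural right $\SL_2$-action on the $F_v^2$-slot; and the substitution $g_v\mapsto k_v^{-1}g_v$ inside the double integral $\int_{\SL_2(F_v)}\int_{K_v}$ requires a Fubini justification, which the paper supplies by invoking the asymptotic decay of the matrix coefficients of the unitary representation $\sigma_v$.
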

\begin{proof}
By (\ref{vc}), we have
\begin{align*}
(\phi_{1,v},\phi_{2,v})=&\int_{U(F_v)\times F_v\times F_v}\hat{\phi}_{1,v}(x_v;y_v,z_v)\overline{\hat{\phi}_{2,v}(x_v;y_v,z_v)}dx_vdy_vdz_v\\
=&c_v \int_{U(F_v)\times (N(F_v)\backslash \SL_2(F_v))}\hat{\phi}_{1,v}(x_v;(0,1)g_v^\prime)\overline{\hat{\phi}_{2,v}(x_v;(0,1)g_v^\prime)}dx_v d\dot{g}_v^\prime\\
=&c_v \int_{K_v}\int_{U(F_v)\times F_v^\times}\leftup{k_v}{\hat{\phi}}_{1,v}(x_v;0,a_v^{-1})\overline{\leftup{k_v}{\hat{\phi}}_{2,v}(x_v;0,a_v^{-1})}|a_v|^{-2}dx_v d^\ast a_vdk_v.
\end{align*}
Here one writes $\dot{g}_v^\prime=\smalltwomatrix{a_{v}}{}{}{a_{v}^{-1}}k_{v}$ with $k_v\in K_v$ and  $d\dot{g}_v^\prime=|a_{v}|^{-2}d^\ast a_{v} dk_{v}$ in the last step.

Using the above to rewrite the local inner product formula in equation (\ref{lipfso5}), we get
\[
(f_{1,v},f_{2,v})=C_{\sigma_v,\Pi_v}c_v\int_{\SL_2(F_v)}\int_{K_v}A(g_v,k_v)dg_vdk_v,
\]
where
\[
A(g_v,k_v)=\int_{U(F_v)\times F_v^\times}\overline{(\omega_v(g_v)\varphi_{1,v},\varphi_{2,v})}\omega_v(k_vg_v){\hat{\phi}}_{1,v}(x_v;0,a_v^{-1})\overline{\leftup{k_v}{\hat{\phi}}_{2,v}(x_v;0,a_v^{-1})}|a_v|^{-2}dx_v d^\ast a_v.
\]
Because $\sigma_v$ is unitary, one can use the asymptotic estimate of the matrix coefficient of $\sigma_v$ to show that $\iint_{\SL_2(F_v)\times K_v}A(k_v^{-1}g_v,k_v)dg_vdk_v$ absolutely converges, whence
\[
(f_{1,v},f_{2,v})=C_{\sigma_v,\Pi_v}c_v\int_{\SL_2(F_v)\times K_v}A(k_v^{-1}g_v,k_v)dg_vdk_v.
\]
This is exactly the equation in the lemma.
\end{proof}

\begin{lemma}\label{sl2iv}
For $\varphi_{1,v},\varphi_{2,v}\in \sigma_v$ and $\Phi\in \ms(U(F_v))$, there is
\begin{align*}
&\int_{N(F_v)}\int_{U(F_v)}\overline{(\sigma(n)\varphi_1,\varphi_2)}\Phi(x_v)\psi_v(n_vq(x_v))dx_vdn_v\\
=&\sum_i c_{\sigma_v,\psi_{\delta_{v,i}}}\int_{U(F_v)_{[\delta_{v,i}]}} \overline{\ell_{\psi_{v,q(x_v)}}(\varphi_{1,v})}\ell_{\psi_{v,q(x_v)}}(\varphi_{2,v})\Phi(x_v)|q(x_v)|^{-1}dx_v,
\end{align*}
where $U(F_v)_{\delta_{v,i}}=\{x_v\in U(F_v):q(x_v)\in \delta_{v,i}{F_v^\times}^2\}$.
\end{lemma}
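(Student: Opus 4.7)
\medskip

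\noindent\textbf{Proof proposal for Lemma~\ref{sl2iv}.} The plan is to perform the $U(F_v)$-integration first as a quadratic Fourier transform, expand the matrix coefficient via the Whittaker--Plancherel formula \eqref{ipwfsl2}, and invoke the quadratic Fourier isometry (Proposition~\ref{qft_V2}) to convert back to an integral over $U(F_v)$.

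\smallskip

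\emph{Step 1 (switch order and recognize $\mathcal{F}_q$).} Parametrize $N(F_v)$ by $t \mapsto n(t)=\smalltwomatrix{1}{t}{}{1}$. The innermost $x_v$-integral against $\psi_v(t q(x_v))$ is by definition $\mathcal{F}_q\Phi(t)$, so the left-hand side equals
\[
\int_{F_v} \overline{(\sigma_v(n(t))\varphi_{1,v},\varphi_{2,v})}\,\mathcal{F}_q\Phi(t)\,\dd t.
\]

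\emph{Step 2 (expand the matrix coefficient).} The identity $\underline{a}n(t) = n(a^2 t)\underline{a}$ gives the transformation law
$W_{\sigma_v(n(t))\varphi,\psi_{v,\delta_{v,i}}}(\underline{a}) = \psi_v(\delta_{v,i} a^2 t)\,W_{\varphi,\psi_{v,\delta_{v,i}}}(\underline{a})$.
Substituting into \eqref{ipwfsl2} and conjugating,
\[
\overline{(\sigma_v(n(t))\varphi_{1,v},\varphi_{2,v})}
=\sum_i \tfrac{|2|_v}{2}\,c_{\sigma_v,\psi_{v,\delta_{v,i}}}\int_{F_v^\times} \psi_v(-\delta_{v,i}a^2 t)\,K_i(a)\,\dd a,
\]
where $K_i(a):=\overline{W_{\varphi_1,\psi_{\delta_{v,i}}}(\underline{a})}W_{\varphi_2,\psi_{\delta_{v,i}}}(\underline{a})|a|^{-1}$. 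Since $\sigma_v(\underline{-1})$ acts on $\sigma_v$ by a single scalar, $K_i$ is even in $a$, and the inner $a$-integral is $\mathcal{F}_2 K_i(-\delta_{v,i} t)$.

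\smallskip

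\emph{Step 3 (quadratic Fourier isometry).} After interchanging the finite $i$-sum with the $t$-integral, we are left with a sum of terms of the shape $\int_{F_v}\mathcal{F}_2 K_i(-\delta_{v,i}t)\,\mathcal{F}_q\Phi(t)\,\dd t$. Substituting $t\mapsto -t$ and applying Proposition~\ref{qft_V2} with $\delta=\delta_{v,i}$, $W=K_i$, $\phi=\Phi$ (valid because $\dim U=3\geq 2$, so $\widetilde{\Phi}\in L^2(F_v)$ by Proposition~\ref{qft_V}, and because $K_i$ comes from the Whittaker--Plancherel $L^2$-setting for $\sigma_v$), this integral is rewritten as an integral over $U(F_v)_{[\delta_{v,i}]}$ against $\Phi(X)|q(X)|^{-1/2}\,\dd X$. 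Finally, evaluating $K_i$ at $a=\sqrt{\delta_{v,i}^{-1}q(X)}$ and using the very definition $\ell_{\psi_{v,q(x)}}(\varphi)=W_{\varphi,\psi_{v,\delta_{v,i}}}(\underline{a})$ from Section~\ref{wfsl2}, together with bookkeeping of the powers of $|\delta_{v,i}|$ and $|2|_v$ coming from the square-root substitution and the measure normalizations, assembles the asserted right-hand side.

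\smallskip

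\emph{Main obstacle.} The delicate point is Step~3: neither the $N(F_v)$-integral of the matrix coefficient nor $\mathcal{F}_q\Phi$ is individually in $L^1$, so one cannot invoke Fubini directly on the formal iterated integral. This is exactly the situation that Section~\ref{qft} was built to handle: Proposition~\ref{qft_V} supplies the $L^2$-control on $\widetilde{\Phi}$ (using $\dim U\geq 2$), and Proposition~\ref{qft_V2} supplies the rigorous Plancherel-type identity between the two ``quadratic Fourier'' sides. A secondary point of care is to verify that $K_i$ satisfies the hypothesis $K_i^2(a)|a|^{-1}\in L^1(F_v)$ required by Proposition~\ref{qft_V2}; this reduces to the known asymptotic/integrability properties of $\sigma_v$-Whittaker functions on $\underline{a}$ and can, if necessary, be established first for a dense set of $\varphi_{1,v},\varphi_{2,v}$ (e.g.\ $K$-finite vectors) and then extended by continuity.
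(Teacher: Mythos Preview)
Your proposal is correct and follows essentially the same route as the paper: expand $\overline{(\sigma_v(n)\varphi_1,\varphi_2)}$ via the Whittaker--Plancherel formula \eqref{ipwfsl2} into a sum of quadratic Fourier transforms $\mathcal{F}_2 K_i(-\delta_{v,i} t)$, recognize the inner $U(F_v)$-integral as $\mathcal{F}_q\Phi(t)$, and then apply Proposition~\ref{qft_V2} term by term. The paper handles the integrability hypothesis $K_i^2(a)|a|^{-1}\in L^1(F_v)$ by quoting the known asymptotic estimates of $\widetilde{\SL}_2$-Whittaker functions (cf.\ \cite{qiu4a}, Remark~5), which is cleaner than your suggested density-and-continuity workaround but amounts to the same verification.
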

\begin{proof}
Set $W_i(a_v)=\overline{W_{\varphi_{1,v},\psi_{v,\delta_{v,i}}}}W_{\varphi_{1,v},\psi_{v,\delta_{v,i}}}(a_v)|a_v|^{-1}$, $a_v\in F_v^\times$. By equation (\ref{ipwfsl2}),
\begin{align*}
\overline{(\sigma_v(n_v)\varphi_{1,v},\varphi_{2,v})}
=&\sum_i 2^{-1}|2|c_{\sigma_v,\psi_{v,{\delta_i}}}\int_k \overline{W_{\varphi_{1,v},\psi_{v,{\delta_{v,i}}}}}W_{\varphi_{2,v},\psi_{v,\delta_{v,i}}}(a_v)\psi(-\delta_{i,v} n_va_v^2)\dd^\times a_v\\
=&\sum_{i} 2^{-1}|2|c_{\sigma_v,\psi_{v,\delta_{v,i}}} \mf_2 W_i(-\delta_{v,i} n_v),
\end{align*}
Hence the LHS of the equation in lemma is
\[
\sum_{i} 2^{-1}|2|c_{\sigma_v,\psi_{v,\delta_{v,i}}}\int_{F_v}\mf_2W_i(-\delta_{v,i}n_v)\mf_q\Phi(n_v)dn_v.
\]

Because $\sigma_v$ is unitary, one can uses the asymptotic estimate of the Whittaker functions $W_{\varphi_v,\psi_{v,\delta_i}}(a)$ (c.f. \cite{qiuy2019}, remark 5) to show that $W_i^2(a)|a|^{-1}\in L^1(F_v)$. Hence, by Proposition ~\ref{qft_V2},
\begin{align*}
&\int_{N(F_v)}\mf_2 W_i(-\delta_{v,i} n_v)\mf_{q}\Phi(n_v)dn_v\\
=&2|2\delta_{v,i}|^{-\frac{1}{2}}\int_{U(F_v)_{[\delta_{v,i}]}} W_i(\sqrt{\delta_{v,i}^{-1}q(x_v)})\Phi(x_v)|q(x_v)|^{-\frac{1}{2}}dx_v\\
=&2|2|^{-1}\int_{V_{[\delta_{v,i}]}} \overline{\ell_{\psi_{v,q(x_v)}}(\varphi_1)}\ell_{\psi_{v,q(x_v)}}(\varphi_2)\Phi(x_v)|q(x_v)|^{-1}dx_v.
\end{align*}
Here we use the observation that when $q(x_v)=\delta_{v,i} a_v^2$, the function $W_i(\sqrt{\delta_{v,i}^{-1}q(x_v)})$ takes value $W_i(a_v)=|\delta_{v,i}|^{\frac{1}{2}}\overline{\ell_{\psi_{v,q(x_v)}}(\varphi_{1,v})}\ell_{\psi_{v,q(x_v)}}(\varphi_{2,v})|q(x_v)|^{-\frac{1}{2}}$. Now the equation in the lemma follows.
\end{proof}

\begin{proof}[Proof of Proposition ~\ref{prop_ftdistribution}] By Lemma ~\ref{lippi}, the LHS of equation (\ref{i_d}) is equal to
\begin{align*}
&\text{LHS of (\ref{i_d})}=C_{\sigma_v,\Pi_v}c_v\int_{U(F_v)}\int_{\SL_2(F_v)\times K_v}\int_{U(F_v)\times F_v^\times} \overline{(\sigma_v(g_v)\varphi_{1,v},\leftup{k_v}\varphi_{2,v})}\\
& \cdot \omega_v(g_v)\widehat{\phi_{1,v}}(x_v;0,a_v^{-1})\overline{\leftup{k_v}\widehat{\phi}_{2,v}(x_v;0,a_v^{-1})}\hat{f}(u_v)|a_v|^{-2}\psi_v(-q(u_v,a_v^{-1}x_v))dx_vd^\ast a_vdg_vdk_vdu_v.
\end{align*}

We observe that the inner integration with respect to $dx_vd^\ast a_v$ on $U(F_v)\times F_v^\times$ yields a function of $(u_v,g_v,k_v)$ that is absolutely integrable on $U(F_v)\times \SL_2(F_v)\times K_v$, whence the outer iterated integral $\int_{U(F_v)}\int_{\SL_2(F_v)\times K_v}$ can be replaced by $\int_{\SL_2(F_v)\times K_v}\int_{U(F_v)}$ and the full order of integration is $\int_{\SL_2(F_v)\times K_v}\int_{U(F_v)}\int_{U(F_v)\times F_v^\times}$. Furthermore, the iterated integral $\int_{U(F_v)}\int_{U(F_v)\times F_v^\times}$ is equal to the absolutley convergent multiple integral $\int_{U(F_v)\times U(F_v)\times F_v^\times}$. Thus, one can integrate with respect to $du_v$ on $U(F_v)$ first; this yields
\begin{align*}
\text{LHS of (\ref{i_d})}=&C_{\sigma_v,\Pi_v}c_v\int_{\SL_2(F_v)\times K_v}\int_{U(F_v)\times F_v^\times} \overline{(\sigma_v(g_v)\varphi_{1,v},\leftup{k_v}\varphi_{2,v})}\\
&\cdot \omega_v(g_v)\widehat{\phi_{1,v}}(x_v;0,a_v^{-1})\overline{\leftup{k_v}\widehat{\phi}_{2,v}(x_v;0,a_v^{-1})}f(a_v^{-1}x_v)|a_v|^{-2}dx_vd^\ast a_vdg_vdk_v.
\end{align*}

We use the decomposition $g_v=n_v\smalltwomatrix{b_v}{}{}{b_v^{-1}}k_{v}^\prime$, $dg_v=|b_v|^{-2}dn_vd^\ast b_v dk^\prime_{v}$ to turn the outer integral $\int_{\SL_2(F_v)\times K_v}$ into an iterated integral $\int_{K_v\times K_v}\int_{F_v^\times}\int_{N(F_v)}$. We also rewrites the inner integral $\int_{U(F_v)\times F_v^\times}$ as an iterated integral $\int_{U(F_v)}\int_{F_v^\times}$. Then
\begin{align*}
\text{LHS of (\ref{i_d})}&=C_{\sigma_v,\Pi_v}c_v \int_{K_v\times K_v}\int_{F_v^\times}\int_{N(F_v)}\int_{U(F_v)}\int_{F_v^\times} \overline{(\sigma_v(n_v\underline{b_v})^\leftup{k_v^\prime}{\varphi}_{1,v},\leftup{k_v}\varphi_{2,v})}\\
&\cdot \omega_v(n_v\underline{b_v})\leftup{k_v^\prime}{\widehat{\phi_{1,v}}}(x_v;0,a_v^{-1})\overline{\leftup{k_v}\widehat{\phi}_{2,v}(x_v;0,a_v^{-1})}f(a_v^{-1}x_v)|a_vb_v|^{-2}dx_vd^\ast a_vdn_vd^\ast b_v dk_v^\prime dk_v.
\end{align*}

We now analyze inner part $\int_{N(F_v)}\int_{U(F_v)}\int_{F_v^\times}$ in the above expression. For convenience, set $\wtilde{\varphi}_{1,v}=\sigma_v(\underline{b_v}k_v^\prime)\varphi_{1,v}$, $\wtilde{\varphi}_{2,v}=\sigma_v(k_v)\varphi_{2,v}$, $\Phi_{1,v}=\omega_v(\underline{b_v}k_v^\prime)\phi_{1,v}$, $\Phi_{2,v}=\omega_v(k_v)\phi_{2,v}$
and
\[
\Phi_v(x_v)=\int_{F_v^\times}\Phi_{1,v}(x_v;0,a_v^{-1})\overline{\Phi_{2,v}(x_v;0,a_v^{-1})}f(a_v^{-1}x_v)|a_v|^{-2}d^\ast a_v.
\]
Then the inner integral $\int_{N(F_v)}\int_{U(F_v)}\int_{F_v^\times}$ with respect to $n_v, x_v, a_v$ is turned into
\[
|b_v|^{-2}\int_{N(F_v)}\int_{U(F_v)}\overline{(\sigma_v(n_v)\wtilde{\varphi}_{1,v},\wtilde{\varphi}_{2,v})}\Phi_v(x_v)\psi_v(q(x_v)n_v)dx_vdn_v,
\]
Which is further equal to the following by Lemma ~\ref{sl2iv},
\[
c_{\sigma_v,\psi_{v,q(x_v)}}|b_v|^{-2}\int_{U(F_v)}\overline{\ell_{\psi_{v,q(x_v)}}(\wtilde{\varphi}_{1,v})}\ell_{\psi_{v,q(x_v)}}(\wtilde{\varphi}_2)\Phi_v(x_v)|q(x_v)|^{-1}dx_v
\]
It follows that
\begin{align*}
&\text{LHS of (\ref{i_d})}\\
=&C_{\sigma_v,\Pi_v}c_v c_{\sigma_v,\psi_{v,q(x_v)}}\int_{K_v\times K_v}\int_{F_v^\times}\int_{U(F_v)}\int_{F_v^\times}\overline{\ell_{\psi_{v,q(b_vx_v)}}(\leftup{k_v^\prime}{\varphi}_{1,v})}\ell_{\psi_{v,q(x_v)}}(\leftup{k_v}{\varphi}_{2,v})\\
&\quad \cdot \omega_v(\underline{b_v})\leftup{k_v^\prime}{\widehat{\phi}}_{1,v}(x_v;0,a_v^{-1})\overline{\leftup{k_v}\widehat{\phi}_{2,v}(x_v;0,a_v^{-1})}f(a_v^{-1}x_v)|a_v^{-1}b_v^{-1}|^2|q(x_v)|^{-1}d^\ast a_vdx_vd^\ast b_v dk_vdk_v^\prime\\
=&C_{\sigma_v,\Pi_v}c_v c_{\sigma_v,\psi_{v,q(x_v)}}\int_{K_v\times K_v}\int_{F_v^\times}\int_{U(F_v)}\int_{F_v^\times}\overline{\ell_{\psi_{v,q(a_vb_vx_v)}}(\leftup{k_v^\prime}{\varphi}_{1,v})}\ell_{\psi_{v,q(a_vx_v)}}(\leftup{k_v}{\varphi}_{2,v})\\
&\quad \cdot \omega_v(\underline{a_v}\underline{b_v})\leftup{k_v^\prime}{\widehat{\phi}}_{1,v}(x_v;0,1)\overline{\omega_v(\underline{a_v})\leftup{k_v}\widehat{\phi}_{2,v}(x_v;0,1)}f(x_v)|a_v^4b_v^2|^{-1}|q(x_v)|^{-1}d^\ast a_vdx_vd^\ast b_v dk_vdk_v^\prime,
\end{align*}
Here we make a change of variable $x_v=a_vx_v$ for the second ``$=$".

The above integral is absolutely convergent and the change of variable $b_v=a_v^{-1}b_v$ turns it into
\begin{align*}
&C_{\sigma_v,\Pi_v}c_vc_{\sigma_v,\psi_{v,q(x_{v})}}\int_{U(F_v)}\int_{K_v\times K_v}\int_{F_v^\times\times F_v^\times}\overline{W_{\varphi_{1,v},\psi_{v,q(x_{v})}}(\underline{b_{v}}k_v^\prime)}W_{\varphi_{2,v},\psi_{v,q(x_{v})}}(\underline{a_{v}}k_{v})\cdot\\
&\quad \cdot \omega_v(\underline{b_{v}}k^\prime_{v}))\widehat{\phi_{1,v}}(x_{v};0,1)\overline{\omega_v(\underline{a_{v}}k_{v})\widehat{\phi_{2,v}}(x_{v};0,1)}|b_{v}a_{v}|^{-2} f(x_{v})|q(x_v)|^{-1}d^\ast b_{v} d^\ast a_{v} dk_{v}d k^\prime_{v}dx_{v}.
\end{align*}
This is equal to the RHS of equation (\ref{i_d}) with $W_{f_{1,v},f_{2,v},\psi_v}(x_v)$ given in Proposition ~\ref{prop_ftdistribution}.
\end{proof}

\subsubsection{The regularized local period integral}\label{rlp_sk}

The $h_v$-function $\int_{U(F_v)}(\Pi_v(u_vh_v)f_{1,v},f_{2,v})\psi_{v_0}(u_v)du_v$ is trivial on $\SO(V_0)_{F_v}$ by Proposition ~\ref{prop_ftdistribution}, whence
\begin{align}\label{lrp_sk}
\nonumber \mpp_v^\sharp(f_{1,v},f_{2,v})=&\frac{\zeta_{E_v}(1)L(\frac{1}{2},\pi_v)L(\frac{3}{2},\pi_v)L(1,\pi_v,\ad)}{\zeta_{F_v}(4)L(1,\chi_v)L(\frac{1}{2},\pi_v\boxtimes \chi_v)}\cdot \frac{\zeta_{F_v}(s)\mpp(s,\chi_v)}{L(s,\chi_v)}\big|_{s=0}\\
&\quad \cdot C_{\sigma_v,\Pi_v}c_{\sigma_v,\psi_{v,q(v_0)}}c_vI_v(\phi_{1,v},\varphi_{1,v})\overline{I_v(\phi_{2,v},\varphi_{2,v})}.
\end{align}
Here $\mpp(s,\chi_v)=\int_{\SO(V_0)_{F_v}}\chi_v(h_v)\Delta(h_v)^s \dd h_v$ is as defined in Section ~\ref{s_so2i}. We observe that $\frac{\zeta_{F_v}(s)\mpp(s,\chi_v)}{L(s,\chi_v)}=\frac{\zeta_{F_v}(s)}{\zeta_{F_v}(2s)}\cdot\mpp^\sharp(s,\chi_v)$ is holomorphic at $s=0$.

\begin{proposition}\label{lcriterion}
The following statements are equivalent:
\begin{itemize}
\item[(i)] $\Hom_{R_v(F_v)}(\Pi_v\otimes \big(\psi_{v,v_0}\boxtimes \chi_v\big),\bc)\neq 0$,
\item[(ii)] $\chi_v=1$ and $\sigma_v$ has a non-zero $\psi_{v,q(v_0)}$-Whittaker functional,
\item[(iii)] $\mathrm{ord}_{s=0}L(s,\Pi_v,\chi_v)\leq 0$ and $\mpp^\sharp_v$ is non-vanishing on $\Pi_v\otimes \Pi_v$.
\end{itemize}
\end{proposition}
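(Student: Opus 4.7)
The proof rests on the explicit decomposition of $\mpp_v^\sharp$ furnished by (\ref{lrp_sk}), which factors it as a product of four pieces: the harmless nonzero constants $C_{\sigma_v,\Pi_v}c_v$ from the inner product and measure normalization; the Whittaker constant $c_{\sigma_v,\psi_{v,q(v_0)}}$ attached to $\sigma_v$; the regularized $\SO(V_0)_{F_v}$-integral $\frac{\zeta_{F_v}(s)\mpp(s,\chi_v)}{L(s,\chi_v)}\big|_{s=0}$; and the pair of local Shimura-type integrals $I_v(\phi_{i,v},\varphi_{i,v})$. Each condition in the proposition will correspond to the nonvanishing of some of these factors.

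For (ii) $\Leftrightarrow$ (iii), first analyze the $L$-factor condition using (\ref{lf_skp}): the numerator contributes $L(s+1,\chi_v)L(s,\chi_v)$, while the denominator contains $\zeta_{F_v}(s+1)\zeta_{F_v}(s)$. When $\chi_v=1$ these cancel and the remaining ratio is finite and nonzero at $s=0$; when $\chi_v\neq 1$ the numerator is regular while the $\zeta$-poles survive, giving $\mathrm{ord}_{s=0}L(s,\Pi_v,\chi_v)=2$. Hence the $L$-condition is equivalent to $\chi_v=1$. Next, Lemma ~\ref{integralsov0} (with $k^\prime=E_v$) shows that $\frac{\zeta_{F_v}(s)\mpp(s,\chi_v)}{L(s,\chi_v)}\big|_{s=0}=\frac{\zeta_{F_v}(s)}{\zeta_{F_v}(2s)}\big|_{s=0}\cdot\mpp^\sharp(\chi_v)$ is nonzero precisely when $\chi_v=1$. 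By Section ~\ref{wfsl2}, $c_{\sigma_v,\psi_{v,q(v_0)}}\neq 0$ is equivalent to the existence of a nonzero $\psi_{v,q(v_0)}$-Whittaker functional on $\sigma_v$. It remains to show that, under this Whittaker hypothesis, the functional $I_v$ is nonvanishing on $\ms(V(F_v))\otimes\sigma_v$; one checks via the Weil-representation formulas used in the derivation of (\ref{func_I}) that $g_v\mapsto \omega_v(g_v)\widehat{\phi}_v(v_0;0,1)$ transforms under $N(F_v)$ by $\psi_{v,q(v_0)}$, so $I_v$ is a convergent pairing between this Whittaker-type orbit and $\mw(\sigma_v,\psi_{v,q(v_0)})$, and by uniqueness of the Whittaker model on $\wtilde{\SL}_2$ together with density (Gelfand--Kazhdan at finite places, Casselman--Wallach at infinity), the orbit cannot be orthogonal to the entire Whittaker model.

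For (i) $\Leftrightarrow$ (iii), the direction (iii) $\Rightarrow$ (i) is read off the factorization: the $U(F_v)$-integration (via Proposition ~\ref{prop_ftdistribution}) supplies the $\psi_{v,v_0}(u_2-u_1)$-equivariance, $\SO(V_0)_{F_v}$ acts trivially on $I_v(\phi_v,\varphi_v)$ (it fixes $v_0$ and the Schr\"odinger coordinate $(0,1)$, and leaves $\varphi_v$ untouched), and the $\chi_v$-equivariance comes from the $\SO(V_0)$-integral factor. Hence $\mpp_v^\sharp\in \Hom_{\mrr\times\mrr}$; when it is nonzero, fixing an $f_{2,v}$ with $\mpp_v^\sharp(\cdot,f_{2,v})\not\equiv 0$ produces a nonzero element of $\Hom_{\mrr(F_v)}(\Pi_v\otimes(\psi_{v,v_0}\boxtimes\chi_v),\bc)$. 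For the converse (i) $\Rightarrow$ (ii), any nonzero Bessel functional $\ell$ on $\Pi_v$ pulls back through the theta map to a functional $L_\ell(\phi_v,\varphi_v)=\ell(\theta_v(\phi_v,\varphi_v))$ on $\ms(V(F_v))\otimes\sigma_v$; the $U(F_v)$-equivariance of $L_\ell$, combined with the Weil-representation action on $\widehat{\phi}$ at the fiber over $v_0$, forces a nonzero $\psi_{v,q(v_0)}$-Whittaker functional on $\sigma_v$, and the $\SO(V_0)_{F_v}$-equivariance forces $\chi_v=1$ (since $\SO(V_0)$ acts via the Weil representation on $\ms(V(F_v))$ and the Whittaker-factored functional is $\SO(V_0)$-invariant).

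\textbf{Main obstacle.} The one nonformal step is the nonvanishing of $I_v$ on $\ms(V(F_v))\otimes\sigma_v$ under the Whittaker hypothesis alone. The convergence of the integral is already implicit in Lemma ~\ref{sl2iv} (using the asymptotic estimates for $\psi_{v,q(v_0)}$-Whittaker functions), so the content is showing the pairing is not identically zero; the cleanest route is to check it reduces, in the unramified case, to a quotient of local $L$-factors attached to $\sigma_v$ (which is nonzero), and then transport the nonvanishing to general $(\phi_v,\varphi_v)$ by density and the fact that the Whittaker model on $\wtilde{\SL}_2(F_v)$ is irreducible.
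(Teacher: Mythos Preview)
Your decomposition of $\mpp_v^\sharp$ via (\ref{lrp_sk}) and the reduction of the $L$-condition to $\chi_v=1$ via (\ref{lf_skp}) and Lemma~\ref{integralsov0} match the paper's argument for (ii)~$\Leftrightarrow$~(iii). One small slip: when $\chi_v\neq 1$ the order of $L(s,\Pi_v,\chi_v)$ at $s=0$ is $+1$, not $+2$ (only $\zeta_{F_v}(s)$ contributes a pole from the denominator; $\zeta_{F_v}(s+1)$ is finite at $s=0$), but your conclusion is unaffected. For (i)~$\Leftrightarrow$~(ii) the paper simply cites \cite[Thm.~9.1]{gan_gurevich09}; your pull-back of a Bessel functional through the theta map is the right idea behind that result, but the step ``$U(F_v)$-equivariance forces a $\psi_{v,q(v_0)}$-Whittaker functional on $\sigma_v$'' hides a Jacquet-module computation for $\omega_v$ that you would need to supply.

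The substantive gap is your treatment of the nonvanishing of $I_v$. Uniqueness of the Whittaker model on $\wtilde{\SL}_2(F_v)$ says $\dim\Hom_{N}(\sigma_v,\psi_{v,q(v_0)})\leq 1$; it does not tell you that the orbit $\{g_v\mapsto\omega_v(g_v)\widehat{\phi}_v(v_0;0,1)\}$ is large enough to pair nontrivially with $\mw(\sigma_v,\psi_{v,q(v_0)})$. Your fallback (unramified computation plus ``transport by density'') does not cover ramified or archimedean places, and irreducibility of the Whittaker model does not by itself produce a $\phi_v$ with $I_v(\phi_v,\varphi_v)\neq 0$. The paper's argument is direct and uniform: take $\phi_v=\phi_v^\prime\otimes\phi_v^{\prime\prime}$ along $V=U\oplus(Fv_+\oplus Fv_-)$, identify $N(F_v)\backslash\SL_2(F_v)$ with $\{(y_v,x_v):y_v\neq 0\}\subset F_v^2$ via $\beta:\dot g_v\mapsto(0,1)\smalltwomatrix{}{-1}{1}{}g_v$, and rewrite
\[
I_v(\phi_v,\varphi_v)=\frac{1}{c_v}\int_{F_v^\times\times F_v}\overline{W_{\varphi_v,\psi_{q(v_0)}}(\beta^{-1}(y_v,x_v))}\,\omega_v(\beta^{-1}(y_v,x_v))\phi_v^\prime(v_0)\,\widehat{\phi_v^{\prime\prime}}(y_v,x_v)\,dy_v\,dx_v.
\]
Since $W_{\varphi_v,\psi_{q(v_0)}}\not\equiv 0$ by hypothesis, one first chooses $\phi_v^\prime$ so that the product of the first two factors is not identically zero on $F_v^\times\times F_v$, and then chooses $\widehat{\phi_v^{\prime\prime}}\in\ms(F_v^2)$ to make the integral nonzero. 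This elementary construction replaces your density appeal and works at every place.
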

\begin{proof}
The equivalence of (i) and (ii) follows from \cite[Thm. 9.1]{gan_gurevich09}. We now argue for the equivalence of (ii) and (iii). By the expression of $L(s,\Pi,\chi)$ in (\ref{lf_skp}), there is $\mathrm{ord}_{s=0}L(s,\Pi_v,\chi_v)=\mathrm{ord}_{s=0}\frac{L(s,\chi_v)}{\zeta_{F_v}(2s)}$, whence $\mathrm{ord}_{s=0}L(s,\Pi_v,\chi_v)\leq 0$ if and only $\chi_v=1$. With equation (\ref{lrp_sk}) and Lemma ~\ref{integralsov0}, one sees that $P^\sharp$ is nonzero if and only $\chi_v=1$, $c_{\sigma_v,\psi_{v,q(v_0)}}\neq 0$, and $I_v$ is non-zero on $\ms(V(F_v))\otimes \sigma_v$. Because $c_{\sigma_v,\psi_{v,q(v_0)}}\neq 0$ if and only if $\sigma_v$ has a non-zero $\psi_{v,q(v_0)}$-Whittaker function, it suffices to show $I_v$ is nonzero on $\ms(V(F_v))\otimes \sigma_v$ when $\sigma_v$ has a non-zero $\psi_{v,q(v_0)}$-Whittaker functional.

Recall the decomposition $V(F_v)=F_vv_+\oplus U(F_v)\oplus F_vv_-$. Consider $\phi_v\in \ms(V(F_v))$ of the shape $\phi_v=\phi_v^\prime\otimes \phi^\pprime_{v}$, with $\phi_v^\prime\in \ms(U(F_v))$ and $\phi^\pprime_{v}\in \ms(F_vv_+\oplus F_vv_-)$. There is
\[
I_v(\phi_v,\varphi_v)=\int_{N(F_v)\backslash \SL_2(F_v)}\overline{W_{\varphi_{v},\psi_{q(v_0)}}(g_v)}\omega_v(g_v){\phi}^\prime_v(v_0)\widehat{\phi^\pprime_v}((0,1)g_v)dg_v.
\]
Identify $N(F_v)\backslash \SL_2(F_v)$ with $\{(y_v,x_v)\in F_v^2: y_v\neq 0\}$ via the map $\beta: \dot{g}_v\rar (0,1)\smalltwomatrix{}{-1}{1}{}g_v$ and recall $d\dot{g}_v=\frac{1}{c_v}dy_vdx_v$ (c.f. Section ~\ref{ss_ui} (iii)), then
\begin{equation}\label{iintegral_sk}
I_v(\phi_v,\varphi_v)=\frac{1}{c_v}\int_{F_v^\times \times F_v}\overline{W_{\varphi_{v},\psi_{q(v_0)}}(\beta^{-1}(y_v,x_v))}\omega_v(\beta^{-1}(y_v,x_v))\phi^\prime_v(v_0)\widehat{\phi^\pprime_v}(y_v,x_v)dy_vdx_v.
\end{equation}
Because $\sigma_v$ has a non-zero $\psi_{v,q(v_0)}$-Whittaker functional, the function $W_{\varphi_{v},\psi_{q(v_0)}}(\beta^{-1}(y_v,x_v))$ is nonzero. One can choose $\phi_v^\prime\in \ms(U(F_v))$ so that $\overline{W_{\varphi_{v},\psi_{q(v_0)}}(\beta^{-1}(y_v,x_v))}\omega_v(\beta^{-1}(y_v,x_v))\phi^\prime_v(v_0)$ is not identically zero, then there exists a suitable $\widehat{\phi^\pprime_v}\in \ms(F_v^2)$ so that the integral in (\ref{iintegral_sk}) is nonzero. This shows that $I_v$ is nonzero on $\sigma_v\otimes \ms(V(F_v))$.
\end{proof}

\begin{theorem}\label{lc_skp}
When $\Pi\in \SK(\pi)$ is cuspidal, Conjecture ~\ref{conj_lbp} holds for $\Pi_v$.
\end{theorem}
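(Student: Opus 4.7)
The plan is to piece together results already established in Sections~\ref{subsec_lbp} and~\ref{local_bessel}. By Proposition~\ref{prop_ftdistribution}, for $f_{i,v}=\theta_v(\phi_{i,v},\varphi_{i,v})$ the inner $U(F_v)$-integration in $\mpp_v(s;f_{1,v},f_{2,v})$ takes the value
\[
C_{\sigma_v,\Pi_v}c_{\sigma_v,\psi_{v,q(v_0)}}c_v\,I_v(\phi_{1,v},\varphi_{1,v})\overline{I_v(\phi_{2,v},\varphi_{2,v})}
\]
at every point of $\SO(V_0)_{F_v}$. To see this, replacing $f_{1,v}$ by $\Pi_v(h_v)f_{1,v}$ amounts to replacing $\phi_{1,v}$ by $\omega_v(h_v)\phi_{1,v}$; in the mixed model $\omega_v(h_v)\widehat{\phi}_{1,v}(u;x,y)=\widehat{\phi}_{1,v}(h_v^{-1}u;x,y)$, and since $h_v$ commutes with $\SL_2(F_v)$ in the Weil representation and fixes $v_0$, we conclude $I_v(\omega_v(h_v)\phi_{1,v},\varphi_{1,v})=I_v(\phi_{1,v},\varphi_{1,v})$. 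Hence
\[
\mpp_v(s;f_{1,v},f_{2,v})=\bigl[C_{\sigma_v,\Pi_v}c_{\sigma_v,\psi_{v,q(v_0)}}c_v\,I_v(\phi_{1,v},\varphi_{1,v})\overline{I_v(\phi_{2,v},\varphi_{2,v})}\bigr]\cdot\mpp(s,\chi_v).
\]

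Part~(i) of Conjecture~\ref{conj_lbp} (absolute convergence for $\re(s)\gg 0$ and meromorphic continuation to $\bc$) is then immediate from Lemma~\ref{integralsov0}. For the order bound in part~(ii), the expression~(\ref{lf_skp}) shows $\ord_{s=0}L(s,\Pi_v,\chi_v)=\ord_{s=0}\tfrac{L(s,\chi_v)}{\zeta_{F_v}(s)}$; since both $\zeta_{F_v}(s)$ and $\zeta_{F_v}(2s)$ have simple poles at $s=0$, this equals $\ord_{s=0}\tfrac{L(s,\chi_v)}{\zeta_{F_v}(2s)}$. Lemma~\ref{integralsov0} then gives $\ord_{s=0}\mpp_v(s;-)\ge\ord_{s=0}L(s,\Pi_v,\chi_v)$.

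For the $\mrr(F_v)\times\mrr(F_v)$-equivariance of $\mpp^\sharp_v$ in part~(ii), in view of the explicit product formula~(\ref{lrp_sk}) it suffices to track the transformation of $I_v$. The $\SO(V_0)_{F_v}$-invariance has just been observed. For $u_0\in U(F_v)$, a direct mixed-model computation gives
\[
\omega_v(u_0)\widehat{\phi}(u';x,y)=\psi\bigl(-q(u_0,u')y+xq(u_0)y\bigr)\widehat{\phi}(u'-xu_0;x,y),
\]
so that $\omega_v(u_0)\widehat{\phi}(v_0;0,1)=\psi_{v_0}(u_0)^{-1}\widehat{\phi}(v_0;0,1)$. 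Since $u_0$ commutes with $\SL_2(F_v)$ in the Weil representation, this scalar factors out of $I_v$, yielding $I_v(\omega_v(u_0)\phi,\varphi)=\psi_{v_0}(u_0)^{-1}I_v(\phi,\varphi)$, from which the required equivariance follows (with the complex conjugation on the $f_{2,v}$-slot producing the sign in $\psi_{v_0}(u_2-u_1)$).

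Part~(iii) of the conjecture is precisely Proposition~\ref{lcriterion}. The only genuinely substantive work, namely the distributional regularization of the divergent $U$-integration together with the order-of-integration manipulations built on the $L^2$-isometry of the quadratic Fourier transform from Section~\ref{qft}, was already absorbed into Proposition~\ref{prop_ftdistribution}; with that result in hand the verification of Conjecture~\ref{conj_lbp} in this case is essentially bookkeeping, and the expected obstacle (justifying that $\mpp_v(s;-)$ defines an $\mrr$-equivariant regularized functional rather than merely a meromorphic number) is dispatched by the commutativity of the $(\wtilde{\SL}_2,\SO(V))$ dual-pair action.
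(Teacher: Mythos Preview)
Your proof is correct and follows essentially the same route as the paper: reduce $\mpp_v(s;-)$ via Proposition~\ref{prop_ftdistribution} to a constant multiple of $\mpp(s,\chi_v)$, invoke Lemma~\ref{integralsov0} for part~(i) and the order bound in part~(ii), and cite Proposition~\ref{lcriterion} for part~(iii). The only difference is cosmetic: where the paper says the $\mrr(F_v)\times\mrr(F_v)$-equivariance ``obviously holds'' (having already noted in Section~\ref{rlp_sk} that the inner integral is constant on $\SO(V_0)_{F_v}$), you spell out the mixed-model computations for the $\SO(V_0)$- and $U$-actions explicitly, which is a welcome addition rather than a departure.
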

\begin{proof}
Part (i) of Conjecture ~\ref{conj_lbp} follows from Lemma ~\ref{integralsov0}. When $\mathrm{ord}_{s=0}L(s,\Pi_v,\chi_v)\leq 0$, there is $\chi_v=1$, whence $\mpp_v^\sharp(f_{1,v},f_{2,v})$ is a nonzero scalar multiple of $c_{\sigma_v,\psi_{v,q(v_0)}}I_v(\phi_{1,v},\varphi_{1,v})\overline{I_v(\phi_{2,v},\varphi_{2,v})}$ for $f_{i,v}=\theta(\phi_{i,v},\varphi_{i,v})$ and the invariance property in Part (ii) of Conjecture ~\ref{conj_lbp} obviously holds. Part (iii) of Conjecture ~\ref{conj_lbp} is exactly Proposition ~\ref{lcriterion}.
\end{proof}

\subsection{The Global formula}

Suppose that $\Pi\in \SK(\pi)$ is cuspidal with $\sigma=\Theta_{\wtilde{\SL}_2\times \SO(V)}(\Pi,\psi)$.

\begin{proposition}\label{gcriterion}
$P$ is non-vanishing on $\Pi\boxtimes \chi$ if and only if \emph{(ia)} $\Hom_{\mrr(F_v)}(\Pi_v\otimes \big(\psi_{v,v_0}\boxtimes \chi_v\big),\bc)\neq 0$ for all places $v$ and \emph{(ib)} $L(0,\Pi\otimes \chi)\neq 0$. Equivalently, the conditions can be stated as \emph{(iia)} $q(v_0)\not\in {F^\times}^2$, \emph{(iib)} $\chi=1$, \emph{(iic)} $L(\frac{1}{2},\pi\otimes \chi_{E/F})\neq 0$ and \emph{(iid)} each $\sigma_v$ has a non-zero $\psi_{v,q(v_0)}$-Whittaker functional. As a consequence, there exists at most one cuspidal member $\Pi\in \SK(\pi)$ such that $P$ is nonzero on $\Pi$
\end{proposition}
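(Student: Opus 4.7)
The starting point is Proposition~\ref{gperiod}, which shows $P \equiv 0$ on $\Pi$ unless $q(v_0) \notin {F^\times}^2$ and $\chi = 1$, establishing necessity of (iia) and (iib). Assume these conditions hereafter. Formula~(\ref{gp_sk}) then expresses $P(f)$ as a global Whittaker-type integral on $\wtilde{\SL}_2(\ba)$. For a decomposable vector $f = \Theta(\phi,\varphi)$ with $\phi = \otimes_v \phi_v$ and $\varphi$ chosen so that $W_{\varphi,\psi_{q(v_0)}}$ factors as $\prod_v W_{\varphi_v,\psi_{v,q(v_0)}}$, the integrand in~(\ref{gp_sk}) factors accordingly, and cuspidality of $\pi$ (hence of $\sigma$) gives the absolute convergence needed to yield
\[
P(f) = 2\prod_v I_v(\phi_v,\varphi_v),
\]
with $I_v$ as in~(\ref{func_I}). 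Thus $P$ is non-vanishing on $\Pi$ iff (A) the global $\psi_{q(v_0)}$-Whittaker functional $\ell_{\psi_{q(v_0)}}$ is non-vanishing on $\sigma$, and (B) every local $I_v$ is non-vanishing on $\ms(V(F_v))\otimes\sigma_v$.

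For (B), the proof of Proposition~\ref{lcriterion} already shows $I_v\not\equiv 0$ iff $\sigma_v$ admits a non-zero $\psi_{v,q(v_0)}$-Whittaker functional; combined with $\chi_v=1$, this is exactly condition (ia) at $v$ by Proposition~\ref{lcriterion}. For (A), I invoke Waldspurger's classical theorem on Whittaker periods within a Waldspurger packet: for cuspidal $\sigma \in Wd_\psi(\pi)$, the global functional $\ell_{\psi_{q(v_0)}}$ is non-zero iff (i) every local $\sigma_v$ admits a non-zero $\psi_{v,q(v_0)}$-Whittaker functional, and (ii) $L(\tfrac{1}{2},\pi\otimes \chi_{E/F})\neq 0$, where $E=F(\sqrt{q(v_0)})$. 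Moreover such $\sigma$, if it exists, must be $\sigma = \Theta_{\wtilde{\SL}_2\times \PGL_2}(\pi\otimes\chi_{E/F},\psi_{q(v_0)})$, giving the uniqueness of a cuspidal $\Pi = \Theta_{\wtilde{\SL}_2\times\SO(V)}(\sigma,\psi) \in \SK(\pi)$ with $P\not\equiv 0$.

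It remains to match (ib) with (iic). Substituting $\chi=1$ into~(\ref{lf_skp}) with $L(s,1_{E_1^\times}) = \zeta_F(s)L(s,\chi_{E/F})$ and $L(s,\pi\boxtimes 1_{E_1^\times}) = L(s,\pi)L(s,\pi\otimes\chi_{E/F})$, the factors $L(s+\tfrac{1}{2},\pi)$, $\zeta_F(s)$, $\zeta_F(s+1)$ and $L(s+1,\chi_{E/F})$ cancel, leaving
\[
L(s,\Pi,1) = \frac{L(s+\tfrac{1}{2},\pi\otimes\chi_{E/F})\,L(s,\chi_{E/F})}{L(s+1,\pi,\ad)\,L(s+\tfrac{3}{2},\pi)\,\zeta_F(s+2)},
\]
whose value at $s=0$ vanishes precisely when $L(\tfrac{1}{2},\pi\otimes\chi_{E/F})=0$. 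Combining (A), (B), and this identification proves the equivalence of (ia)+(ib) with (iia)--(iid); uniqueness of $\Pi$ follows from the uniqueness asserted in Waldspurger's theorem. The main non-trivial input is that theorem on dichotomy of Whittaker periods in $Wd_\psi(\pi)$; the factorization of $P$ and the $L$-function bookkeeping are routine within the framework already built in the paper.
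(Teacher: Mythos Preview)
Your overall strategy mirrors the paper's: factor $P$ through the local integrals $I_v$ via Proposition~\ref{gperiod}, invoke Waldspurger for the global Whittaker period on $\sigma$, and match the $L$-function conditions. However, there is a genuine gap in the direction $(\mathrm{ia})+(\mathrm{ib}) \Rightarrow P\not\equiv 0$. You derive (iia) and (iib) as \emph{necessary} for $P\not\equiv 0$ and then work under those hypotheses throughout; this is fine for the forward implication, but for the converse you must show that $(\mathrm{ia})+(\mathrm{ib})$ themselves force (iia). Your identification of (ib) with (iic) uses $L(0,\chi_{E/F})\neq 0$, which already presupposes that $E$ is a field. If $q(v_0)\in {F^\times}^2$ then $\chi_{E/F}=1$ and your displayed expression becomes $L(0,\Pi,1)=L(\tfrac12,\pi)\zeta_F(0)/\big(L(1,\pi,\ad)L(\tfrac32,\pi)\zeta_F(2)\big)$; for $F=\bq$ one has $\zeta_\bq(0)=-\tfrac12$, so $(\mathrm{ia})+(\mathrm{ib})$ can hold with (iia) failing while $P\equiv 0$ by Proposition~\ref{gperiod}.

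The paper closes exactly this gap with a cuspidality argument you omit: assuming (iib), (iid) and (ib) but not (iia), one first extracts $L(\tfrac12,\pi)\neq 0$ (the local Whittaker conditions in (iid) force $\epsilon(\tfrac12,\pi)=1$, so the order of vanishing is even, and (ib) then rules out order $\geq 2$). Combined with (iid), Waldspurger gives that the global $\psi$-Whittaker functional on $\sigma$ is nonzero, hence $\Theta_{\wtilde{\SL}_2\times \SO(U)}(\sigma,\psi)\neq 0$, contradicting the standing hypothesis that $\Pi$ is cuspidal. You should insert this step before claiming the equivalence of $(\mathrm{ia})+(\mathrm{ib})$ with (iia)--(iid).
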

\begin{proof}
We explain the equivalance of the conditions. By \cite[Thm. 9.1]{gan_gurevich09}, (ia) is equivalent to (iib)$+$ (iid). Supposing (iib) and (iid), we show that (ib) is equivalent to (iia)$+$(iic). Actually, when $\chi=1$, (ib) means $L(\frac{1}{2},\pi\otimes \chi_{q(v_0)})L(0,\chi_{q(v_0)})\neq 0$. So (iia)$+$(iic) implies (ib). For the inverse direction, it suffices to prove (iia)$\Rightarrow$(ib). If (iia) does not hold, then $U$ is split and $L(s,\chi_{q(v_0)})=\zeta_F(s)$ has a simple pole at $s=0$. In this situation, (ib) forces that $\ord_{s=1/2}L(s,\pi\otimes \chi_{q(v_0)})=0$ or $1$. Because (iid) implies $\epsilon(\frac{1}{2},\pi\otimes \chi_{q(v_0)})=1$, there is $\ord_{s=1/2}L(s,\pi\otimes \chi_{q(v_0))})\neq 1$, whence $L(\frac{1}{2},\pi\otimes \chi_{q(v_0)})\neq 0$. Combining this with (iid), we see that that $\sigma$ has non-zero $\psi$-Whittaker coefficient and hence has non-zero theta lifting to $\PGL_2(\ba)\cong \SO(U)_\ba$, a contradiction to the assumption that $\Pi$ is cuspidal.

Now suppose that $P$ is non-vanishing on $\Pi$. By Proposition ~\ref{gperiod},  we have (iia), (iib) and that $\sigma$ has non-vanishing $\psi_{q(v_0)}$-Whittaker functional. The latter impliees (iic) and (iid).

Conversely, supposing (iia), (iib), (iic), and (iid), then Proposition ~\ref{gperiod} tells that $P(F)=2\prod_v I_v(\phi_v,\varphi_v)$ for $F=\Theta(\phi,\varphi)$. As argued in the proof of Proposition ~\ref{lcriterion}, $I_v(-)$ is non-vanishing on $\ms(V(F_v))\otimes \sigma_v$, whence $P$ is non-vanishing on $\Pi$.
\end{proof}
\begin{remark}
With Conditions (iia), (iib), (iic), the only member of $Wd_\psi(\sigma)$ satisfying Considtion (iid) is  $\sigma=\Theta_{\wtilde{\SL}_2\times \PGL_2}(\pi\otimes \chi_{q(v_0)},\psi_{q(v_0)})$. So $P$ is nonzero on certain cuspidal member of $\SK(\pi)$ if and only if $\Theta_{\wtilde{\SL}_2\times \SO(V)}(\sigma,\psi)\neq 0$ and $\Theta_{\wtilde{\SL}_2\times \SO(U)}(\sigma,\psi)=0$.
\end{remark}

\begin{theorem}\label{gformula}
Suppose that $\Pi\in \SK(\pi)$ is cuspidal and that $q(v_0)\not\in {F^\times}^2$, $\chi=1$, then
\[
P\otimes \overline{P}=\zeta_F(2)\zeta_F(4)L(0,\Pi,\chi)\prod_v 2^{-1}\mpp_v^\sharp.
\]
In other words, for decomposable $f_1, f_2\in \Pi$,  there is $\mpp_v^\sharp(f_{1,v},f_{2,v})=2$ for almost all $v$ and
\begin{equation}\label{eq_skp}
P(f_1)\overline{P(f_2)}=\frac{2\zeta_F(4)L(\frac{1}{2},\pi\otimes \chi_{E/F})L(0,\chi_{E/F})}{L(\frac{3}{2},\pi)L(1,\pi,\ad)}\prod_v \mpp^\sharp_{v}(f_{1,v},f_{2,v}),
\end{equation}
\end{theorem}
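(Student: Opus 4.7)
The plan is to compare the global Bessel period with the regularized local Bessel periods via the theta correspondence, matching them using the inner product formulas and Whittaker--Waldspurger formulas available in the excerpt. Under the hypotheses $q(v_0)\notin (F^\times)^2$ and $\chi=1$, Proposition~\ref{gperiod} yields the global factorization $P(f)=2\prod_v I_v(\phi_v,\varphi_v)$ for decomposable $f=\Theta(\phi,\varphi)\in \Pi$, whence
\[
P(f_1)\overline{P(f_2)}=4\prod_v I_v(\phi_{1,v},\varphi_{1,v})\overline{I_v(\phi_{2,v},\varphi_{2,v})}.
\]
On the local side, simplifying the prefactor of \eqref{lrp_sk} under $\chi_v=1$---using $L(s,1_v)=\zeta_{E_v}(s)$, $L(\tfrac{1}{2},\pi_v\boxtimes 1_v)=L(\tfrac{1}{2},\pi_v)L(\tfrac{1}{2},\pi_v\otimes\chi_{E_v/F_v})$, and the elementary identity $\zeta_{F_v}(s)/\zeta_{F_v}(2s)\big|_{s=0}=2$ valid at every completion---yields $\mpp_v^\sharp=\mathrm{pref}_v\cdot I_v\overline{I_v}$ with
\[
\mathrm{pref}_v:=2c'_{E_v}\cdot\frac{L(\tfrac{3}{2},\pi_v)L(1,\pi_v,\ad)}{\zeta_{F_v}(4)L(\tfrac{1}{2},\pi_v\otimes\chi_{E_v/F_v})}\cdot C_{\sigma_v,\Pi_v}\cdot c_{\sigma_v,\psi_{v,q(v_0)}}\cdot c_v.
\]

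I then solve $I_v\overline{I_v}=\mpp_v^\sharp/\mathrm{pref}_v$ and take the product over $v$, grouping the local factors so that each group forms a convergent Euler product. Four global identities assemble the result: Proposition~\ref{IPF_SO5} controls $\prod_v \zeta_{F_v}(4)/(C_{\sigma_v,\Pi_v}L(\tfrac{3}{2},\pi_v))=\zeta_F(4)/(2L(\tfrac{3}{2},\pi))$; Proposition~\ref{wfdsl2} is applied to $\sigma$ reinterpreted as an element of $Wd_{\psi_{q(v_0)}}(\pi\otimes\chi_{E/F})$---legitimate since $\sigma=\Theta_{\wtilde{\SL}_2\times \PGL_2}(\pi\otimes\chi_{q(v_0)},\psi_{q(v_0)})$ (forced by condition (iid) of Proposition~\ref{gcriterion}) and $L(s,\pi\otimes\chi_{E/F},\ad)=L(s,\pi,\ad)$; the normalization \eqref{vc} supplies $\prod_v c_v=1$; and Lemma~\ref{cep} gives $\prod_v c'_{E_v}=c_E^{-1}=L(1,\chi_{E/F})^{-1}$. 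After cancellation of the auxiliary $\zeta_{F_v}(2)$ and $\zeta_{F_v}(4)$ factors the identity emerges as
\[
P(f_1)\overline{P(f_2)}=\frac{2\zeta_F(4)L(\tfrac{1}{2},\pi\otimes\chi_{E/F})L(1,\chi_{E/F})}{L(\tfrac{3}{2},\pi)L(1,\pi,\ad)}\prod_v 2^{-1}\mpp_v^\sharp(f_{1,v},f_{2,v}).
\]

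To match the desired expression \eqref{eq_skp}, I invoke the functional equation of the completed quadratic Hecke $L$-function $\Lambda(s,\chi_{E/F})=\zeta_E(s)/\zeta_F(s)$, whose root number is $+1$, so that $L(1,\chi_{E/F})=L(0,\chi_{E/F})$ in the completed normalization used throughout. Finally, the assertion $\mpp_v^\sharp(f_{1,v},f_{2,v})=2$ at almost all $v$ for decomposable vectors follows by observing that at unramified nonsplit places with $K_v$-spherical data all auxiliary constants take their generic values ($c'_{E_v}=1$, $c_v=\zeta_{F_v}(2)^{-1}$, and the ratios in Propositions~\ref{IPF_SO5} and~\ref{wfdsl2} equal $1$), forcing $\mathrm{pref}_v=2$, while the standard unramified computation of $I_v\overline{I_v}$ at spherical data returns $1$.

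The main obstacle is twofold. First, the reidentification of $\sigma$ as a member of the Waldspurger packet $Wd_{\psi_{q(v_0)}}(\pi\otimes\chi_{E/F})$ must be executed carefully, since Proposition~\ref{wfdsl2} was formulated only for $\sigma\in Wd_\psi(\pi)$ with non-vanishing $\psi$-Whittaker functional, whereas the Bessel functional pairs against $\psi_{v_0}$ and thus requires the twisted character $\psi_{q(v_0)}$. Second, the bookkeeping of formally divergent products of auxiliary factors---$\prod_v 2$ from the doubling in each $\mathrm{pref}_v$, $\prod_v \zeta_{F_v}(2)^{-1}$ from the Whittaker normalization of Proposition~\ref{wfdsl2}, and the $c_E^{-1}$ from Lemma~\ref{cep}---must be grouped so that the cancellations occur both inside $\prod_v(\mpp_v^\sharp/2)$ and in the global prefactor, leaving only absolutely convergent Euler products.
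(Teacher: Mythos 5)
Your proof is correct and follows essentially the same route as the paper: reducing via Proposition~\ref{gperiod} to $\prod_v I_v\overline{I_v}$, simplifying the prefactor of \eqref{lrp_sk} at $\chi_v=1$, and assembling the Euler products from Propositions~\ref{IPF_SO5}, \ref{wfdsl2}, the normalization \eqref{vc}, and Lemma~\ref{cep}. The one point you spell out more explicitly than the paper is the reidentification of $\sigma$ as a member of $Wd_{\psi_{q(v_0)}}(\pi\otimes\chi_{E/F})$ so that Proposition~\ref{wfdsl2} applies to the $\psi_{q(v_0)}$-Whittaker coefficients; the paper uses this implicitly when writing the Euler product for $c_{\sigma_v,\psi_{v,q(v_0)}}$.
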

\begin{proof}
Write $f_i=\Theta(\phi_i,\varphi_i)$. When $\chi=1$, there is
\[
\mpp^\sharp_{v}(f_{1,v},f_{2,v})=\frac{2C_{\sigma_v,\Pi_v}c_{\sigma_v,\psi_{v,q(v_0)}}c_v c_{E_v}^\prime L(\frac{3}{2},\pi_v)L(1,\pi_v,\ad)}{\zeta_{F_v}(4)L(\frac{1}{2},\pi_v\otimes \chi_{E_v/F_v})}\cdot I_v(\phi_{1,v},\varphi_{1,v})\overline{I_v(\phi_{1,v},\varphi_{1,v})}.
\]

For almost all places, one has $I_v(\phi_{i,v},\varphi_{i,v})=1$ and
\[
C_{\sigma_v,\Pi_v}=\frac{\zeta_{F_v}(4)}{L(\frac{3}{2},\pi_v)},\, c_{\sigma_v,\psi_{v,q(v_0)}}=\frac{L(\frac{1}{2},\pi_v\otimes \chi_{E_v/F_v})\zeta_{F_v}(2)}{L(1,\pi_v,\ad)},\, c_v=\frac{1}{\zeta_{F_v}(2)},\, c_{E_v}^\prime=1,
\]
whence $\mpp_v^\sharp(f_{1,v},f_{2,v})=2$ for almost all places.

Furthermore, because $\prod_v \frac{C_{\sigma_v,\Pi_v}L(\frac{3}{2},\pi_v)}{\zeta_{F_v}(4)}=\frac{2L(\frac{3}{2},\pi)}{\zeta_F(4)}$, $\prod_v \frac{c_{\sigma_v,\psi_{v,q(v_0)}}L(1,\pi_v,\ad)}{L(\frac{1}{2},\pi_v\otimes_{\chi_{E_v/F_v}})\zeta_{F_v}(2)}=2\cdot \frac{L(1,\pi,\ad)}{L(\frac{1}{2},\pi\otimes \chi_{E/F})\zeta_F(2)}$, $\prod_v c_v=1$, and $\prod_v c_{E_v}^\prime=\frac{1}{L(1,\chi_{E/F})}$, we have
\begin{equation}\label{eq_skp1}
\prod_v 2^{-1}\mpp_v^\sharp(f_{1,v},f_{2,v})=\frac{4L(\frac{3}{2},\pi)L(1,\pi,\ad)}{\zeta_F(4)L(\frac{1}{2},\pi\otimes \chi_{E/F})L(1,\chi_{E/F})}\prod_v I_v(\phi_{1,v},\varphi_{1,v})\overline{I_v(\phi_{1,v},\varphi_{1,v})}.
\end{equation}
On the other hand, Proposition ~\ref{gperiod} tells that
\begin{equation}\label{eq_skp2}
P(F_1)\overline{P(F_2)}=4\prod_v I_v(\phi_{1,v},\varphi_{1,v})\overline{I_v(\phi_{1,v},\varphi_{1,v})}.
\end{equation}
Comparing equations (\ref{eq_skp1}), (\ref{eq_skp2}) and noticing $L(1,\chi_{E/F})=L(0,\chi_{E/F})$, we obtain (\ref{eq_skp}).
\end{proof}

From Theorem ~\ref{lc_skp}, Proposition ~\ref{gcriterion} and Theorem ~\ref{gformula}, one sees easily that the main theorem in the introduction holds for a cuspidal member $\Pi$ of a general Saito-Kurokawa packet $\SK(\pi,\mu)=\SK(\pi\otimes \mu)\otimes \mu$.

\section{Soudry Packets on $\SO(3,2)$}

In this section, we prove the main theorem for Soudry packets on $\SO(3,2)$. Recall the explicit quadratic space $(V,q)$ of Witt index $2$ and determinant $1\cdot {F^\times}^2$ in Section ~\ref{so32} and the isomorhism $\PGSp_4\cong \SO(V)$. We adopt the following notations:
\begin{itemize}
\item[(i)] Write $V=Fv_+\oplus U\oplus Fv_-$, where
\[
v_+=\smalltwomatrix{0}{J_1}{0}{0},\quad v_-=\smalltwomatrix{0}{0}{-J_1}{0},\quad U=\{X\in M_{2\times 2}(F): \Tr(X)=0\}
\]
$U$ is embedded into $V$ by $X\rar \smalltwomatrix{X}{}{}{\leftup{t}{X}}$ and $q(x^\prime  v_++X+x^\pprime v_-)=x^\prime x^\pprime-\det X$.

\item[(ii)] Let $\mrp$ denote the Siegel parabolic subgroup of $\PGSp_4$, $\mrn$ its unipotent radical, and $\mrm$ its Levi subgroup consisting of block diagonal matrices. Recall from Section ~\ref{bgroup} the parabolic subgroup $P=U\rtimes M$ of $\SO(V)$ that stabilizes the line $Fv_+$. With respect to $\PGSp_4\cong \SO(V)$, $\smalltwomatrix{1_2}{N}{}{1_2}\in \mathrm{N}$ corresponds to $u=-NJ_1\in U$ and $[\smalltwomatrix{A}{}{}{x\leftup{t}{A^{-1}}}]\in \mathrm{M}$ corresponds to the following element $m(A,x)\in M$
\[
v_+\rar x^{-1}\det A\cdot v_+,\quad   X\in U\rar AXA^{-1}\in U,\quad  v_-\rar x(\det A)^{-1}\cdot v_-.
\]

\item[(iii)] Identify $\mrn$ with $\Sym_2$. For $T\in \Sym_2^\circ(F)$, write $v_0=J_1T\in U^\circ(F)$. The character $\psi_T(N):=\psi(\Tr(TN))$ on $[\mathrm{N}]$ corresponds to the character $\psi_{v_0}(u)$ on $[U]$.

\item[(iv)] Write $U=Fv_0\oplus V_0$. $\SO(V_0)$ is identified with $\bt$ via the map $[A]\rar [\smalltwomatrix{A}{}{}{(\det A)\leftup{t}{A^{-1}}}]$,
\begin{align}\label{groupT}
\SO(V_0)=&\{[A]\in \PGL_2\cong \SO(U): A^{-1}v_0A=v_0\}\subset M,\\
\nonumber \bt:=&\left\{[\smalltwomatrix{A}{}{}{(\det A)\leftup{t}{A^{-1}}}]: \leftup{t}{A}TA=(\det A) T\right\}\subset \mrm
\end{align}
When $T=\smalltwomatrix{c}{}{}{c\delta}$ is diagonal, there is  $\SO(V_0)=\{\smalltwomatrix{a}{-b\delta}{b}{a}\in \GL_2\}/F^\times 1_2$.
\end{itemize}

The Soudry packets on $\SO(V)$ are of the form $\So(\eta)$, where $\eta$ is a character of $\ba^\times_K/K^\times$ for a quadratic extension $K/F$ and is trivial on $\ba^\times$. Consider $(V_K,q_K)=(K,\Nm_{K/F})$ and the Weil representation $\omega_{\psi}$ of $\GSp_4^+(\ba)\times \OO(V_K)_\ba$ on $\ms(V_K^2(\ba))$. 

\subsection{Packets on $\GO(V_K)$}\label{p_go2}

Write $K=F\oplus Fe$ with $\Tr(e)=0$ and $\Nm(e)=d_K$. Set $\Gal(K/F)=\{1,\iota\}$, then $\GO(V_0)=\GSO(V_0)\rtimes \mu_2$, with $\GSO(V_K)= K^\times$ and  $\mu_2=\Gal(K/F)$.

Let $S_F$ denote the set of places of $F$. For a character $\eta$ of $\ba_K^\times/K^\times$, let $S_\eta$ denote the set of places of $F$ such that $\eta_v^{\iota_v}\neq \eta_v$, where $\iota_v$ is the image of $\iota$ in $\Gal(K_v/F_v)$ and $\eta_v^{\iota_v}(x_v)=\eta_v(x_v^{\iota_v})$. $S_\eta=\emptyset$ happens only when $\eta^\iota=\eta$, that is, when $\eta|_{\ba_{K,1}^\times}=1$. Set
\[
\beta=
\begin{cases}
1, &\eta^\iota\neq \eta,\\
2. &\eta^\iota=\eta.
\end{cases},\quad 
\beta_v=
\begin{cases}
1, &\eta_v^{\iota_v}\neq \eta_v,\\
2. &\eta_v^{\iota_v}=\eta_v.
\end{cases}
\]

(i) For $v\in S_\eta$, set $\eta_v^+=\Ind_{\GSO(V_K)_{F_v}}^{\GO(V_K)_{F_v}}\eta_v$, which is irreducible. When $v\not\in S_\eta$, $\eta_v$ has two irreducible extensions to $\GO(V_K)_{F_v}$; let $\eta_v^+$ denote the extension with value $1$ at  $\iota_v$ and set $\eta_v^-=\eta_v^+ \sgn$, where $\sgn$ is the sign character factoring through $\mu_2$. Define the local packet 
\begin{equation*}
\Pa(\eta_v)=
\begin{cases}
\{\eta_v^+\}, &v\in S_\eta,\\
\{\eta_v^+, \eta_v^-\}, &v\not\in S_\eta.
\end{cases}
\end{equation*}

(ii) For $\epsilon=(\epsilon_v)$ with almost all $\epsilon_v=1$, set $\eta^\epsilon=\prod_v \eta_v^{\epsilon_v}$. Define the Eisenstein series  $E(\Phi):=\Phi(h)+\Phi(\iota h)$ for $\Phi\in \eta^\epsilon$ and set $E(\eta^\epsilon):=\{E(\Phi): \Phi\in \eta^\epsilon\}$. $E(\eta^\epsilon)$ is zero only when $S_\eta=S_F$ with $\sgn(\epsilon)=-1$. Define the global packet
\[
\Pa(\eta)=
\begin{cases}
\{\eta^\epsilon:\epsilon \}, &\eta\neq \eta^\iota,\\
\{\eta^\epsilon: \sgn(\epsilon)=1\}, &\eta=\eta^\iota.
\end{cases}
\]

(iii) $L^2(K^\times\ba^\times\backslash \ba_K^\times)$ is the Hilbert space direct sum of characters $\eta$ satisfying $\eta|_{\ba^\times}=1$. As a consequence, $L^2(\GO(V_K)_F\ba^\times\backslash \GO(V_K)_\ba)$ is the Hilbert space direct sum of $E(\eta^\epsilon)$, with $\eta^\epsilon\in \Pa(\eta)$ and $\eta|_{\ba^\times}=1$.

(iv) Given $\mu_2(\ba)$ the probability measure. For $\Phi_1,\Phi_2\in \eta^\epsilon$ and $\varphi_1,\varphi_2\in E(\eta^\epsilon)$, define
\[
(\Phi_1,\Phi_2)=\int_{\mu_2(\ba)}\Phi_1(h_0)\overline{\Phi_2(h_0)}\dd h_0,\quad (\varphi_1,\varphi_2)_{\GO(V_K)}=\int_{[\PGO(V_K)]}\varphi_1(h)\overline{\varphi_2(h)}\dd h.
\]
When $\varphi_i=E(\Phi_i)$ with $\Phi_i\in \eta^\epsilon$, there is $(\varphi_1,\varphi_2)=2^{\beta}(\Phi_1,\Phi_2)$.

\subsection{Soudry packets on $\GSp_4$}

Set $\GSp_4^+=\{g\in \GSp_4:\nu(g)\in \Nm(K^\times)\}$. Recall from (\ref{wr_s}) the expression for the Weil representation of $R(\ba):=\{(g,h)\in \GSp_4^+(\ba)\times \GO(V_K)_\ba: \nu(g)=\nu(h)\}$ on $\ms(V^2_K(\ba))$,
\begin{equation}\label{wr_s2}
\omega_\psi(g,h)\phi(X)=|\nu(h)|^{-1}\omega_{\psi}\big(g\smalltwomatrix{1}{}{}{\nu(g)^{-1}}\big)\phi(h^{-1}X),\quad (g,h)\in R(\ba).
\end{equation}
The action of $\Sp_4(\ba)\times \OO(V_K)_\ba$ is according to Section ~\ref{ltc}.

For an automorphic form $\varphi(h)$ on $[\GO(V_K)]$ and $\phi\in \ms(V_K^2(\ba))$, define
\[
\Theta_\psi(\phi,\varphi)(g)=\int_{[\OO(V_K)]}\Theta_{\phi,\psi}(g,h_1h^\prime)\overline{\varphi(h_1h^\prime)}\dd h_1,\quad g\in \GSp_4^+(\ba),
\]
where $h^\prime\in \GO(V_K)_\ba$ is an element satisfying $\nu(h^\prime)=\nu(g)$. 

For $\eta^\epsilon\in \Pa(\eta)$, define the global theta lift of $E(\eta^\epsilon)$ with respect to $\psi$,
\[
\Theta_\psi(\eta^\epsilon)=\Theta(\eta^\epsilon,\psi):=\{\Theta_\psi(\phi,\varphi): \phi\in \ms(V_K^2(\ba)), \varphi\in E(\eta^\epsilon)\}.
\]
It has central character $\eta|_{\ba^\times}$ and is a nonzero irreducible $\GSp_4^+(\ba)$-representation in the discrete spectrum of $\GSp_4^+$. There is $\Theta_\psi(\eta^\epsilon)\cong\otimes_v \theta(\eta_v^{\epsilon_v},\psi_v)$ by local Howe duality and it is cuspidal unless $S_\eta=S_F$ and $\epsilon=(1)$.

We view functions in $\Theta_\psi(\eta^\epsilon)$ as functions on $[\GSp_4]$ according to two observations:
\begin{itemize}
\item[(i)] $\GSp_4^\circ(\ba):=\{g\in \GSp_4(\ba):\nu(g)\in \Nm_{K/F}(\ba_K^\times)\cdot F^\times\}$ is open and of index $2$ in $\GSp_4(\ba)$. Write $\GSp_4(\ba)=\GSp_4^\circ(\ba)\sqcup \GSp_4^\circ(\ba)g_\iota$. A function $F$ on $\GSp_4^\circ(\ba)$ is regarded as a function on $\GSp_4(\ba)$ by setting $F|_{\GSp_4^\circ(\ba)g_\iota}=0$.
.
\item[(ii)] $\GSp_4(F)\backslash \GSp_4^\circ(\ba)=\GSp_4^+(F)\backslash \GSp_4^+(\ba)$, whence a function on $[\GSp_4^+]$ is naturally a function on $\GSp_4(F)\backslash \GSp_4^\circ(\ba)$.
\end{itemize}
Let $\Pi_\eta^\epsilon$ be the $\GSp_4(\ba)$-representation generated by functions $f\in \Theta_\psi(\eta^\epsilon)$. It does not depend on the choice of $\psi$, following Lemma ~\ref{l_thetacom} and ~\ref{l_decomp} below. Define the global packet
\[
\So(\eta)=\{\Pi^\epsilon_\eta: \eta^\epsilon\in \Pa(\eta)\}.
\]
When $\eta|_{\ba^\times}=1$, $\Pi^\epsilon_\eta$ has trivial central character and the packet $\So(\eta)$ is on $\PGSp_4\cong \SO(V)$.

\begin{remark}
When $\eta=\eta^\iota$, $\eta$ is trivial on $\ba^\times_{K,1}$ and it leads to a character of $\Nm(\ba_E^\times)/\Nm(E^\times)F^\times$, which is of index $2$ in $\ba^\times/F^\times$. So there exists a unique couple $(\chi_1,\chi_2)$ of characters on $\ba^\times/F^\times$ with $\chi_1\chi_2=\chi_{E/F}$ such that $\eta=\chi_i\circ \Nm$. The packet $\So(\eta)$ is the Howe-Piatetski-Shapiro packet $\HPS(\chi_1,\chi_2)$ that consists of irreducible representations $\Pi$ in the discrete spectrum with $L^S(\Pi)=L^S(s+\frac{1}{2},\chi_1)L^S(s+\frac{1}{2},\chi_2)L^S(s-\frac{1}{2},\chi_1)L^S(s-\frac{1}{2},\chi_2)$. There is $\HPS(\chi_1,\chi_2)=\HPS(1,\chi_{E/F})\otimes \chi_1$ and $\HPS(1,\chi_{E/F})$ can be alternatively constructed by lifting elementary theta series from $\wtilde{\SL}_2(\ba)$ to $\SO(V)_\ba$ (c.f. \cite{PS83}).

\end{remark}

Now we discuss the structure of $\Pi^\epsilon_\eta$. First is the local aspect. At a nonsplit place $v$ of $E/F$, $\GSp_4^+(F_v)$ is of index $2$ in $\GSp_4(F_v)$ and we comment that $\Ind_{\GSp_4^+(F_v)}^{\GSp_4(F_v)}\theta(\eta_v^{\epsilon_v},\psi_v)$ is irreducible. Actually, choose $a_v\in F_v^\times- \Nm(K_v^\times)$ and $g_{a_v}\in \GSp_4(F_v)$ with $\nu(g_{a_v})=a_v$, one can show that the local theta lift $\theta(\eta_v^{\epsilon_v},\psi_v)$ and its twist $\theta(\eta_v^{\epsilon_v},\psi_v)^{g_{a_v}}$ are non-isomorphic. This is because the local theta lift of $\theta(\eta_v^{\epsilon_v},\psi_v)^{g_{a_v}}$to $\GO(V_K(F_v),a_v q)$ with respect to $\psi_v$ is $\eta_v^{\epsilon_v}$, while the local theta lift of $\theta(\eta_v^{\epsilon_v},\psi_v)$ to $\GO(V_0(F_v),a_v q)$ is zero by the conservation principle.

Second is the global aspect. For a space $\mh$ of functions on $\GSp_4(\ba)$ and $g^\prime\in \GSp_4(\ba)$, define $\mh^{g^\prime}:=\{f(\cdot g^\prime): f\in \mh\}$. Let $\Pi_\eta^{\epsilon^\circ}$ be the the $\GSp_4^\circ(\ba)$-representation spanned by functions in $\Theta(\eta^\epsilon)$. Observe that $\GSp_4^+(\ba)\backslash \GSp_4^\circ (\ba)= F^\times/\Nm(K^\times)$.
\begin{lemma}\label{l_thetacom}
For each $a\in F^\times/\Nm(K^\times)$, choose $g_a\in \GSp_4(F)$ with $\nu(g_a)=a$, then
\begin{equation}\label{thetacom}
\Theta_\psi(\eta^\epsilon)^{g_a}=\Theta_{\psi_a}(\eta^\epsilon).
\end{equation}
\end{lemma}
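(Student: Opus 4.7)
The plan is to fix a convenient representative $g_a := \diag(1_2, a\cdot 1_2) \in \GSp_4(F)$ of similitude $a$ (any other choice differs by an element of $\GSp_4^+(F)$, which only reshuffles $(\phi,\varphi)$ inside $\Theta_\psi(\eta^\epsilon)$), and then to compute $\Theta_\psi(\phi,\varphi)(g\cdot g_a)$ directly, showing that it equals $\Theta_{\psi_a}(\phi,\varphi)(g)$ for the same pair $(\phi,\varphi)$. As $(\phi,\varphi)$ ranges over $\ms(V_K^2(\ba))\otimes E(\eta^\epsilon)$, this gives both inclusions in (\ref{thetacom}).

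The steps are as follows. \textbf{(1)} For $g\in\GSp_4^+(\ba)$, apply left $\GSp_4(F)$-invariance of the automorphic form $\Theta_\psi(\phi,\varphi)$ to $g_a\in\GSp_4(F)$ to write
\[
\Theta_\psi(\phi,\varphi)(gg_a)=\Theta_\psi(\phi,\varphi)(g_a^{-1}gg_a).
\]
Since $\nu(g_a^{-1}gg_a)=\nu(g)\in\Nm(\ba_K^\times)$, the argument is back in $\GSp_4^+(\ba)$ and the defining integral representation applies. \textbf{(2)} Expand the right side as a theta integral over $[\OO(V_K)]$, and apply formula (\ref{wr_s2}). Because $g_a$ lies in the diagonal torus of $\GSp_4$ it commutes with $\diag(1_2,\nu(g)^{-1}\cdot1_2)$, so the $\Sp_4$-part of the extended Weil action reduces to $\omega_{\psi,\Sp_4}(g_a^{-1}h_0 g_a)$, where $h_0:=g\cdot\diag(1_2,\nu(g)^{-1}\cdot1_2)\in\Sp_4(\ba)$. \textbf{(3)} Verify, on a generating set of $\Sp_4$, the identity
\[
\omega_{\psi,\Sp_4}(g_a^{-1}h_0g_a)=\omega_{\psi_a,\Sp_4}(h_0)
\]
as operators on $\ms(V_K^2(\ba))$: conjugation of the Siegel unipotent $u_N$ by $g_a$ produces $u_{aN}$, and the multiplication actions $\psi(\tfrac12\Tr(aNq(X)))=\psi_a(\tfrac12\Tr(Nq(X)))$ match; the Siegel Levi commutes with $g_a$, and because $\dim V_K=2$ is even, the Weil character $\chi_{\psi,V_K}$ is $\psi$-independent, so the Levi actions agree; for the Weyl element the identity reduces to the standard transformation law of the Weil index $\gamma(\psi,V_K)$ under scaling of $\psi$, combined with the corresponding scaling of the self-dual Haar measure on $V_K^2(\ba)$. \textbf{(4)} Reading (\ref{wr_s2}) in reverse with character $\psi_a$ identifies the integrand as $\omega_{\psi_a}(g,h_1h')\phi(\xi)$, so that $\Theta_\psi(\phi,\varphi)(gg_a)=\Theta_{\psi_a}(\phi,\varphi)(g)$.

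The main obstacle is step (3), the Weyl-element verification. One must simultaneously track three factors arising from $g_a^{-1}wg_a=\diag(a\cdot1_2,a^{-1}\cdot1_2)\cdot w$ in $\Sp_4$: the scalar $\chi_{\psi,V_K}(a^n)|a|^{n\dim V_K/2}$ from the Levi piece, the ratio of Weil indices $\gamma(\psi,V_K)/\gamma(\psi_a,V_K)$ coming from the Fourier-transform piece, and the ratio between the self-dual Haar measures on $V_K^2(\ba)$ with respect to $\psi$ and $\psi_a$, and then confirm that they exactly cancel. The cancellation is forced abstractly—any residual scalar would define a nontrivial character of the perfect group $\Sp_4$, which is impossible—but an explicit computation using the Weil-index identity for even-dimensional quadratic spaces is what pins down the operator equality and makes the preceding steps rigorous.
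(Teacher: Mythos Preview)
Your approach is correct and takes a genuinely different route from the paper. You compute directly: using left $\GSp_4(F)$-invariance of the (extended) theta lift together with the operator identity $\omega_{\psi}(g_a^{-1}h_0g_a)=\omega_{\psi_a}(h_0)$ on $\Sp_4$ for even-dimensional $V_K$, you obtain the pointwise equality $\Theta_\psi(\phi,\varphi)(\,\cdot\,g_a)=\Theta_{\psi_a}(\phi,\varphi)(\,\cdot\,)$ for every pair $(\phi,\varphi)$, which immediately gives both inclusions. The paper instead argues indirectly: in the cuspidal case it observes that $\theta(\eta_v^{\epsilon_v},\psi_v)^{g_a}\cong\theta(\eta_v^{\epsilon_v},\psi_{a,v})$ locally, invokes the inner product formula to show $\Theta_{\psi_a}\big(\Theta_\psi(\eta^\epsilon)^{g_a}\big)\neq 0$, identifies this back-lift with $E(\eta^\epsilon)$ via local Howe duality and multiplicity one on $\GO(V_K)$, and concludes equality of the two global spaces; the non-cuspidal case is handled separately by comparing residues of Eisenstein series. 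Your argument is more elementary and uniform (no cuspidal/non-cuspidal dichotomy, no appeal to multiplicity one), and it actually yields the finer statement that right translation by $g_a$ carries each individual theta lift to the corresponding $\psi_a$-lift with the \emph{same} data $(\phi,\varphi)$. The paper's route, while heavier, illustrates how the global spaces are pinned down by the local theta correspondence together with the Rallis inner product, a theme used elsewhere in the paper. One small point in your step (3): the abstract ``residual scalar must be trivial since $\Sp_4$ is perfect'' argument requires first knowing that the two operators on the Weyl element differ only by a scalar; this follows because both are Fourier transforms on $\ms(V_K^2)$ up to constants, or more conceptually because both $\omega_\psi\circ\mathrm{Ad}(g_a^{-1})$ and $\omega_{\psi_a}$ normalize the same Heisenberg representation $\rho_{\psi_a}$ with the same action. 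With that in hand, your explicit check on the Siegel parabolic plus perfectness of $\Sp_4$ indeed forces the operator identity.
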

\begin{proof}
We do a sketch of the proof.  First, suppose $\Theta_\psi(\eta^\epsilon)$ is cuspidal, then $\Theta_\psi(\eta^\epsilon)$is also cuspidal. Because the local representations $\theta(\eta_v^{\epsilon_v},\psi_v)^{g_a}$, $\theta(\eta_v^{\epsilon_v},\psi_{v,a})$ are isomorphic and $\Theta_{\psi_a}(\Theta_{\psi_a}(\eta^\epsilon))=E(\eta^\epsilon)$, one knows from inner product formula and the nonvanishing of local zeta integrals that $\Theta_{\psi_a}(\Theta_\psi(\eta^\epsilon)^{g_a})$ is nonzero. There must be $\Theta_{\psi_a}(\Theta_\psi(\eta^\epsilon)^{g_a})=E(\eta^\epsilon)$ because their local representations are isomorphic by local Howe duality and there is multiplicity one on $\GO(V_K)$. Therefore, (\ref{thetacom}) holds.

Second, when $\Theta_\psi(\eta^\epsilon)$ is noncuspidal, then it belongs to the case $\eta^\iota=\eta$ with $\epsilon=(1)$. $\Theta_\psi(\eta^\epsilon)$ is the residue representation of an explicit Borel-type Eisenstein series. Both $\Theta_\psi(\eta^\epsilon)^{g_a}$ and $\Theta_{\psi_a}(\eta^\epsilon)$ can be seen explicitly and compared to be equal.
\end{proof}

\begin{lemma}\label{l_decomp}
\begin{itemize}
\item[(i)] $\Pi_\eta^{\epsilon^\circ}= \oplus_{a\in F^\times/\Nm(E^\times)} \Theta_{\psi_a}(\eta^\epsilon)$ as a $\GSp_4^+(\ba)$-representation.
\item[(ii)] $\Pi^\epsilon_\eta=\Ind_{\GSp_4^\circ(\ba)}^{\GSp_4(\ba)}\Pi_\eta^{\epsilon^\circ}=\otimes_v \Pi_{\eta_v}^{\epsilon_v}$ with $\Pi_{\eta_v}^{\epsilon_v}=\Ind_{\GSp_4^+(F_v)}^{\GSp_4(F_v)} \theta(\eta_v^{\epsilon_v},\psi_v)$. $\Pi^\epsilon_\eta$ is irreducible.
\end{itemize}
\end{lemma}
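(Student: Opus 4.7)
The plan is to deduce both parts from Lemma~\ref{l_thetacom} together with the local Howe-duality discussion preceding it, using the chain $\GSp_4^+(\ba) \subset \GSp_4^\circ(\ba) \subset \GSp_4(\ba)$ in which each inclusion has index $[F^\times:\Nm(K^\times)]$ and $2$ respectively.

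\textbf{Part (i).} The plan is to exploit the coset decomposition $\GSp_4^\circ(\ba) = \bigsqcup_{a} \GSp_4^+(\ba) g_a$ where $a$ runs over representatives of $F^\times/\Nm(K^\times)$ and $g_a \in \GSp_4(F)$ satisfies $\nu(g_a) = a$ (such $g_a$ exist by the definition of $\GSp_4^\circ$). Since $\Pi_\eta^{\epsilon\circ}$ is the $\GSp_4^\circ(\ba)$-span of $\Theta_\psi(\eta^\epsilon)$, it equals the sum of the right translates $\Theta_\psi(\eta^\epsilon)^{g_a}$, which by Lemma~\ref{l_thetacom} are precisely $\Theta_{\psi_a}(\eta^\epsilon)$. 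To show the sum is direct, I would argue that for distinct classes $a,b \in F^\times/\Nm(K^\times)$, the Hasse norm theorem (trivially valid for cyclic, in particular quadratic, extensions) guarantees $ab^{-1}\notin \Nm(K_v^\times)$ at some place $v$ nonsplit in $K$; at such $v$ the local components $\theta(\eta_v^{\epsilon_v},\psi_{v,a})$ and $\theta(\eta_v^{\epsilon_v},\psi_{v,b})$ are non-isomorphic by the conservation-principle argument given just before Lemma~\ref{l_thetacom}. Hence $\Theta_{\psi_a}(\eta^\epsilon)$ and $\Theta_{\psi_b}(\eta^\epsilon)$ are distinct irreducible subrepresentations of the discrete automorphic spectrum of $\GSp_4^+(\ba)$ and the sum is direct.

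\textbf{Part (ii), first equality.} Here I would invoke $[\GSp_4(\ba):\GSp_4^\circ(\ba)]=2$ with coset representative $g_\iota$. Since by construction $\Pi_\eta^{\epsilon\circ}$ is realized as functions on $\GSp_4(\ba)$ supported on the single coset $\GSp_4^\circ(\ba)$, the $\GSp_4(\ba)$-representation $\Pi_\eta^\epsilon$ it generates is the internal direct sum $\Pi_\eta^{\epsilon\circ} \oplus R(g_\iota)\Pi_\eta^{\epsilon\circ}$, where $R(g_\iota)$ shifts support to the other coset. This is manifestly the induced representation $\Ind_{\GSp_4^\circ(\ba)}^{\GSp_4(\ba)} \Pi_\eta^{\epsilon\circ}$.

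\textbf{Part (ii), tensor product and irreducibility.} The task is to identify the induced representation with $\bigotimes_v \Pi_{\eta_v}^{\epsilon_v}$. I would compare both sides after restriction to $\GSp_4^+(\ba)$. Locally, at each place $v$ nonsplit in $K$, the restriction $\Pi_{\eta_v}^{\epsilon_v}|_{\GSp_4^+(F_v)}$ decomposes as $\theta(\eta_v^{\epsilon_v},\psi_v) \oplus \theta(\eta_v^{\epsilon_v},\psi_{v,a_v})$ for any $a_v \notin \Nm(K_v^\times)$, indexed by $F_v^\times/\Nm(K_v^\times)\cong \mu_2$; at split places the representation is already irreducible under $\GSp_4^+(F_v)=\GSp_4(F_v)$. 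Taking the restricted tensor product and recognizing $\otimes_v \theta(\eta_v^{\epsilon_v},\psi_{v,a_v})$ as the adelic theta lift indexed by $a=(a_v)\in \ba^\times/\Nm(\ba_K^\times)$ yields
\[
\bigotimes_v \Pi_{\eta_v}^{\epsilon_v}\bigg|_{\GSp_4^+(\ba)} \;=\; \bigoplus_{a\in \ba^\times/\Nm(\ba_K^\times)} \Theta_{\psi_a}(\eta^\epsilon).
\]
On the other side, by part (i) together with the first equality in part (ii), the restriction of $\Ind_{\GSp_4^\circ(\ba)}^{\GSp_4(\ba)}\Pi_\eta^{\epsilon\circ}$ to $\GSp_4^+(\ba)$ is the direct sum over the two $\GSp_4^\circ$-cosets of two copies of $\oplus_{a\in F^\times/\Nm(K^\times)}\Theta_{\psi_a}(\eta^\epsilon)$, and the exact sequence $1\to F^\times/\Nm(K^\times)\to \ba^\times/\Nm(\ba_K^\times)\to \ba^\times/F^\times\Nm(\ba_K^\times)\to 1$ (with quotient of order $2$) identifies these two decompositions. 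Irreducibility of $\otimes_v \Pi_{\eta_v}^{\epsilon_v}$ follows from the local irreducibility established in the paragraph preceding Lemma~\ref{l_thetacom}.

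The main obstacle I anticipate is the third step: threading the local-global class-field-theoretic identification $F^\times/\Nm(K^\times) \subset \ba^\times/\Nm(\ba_K^\times)$ through the comparison of restrictions, and simultaneously matching the two types of decompositions (via local Howe duality on one side, via two global cosets of $\GSp_4^\circ$ on the other). This is essentially combinatorial bookkeeping, but it must be aligned with the non-isomorphism of the local twisted theta lifts so that each irreducible $\GSp_4^+(\ba)$-summand appears on exactly one side with the same multiplicity.
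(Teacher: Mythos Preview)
Your proof of part (i) and the first equality of part (ii) matches the paper's argument essentially verbatim: same coset decomposition, same appeal to Lemma~\ref{l_thetacom}, same local non-isomorphism (via conservation) at a nonsplit place to force directness.

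For the tensor factorization in part (ii) you take a slightly more circuitous route than the paper. The paper observes in one stroke that the elements $\{g_a,\,g_ag_\iota : a\in F^\times/\Nm(K^\times)\}$ form a complete set of representatives for $\GSp_4^+(\ba)\backslash \GSp_4(\ba)$ (this is exactly your exact sequence $1\to F^\times/\Nm(K^\times)\to \ba^\times/\Nm(\ba_K^\times)\to \mu_2\to 1$), so that in fact $\Pi_\eta^\epsilon=\Ind_{\GSp_4^+(\ba)}^{\GSp_4(\ba)}\Theta_\psi(\eta^\epsilon)$; since $\Theta_\psi(\eta^\epsilon)=\otimes_v\theta(\eta_v^{\epsilon_v},\psi_v)$ and induction from an open finite-index product subgroup factorizes, the tensor decomposition drops out immediately. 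Your restriction-and-match argument reaches the same conclusion but carries two imprecisions worth fixing: writing $\Theta_{\psi_a}(\eta^\epsilon)$ for an \emph{idelic} class $a\in\ba^\times/\Nm(\ba_K^\times)$ is an abuse (the global theta lift is only defined for $a\in F^\times$; what you mean is the abstract tensor product $\otimes_v\theta(\eta_v^{\epsilon_v},\psi_{v,a_v})$), and the phrase ``two copies of $\oplus_a\Theta_{\psi_a}(\eta^\epsilon)$'' is not quite right---the $g_\iota$-translate is a \emph{different} collection of irreducibles, not a second copy. More substantively, matching restrictions to $\GSp_4^+(\ba)$ does not by itself prove the two $\GSp_4(\ba)$-representations are isomorphic; you should either argue (as the paper does) that both are the \emph{same} induced representation $\Ind_{\GSp_4^+(\ba)}^{\GSp_4(\ba)}\Theta_\psi(\eta^\epsilon)$, or invoke irreducibility of the tensor product together with multiplicity-freeness of the restriction to pin down the $\GSp_4(\ba)$-structure.
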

\begin{proof}
First, $\Pi_\eta^{\epsilon^\circ}= \sum_{a\in \Nm(E^\times)\backslash F^\times} \Theta(\eta^\epsilon)^{g_a}$. It is a direct sum because $\Theta(\eta^\epsilon)^{g_a}$ are distinctive $\GSp_4^+(\ba)$-representations. Actually, if $a_1\neq a_2$ in $\Nm(E^\times)\backslash F^\times$, then there is a nonsplit place $v$ of $E/F$ such that $a_1\neq a_2$ in $F_v^\times/\Nm(E_v^\times)$, whence the local representations $\theta(\eta_v^{\epsilon_v},\psi_v)^{g_{a_1}}$ and $\theta(\eta_v^{\epsilon_v},\psi_v)^{g_{a_2}}$ are distinctive. This proves (i).

Recall that $\GSp_4(\ba)=\GSp_4^\circ(\ba)\sqcup \GSp_4^\circ(\ba)g_\iota$. The same argument as for (i) shows that
\begin{align*}
\Pi^\epsilon_\eta=&\big(\oplus_{a\in F^\times/\Nm(K^\times)} \Theta(\eta^\epsilon)^{g_a}\big)\oplus \big(\oplus_{a\in F^\times/\Nm(K^\times)} \Theta(\eta^\epsilon)^{g_\iota g_a}\big)=\Pi_\eta^{\epsilon^\circ}\oplus (\Pi_\eta^{\epsilon^\circ})^{g_\iota}.
\end{align*}
So $\Pi^\epsilon_\eta=\Ind_{\GSp_4^\circ(\ba)}^{\GSp_4(\ba)}\Pi_\eta^{\epsilon^\circ}$. Combining this with (i), one sees that $\Pi^\epsilon_\eta\cong \otimes_v \Ind_{\GSp_4^+(F_v)}^{\GSp_4(F_v)} \theta(\eta_v^{\epsilon_v},\psi_v)$ because the elements $g_a, g_ag_\iota$ form a complete set of representatives of $\GSp_4^+(\ba)\backslash \GSp_4(\ba)$.  $\Pi^\epsilon_\eta$ is irreducible because each local representation $\Ind_{\GSp_4^+(F_v)}^{\GSp_4(F_v)} \theta(\eta_v^{\epsilon_v},\psi_v)$ is irreducible.
\end{proof}

Third is about the inner product formula. We present the inner product formula for the lift $E(\eta^\epsilon)\rar \Pi^\epsilon_\eta$. The inner product formula for the inverse direction is not used in the computation of Bessel periods and skipped. For irreducible unitary representations $\Pi$ (resp. $\Pi^+$) of $\GSp_4(\ba)$ (resp. $\GSp_4^+(\ba)$) in the discrete spectrum and $f_i\in \Pi, f_i^+\in \Pi^+$, define

\[
(f_1,f_2)_{\GSp_4}=\int_{[\PGSp_4]}f_1(g)\overline{f_2(g)}\dd g,\quad (f_1^+,f_2^+)_{\GSp_4^+}=\int_{[\PGSp_4^+]}f_1(g)\overline{f_2(g)}\dd g,
\]

\begin{proposition}\label{ipf_sp}
Suppose $\eta^\epsilon\in \Pa(\eta)$ and $\Pi^\epsilon_\eta$ is cuspidal. For $\varphi_i\in E(\eta^\epsilon)$, $\phi_i\in \ms(V_K^2(\ba))$ and $f_i=\Theta_\psi(\phi_i,\varphi_i)$, there is
\[
(f_1,f_2)_{\GSp_4}=\frac{L(2, \eta\circ \pr)}{\zeta_F(4)}\prod_v \frac{\zeta_{F_v}(4)}{L(2,\eta_v\circ \pr)}\underset{\OO(V_K)_{F_v}}{\int} \overline{(h_v\circ \varphi_{1,v},\varphi_{2,v})}(\omega_{\psi_v}(h_{v})\phi_{1,v},\phi_{2,v})\dd h_v.
\]
\end{proposition}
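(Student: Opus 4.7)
The plan is to establish this formula via the doubling-method Rallis inner product formula, adapted to the non-tempered setting in the spirit of the proof of Proposition ~\ref{IPF_SO5} for Saito-Kurokawa packets.

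First I would unfold the global inner product. Substituting $f_i(g)=\Theta_\psi(\phi_i,\varphi_i)(g)$ into $(f_1,f_2)_{\GSp_4}$ and formally interchanging the $[\PGSp_4]$- and $[\OO(V_K)]$-integrations gives
\begin{equation*}
(f_1,f_2)_{\GSp_4} = \int_{[\OO(V_K)]\times[\OO(V_K)]}\overline{\varphi_1(h_1h^\prime)}\varphi_2(h_2h^\prime)\,\mi(h_1,h_2)\,\dd h_1 \dd h_2,
\end{equation*}
where $\mi(h_1,h_2):=\int_{[\PGSp_4]}\Theta_{\phi_1}(g,h_1h^\prime)\overline{\Theta_{\phi_2}(g,h_2h^\prime)}\,\dd g$ and $h^\prime\in\GO(V_K)_\ba$ satisfies $\nu(h^\prime)=\nu(g)$. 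After the change of variable $h_1=h h_2$, the integrand $\mi$ depends only on $h$ by the $\Sp_4(\ba)$-invariance of the kernel, and the $h_2$-integration produces the matrix coefficient $\overline{(h\circ \varphi_1,\varphi_2)_{\GO(V_K)}}$.

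The second step is to evaluate $\mi(h,1)$ using the regularized Siegel-Weil formula. The product $\Theta_{\phi_1}(g,hh^\prime)\overline{\Theta_{\phi_2}(g,h^\prime)}$ is a theta kernel for the doubled dual pair $(\Sp_8,\OO(V_K))$ after rewriting $\phi_1\otimes\overline{\phi_2}$ as a Schwartz function on the doubled space. Its integral over $[\PGSp_4]$ is identified, via the Siegel-Weil formula, with a value of a Siegel-type Eisenstein series on $\GSp_8$, whose leading constant is governed by the Langlands-Shahidi-type $L$-factor $L(s,\eta\circ\pr)$ for $\OO(V_K)$ evaluated at the point $s=2$ determined by the doubling normalization for the $2$-dimensional space $V_K$.

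Finally, the resulting global integral over $\OO(V_K)_\ba$ factors, on decomposable vectors, as a product of local integrals $\int_{\OO(V_K)_{F_v}}\overline{(h_v\circ\varphi_{1,v},\varphi_{2,v})}(\omega_{\psi_v}(h_v)\phi_{1,v},\phi_{2,v})\,\dd h_v$. A routine unramified computation, using the Tamagawa normalization from Section ~\ref{s_tmeasure}, shows that at almost all places this local integral equals $L(2,\eta_v\circ\pr)/\zeta_{F_v}(4)$, which justifies the correcting ratio in the infinite product and ensures its convergence. The main obstacle is the regularization step: because the CAP representations $\Pi_\eta^\epsilon$ force the doubling Eisenstein series outside its absolutely convergent range, one must insert a complex parameter via a degenerate principal series section on $\GSp_8$, carry out meromorphic continuation, and identify the precise leading term that produces the factor $L(2,\eta\circ\pr)/\zeta_F(4)$. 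A secondary subtlety is the similitude bookkeeping needed to pass between $\GSp_4^+$ and $\GSp_4$, for which Lemma ~\ref{l_decomp} provides the structural description of $\Pi_\eta^\epsilon$.
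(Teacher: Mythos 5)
Your overall shape — unfold the inner product, invoke Siegel--Weil, compute unramified local values, bookkeep the similitudes — matches the paper's strategy, but there are two concrete problems, one of which is a genuine error.

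The doubling direction is reversed. Here the lift goes from $\OO(V_K)$ to $\GSp_4$ and the inner product is taken on $[\GSp_4]$. After unfolding, the inner integral over $[\Sp_4]$ of $\theta_{\phi_1}(g,h_1)\overline{\theta_{\phi_2}(g,h_2)}$ is a theta integral for the dual pair $(\Sp_4,\OO(\wtilde V_K))$, where $\wtilde V_K=V_K\oplus V_K^-$ is the doubled $4$-dimensional split quadratic space; Siegel--Weil identifies it with a value of a Siegel Eisenstein series on the \emph{doubled orthogonal group} $\OO(\wtilde V_K)\cong \OO_{2,2}$, not on $\GSp_8$. An Eisenstein series on $\GSp_8$ would be relevant if you were computing the norm of a lift in the opposite direction (to the orthogonal group, as in Proposition~\ref{IPF_SO5}, where one doubles $\SL_2$ into $\Sp_4$). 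Because you have the wrong group, the "degenerate principal series section on $\GSp_8$" and the associated regularization simply do not exist in this problem. Related to this, your worry that the CAP nature of $\Pi_\eta^\epsilon$ forces you outside the convergent range is misplaced: the relevant integral $\int_{\OO(V_K)_\ba}\overline{(h\circ\varphi_1,\varphi_2)}(\omega(h)\phi_1,\phi_2)\,\dd h$ is absolutely convergent (as the paper explicitly notes), since $\OO(V_K)$ is a form of $\OO_2$ and the Weil-representation matrix coefficient decays.

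The paper does not re-derive the Rallis inner product formula at all; it quotes it directly for $(\Sp_4,\OO(V_K))$ from \cite[Thm.~11.4]{gqt12}. The real content of its proof is the similitude reduction: one first observes that $f_1,f_2$ are supported on $\GSp_4^\circ(\ba)$, so $(f_1,f_2)_{\GSp_4}=(f_1,f_2)_{\GSp_4^+}$, and then fibers $[\PGSp_4^+]$ and $[\PGO(V_K)]$ over the compact group $\mc=\Nm(\ba_K^\times)/{\ba^\times}^2\Nm(K^\times)$ with fibers $[\Sp_4]$ and $[\OO(V_K)]$ respectively, applies the cited Rallis formula fiberwise, and integrates over $\mc$ to reassemble the $\GO(V_K)$-matrix coefficient. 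Lemma~\ref{l_decomp}, which you invoke, describes the $\GSp_4(\ba)$-module structure of $\Pi_\eta^\epsilon$ but is not the tool used for this integral manipulation — the relevant observation is about the support of $f_i$ and the fibration over $\mc$. The final unramified computation giving $L(2,\eta_v\circ\pr)/\zeta_{F_v}(4)$ at almost all places is correctly anticipated in your proposal.
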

\begin{proof}
$f_1, f_2$ are functions on $[\GSp_4^+]$. They are supported on $\GSp_4^\circ(\ba)$ when regarded as functions on $\GSp_4(\ba)$, whence $(f_1,f_2)_{\GSp_4}=(f_1,f_2)_{\GSp_4^+}$. Equip the compact space $\mc=\Nm(\ba_K^\times)/{\ba^\times}^2\Nm(K^\times)$ with the probability measure $1$, then
\begin{align}\label{gpairing}
&(f_1,f_2)_{\GSp_4^+}=\int_{\mc}\int_{[\Sp_4]}f_1(g_1g_c)\overline{f_2(g_1g_c)}\dd g_1\dd c,\\
\nonumber &(\varphi_1,\varphi_2)_{\GO(V_K)}=\int_{\mc}\int_{[\OO(V_K)]}\varphi_1(h_1h_c)\overline{\varphi_2(h_1h_c)}\dd h_1\dd c,
\end{align}
where $g_c\in \GSp_4^+(\ba)$ and $h_c\in O(V_K)_\ba$ satisfy $\nu(h_c)=\nu(g_c)=c$. Write $\phi_{i,c}=\omega(h_c,g_c)\phi_i$, $\varphi_{i,c}(h_1)=\varphi_i(h_1 h_c)$, and $f_{i,c}=\Theta(\phi_{i,c},\varphi_{i,c})$, then $f_i(g_1g_c)=f_{i,c}(g_1)$, whence $(f_1,f_2)_{\GSp_4^+}=\int_{\mc}(f_{1,c},f_{2,c})_{\Sp_4}\dd c$ and $(\varphi_1,\varphi_2)_{\GO(V_K)}=\int_{\mc}(\varphi_{1,c},\varphi_{2,c})_{\OO(V_K)}\dd c$. 

By \cite[Thm. 11.4]{gqt14}, 
\begin{align*}
(f_{1,c},f_{2,c})_{\Sp_4}=\int_{\OO(V_K)_\ba}\overline{(h_1\circ \varphi_{1,c},\varphi_{2,c})_{\OO(V_K)}}(\omega(h_1)\phi_{1,c},\phi_{2,c})\dd h_1.
\end{align*}
Note that the above integral is absolutely convergent. Also note that $(\omega(h)\phi_{1,c},\phi_{2,c})=(\omega(h)\phi_1,\phi_2)$ because the pairing on $\ms(V_K^2(\ba))$ is unitary with respect to $\omega$. Therefore,
\begin{align*}
\big(f_1,f_2)_{\GSp_4^+}
=&\int_{\mc}\int_{\OO(V_K)_\ba}\overline{(h_1\circ \varphi_{1,c},\varphi_{2,c})_{\OO(V_K)}}(\omega(h_1)\phi_{1},\phi_{2})\dd h_1\dd c\\
=&\int_{\OO(V_K)_\ba}\big[\overline{\int_{\mc}(h_1\circ \varphi_{1,c},\varphi_{2,c})_{\OO(V_K)}\dd c}\big]\ (\omega(h_1)\phi_{1},\phi_{2})\dd h_1\\
=&\int_{\OO(V_K)_\ba}\overline{(h_1\circ \varphi_1,\varphi_2)_{\GO(V_K)}}(\omega(h_1)\phi_1,\phi_2)\dd h_1.
\end{align*}
At almost all places, there is $\int_{\OO(V)_{F_v}}\overline{(h_{1,v}\circ \varphi_{1,v},\varphi_{2,v})}(\omega_v(h_{1,v})\phi_{1,v},\phi_{2,v})dh_{1,v}=\frac{L(2, \eta_v\circ \pr)}{\zeta_{F_v}(4)}$, whence the formula in the proposition follows.
\end{proof}

\begin{remark}\label{lipf_sp}
Choose local theta liftings $\theta_v:\ms(V_K^2(F_v))\otimes \eta_v^{\epsilon_v}\rar \theta(\eta_v^{\epsilon_v},\psi_v)$ such that $\Theta_\psi(\phi,E(\Phi))=\otimes \theta_v(\phi_v,\Phi_v)$. Proposition \ref{ipf_sp} combined with Section ~\ref{p_go2} (iv) then implies that for $f_{i,v}=\theta_{\psi_v}(\phi_v,\Phi_v)$, there are
\[
(f_{1,v},f_{2,v})=c_{\eta_v}\underset{\OO(V_K)_{F_v}}{\int} \overline{(h_v\circ \Phi_{1,v},\Phi_{2,v})}(\omega_{\psi_v}(h_{v})\phi_{1,v},\phi_{2,v})\dd h_v,
\]
where $c_{\eta_v}$ are equal to $\frac{\zeta_{F_v}(4)}{L(2,\eta_v\circ \pr)}$ for almost all $v$ with $\prod_v \frac{c_{\eta_v} L(2,\eta_v\circ \pr)}{\zeta_{F_v}(4)}=\frac{2^{\beta}L(2,\eta\circ \pr)}{\zeta_F(4)}$.
\end{remark}

\subsection{Global Bessel periods}

Suppose $\eta|_{\ba^\times}=1$ and $\Pi^\epsilon_\eta\in \So(\eta)$ is cuspidal. We now compute the global Bessel period functional $P$ on $\Pi^\epsilon_\eta$. In terms of $\PGSp_4$, there is $P(f)=\int_{[\SO(V_0)]}\ell_T(\Pi^\epsilon_\eta(g)f)\dd g$, where $T=-J_1v_0$ and 
\[
\ell_T(f):=\int_{[\mathrm{N}]}f(N)\psi_T(N)dN.
\]

For $\Phi\in \eta^\epsilon,\phi\in \ms(V_E^2(\ba))$ and $\xi\in V_K^{2,\circ}(F)$, we introduce
\[
P(\phi,\Phi,\xi):=\int_{\OO(V_K)_\ba}\overline{\Phi(h)}\phi(h^{-1}\xi)\dd h,\quad P_{v}(\phi_v,\Phi_v,\xi):=\int_{\OO(V_K)_{F_v}}\overline{\Phi_v(h_v)}\phi_v(h_v^{-1}\xi)\dd h_v.
\]

To compute the global Bessel period functional, we make two observations: first, $\ell_T$ is zero on $\big(\Pi_\eta^{\epsilon^\circ}\big)^{g_\iota}$ because functions in $\big(\Pi_\eta^{\epsilon^\circ}\big)^{g_\iota}$ are supported on $\GSp_4^\circ(\ba)g_\iota$; second, by (\ref{sm}), there is a unique element $a_T\in F^\times/\Nm(E^\times)$ with $T\in -\frac{a_T}{2}q_E(V_E^{2,\circ}(E))$ because $\det T=-q(v_0)$. Let $\delta(\chi,\eta)$ be the Dirac symbol, which takes value $1$ when $\chi=\eta$ and value $0$ when $\chi\neq \eta$.

\begin{proposition}\label{functionalT}
\begin{itemize}
\item[(i)] When $E\neq K$, $\ell_T$ is zero on $\Pi^\epsilon_\eta$ 
\item[(ii)] When $E=K$, $\ell_T$ is zero on $\big(\Pi_\eta^{\epsilon^\circ}\big)^{g_\iota}$ and $\Theta_{\psi_a}(\eta^\epsilon)$ for $a\neq a_T$. 
\item[(iii)] Suppose $E=K$ and $f=\Theta_{\psi_{a_T}}(\phi, E(\Phi))$ with $\phi\in \ms(V^2_K(\ba))$ and $\Phi\in \eta^\epsilon$, choose $\xi_T\in V_K^2$ with $q_K(\xi_T)=-2T/a_T$, then $\ell_T(f)=P(\phi,\Phi,\xi_T)+P(\phi,\Phi,\xi_T^{\iota})$. Furthermore, when $T=-a_T\smalltwomatrix{1}{}{}{\delta}$ and $\xi_T=(1,e)$, there is
\[
P(f)=
\begin{cases}
2^{\beta}\delta(\chi,\eta)P(\phi,\Phi,\xi_T)+2^{\beta}\delta(\chi,\eta^\iota)P(\phi,\Phi,\xi_T^\iota), &\eta\neq \eta^{\iota},\\
2^{\beta}\delta(\chi,\eta)P(\phi,\Phi,\xi_T), &\eta=\eta^\iota.
\end{cases}
\]
\item[(iv)] $P$ is nonzero if and only if $E=K$ and $\chi\in\{\eta, \eta^\iota\}$.
\end{itemize}
\end{proposition}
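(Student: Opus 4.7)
The plan is to compute $\ell_T(f)$ by Fourier expansion of the theta kernel along the Siegel unipotent $\mrn = \Sym_2$, and to pass the resulting expression through the $\SO(V_0)$-integration via the Hilbert~90 identification $\SO(V_0)\cong K_1^\times$.

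For parts (i)--(ii), the theta kernel at a Siegel unipotent element $N\in \Sym_2$ is $\sum_{\xi\in V_K^2(F)}\psi\bigl(\tfrac{a}{2}\Tr(Nq_K(\xi))\bigr)\phi(h^{-1}\xi)$. Integrating against $\psi_T(N)$ over $[\Sym_2]$ selects those $\xi$ with $q_K(\xi) = -2T/a$. By the disjoint decomposition (\ref{sm}), such $\xi$ exists iff $\det T \in -\disc(K)F^{\times 2}$ (forcing $E = K$) and $a$ equals the unique index $a_T$ with $-2T\in a_T q_K(V_K^{2,\circ})$. This gives the vanishing on $\Theta_{\psi_a}(\eta^\epsilon)$ in the two stated cases. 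Vanishing on $(\Pi_\eta^{\epsilon\circ})^{g_\iota}$ is by support, since $f^{g_\iota}(N) = f(Ng_\iota)$ with $Ng_\iota \in \GSp_4^\circ(\ba)g_\iota$ while $f$ is supported on $\GSp_4^\circ(\ba)$.

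For part (iii), when $E = K$ and $a = a_T$, the fiber $\Sigma_{-2T/a_T}(F)$ is a single $\OO(V_K)(F)$-torsor by (\ref{sm}), so for any choice of $\xi_T$ on it the sum becomes $\sum_{\gamma\in \OO(V_K)(F)}\phi(h^{-1}\gamma\xi_T)$. Unfolding against $[\OO(V_K)]$ and then expanding $E(\Phi)(h) = \Phi(h) + \Phi(\iota h)$, the substitution $h\mapsto \iota h$ in the second piece (using $\iota^{-1}\xi_T = \xi_T^\iota$) produces $\ell_T(f) = P(\phi,\Phi,\xi_T) + P(\phi,\Phi,\xi_T^\iota)$. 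For $P(f) = \int_{[\SO(V_0)]}\chi(h)\ell_T(\rho(h)f)\,\dd h$, the key observation is that right translation by $h = [A]\in\SO(V_0)$ lifts on $R(\ba) = \GSp_4^+\times_\nu\GO(V_K)$ to the pair $(h, h_{k_A})$ with $h_{k_A}\in\GSO(V_K)(\ba)$ acting as multiplication by $k_A = a+be\in K^\times$ (and $\nu(h_{k_A}) = \Nm(k_A) = \det A$); explicit computation gives $\xi_T\cdot A = k_A\xi_T$ and $\xi_T^\iota\cdot A = k_A^\iota\xi_T^\iota$, and using the $\eta$-eigenproperty of $E(\Phi)$ under right $\GSO$-translation together with the centrality of $\GSO$, one obtains
\[
\ell_T(\rho(h)f) = \eta(k_A)^{-1}P(\phi,\Phi,\xi_T) + \eta(k_A^\iota)^{-1}P(\phi,\Phi,\xi_T^\iota).
\]
The map $h\mapsto k_A$ realizes the Hilbert~90 isomorphism $\SO(V_0)\cong K^\times/F^\times\cong K_1^\times$, and $\int_{[\SO(V_0)]}\chi(h)\eta(k_A)^{-1}\dd h$ is a character integral on $[K_1^\times]$ equal to $\Vol([\SO(V_0)]) = 2$ if $\chi = \eta$ and zero otherwise; similarly with $\eta^\iota$. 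When $\eta\neq\eta^\iota$ the two delta conditions are disjoint, yielding $\beta = 1$; when $\eta = \eta^\iota$ they coincide and, since the packet definition forces $\sgn(\epsilon) = 1$ so that $\Phi(\iota) = 1$, one checks that $P(\phi,\Phi,\xi_T^\iota) = P(\phi,\Phi,\xi_T)$ (by changing variables $h\mapsto\iota^{-1}h$ in the $\xi_T^\iota$ integral and using the character property of $\Phi$), so the two terms combine to give $\beta = 2$. Part (iv) is then an immediate consequence of (iii) together with a routine non-vanishing argument for the local factors $P_v(\phi_v,\Phi_v,\xi_T)$.

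The principal obstacle will be the compatibility calculation in part (iii): lifting $h\in\SO(V_0)\subset\GSp_4$ to a suitable pair in $R(\ba)$, verifying the identity $\xi_T\cdot A = k_A\xi_T$ in the chosen coordinates, and cancelling the $\eta(k_A)$ Weil-representation factor against the $\GSO$-eigenvalue of $E(\Phi)$, together with measure bookkeeping ($\Vol([\SO(V_0)]) = 2$ and the probability measure on $\mu_2(\ba)$ in the Tamagawa measure on $\OO(V_K)_\ba$) needed to extract the precise factor $2^\beta$. A secondary but delicate point is the equality $P(\phi,\Phi,\xi_T^\iota) = P(\phi,\Phi,\xi_T)$ in the case $\eta = \eta^\iota$, which rests on the global constraint $\sgn(\epsilon) = 1$ built into the definition of the packet $\Pa(\eta)$.
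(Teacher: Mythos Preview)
Your approach is essentially the paper's: Fourier-expand the theta kernel along $\mrn$ to isolate the fiber $\Sigma_{-2T/a}$, unfold over $\OO(V_K)(F)$, then push right translation by $h\in\SO(V_0)$ through the Weil representation via the identity $\xi_T A = k_A\xi_T$ and finish by character orthogonality on $[\SO(V_0)]$.

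One step in your justification is misstated. The subgroup $\GSO(V_K)=K^\times$ is \emph{not} central in $\GO(V_K)$: conjugation by $h_g\in K^\times$ sends $\iota$ to $(h_g/h_g^\iota)\iota$, so you cannot simply commute $h_g$ past the integration variable $h_1\in\OO(V_K)_\ba$. The paper's device is the change of variable $h_1\mapsto h_g h_1 h_g^{-1}$ (valid since conjugation preserves Haar measure), which turns the right translation $\varphi(h_1 h_g)$ into $\varphi(h_g h_1)$; then the \emph{left} transformation laws $\Phi(h_g h_1)=\eta(h_g)\Phi(h_1)$ and $\Phi(\iota h_g h_1)=\Phi(h_g^\iota\iota h_1)=\eta^\iota(h_g)\Phi(\iota h_1)$ apply directly and yield your formula $\ell_T(\rho(h)f)=\overline{\eta(k_A)}P(\phi,\Phi,\xi_T)+\overline{\eta^\iota(k_A)}P(\phi,\Phi,\xi_T^\iota)$. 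The rest of your argument (including the handling of the case $\eta=\eta^\iota$ via $\sgn(\epsilon)=1$, and the nonvanishing for part~(iv)) matches the paper.
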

\begin{proof}
For $a\in F^\times/\Nm(K^\times)$ and $f=\Theta_{\psi_a}(\phi, \varphi)\in \Theta_{\psi_a}(\eta^\epsilon)$ with $\varphi=E(\Phi)$, there is
\begin{align*}
\ell_T(F)=&\int_{[\mrn]}\int_{[\OO(V_K)]}\overline{\varphi(h_1)}\sum_{\xi\in V_K^2(F)} \omega_{\psi_a}(N,h_1)\phi(\xi)\psi_T(N)\dd h_1\dd N\\
=&\int_{[\OO(V_K)]}\overline{\varphi(h_1)}\sum_{\xi\in V_K^2(F)}\phi(h_1^{-1}\xi)\int_{[\mrn]}\psi\big(\Tr[(\frac{bq_K(\xi)}{2}+T)N])\big)\dd N \dd h_1.
\end{align*}
In order that the above expression is nonzero, it is necessary that $T\in  -\frac{b}{2}q(V_K^2(F))$. When $E\neq K$, this can not happen. When $E=K$, this happens only when $a=a_T$ is the class such that $-2T\in aq_K(V_K^\circ(F))$. This proves (i) and (ii).

Now suppose $a=a_T$. The set $\Sigma_{-2T/a}(F)=\{\xi\in V_K^2(F): q_K(\xi)=-2T/a\}$ is homeomorphic to $\OO(V_K)_F$. Choosing one $\xi_T\in \Sigma_{-2T/a_T}(F)$, one can write $\ell_T(f)$ as
\[
\int_{[\OO(V_K)]}\overline{\varphi(h_1)}\sum_{\gamma\in \OO(V_K)_F}\phi(h_1^{-1}\gamma^{-1}\xi_T)\dd h_1=\int_{\OO(V_K)_\ba}\overline{(\Phi(h_1)+\Phi(\iota h_1))}\cdot \phi(h_1^{-1}\xi_T)\dd h_1.
\]
This proves the part about $\ell_T(f)$ in (iii).



Recall (\ref{groupT}). For the given special diagonal $T$ and special $\xi_T$, there is
\[
\SO(V_0)=\{[\smalltwomatrix{a}{-b \delta}{b}{a}]\in \PGL_2\}\cong \bt=\left\{[\smalltwomatrix{A}{}{}{(\det A)\leftup{t}{A^{-1}}}]: A=\smalltwomatrix{a}{-b \delta}{b}{a}\right\}
\]
For $g=[\smalltwomatrix{a}{-b \delta}{b}{a}]\in \SO(V_0)_\ba$, set $h_g=a+b e\in \ba_K^\times=\GSO(V_K)_\ba$, $\varphi_{h_g}(\cdot)=\varphi(\cdot h_g)$, and 
\[
f_g=\Theta(\omega_{\psi_a}(h_g,g)\phi,\varphi_{h_g}).
\]
There is $f(ug)=f_g(u)$. The previous calculation for $\ell_T$ yields
\begin{align*}
P(f)=&\int_{[\SO(V_0)]}\ell_T(f_g)\chi(g)\dd g\\
=&\int_{[\SO(V_0)]}\int_{\OO(V_K)_\ba}\chi(g)\varphi(h_1h_g)\omega_{\psi_{a_T}}(h_g,g)\phi(h_1^{-1}\xi_T)\dd h\dd g.
\end{align*}
Set $A=\smalltwomatrix{a}{-b \delta}{b}{a}$, then $\xi_TA=h_g \xi_T$. By the expression of Weil representation, there is
\[
\omega_{\psi_a}(h_g,g)\phi(h_1^{-1}\xi_0)=\phi({h_g}^{-1}h_1^{-1}\xi_TA)= \phi({h_g}^{-1}h_1^{-1}h_g\xi_T).
\]
Hence
\[
P(f)=\int_{[\SO(V_0)]}\int_{\OO(V_K)_\ba}\chi(g)\overline{\varphi(h_1h_g)}\phi({h_g}^{-1}h_1^{-1}h_g\xi_T).
\]
By making a change of variable $h_1\rar h_g h_1{h_g}^{-1}$, we see that
\begin{align*}
P(f)=&\int_{[\SO(V_0)]}\int_{\OO(V_K)_\ba}\chi(g)\overline{\varphi(h_g h_1)}\phi(h_1^{-1}\xi_T)\dd h_1\dd g\\
=&\int_{\OO(V_K)_\ba}\int_{[\SO(V_0)]}\big[\chi(g)\overline{\eta(h_g)\Phi(h_1)}+\chi(g)\overline{\eta^\iota(h_g)\Phi(\iota h_1)}\big]\phi(h_1^{-1}\xi_T)\dd h_1\dd g.
\end{align*}
When $\eta\neq \eta^\iota$, we observe $\int_{\OO(V_K)_\ba}\overline{\Phi(\iota h_1)}\phi(h_1^{-1}\xi_T)\dd h_1=\int_{\OO(V_K)_\ba}\overline{\Phi( h_1)}\phi(h_1^{-1}\xi_T^{\iota})\dd h_1$ by making a change of variable $h_1\rar \iota h_1$, then the formula for $P(\chi)$ follows. When $\eta=\eta^\iota$, there is $\Phi(\iota h_1)=\Phi(h_1)$ and the formula for $P(f)$ in this case follows. So (iii) is proved.

We now verify (iv). By (i), $E=K$ is a necessary condition for $P\neq 0$. Suppose $E=K$, we show that $\chi=\eta$ or $\eta^\iota$ is a sufficient condition. One may assume that the associated $T=J_1^{-1}v_0$ is of the shape $-a_T\smalltwomatrix{1}{}{}{\delta}$. This is because there always exists $h\in \SO(U)$ such that $J_1^{-1}h\circ v_0$ is of such a shape; also, if the assertion holds for $h\circ v_0$, then it also holds for $v_0$. Now the ``only if" part follows from the formula for $P(f)$ in (iii) .  For the other direction, it suffices to show that for $\xi=(\xi_1,\xi_2)\in V_K^2(F)$ with $\xi_1,\xi_2$ linearly independent, $P(\phi,\Phi,\xi)$ is nonzero for certain $\Phi\in \eta^\epsilon$ and $\phi\in \ms(V_K^2(\ba))$. Look at
\[
P_{v}(\phi_v,\Phi_v,\xi)=\int_{\OO(V_K)_{F_v}} \overline{\Phi_v(h_{1,v})}\phi_v(h_{1,v}^{-1}\xi)\dd h_{1,v},\quad \Phi_v\in \eta_v^{\epsilon_v}, \phi_v\in \ms(V_K^2(F_v)).
\]
It is $1$ almost everywhere.
 It suffices to show that $P_v(\cdot,\cdot,\xi)$ is noznero at the remaining finitely many places. When $\eta_v^{\iota_v}\neq \eta_v$, choose $\Phi_v$ with $\Phi_v(1)=1$ and $\Phi_v(\iota_v)=0$, then the local integral is $\int_{\SO(V_K)_{F_v}}\eta_v(h_v)\phi_v(h_v^{-1}\xi)\dd h_v$, which is is obviously nonzero for certain $\phi_v$. When $\eta_v^{\iota_v}=\eta_v$, we use the fact that $\xi, \xi^\iota$ are linearly independent over $K_v$ and choose $\phi_v$ of the shape $\phi_v(x_1\xi+x_2\xi^{\iota})=\phi_1(x_1)\phi_2(x_2)$, $x_1,x_2\in K_v$, $\phi_1,\phi_2\in \ms(K_v)$, then $P_{v}(\phi_v,\Phi_v,\xi)$ is \[ \int_{\SO(V_K)_{F_v}}\eta_v(h_v)\phi_1(h_v^{-1})\dd h \phi_2(0)+\eta_v(\iota_v)\int_{\SO(V_K)_{F_v}}\eta_v(h_v)\phi_2(h^{-1})\dd h \phi_1(0) \] One can certainly make the above expression to be nonzero.
\end{proof}

\subsection{Regularized local Bessel periods }

When $\Pi^\epsilon_\eta\in \So(\eta)$ is cuspidal with $\eta|_{\ba^\times}=1$,
\begin{equation}\label{lf_sp1}
L(s,\Pi^\epsilon_\eta,\chi)=\frac{L(s,\pi_\eta\boxtimes \pi_\chi)L(s+1,\pi_\eta\boxtimes \pi_\chi)}{L(s+1,\chi_{K/F})L(s+1,\chi_{E/F})\prod_{i=0}^2\big(\zeta_F(s+i)L(s+i,\eta\circ \pr)\big)}.
\end{equation}
If $E=K$ and $\chi\in\{\eta, \eta^\iota\}$, it is further reduced to
\begin{equation}\label{lfunction_s}
L(s,\Pi^\eta_\epsilon,\chi)=\frac{L(s,\chi_{K/F})}{\zeta_F(s+2)L(s+1,\chi_{K/F})L(s+2,\eta\circ \pr)}.
\end{equation}
Here $\pi_\eta,\pi_\chi$ refer to the local functorial lifts of $\eta, \chi$ to $\GL_2(\ba)$ and $\eta\circ \pr:\ba^\times_K\overset{\pr}{\rar} \ba^\times_{K,1}\hrar \ba^\times_K\overset{\eta}{\rar} \bc$ is a character of $\ba^\times_K/K^\times$.

Recall the integral $\mpp(s,\chi_v)$ defined in Section ~\ref{s_so2i}. There is
\[
\mpp_v(s;f_{1,v},f_{2,v})=\int_{\SO(V_0)_{F_v}}\ell_{v,T}(h_v\circ f_{1,v},f_{2,v})\chi_v(h_v)\Delta(h_v)^s \dd h_v,
\]
where $\ell_{v,T}(f_{1,v},f_{2,v}):=\int_{\bn(F_v)}(\Pi_v(N_v)f_{1,v},f_{2,v})\psi_{v,T}(N_v)\dd N_v$ for $f_{1,v},f_{2,v}\in \Pi_{\eta_v}^{\epsilon_v}$. Recall that
\[
\Pi^{\epsilon_v}_{\eta_v}=\begin{cases}
\theta(\eta_v^{\epsilon_v},\psi_v), &\text{$K_v$ is split},\\
\oplus_{a_v\in F_v^\times\backslash \Nm(K_v^\times)} \theta(\eta_v^{\epsilon_v},\psi_{v,a_v}), &\text{$K_v$ is nonsplit}.
\end{cases}
\]
For given $T$, there is a unique $a_{v,T}\in F_v^\times/\Nm(K_v^\times)$ such that $T\in -\frac{a_{v,T}}{2}q_E(V_E^{2,\circ}(F_v))$.
\begin{proposition}\label{lbp_sp}
Suppose $f_{i,v}=\theta_{\psi_{v,a_v}}(\phi_{i,v},\Phi_{i,v})\in \theta(\eta_v^{\epsilon_v},\psi_{v,a_v})$. \emph{(i)} When $E_v\neq K_v$ or $E_v=K_v$ with $a_v\neq a_{v,T}$, $\ell_{v,T}$ is zero on $\theta(\eta_v^{\epsilon_v},\psi_{v,a_v})$. \emph{(ii)} When $E_v=K_v$ and $a_v=a_{v,T}$, choose $\xi_{v,T}\in V_K^2(F_v)$ with $q_K(\xi_{v,T})=-2T/a_{v,T}$, then $\ell_{v,T}(f_{1,v},f_{2,v})$ is equal to
\begin{equation*}
2^{{\beta_v}-1}|a_{v,T}||2\det T|^{-\frac{1}{2}}c_{\eta_v}c_{E_v}\cdot
\begin{cases}
\sum_{\tau_v\in \Gal(K_v/F_v)}P_v(\phi_{1,v},\Phi_{1,v},\xi_{v,T}^{\tau_v})\overline{P_v(\phi_{2,v},\Phi_{2,v},\xi_{v,T}^{\tau_v})}, &\eta_v^{\iota_v}\neq \eta_v,\\
P_v(\phi_{1,v},\Phi_{1,v},\xi_{v,T})\overline{P_v(\phi_{2,v},\Phi_{2,v},\xi_{v,T})}, &\eta_v^{\iota}=\eta_v.
\end{cases}
\end{equation*}
Furthermore, when $T=-a_{v,T}\smalltwomatrix{1}{}{}{\delta}$ and $\xi_{v,T}=(1,e)$, $\mpp_v(s;f_{1,v},f_{2,v})$ is equal to
\begin{equation*}
2^{{\beta_v}-1}|a_{v,T}||2\det T|^{-\frac{1}{2}}c_{\eta_v}c_{E_v}\cdot
\begin{cases}
\sum_{\tau_v=1,\iota_v}\mpp(s,\chi_v\overline{\eta_v^{\tau_v}})P_v(\phi_{1,v},\Phi_{1,v},\xi_{v,T}^{\tau_v})\overline{P_v(\phi_{2,v},\Phi_{2,v},\xi_{v,T}^{\tau_v})}, &\eta_v\neq \eta_v^{\iota},\\
\mpp(s,\chi_v\overline{\eta_v})P_v(\phi_{1,v},\Phi_{1,v},\xi_{v,T})\overline{P_v(\phi_{2,v},\Phi_{2,v},\xi_{v,T})}, &\eta_v=\eta_v^\iota.
\end{cases}
\end{equation*}
\end{proposition}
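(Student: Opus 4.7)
The plan is to combine the local inner product formula in Remark \ref{lipf_sp} with an explicit computation of the $\bn(F_v)$-integration via Fourier inversion on $\Sym_2(F_v)$. For $f_{i,v}=\theta_{\psi_{v,a_v}}(\phi_{i,v},\Phi_{i,v})$, Remark \ref{lipf_sp} gives
\[
(\Pi_v(N)f_{1,v},f_{2,v})=c_{\eta_v}\!\int_{\OO(V_K)_{F_v}}\!\!\overline{(h\circ \Phi_{1,v},\Phi_{2,v})}\,(\omega_{\psi_{v,a_v}}(N,h)\phi_{1,v},\phi_{2,v})\,\dd h.
\]
Substituting this into $\ell_{v,T}$ and formally interchanging the $\bn(F_v)$ and $\OO(V_K)_{F_v}$ integrations, I apply the Weil-representation formula $\omega_{\psi_{v,a_v}}(N)\phi(X)=\psi_v(\tfrac{a_v}{2}\Tr(N\,q_K(X)))\phi(X)$. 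Together with $\psi_{v,T}(N)=\psi_v(\Tr(TN))$ the characters combine to $\psi_v(\Tr(N[T+\tfrac{a_v}{2}q_K(X)]))$, and the $\bn(F_v)$-integration becomes Fourier inversion on $\Sym_2(F_v)$, localizing the $X$-integration to the constraint $q_K(X)=-2T/a_v$.

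Part (i) is immediate: by (\ref{sm}), the condition $q_K(X)=-2T/a_v$ has no solutions in $V_K^2(F_v)$ when $E_v\neq K_v$ or when $a_v\neq a_{v,T}$, so $\ell_{v,T}$ vanishes on $\theta(\eta_v^{\epsilon_v},\psi_{v,a_v})$. For part (ii) I apply the measure decomposition of Lemma \ref{l_md2} together with Lemma \ref{md2_r} to replace the $X$-integration over the fiber $\Sigma_{-2T/a_{v,T}}\cong \OO(V_K)_{F_v}$ by a $\dd h_v$-integral over $\OO(V_K)_{F_v}$ against the base point $\xi_{v,T}$. The Jacobian produced by changing $T'\!=\!q_K(X)$ to the delta at $T'\!=\!-2T/a_{v,T}$, combined with the factor $\dd\sigma_{v,T'}=2c_{K_v}\dd h_v=2c_{E_v}\dd h_v$, assembles to $|a_{v,T}||2\det T|^{-1/2}$ up to the stated powers of $2$. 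The resulting double integral over $\OO(V_K)_{F_v}\times\OO(V_K)_{F_v}$ separates, after the substitution $h\mapsto hh_1$ and the $\OO(V_K)_{F_v}$-invariance, into the product $P_v(\phi_{1,v},\Phi_{1,v},\xi_{v,T})\overline{P_v(\phi_{2,v},\Phi_{2,v},\xi_{v,T})}$. The sum $\sum_{\tau_v\in\Gal(K_v/F_v)}$ in the formula comes from the decomposition of $\Phi_v\in\eta_v^{\epsilon_v}=\Ind_{\GSO(V_K)_{F_v}}^{\GO(V_K)_{F_v}}\eta_v$ into its values on $\GSO(V_K)_{F_v}$ and on $\iota_v\GSO(V_K)_{F_v}$ when $\eta_v^{\iota_v}\neq\eta_v$, while only a single term survives when $\eta_v^{\iota_v}=\eta_v$; this accounts for the exponent $\beta_v-1$.

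For the $\SO(V_0)_{F_v}$-integration giving $\mpp_v(s;f_{1,v},f_{2,v})$, I use the identification (\ref{groupT}) between $\SO(V_0)_{F_v}$ and $K_{v,1}^\times\subset K_v^\times=\GSO(V_K)_{F_v}$ via $g=[\smalltwomatrix{a}{-b\delta}{b}{a}]\mapsto h_g=a+be$, together with the key identity $\xi_{v,T}\cdot A=h_g\xi_{v,T}$ (verified directly using $e^2=-\delta$ when $T=-a_{v,T}\smalltwomatrix{1}{}{}{\delta}$ and $\xi_{v,T}=(1,e)$; this is the same identity used in the global computation of Proposition \ref{functionalT}). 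This identity lets me absorb the action of $h_v\in\SO(V_0)_{F_v}$ on $f_{1,v}$ into a translation by $h_g$ on the integration variable of $P_v$, producing by the $\GSO$-equivariance of $\Phi_{1,v}$ the character $\overline{\eta_v^{\tau_v}(h_g)}$ for $\tau_v=1$ or $\iota_v$. Combining this character with $\chi_v(h_v)\Delta(h_v)^s$ and using the isomorphism $\SO(V_0)_{F_v}\cong K_{v,1}^\times$, the remaining integral becomes precisely $\mpp(s,\chi_v\overline{\eta_v^{\tau_v}})$ as defined in Section \ref{s_so2i}.

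The main technical obstacle is justifying the formal interchange of the $\bn(F_v)$ and $\OO(V_K)_{F_v}$ integrations, since the unfolded double integral is not jointly absolutely convergent; this has to be handled by interpreting $\ell_{v,T}$ in the sense of tempered distributions as in Section \ref{subsec_td} and applying Fourier inversion as an identity of distributions pushed forward along the orbit map $V_K^{2,\circ}\to\Sym_2^\circ$. A secondary but careful point is the bookkeeping of the constants $c_{\eta_v}$, $c_{E_v}$, the factor $2^{\beta_v-1}$, and the powers $|a_{v,T}||2\det T|^{-1/2}$, all of which come out of the Jacobians in the measure-decomposition step and the normalization of the induced representation $\eta_v^{\epsilon_v}$.
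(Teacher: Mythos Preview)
Your proposal is correct and follows essentially the same route as the paper: the local inner product formula of Remark~\ref{lipf_sp}, Fourier inversion on $\Sym_2(F_v)$ interpreted via the tempered-distribution framework of Definition~\ref{def_ui}, the measure decomposition of Lemma~\ref{l_md2}, and the identity $\xi_{v,T}A=h_g\xi_{v,T}$ for the $\SO(V_0)$-step. The paper makes the interchange rigorous exactly as you anticipate, by pairing against $\widehat{\lambda}$ for $\lambda\in C^\infty_c(\Sym_2^\circ(F_v))$ so that after the innermost $V_K^2(F_v)$-integration the remaining integrand is absolutely integrable, then swapping $\int_{\Sym_2}\int_{\OO(V_K)}\to\int_{\OO(V_K)}\int_{V_K^2}\int_{\Sym_2}$ and reading off the value at $T$ via Definition~\ref{def_ui}.
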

\begin{proof}
To compute $\ell_{v,T}(f_{1,v},f_{2,v})$, 
we need to calculate $\int_{\mrn(F_v)}(\Pi_v(N_v)f_{1,v},f_{2,v})\widehat{\lambda}(N_v)\dd N_v$, where $\lambda\in C^\infty_c (\Sym_2^\circ(F_v))$ and $\widehat{\lambda}(N_v)=\int_{\Sym_2(F_v)}\lambda(N_v^\prime)\psi(\Tr(N_vN_v^\prime))\dd N_v^\prime$.

Temporarily set $\rho(h_v,X_v)=\phi_{1,v}(h_v^{-1}X_v)\overline{\phi_{2,v}(X_v)}$. By Remark ~\ref{lipf_sp},
\[
(f_{1,v},f_{2,v})=c_{\eta_v}\int_{\OO(V_K)_{F_v}}\overline{(h_{1,v}\circ\Phi_{1,v},\Phi_{2,v})}\int_{V_K^2(F_v)}\rho(h_v,X_v)\dd X_v
\]
It follows that $\int_{\mrn(F_v)}(\Pi_v(N_v)f_{1,v},f_{2,v})\widehat{\lambda}(N_v)\dd N_v$ is equal to
\begin{align*}
&c_{\eta_v}\int_{\Sym_2(F_v)}\int_{\OO(V_K)_{F_v}}\overline{(h_{1,v}\circ\Phi_{1,v},\Phi_{2,v})}\widehat{\lambda}(N_v)\int_{V_K^2(F_v)}\rho(h_v,X_v)\psi(a_v\Tr(q(X_v)N_v)/2)\dd X_v\dd h_v\dd N_v.
\end{align*}
After the innermost integration over $V_K^2(F_v)$, the integrand is absolutely convergent on $\Sym_2(F_v)\times \OO(V_K)_{F_v}$, so one can change the order of integration to $\int_{\OO(V_K)_{F_v}}\int_{\Sym_2(F_v)}\int_{V_K^2(F_v)}$. One can further change the order to $\int_{\OO(V_K)_{F_v}}\int_{V_K^2(F_v)}\int_{\Sym_2(F_v)}$ because $\widehat{\lambda}(N_v)\rho(h_v,X_v)$ is absolutely integrable concerning $(N_v,X_v)$. Applying Fourier inversion formula to the $N_v$-integration, we get
\begin{align*}
&\int_{\mrn(F_v)}(\Pi_v(N_v)f_{1,v},f_{2,v})\widehat{\lambda}(N_v)\dd N_v\\
=&c_{\eta_v}\int_{\OO(V_K)}\int_{V_K^2(F_v)}\overline{(h_v\circ\Phi_{1,v},\Phi_{2,v})}\rho(h_v,X_v)\lambda(-a_vq(X_v)/2)\dd X_v \dd h_v\\
=&\frac{c_{\eta_v}}{|2|^{1/2}}\int_{\OO(V_K)}\int_{\Sym_2(F_v)}\int_{\Sigma_{-2N_v}}\overline{(h_v\circ\Phi_{1,v},\Phi_{2,v})}\rho(h_v,X_v)\dd \sigma_{v,-2N_v/a_v}\lambda(N_v)|\det N_v|^{-1/2}\dd N_v \dd h_v.
\end{align*}
Here in the second equality, we use the measure decomposition in Lemma ~\ref{l_md2}. One can move the $N_v$-integration to the outside because $\lambda(N_v)\in C_c^\infty(\Sym_2^\circ(F_v))$. 

According to Definition ~\ref{def_ui}, $\ell_{v,T}(f_{1,v},f_{2,v})=\int_{\mrn(F_v)}(\Pi_v(N_v)f_{1,v},f_{2,v})\psi_{v,T}(N_v)\dd N_v$ is
\[
c_{\eta_v}|a_v||2\det T|^{-1/2}\int_{\OO(V_K)}\int_{\Sigma_{v,-2T}}\overline{(h_v\circ\Phi_{1,v},\Phi_{2,v})}\rho(h_v,X_v)\dd \sigma_{v,-2T/a_v}
\]

When $E_v\neq K_v$ or $E_v=K_v$ with $T\not\in -\frac{a_v}{2}q(V_K^2(F_v))$, the set $\Sigma_{v,-2T/a_v}$ is empty, whence the above expression is zero. Otherwise, choose $\xi_{v,T}\in V_K^2(F_v)$ with $q(\xi_{v,T})=-2T/a_v$, write $X_v={h_v^\prime}^{-1}\xi_{v,T}$ and recall $\dd \sigma_{v,-2T/a_v}=2c_{K_v}\dd h_v^\prime$ from Section ~\ref{ss_md2}. We have
\begin{align*}
\ell_{v,T}(f_{1,v},f_{2,v})=&\frac{2|a_v|c_{\eta_v}c_{E_v}}{|2\det T|^{1/2}}\int_{\OO(V_K)_{F_v}^2}\overline{(h_v\circ\Phi_{1,v},\Phi_{2,v})}\phi_{1,v}({h_v}^{-1}{h_v^\prime}^{-1}\xi_{v,T})\overline{\phi_{2,v}(h_v^{-1}\xi_{v,T})}\dd h_v\dd h_v^\prime\\
=&\frac{2|a_v|c_{\eta_v}c_{E_v}}{|2\det T|^{1/2}}\int_{\OO(V_K)_{F_v}^2}\overline{(h_v\circ\Phi_{1,v},h_v^\prime\Phi_{2,v})}\phi_{1,v}({h_v}^{-1}\xi_{v,T})\overline{\phi_{2,v}({h_v^\prime}^{-1}\xi_{v,T})}\dd h_v\dd h_v^\prime.
\end{align*}

Note that
\[
(h_v\circ \Phi_{1,v},h_v^\prime\Phi_{2,v})=\frac{1}{2}\Phi_{1,v}(h_v)\overline{\Phi_{2,v}(h_v^\prime)}+\frac{1}{2}\Phi_{1,v}(\iota_v h_v)\overline{\Phi_{2,v}(\iota_vh_v^\prime)}.
\]
When $\eta_v=\eta_v^{\iota_v}$, there is $(h_v\circ \Phi_{1,v},h_v^\prime\Phi_{2,v})=\Phi_{1,v}(h_v)\overline{\Phi_{2,v}(h_v^\prime)}$, whence
\[
\ell_{v,T}(f_{1,v},f_{2,v})=2|a_v||2\det T|^{-\frac{1}{2}}c_{\eta_v}c_{E_v}P_v(\phi_{1,v},\Phi_{1,v},\xi_{v,T})\overline{P_v(\phi_{2,v},\Phi_{2,v},\xi_{v,T})}.
\]
When $\eta_v\neq \eta_v^{\iota_v}$, there is $\int_{\OO(V_K)_{F_v}^2}\overline{\Phi_{1,v}(\iota_v h_v)}\Phi_{2,v}(\iota_vh_v^\prime)\phi_{1,v}({h_v}^{-1}\xi_{v,T})\overline{\phi_{2,v}({h_v^\prime}^{-1}\xi_{v,T})}\dd h_v\dd h_v^\prime=P_v(\phi_{1,v},\Phi_{1,v},\xi_{v,T}^{\iota_v})\overline{P_v(\phi_{2,v},\Phi_{2,v},\xi_{v,T}^{\iota_v})}$, whence
\[
\ell_{v,T}(f_{1,v},f_{2,v})=|a_v||2\det T|^{-\frac{1}{2}}c_{\eta_v}c_{E_v}\sum_{\tau_v\in \Gal(K_v/F_v)}P_v(\phi_{1,v},\Phi_{1,v},\xi_{v,T}^{\tau_v})\overline{P_v(\phi_{2,v},\Phi_{2,v},\xi_{v,T}^{\tau_v})}
\]
This proves the assertion about $\ell_{v,T}$. When when $T=-a_{v,T}\smalltwomatrix{1}{}{}{\delta}$ and $\xi_{v,T}=(1,e)$, the formula for $\mpp_v(s,f_{1,v},f_{2,v})$ follows from the fact
\[
P_v(g_v\circ \phi_{1,v},\Phi_{1,v},\xi_{v,T}^{\tau_v})=\overline{\eta_v^{\iota_v}(g_v)}P_v(g_v\circ \phi_{1,v},\Phi_{1,v},\xi_{v,T}^{\tau_v}),\quad g_v\in \SO(V_0)_{F_v}.
\]
\end{proof}

The above proposition tells that $\mpp_v(s,-)$ is zero on $\Pi_v^{\epsilon_v}\otimes \Pi_v^{\epsilon_v}$ when $E_v\neq K_v$ and factors through $\theta(\eta_v^{\epsilon_v},\psi_{v,a_{v,T}})\otimes \theta(\eta_v^{\epsilon_v},\psi_{v,a_{v,T}})$ when $E_v=K_v$. Now consider
\[
\mpp^\sharp_v(f_{1,v},f_{2,v})=\frac{\mpp_v(s; f_{1,v},f_{2,v})}{\zeta_{F_v}(2)\zeta_{F_v}(4)L(s,\Pi^{\epsilon_v}_{\eta_v},\chi_v)}\big|_{s=0},\quad  f_{1,v},f_{2,v}\in \Pi_{\eta_v}^{\epsilon_v}.
\]
When $E_v=K_v$ and $\chi_v\in \{\eta_v,\eta_v^{\iota_v}\}$, we may assume $T=-a_{v,T}\smalltwomatrix{1}{}{}{\delta}$ and $\xi_{v,T}=(1,e)$, then for $f_{i,v}=\theta_{\psi_{v,a_v}}(\phi_{i,v},\Phi_{i,v})\in \theta(\eta_v^{\epsilon_v},\psi_{v,a_v})$, there is
\begin{align}\label{rlbp}
\nonumber \mpp^\sharp_v(f_{1,v},f_{2,v})=&\frac{2^{{\beta_v}}|a_{v,T}|c_{\eta_v}c_{K_v}L(1,\chi_{K_v/F_v})L(2,\eta_v\circ \pr)}{|2\det T|^{\frac{1}{2}}\zeta_{F_v}(4)}\cdot\\
& \cdot
\begin{cases}
\sum_{\tau_v=1,\iota_v}\mpp^\sharp(\chi_v\overline{\eta_v^{\iota_v}})P_v(\phi_{1,v},\Phi_{1,v},\xi_{v,T}^{\tau_v})\overline{P_v(\phi_{2,v},\Phi_{2,v},\xi_{v,T}^{\tau_v})}, &\eta_v\neq \eta_v^{\iota_v},\\
\mpp^\sharp(\chi_v\overline{\eta_v}) P_v(\phi_{1,v},\Phi_{1,v},\xi_{v,T})\overline{P_v(\phi_{2,v},\Phi_{2,v},\xi_{v,T})}, &\eta_v=\eta_v^{\iota_v}.
\end{cases}
\end{align}
For a character $\chi_v^\prime$ of $\SO(V_0)_{F_v}$, the symbol $\mpp^\sharp(\chi_v^\prime)$ is defined in Section ~\ref{s_so2i} and takes a nonzero value only when $\chi_v^\prime=1$. Recall that $\mpp^\sharp(1_v)$ is denoted by $c_{K_v}^\prime$.

\begin{proposition}\label{lnvs_sp}
The following statements are equivalent:
\begin{itemize}
\item[(i)] $\Hom_{\mrr(F_v)}(\Pi_{\eta_v}^{\epsilon_v}\otimes (\chi_v\boxtimes \psi_{v_0,v}),\bc)\neq 0$,

\item[(ii)] $E_v=K_v$ and $\chi_v\in \{\eta_v,\eta_v^{\iota_v}\}$

\item[(iii)] $\ord_{s=0}L(s,\Pi_{\eta_v}^{\epsilon_v},\chi_v)\leq 0$ and $\mpp_v^\sharp$ is nonzero on $\Pi_{\eta_v}^{\epsilon_v}\otimes \Pi_{\eta_v}^{\epsilon_v}$.
\end{itemize}
\end{proposition}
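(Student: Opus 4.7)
The plan is to close the loop (iii) $\Rightarrow$ (i) $\Rightarrow$ (ii) $\Rightarrow$ (iii); the first and third links are computational and use machinery already assembled, while the middle link is the genuine obstacle. For (iii) $\Rightarrow$ (i), once $\mpp_v^\sharp$ is nonzero on $\Pi_{\eta_v}^{\epsilon_v}\otimes\Pi_{\eta_v}^{\epsilon_v}$, fixing any $f_{2,v}$ in its support yields a nonzero element of the Hom space: the $U(F_v)$-covariance with character $\psi_{v_0,v}$ follows from the distribution-theoretic interpretation of the inner integral in Section \ref{subsec_td} (translating $u\mapsto u+u_1$ shifts the tempered distribution $\mf T_{(\Pi(u)f_1,f_2)}$, which produces the character $\psi_{v_0,v}(u_1)$ upon evaluating its Fourier transform at $v_0$), while the $\SO(V_0)_{F_v}$-covariance by $\chi_v$ is immediate from the $\SO(V_0)_{F_v}$-invariance of $\Delta(h)$ and the factor $\chi_v(h_v)$ in the outer integral.

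For (ii) $\Rightarrow$ (iii) I would substitute directly into the explicit formulas. By (\ref{lfunction_s}) the three denominator factors $\zeta_{F_v}(s+2)$, $L(s+1,\chi_{K_v/F_v})$, $L(s+2,\eta_v\circ\pr)$ are holomorphic and nonzero at $s=0$, while the numerator $L(s,\chi_{K_v/F_v})$ has at worst a simple pole at $s=0$ (attained exactly when $K_v$ is split), giving $\ord_{s=0}L(s,\Pi_{\eta_v}^{\epsilon_v},\chi_v)\in\{-1,0\}$. For the nonvanishing of $\mpp_v^\sharp$ I would plug into (\ref{rlbp}): since $\chi_v=\eta_v^{\tau_v}$ for some $\tau_v\in\Gal(K_v/F_v)$, one of the factors $\mpp^\sharp(\chi_v\overline{\eta_v^{\tau_v}})$ equals $\mpp^\sharp(1)=c'_{K_v}\neq 0$ by Lemma \ref{cep}, and the $P_v$-integrals are made nonzero by choosing test vectors exactly as at the end of the proof of Proposition \ref{functionalT}(iv).

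For (i) $\Rightarrow$ (ii), the strategy is a seesaw unfolding. Given a nonzero $\ell\in\Hom_{\mrr(F_v)}(\Pi_{\eta_v}^{\epsilon_v}\otimes(\chi_v\boxtimes\psi_{v_0,v}),\bc)$, write $\Pi_{\eta_v}^{\epsilon_v}$ as $\Ind_{\GSp_4^+(F_v)}^{\GSp_4(F_v)}\theta(\eta_v^{\epsilon_v},\psi_{v,a_{v,T}})$ on the direct summand relevant to $T$, and use the Weil-representation formulas to express $\ell$ on a theta lift $\theta_v(\phi,\Phi)$ as an integral that, after a Fourier inversion in the $U(F_v)$-variable identical to the one used in the proof of Proposition \ref{lbp_sp}, factors through $\OO(V_0,a_{v,T}q)_{F_v}$, where $(V_0,a_{v,T}q)\cong V_{E_v}$ as a quadratic space. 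The see-saw with the dual pairs $(\Sp_4(F_v),\OO(V_K)_{F_v})$ and $(\wtilde{\SL}_2(F_v),\OO(V_0,a_{v,T}q)_{F_v})$ then converts the datum into a nonzero local Hom between $\eta_v^{\epsilon_v}$ and a theta lift from $\OO(V_0,a_{v,T}q)_{F_v}$; conservation of local theta correspondence forces $V_K\cong V_0$ as $F_v$-quadratic spaces and hence $E_v=K_v$. Given $E_v=K_v$, the residual $\SO(V_0)_{F_v}\cong K_{v,1}^\times$-character extracted from $\eta_v^{\epsilon_v}|_{\SO(V_0)_{F_v}}$ matches $\chi_v$ and forces $\chi_v\in\{\eta_v,\eta_v^{\iota_v}\}$. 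The main difficulty will be keeping the distributional regularization of the inner $U(F_v)$-integral (Section \ref{subsec_td}) under control throughout the seesaw unfolding, so as to reduce an abstract Hom statement to the concrete vanishing/nonvanishing already established in Proposition \ref{lbp_sp}.
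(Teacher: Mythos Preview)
Your cycle (iii)$\Rightarrow$(i)$\Rightarrow$(ii)$\Rightarrow$(iii) differs from the paper's structure, which proves (i)$\Leftrightarrow$(ii) by citing \cite[Prop.~7.6]{gs2013} (the twisted Jacquet module computation) and then establishes (ii)$\Leftrightarrow$(iii) directly, including a nontrivial (iii)$\Rightarrow$(ii) via an $L$-function analysis that invokes \cite{geljac78} in the supercuspidal case. Your (ii)$\Rightarrow$(iii) and the paper's agree; your (iii)$\Rightarrow$(i) is a legitimate shortcut that replaces one half of the \cite{gs2013} citation. But two points need attention.

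First, in (iii)$\Rightarrow$(i) your justification of the $\SO(V_0)_{F_v}$-covariance is wrong as stated: $\Delta(h)$ is a nonconstant function on $\SO(V_0)_{F_v}$, so translating $h_v\mapsto h_vh_1$ changes $\Delta(h_v)^s$ to $\Delta(h_vh_1^{-1})^s$, and you do not get exact $\chi_v$-equivariance for $s\neq 0$. The equivariance of $\mpp_v^\sharp$ at $s=0$ is not ``immediate''; it follows only from the explicit formula (\ref{rlbp}), where the $s$-dependence sits entirely in the scalar factors $\mpp(s,\chi_v\overline{\eta_v^{\tau_v}})$ and the remaining $P_v$-factors already have the required covariance. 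This is exactly how the paper argues in Theorem~\ref{lc_sp_split}. So your conclusion is right but you must route it through (\ref{rlbp}), not through a purported invariance of $\Delta$.

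Second, and more seriously, your (i)$\Rightarrow$(ii) has a genuine gap. What Proposition~\ref{lbp_sp} computes is one specific functional $\ell_{v,T}$; its vanishing when $E_v\neq K_v$ does not by itself kill the abstract Hom space. Your seesaw sketch invokes ``conservation of local theta correspondence'' to force $V_K\cong V_0$, but conservation governs first-occurrence indices across a Witt tower for a fixed representation of the metaplectic side; it does not directly control twisted Jacquet modules of a theta lift on the orthogonal/symplectic side. What is actually needed is the computation of the $(\mrn,\psi_T)$-Jacquet module of $\theta(\eta_v^{\epsilon_v},\psi_{v,a_v})$ as an $\SO(V_0)_{F_v}$-module, showing it is zero unless $E_v=K_v$ and then identifying its characters as $\{\eta_v,\eta_v^{\iota_v}\}$. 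That is precisely the content of \cite[Prop.~7.6]{gs2013}, which the paper cites. If you want to avoid the citation, you must carry out that Jacquet-module calculation in full; the seesaw heuristic you describe does not substitute for it. Note also that your phrase ``the direct summand relevant to $T$'' already presupposes $E_v=K_v$, since $a_{v,T}$ is defined only in that case.
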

\begin{proof}
The equivalence of (i) and (ii) follows from the structure of the twisted Jacquet module of $\Pi^{\epsilon_v}_{\eta_v}$ with respect to $\psi_{v_0,v}=\psi_{v,T_v}$, as described in Proposition 7.6 of \cite{gs2013}.

We now argue for the equivalence of (ii) and (iii). Observe that $\ord_{s=0}L(s,\Pi_{\eta_v}^{\epsilon_v},\chi_v)=\ord_{s=0}\frac{L(s,\pi_{\eta_v}\boxtimes\pi_{\chi_v})}{\zeta_{F_v}(s)L(s,\eta_v\circ \pr)}$. Suppose (ii), then $\ord_{s=0}L(s,\Pi_{\eta_v}^{\epsilon_v},\chi_v)=\ord_{s=0}L(s,\chi_{E_v/F_v})\leq 0$ and $\mpp_v(s;-)$ factors through $\theta(\eta_v^{\epsilon_v},\psi_{v,a_{v,T}})\otimes \theta(\eta_v^{\epsilon_v},\psi_{v,a_{v,T}})$, on which $\mpp_v^\sharp$ is given by (\ref{rlbp}). Because $P_v(\phi_{1,v},\Phi_{1,v},\xi_{v,T}^{\tau_v})$ ($\tau_v\in \Gal(K_v/F_v)$) is nonzero on $\ms(V^2_K(F_v))\otimes \eta_v^{\epsilon_v}$ (as argued in the end of the proof for Proposition ~\ref{functionalT}), one sees that $\mpp_v^\sharp$ is nonzero. Thus, (iii) holds.

Conversely, we suppose (iii), then $E_v$ must be equal to $K_v$; otherwise, $\mpp_v(s;-)$ is identifically zero and hence $\mpp_v^\sharp$ is zero. With $E_v=K_v$, the condition $\ord_{s=0}L(s,\Pi_{\eta_v}^{\epsilon_v},\chi_v)\leq 0$ implies $\ord_{s=0}L(s,\pi_{\eta_v}\boxtimes\pi_{\chi_v})<0$, which further forces $\chi_v\in \{\eta_v,\eta_v^{\iota_v}\}$. Actually, if $K_v$ is split, then $\chi_v,\eta_v$ as characters of $K_v^\times=F_v^\times\times F_v^\times$ are of the shape $\mu_v\boxtimes \mu_v^{-1}$ and $\mu_v^\prime\boxtimes {\mu_v^\prime}^{-1}$, whence
\[
L(s,\pi_{\chi_v}\boxtimes \pi_{\eta_v})=L(s,\mu_v\mu_v^\prime)L(s,\mu_v^{-1}\mu_v^\prime)L(s,\mu_v{\mu_v^\prime}^{-1})L(s,\mu_v{\mu_v^\prime}^{-1}),
\]
which has a pole at $s=0$ only when $\mu_v^\prime=\mu_v$ or $\mu_v^{-1}$, that is, when $\chi_v=\eta_v$ or $\eta_v^{\iota_v}$. If $K_v$ is nonsplit and nonarchimedean, one can similarly verify the relation when $\pi_{\chi_v},\pi_{\eta_v}$ are induced representations; when one of them is supercuspidal, we apply \cite[Prop. 1.2]{geljac78} to see that $\pi_{\chi_v}\cong \pi_{\eta_v}$, whence $\chi_v\in \{\eta_v,\eta_v^{\iota_v}\}$. Finally, when $K_v$ is nonsplit and archimedean, there must be $K_v=\bc$ and $F_v=\br$, for which the relation can be checked directly because one can numberate all cases of $\pi_{\chi_v},\pi_{\eta_v}$.
\end{proof}

\begin{theorem}\label{lc_sp_split}
When $\Pi\in \So(\eta)$ is cuspidal, Conjecture ~\ref{conj_lbp} holds for $\Pi_v$.
\end{theorem}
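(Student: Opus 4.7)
The strategy parallels the Saito--Kurokawa case treated in Theorem~\ref{lc_skp}: assemble the three parts of Conjecture~\ref{conj_lbp} from the explicit factorization established in Proposition~\ref{lbp_sp}, the $\SO_2$-integral analysis of Lemma~\ref{integralsov0}, and the non-vanishing criterion of Proposition~\ref{lnvs_sp}. All of the analytic labor specific to the Soudry setting has already been done; what remains is normalization bookkeeping.

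For Part (i), and for the order inequality in Part (ii), we invoke Proposition~\ref{lbp_sp}, which writes $\mpp_v(s;f_{1,v},f_{2,v})$ as an $s$-independent product of absolutely convergent $\OO(V_K)_{F_v}$-integrals of the form $P_v(\phi_{i,v},\Phi_{i,v},\xi_{v,T}^{\tau_v})$ times a finite sum of the $\SO_2$-integrals $\mpp(s,\chi_v\overline{\eta_v^{\tau_v}})$. By Lemma~\ref{integralsov0}, each such $\mpp(s,\chi_v\overline{\eta_v^{\tau_v}})$ is absolutely convergent for $\re(s)\gg 0$, admits meromorphic continuation to $\bc$, and satisfies $\ord_{s=0}\mpp(s,\chi_v\overline{\eta_v^{\tau_v}})\geq \ord_{s=0}\frac{L(s,\chi_v\overline{\eta_v^{\tau_v}})}{\zeta_{F_v}(2s)}$. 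When $\mathrm{ord}_{s=0}L(s,\Pi_v,\chi_v)\leq 0$ we must (by the argument already used in Proposition~\ref{lnvs_sp}) have $E_v=K_v$ and $\chi_v\in\{\eta_v,\eta_v^{\iota_v}\}$, at which point the simplified expression (\ref{lfunction_s}) for $L(s,\Pi_{\eta_v}^{\epsilon_v},\chi_v)$ matches the relevant local factor and the required order bound $\ord_{s=0}\mpp_v(s;-)\geq \ord_{s=0}L(s,\Pi_v,\chi_v)$ follows; when the hypothesis fails the inequality is vacuous.

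The $\mrr(F_v)\times\mrr(F_v)$-equivariance asserted in Part (ii) can be read off directly from the factored formula (\ref{rlbp}): for $\tau_v$ with $\chi_v=\eta_v^{\tau_v}$, the functional $f_v\mapsto P_v(\phi_v,\Phi_v,\xi_{v,T}^{\tau_v})$ transforms under $\mrr(F_v)$ by the character $\chi_v\boxtimes\psi_{v_0,v}$, so the product with its conjugate has the predicted $(\chi_v\boxtimes\psi_{v_0,v})\otimes(\chi_v^{-1}\boxtimes\psi_{v_0,v}^{-1})$-equivariance, and the scalar prefactor does not affect this. Alternatively, at the level of iterated integrals with $\re(s)\gg 0$, the inner $U(F_v)$-integration is $\psi_{v_0,v}$-equivariant by the construction in Section~\ref{subsec_lbp} and the outer $\SO(V_0)_{F_v}$-integration is $\chi_v$-equivariant by inspection; equivariance then propagates to $s=0$ by meromorphic continuation. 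Finally, Part (iii) is precisely the chain of equivalences proved in Proposition~\ref{lnvs_sp}.

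The hard part has already been handled in Proposition~\ref{lbp_sp}, where one had to interchange the Fourier-inversion regularization of the $U(F_v)$-integral with the $\OO(V_K)_{F_v}$-integral coming from the local inner product formula of Remark~\ref{lipf_sp}, using the measure decomposition of Lemma~\ref{l_md2} to reroute the computation through the fiberation $V_K^2\to\Sym_2$. The delicate bookkeeping point at this stage is the branching across $\tau_v\in\Gal(K_v/F_v)$ in (\ref{rlbp}): when $\eta_v=\eta_v^{\iota_v}$ only $\tau_v=1$ appears and $\chi_v=\eta_v$ is the sole contributing character, whereas when $\eta_v\neq\eta_v^{\iota_v}$ the two values $\chi_v=\eta_v$ and $\chi_v=\eta_v^{\iota_v}$ each produce one nonvanishing summand; the constants $\beta_v$ and the factor $c_{K_v}$ in (\ref{rlbp}) must be tracked carefully to ensure that the regularized functional $\mpp_v^\sharp$ stays consistent with the global formula later.
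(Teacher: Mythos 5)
Your overall strategy is the same as the paper's: Part (iii) is quoted from Proposition~\ref{lnvs_sp}, and Parts (i), (ii) are read off from the explicit formula in Proposition~\ref{lbp_sp} together with the $\SO_2$-integral analysis of Lemma~\ref{integralsov0}. However, there is a genuine logical gap in the step where you assert that $\ord_{s=0}L(s,\Pi_v,\chi_v)\leq 0$ already forces $E_v=K_v$ ``by the argument already used in Proposition~\ref{lnvs_sp}.'' That argument in Proposition~\ref{lnvs_sp} uses \emph{both} halves of condition (iii): it concludes $E_v=K_v$ from the non-vanishing of $\mpp_v^\sharp$, not from the $L$-order inequality alone (the reasoning there is literally ``if $E_v\neq K_v$ then $\mpp_v(s;-)$ is identically zero and hence $\mpp_v^\sharp$ is zero''). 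And indeed the implication you invoke is false: for example take $\eta_v$ and $\chi_v$ both trivial with $K_v$ nonsplit and $E_v$ split; then $E_v\neq K_v$, yet a direct count of poles in the numerator $L(s,\pi_{\eta_v}\boxtimes\pi_{\chi_v})$ (order $-2$ at $s=0$, coming from $\zeta_{F_v}(s)^2$) versus the denominator $\zeta_{F_v}(s)L(s,\eta_v\circ\pr)$ (also order $-2$) gives $\ord_{s=0}L(s,\Pi_v,\chi_v)=0\leq 0$.

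The fix is exactly the case split the paper performs up front: if $E_v\neq K_v$, Proposition~\ref{lbp_sp}(i) says $\mpp_v(s;-)$ vanishes identically on $\Pi_{\eta_v}^{\epsilon_v}\otimes\Pi_{\eta_v}^{\epsilon_v}$, so the meromorphic continuation, the order inequality, and the equivariance of the (zero) functional $\mpp_v^\sharp$ are all trivially true. Only after restricting to $E_v=K_v$ does $\ord_{s=0}L(s,\Pi_v,\chi_v)\leq 0$ force $\chi_v\in\{\eta_v,\eta_v^{\iota_v}\}$ (via the Rankin--Selberg pole analysis in the proof of Proposition~\ref{lnvs_sp}), at which point $\ord_{s=0}L(s,\Pi_v,\chi_v)=\ord_{s=0}L(s,\chi_{E_v/F_v})$ and Lemma~\ref{integralsov0} supplies the lower bound on $\ord_{s=0}\mpp_v(s;-)$. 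With that adjustment, the rest of your plan --- the equivariance read off from the factored formula (\ref{rlbp}), and Part (iii) from Proposition~\ref{lnvs_sp} --- matches the paper.
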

\begin{proof}
We verify Part (i) and (ii) of Conjecture ~\ref{conj_lbp}; Part (iii) is exactly Proposition ~\ref{lnvs_sp}. 

When $E_v\neq K_v$, $\mpp_v(s;f_{1,v},f_{2,v})$ is zero for all $f_{1,v},f_{2,v}\in \Pi_{\eta_v}^{\epsilon_v}$, whence the assertions in Part (i) and (ii) of Conjecture ~\ref{conj_lbp} automatically hold. 

Now suppose $E_v=K_v$. We may assume $T=-a_{v,T}\smalltwomatrix{1}{}{}{\delta}$, then $\mpp_v(s;-)$ factors through $\theta(\eta_v^{\epsilon_v},\psi_{v,a_{v,T}})\otimes \theta(\eta_v^{\epsilon_v},\psi_{v,a_{v,T}})$ and is given in Proposition ~\ref{lbp_sp}. It has meromorphic continuation because $\mpp(s;\chi^\prime_v)$ is so for any character $\chi_v^\prime$ of $\SO(V_0)_{F_v}$. When $\ord_{s=0}L(s,\Pi_{\eta_v}^{\epsilon_v},\chi_v)\leq 0$, there is $\chi_v\in \{\eta_v,\eta_v^{\iota_v}\}$ as shown in the proof of Proposition ~\ref{lbp_sp}. Hence $\ord_{s=0}L(s,\Pi_{\eta_v}^{\epsilon_v},\chi_v)=\ord_{s=0} L(s,\chi_{E_v/F_v})$. By Lemma ~\ref{integralsov0}, $\ord_{s=0}\mpp_v(s;f_{1,v},f_{2,v})\geq \ord_{s=0} L(s,\chi_{E_v/F_v})$. Therefore, $\ord_{s=0}\mpp_v(s;f_{1,v},f_{2,v})\geq \ord_{s=0}L(s,\Pi_{\eta_v}^{\epsilon_v},\chi_v)$. It is obvious that the functional $\mpp_v^\sharp$ given in (\ref{rlbp}) respects the actions of $\mrr(F_v)\times \mrr(F_v)$.
\end{proof}

\subsection{The period formula}

In Theorem \ref{bp_sp} below. We see that $P$ is nonzero on a cuspidal $\Pi^\eta_\epsilon$ if and only if the local homomorphism space is nonzero at every local place. This accords with our main theorm in the introduction because in this situation there is $E=K$ and $\chi\in \{\eta,\eta^\iota\}$, whence $L(0,\Pi,\chi)=\frac{\zeta_F(4)}{L(2,\eta\circ \pr)}$ is automatically nonzero.

\begin{theorem}\label{bp_sp}
Suppose $\Pi_\eta^\epsilon\in \So(\eta)$ is cuspidal. \emph{(i)} $P$ is nonzero on $\Pi_\eta^\epsilon$ if and only if $\Hom_{\mrr(F_v)}(\Pi_{\eta_v}^{\epsilon_v}\otimes (\chi_v\boxtimes \psi_{v_0,v}),\bc)$ is nonzero for all $v$, \emph{(ii)} if so, then $P\otimes \overline{P}=2^{\beta}\prod_v 2^{-{\beta_v}}\mpp_v^\sharp$.
\end{theorem}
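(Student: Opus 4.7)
The plan is to transfer both the global Bessel period $P$ and the Euler product $\prod_v 2^{-\beta_v}\mpp_v^\sharp$ to integrals over $\GO(V_K)_\ba$ via the theta correspondence, and then match the two sides on that common base.

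For (i), I first combine Propositions \ref{functionalT}(iv) and \ref{lnvs_sp}. Proposition \ref{functionalT}(iv) states that $P\not\equiv 0$ on $\Pi_\eta^\epsilon$ is equivalent to $E=K$ together with $\chi\in\{\eta,\eta^\iota\}$, while Proposition \ref{lnvs_sp} identifies the nonvanishing of every local Hom space with the local condition $E_v=K_v$ and $\chi_v\in\{\eta_v,\eta_v^{\iota_v}\}$ at each $v$. Since $E$ and $K$ are determined by their local incarnations the local condition forces $E=K$ globally, and since the global character $\chi$ must be compatible with the local choices at every place it must coincide with $\eta$ or $\eta^\iota$ globally. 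This matches the two conditions; nonvanishing of $P$ for suitable test vectors follows from Proposition \ref{functionalT}(iii), which reduces $P(f)$ to $2^\beta P(\phi,\Phi,\xi_T)$, an adelic integral that factorises as $\prod_v P_v(\phi_v,\Phi_v,\xi_T)$ with each $P_v$ visibly nonvanishing (as already shown in the final paragraph of the proof of Proposition \ref{functionalT}(iv)).

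For (ii), I take the normalisations $T=-a_T\,\diag(1,\delta)$ and $\xi_T=(1,e)$ from Propositions \ref{functionalT} and \ref{lbp_sp} and WLOG $\chi=\eta$. Proposition \ref{functionalT}(iii) gives
\[
P(f_1)\overline{P(f_2)}=4^\beta\prod_v P_v(\phi_{1,v},\Phi_{1,v},\xi_T)\overline{P_v(\phi_{2,v},\Phi_{2,v},\xi_T)}.
\]
On the local side, (\ref{rlbp}) expresses $\mpp_v^\sharp$ in terms of the same $P_v\overline{P_v}$ multiplied by the local constants $|a_{v,T}|$, $|2\det T|_v^{-1/2}$, $c_{\eta_v}$, $c_{K_v}L(1,\chi_{K_v/F_v})$, $L(2,\eta_v\circ\pr)/\zeta_{F_v}(4)$, and the surviving weight $\mpp^\sharp(\chi_v\overline{\eta_v^{\tau_v}})=c_{K_v}'$ (only the $\tau_v=1$ branch contributes since $\chi=\eta$ globally). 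Taking the Euler product and invoking, in order, the product formula ($\prod_v|a_{v,T}|=\prod_v|2\det T|_v^{-1/2}=1$), Remark \ref{lipf_sp} ($\prod_v c_{\eta_v}L(2,\eta_v\circ\pr)/\zeta_{F_v}(4)=2^\beta L(2,\eta\circ\pr)/\zeta_F(4)$), Lemma \ref{md2_r} ($\prod_v c_{K_v}L(1,\chi_{K_v/F_v})=c_K$), and Lemma \ref{cep} ($\prod_v c_{K_v}'=c_K^{-1}$), I obtain
\[
\prod_v 2^{-\beta_v}\mpp_v^\sharp=\frac{2^\beta L(2,\eta\circ\pr)}{\zeta_F(4)}\prod_v P_v(\phi_{1,v},\Phi_{1,v},\xi_T)\overline{P_v(\phi_{2,v},\Phi_{2,v},\xi_T)}.
\]
The claimed identity then follows from matching with the global expression for $P\otimes\overline{P}$ via the $L$-factor identity $\zeta_F(2)\zeta_F(4)L(0,\Pi_\eta^\epsilon,\chi)=\zeta_F(4)/L(2,\eta\circ\pr)$, which follows from (\ref{lfunction_s}) together with the functional equation $L(0,\chi_{K/F})=L(1,\chi_{K/F})$.

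The main obstacle is the bookkeeping in the above Euler product: the cancellation $c_K\cdot c_K^{-1}=1$ is delicate because $c_K$ enters from the measure decomposition of $V_K^2(F_v)$ used inside Proposition \ref{lbp_sp}, whereas $c_K^{-1}$ enters from the regularised outer $\SO(V_0)_{F_v}$-integral via Lemma \ref{cep}; it is the alignment of these two sources of Tamagawa constants on $\ba_K^\times/K^\times$ that collapses the formula. A secondary subtlety is the uniform handling of the cases $\eta_v=\eta_v^{\iota_v}$ and $\eta_v\neq\eta_v^{\iota_v}$, which requires coordinating the local exponents $\beta_v$ with the Galois-symmetric structure appearing both in Proposition \ref{functionalT}(iii) (the two-term sum over $\tau$) and in the local formula (\ref{rlbp}).
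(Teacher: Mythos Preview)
Your proposal is correct and follows essentially the same route as the paper: for (i) you combine Proposition~\ref{functionalT}(iv) with Proposition~\ref{lnvs_sp}, and for (ii) you reduce both sides to products of the local integrals $P_v(\phi_{1,v},\Phi_{1,v},\xi_T)\overline{P_v(\phi_{2,v},\Phi_{2,v},\xi_T)}$ via Proposition~\ref{functionalT}(iii) and equation~(\ref{rlbp}), then match constants using Remark~\ref{lipf_sp}, Lemma~\ref{md2_r}, and Lemma~\ref{cep}. The one point where you are slightly looser than the paper is the global-from-local step in (i): the implication ``$\chi_v\in\{\eta_v,\eta_v^{\iota_v}\}$ for all $v$ $\Rightarrow$ $\chi\in\{\eta,\eta^\iota\}$'' is not a pure compatibility statement, and the paper invokes \cite[Lemma~10.3]{gs2013} for it; you should cite that or supply the short argument.
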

\begin{proof}
By Proposition ~\ref{functionalT}, $P$ is nonzero if and only $E=K$ and $\chi\in \{\eta,\eta^\iota\}$. By Proposition ~\ref{lnvs_sp}, $\Hom_{\mrr(F_v)}(\Pi_{\eta_v}^{\epsilon_v}\otimes (\chi_v\boxtimes \psi_{v_0,v}),\bc)$ is nonzero for all $v$ if and only if $E=K$ and $\chi_v\in \{\eta_v,\eta_v^{\iota_v}\}$ for all $v$. So for (i), it suffices to show that if a global character $\chi$ of $[\SO(V_0)]$ has $\chi_v\in \{\eta_v,\eta_v^{\iota_v}\}$ for all $v$, then $\chi\in \{\eta,\eta^{\iota}\}$. This is easy to see, say for example, from Lemma 10.3 of \cite{gs2013}. 

Supposing $E=K$ and $\chi=\{\eta, \eta^\iota\}$, we now prove (ii). We may assume $\chi=\eta$ and $T=J_1^{-1}v_0$ is $-\smalltwomatrix{1}{}{}{\delta}$. Put $\xi_T=(1,e)$. Consider two vectors $f_i=\Theta_\psi(\phi_i,E(\Phi_i))\in \Pi^\epsilon_\eta$, $\phi_i\in \ms(V^2_K(\ba))$ and $\Phi_i\in \eta^\epsilon$. By Proposition ~\ref{functionalT}, there is
\[
P(f_1)\overline{P(f_2)}=2^{2\beta}\prod_v P_v(\phi_{1,v},\Phi_{1,v},\xi_T)\overline{P_v(\phi_{1,v},\Phi_{1,v},\xi_T)}.
\]
On the other hand, by (\ref{rlbp}), there is
\[
2^{-{\beta_v}}\mpp_v^\sharp(f_{1,v},f_{2,v})=\frac{c_{\eta_v}c_{K_v}c_{K_v}^\prime L(1,\chi_{K_v/F_v})L(2,\eta_v\circ \pr) }{\zeta_{F_v}(4)}P_v(\phi_{1,v},\Phi_{1,v},\xi_T)\overline{P_v(\phi_{1,v},\Phi_{1,v},\xi_T)}
\]
For almost all $v$, there are $c_{\eta_v}=\frac{\zeta_{F_v}(4)}{L(2,\eta_v\circ \pr)}$, $c_{K_v}=L(1,\chi_{K_v/F_v})$, $c_{K_v}^\prime=1$. So $2^{-{\beta_v}}\mpp_v^\sharp(f_{1,v},f_{2,v})=1$ almost everywhere. By Lemma ~\ref{md2_r}, Lemma ~\ref{cep}, and Remark ~\ref{lipf_sp}, we further have

\begin{align}\label{pcomp}
\prod_v 2^{-{\beta_v}}\mpp_v^\sharp(f_{1,v},f_{2,v})
=&\frac{2^{\beta}L(2,\eta\circ\pr)}{\zeta_F(4)}\prod_v P_v(\phi_{1,v},\Phi_{1,v},\xi_T)\overline{P_v(\phi_{1,v},\Phi_{1,v},\xi_T)}.
\end{align}
Therefore $P(f_1)\overline{P(f_2)}=\frac{2^\beta\zeta_F(4)}{L(2,\eta\circ\pr)} \prod_v 2^{-\beta_v}\mpp_v^\sharp(f_{1,v},f_{2,v})$.
\end{proof}

\section{Soudry Packets on $\SO(4,1)$}\label{sp_so41}

The inner forms $\SO(4,1)$ over $F$ are classified as $\SO(V_D)\cong \PGU_2(D)$ in Section ~\ref{so41}, with $D$ running over nonsplit quaternion algebras over $F$. The Soudry packets on $\GU_2(D)_\ba$ are constructed with the theta correspondence between $\GU_1(D)$ of skew-Hermitian type and $\GU_2(D)$. The study of the Bessel period functional on these Soudry packets is essentially the same as for the Soudry packets on $\SO(3,2)_\ba$. The major difference is that when $D$ is nonsplit, $\U_1(D)$ is non-connected and has the same set of $F$-rational points as its connected component $\U_1(D)^c$. This affects the choice of the Tamagawa measure on $\U_1(D)_\ba$ (See Section ~\ref{s_tmeasure}) and the construction of the packet slightly (See Sect. ~\ref{ss_spd}), but the qualitative assertions about Bessel period functionals and the global period formula remain the same. We briefly treat $\SO(4,1)$ with emphasis on the issues for which $\SO(4,1)$ differs from $\SO(3,2)$. The Soudry packet $\So(\eta)$ on $\GU_2(D)$ will be put as $\So_D(\eta)$ in this section, to emphasize the role of $D$.

\subsection{The groups}\label{sod}

Let $D$ be a nonsplit quaternion algebra over $F$. For a nonzero element $e\in D_0$, recall from Section ~\ref{gu1dgroup} that $\mv_e$ denotes the space $D$ with the skew-Hermitian pairing satisfying $(1,1)=e$. Set $G=\GU_2(D)$, $H=\GU(\mv_e)$, $G_1=\{g\in G:\nu(g)=1\}$, $H_1=\{h\in H:\nu(h)=1\}$, $PG=G/F^\times$, $PH=H/F^\times$, and $K=F\oplus Fe$. Let $S_D$ denote the set of places $v$ such that $D(F_v)$ is nonsplit. Let $H^c$ and $H_1^c$ be the identity components of $H$ and $H_1$ respectively.

(i) Write $D=K\oplus Ke^\prime$ with $e^\prime\in D_0(F)$ satisfying $ee^\prime=-e^\prime e$. Put $e^2=-\delta$ and $\ep^2=\delta^\prime$, then $\delta\not\in {F^\times}^2$ and $\delta^\prime\not\in \Nm(K^\times)$ because $D$ is nonsplit. There are $H(F)=K^\times\sqcup K^\times e^\prime$, $H_1(F)=K_1^\times$, and $\nu(H(F))=\Nm(K^\times)\sqcup \delta^\prime \Nm(K^\times)$.

(ii) Locally, $\delta^\prime\in \Nm(K_v^\times)$ if and only if $v\not\in S_D$; for $v\not\in S_D$, choose $h_v\in K_v^\times$ satisfying $\delta^\prime=\Nm(h_v)$ and set $\mu_2(F_v)=\{1,h_ve^\prime\}$. There are $H_v(F_v)=K_v^\times\sqcup K_v^\times e^\prime$ and
\[
H_1(F_v)=
\begin{cases}
K_{v,1}^\times\rtimes \mu_2(F_v), &v\not\in S_D,\\
K_{v,1}^\times, &v\in S_D.
\end{cases}
,\quad \nu(H(F_v)=
\begin{cases}
\Nm(K_v^\times), &v\not\in S_D,\\
F_v^\times, &v\in S_D.
\end{cases}
\]

(iii) Set $\ba^\times_{K,D}:=\{(t_v)\in \ba^\times: \text{$t_v\in \Nm(K_v^\times)$ when $v\not\in S_D$}\}$, then $\nu(H(\ba))=\ba^\times_{K,D}$. Accordingly, set $G^+=\{g\in G:\nu(g)\in \nu(H)\}$. There is $G^+(\ba)=\prod_v G^+(F_v)$ with
\[
G^+(F_v)=
\begin{cases}
\big\{g_v\in G(F_v): \nu(g_v)\in \Nm(K_v^\times)\big\}, &v\not\in S_D,\\
G(F_v), &v\in S_D,
\end{cases}
\]
and $G^+(F)=\{g\in G(F):\nu(g)\in \nu(H(F))\}$. Additionally, set $G^\dagger(F)=\{\gamma\in G(F): \nu(\gamma)\in \Nm(K^\times)\}$ and $G^\ddagger(F)=G(F)\cap G^+(\ba)$

\begin{lemma}\label{product}
\emph{(i)} $F^\times \ba^\times_{K,D}=\ba^\times$. Numerate the quaternion algebras $D^\prime$ with $S_{D^\prime}\subseteq S_D$ as $D_i$, $1\leq i\leq 2^{|S_D|-1}$. Write $D_i=K\oplus Ke_i$ with $\Tr(e_i)=0$, $ee_i=-ee_i$ and $e_i^2=\delta_i$, then $F^\ddagger:=F^\times\cap \ba_{K,D}^\times=\sqcup_i \delta_i\Nm(K^\times)$. \emph{(ii)} $G(F)\cdot G^+(\ba)=G(\ba)$.  For each $D_i$, choose $\gamma_{\delta_i}\in G^\ddagger(F)$ with $\nu(\gamma_{\delta_i})=\delta_i$, then $G^+(F)=G^\dagger(F)\sqcup G^\dagger \gamma_{\delta^\prime}$ and $G^\ddagger(F)=\sqcup_{i=1}^{2^{|S_D|-1}} G^\dagger(F)\gamma_{\delta_i}$.
\end{lemma}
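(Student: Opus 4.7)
The plan is to reduce the lemma to global class field theory for the quadratic extension $K/F$ together with the Hasse--Brauer--Noether classification of quaternion algebras; part (ii) then falls out of part (i) almost formally via the similitude character $\nu$.

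For the first equality in (i), I would analyze the quotient $\ba^\times/\ba^\times_{K,D}$. At split places of $K/F$ outside $S_D$ and at all places $v\in S_D$ the local contribution $F_v^\times/\Nm(K_v^\times)$ is trivial (in the latter case because $\ba^\times_{K,D}$ imposes no constraint there), so the quotient identifies with the restricted direct sum $\bigoplus_{v\in T}^{\mathrm{res}}\bz/2$, where $T$ denotes the nonsplit places of $K/F$ outside $S_D$. By global reciprocity for the quadratic character $\chi_{K/F}$ (equivalently the Hasse norm theorem), the image of $F^\times$ in $\bigoplus_{v\text{ nonsplit}}^{\mathrm{res}}\bz/2$ is exactly the kernel of the summation map. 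Since $D$ is nonsplit, $|S_D|\geq 2$, so I can freely adjust signs at places of $S_D$ to complete any prescribed sign pattern on $T$ to total sum zero; hence $F^\times$ surjects onto $\bigoplus_{v\in T}^{\mathrm{res}}\bz/2$, giving $F^\times\ba^\times_{K,D}=\ba^\times$.

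For the second statement of (i), I would use that $a\mapsto (K,a)$ induces a bijection between $F^\times/\Nm(K^\times)$ and isomorphism classes of quaternion algebras over $F$ containing $K$, under which $a$ is a local norm at $v$ if and only if $(K,a)$ is split at $v$. Consequently $F^\ddagger/\Nm(K^\times)$ corresponds to quaternion algebras $D'$ containing $K$ whose ramification set is contained in $S_D$. Since $K_v$ is forced to be a field at every $v\in S_D$ (as $K$ embeds in the division algebra $D_v$), every quaternion algebra $D'$ with $S_{D'}\subseteq S_D$ automatically contains $K$. By Hasse--Brauer--Noether such algebras are parametrized by even subsets of $S_D$, yielding exactly $2^{|S_D|-1}$ classes, and the chosen $\delta_i=e_i^2$ are representatives for these cosets.

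For part (ii), the similitude character $\nu:G\to \bg_m$ is surjective on both $F$-points and $F_v$-points for every $v$ (an explicit diagonal element realizes any scalar), so $\nu(G(F))=F^\times$ and $\nu(G(\ba))=\ba^\times$. Given $g\in G(\ba)$, applying (i) I write $\nu(g)=as$ with $a\in F^\times$ and $s\in\ba^\times_{K,D}$; choosing $\gamma\in G(F)$ with $\nu(\gamma)=a$ gives $\gamma^{-1}g\in G^+(\ba)$, proving $G(F)\cdot G^+(\ba)=G(\ba)$. The decomposition $G^+(F)=G^\dagger(F)\sqcup G^\dagger(F)\gamma_{\delta'}$ follows from $\nu(G^+(F))=\Nm(K^\times)\sqcup\delta'\Nm(K^\times)$ and $\nu(G^\dagger(F))=\Nm(K^\times)$, together with $\nu(\gamma_{\delta'})=\delta'$. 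Finally $G^\ddagger(F)=\{\gamma\in G(F):\nu(\gamma)\in F^\ddagger\}$, so the partition of $F^\ddagger$ from (i) immediately yields $G^\ddagger(F)=\sqcup_i G^\dagger(F)\gamma_{\delta_i}$.

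The main obstacle will be the accounting in the second statement of (i): matching cosets of $F^\ddagger/\Nm(K^\times)$ against the enumeration $\{D_i\}$ requires invoking Hasse--Brauer--Noether and verifying that every $D'$ with $S_{D'}\subseteq S_D$ genuinely admits a presentation $D'=K\oplus Ke_i$, and that the count $2^{|S_D|-1}$ matches---both being consequences of the parity condition and Hasse principle for quaternion algebras.
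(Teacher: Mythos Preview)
Your proof is correct; the paper takes a shorter, more ad hoc route.

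For the first equality of (i), rather than analyzing the full quotient $\ba^\times/\ba_{K,D}^\times$, the paper simply notes that $F^\times\cdot\ba_{K,D}^\times$ contains the index-$2$ subgroup $F^\times\cdot\Nm(\ba_K^\times)=\ker\chi_{K/F}$, and then exhibits a single explicit id\`ele $t\in\ba_{K,D}^\times$ with $\chi_{K/F}(t)=-1$ (namely $t_{v'}=\delta'$ at one chosen place $v'\in S_D$ and $t_v=1$ elsewhere). This forces $F^\times\cdot\ba_{K,D}^\times=\ba^\times$ in one stroke. Your structural argument via the surjection of $F^\times$ onto $\bigoplus_{v\in T}\bz/2\bz$ is equally valid but invokes more of the class-field-theoretic exact sequence than is strictly necessary; note also that the statement ``the image of $F^\times$ is exactly the kernel of the summation map'' already uses the surjectivity part of Hasse--Brauer--Noether, not just the Hasse norm theorem.

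For the description of $F^\ddagger$, the paper argues directly with local Hilbert symbols: for $a\in F^\ddagger$ the set $\{v:\chi_{K_v/F_v}(a)=-1\}$ is an even subset of $S_D$, hence equals $S_{D_i}$ for some $i$, and then $a\delta_i^{-1}$ is a local norm everywhere, so a global norm by the Hasse norm theorem. Your Brauer-group phrasing via $a\mapsto(K,a)$ is the same content in different clothing; it has the advantage of making the enumeration by quaternion algebras transparent, at the cost of having to check (as you do) that every $D'$ with $S_{D'}\subseteq S_D$ admits the presentation $K\oplus Ke_i$.

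Part (ii) is treated identically: the paper says only that it ``directly follows from (i)'', which is precisely your argument via surjectivity of $\nu$.
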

\begin{proof}
(i) Observe that the group $F^\times\cdot \Nm(\ba^\times_K)$, as the kernel of the quadratic character $\chi_{K/F}$, is contained in $\ba^\times_{K,D}$ and is of index $2$ in $\ba^\times$. To prove $\ba^\times_{K,D}=\ba^\times$, it suffices to show that $\chi_{K/F}$ is nontrivial on $\ba^\times_{K,D}$. Because $D$ is nonsplit, $S_D$ is nonempty. Choose $v^\prime\in S_D$ and consider the element $t^\prime=(t_v)$ with $t_{v^\prime}=\delta^\prime$ and $t_v=1$ for $v\neq v^\prime$, then $t\in \ba^\times_{K,D}$ and $t_{v^\prime}\not\in \Nm(K_{v^\prime}^\times)$, whence $\chi_{K/F}(t)=-1$.

For the asssertion about $F^\times\cap \ba_{K,D}^\times$, we notice that $\chi_{K_v/F_v}(\delta_i)$ is  $-1$ for $v\not\in S_{D_i}$ and is $1$ for $v\not\in S_{D_i}$. For two $D_i, D_j$, there is $v$ such that $\chi_{K_v/F_v}(\delta_i\delta_j^{-1})=-1$, whence $\delta_i\delta_j^{-1}\not\in \Nm(K^\times)$ ; this shows that $\delta_i\Nm(K^\times)$ are disjoint. Second, for $a\in F^\times\cap \ba^\times_{K,D}$, there is $\chi_{K/F}(a)=1$ and $\chi_{K_v/F_v}(a)=1$ for $v\not\in S_D$, whence $\prod_{v\in S_D}\chi_{K_v/F_v}(a)=1$. The set of places with $\chi_{K_v/F_v}(a)=-1$ is of even cardinality and is equal to $S_{D_i}$ for certain $D_i$. There is $\chi_{K_v/F_v}(a)=\chi_{K_v/F_v}(\delta_i)$ for all $v$, whence $a\in \delta_i \Nm(K^\times)$. Assertions in (ii) directly follow from (i).
\end{proof}

With Lemma ~\ref{product} (ii), one sees that $G(F)\backslash G(\ba)=G^\ddagger(F)\backslash G^+(\ba)$. The Tamagawa measure on $G(\ba)$ induces a measure on $G^+(\ba)$, with respect to which there is $\Vol(G^+(F)\ba^\times\backslash G^+(\ba))=2^{|S_D|-1}$ because $\Vol(G(F)\ba^\times\backslash G(\ba))=2$ and $G^+(F)$ is of index $2^{|S_D|-2}$ in $G^\ddagger(F)$.

\subsection{Weil representation}

Fix a character $\psi$ of $\ba/F$ and let $\omega:=\omega_{\psi,\mv_e}$ be the Weil representation of $G_1(\ba)\times H_1(\ba)$ on $\ms(\mv_e(\ba))$, the space of Bruhat-Schwartz functions on $\mv_e(\ba)$:
\begin{align*}
&\omega(h)\phi(x)=\phi(h^{-1}x),\quad h\in H_1(\ba),\\
&\omega\smalltwomatrix{a}{}{}{\bar{a}^{-1}}\phi(x)=\chi_{K_e/F}(\Nm_{D}(a))|\Nm_{D}(a)|\phi(xa),\quad a\in D^\times(\ba),\\
&\omega\smalltwomatrix{1}{u}{}{1})\phi(x)=\phi(x)\psi\big(\frac{1}{2}\Tr_{D}(u(x,x))\big),\quad u\in D_0(\ba),\\
&\omega\smalltwomatrix{}{1}{1}{}\phi(x)=\int_{\mv_e(\ba)}\phi(y)\psi(\Tr_{D}(x,y))\dd y.
\end{align*}
Set $R(\ba):=\{(g,h)\in G^+(\ba)\times H(\ba): \nu(g)=\nu(h)\}$. The extension of $\omega$ from $G_1(\ba)\times H_1(\ba)$ to $R(\ba)$ is given below and we write $\omega^+=\mathrm{ind}_{R(\ba)}^{G^+(\ba)\times H(\ba)}\omega$.
\begin{equation}\label{wr_s2d}
\omega_\psi(g,h)\phi(X)=|\nu(h)|^{-1}\omega_{\psi}\big(g\smalltwomatrix{1}{}{}{\nu(g)^{-1}}\big)\phi(h^{-1}X),\quad (g,h)\in R(\ba).
\end{equation}

Locally, it is known that the local Howe duality holds for the triplets $(\omega_v, G_1(F_v),H_1(F_v))$ and $(\omega_v^+, G^+(F_v),H(F_v))$ at every place. Globally, for an automorphic form $\varphi$ on $[H]$ and $\phi\in \ms(\mv_e(\ba))$, define
\[
\Theta_\psi(\phi,\varphi)(g)=\int_{[H_1]}\Theta_{\phi,\psi}(g,h_1h^\prime)\overline{\varphi(h_1h^\prime)}\dd h_1,\quad g\in G^+(\ba),
\]
where $h^\prime\in H(\ba)$ is an element satisfying $\nu(h^\prime)=\nu(g)$. 

\subsection{The packet $\Pa_D(\eta)$}

Let $\eta$ be a character of $\ba^\times_K/K^\times$ and $S_\eta=\{v:\eta_v^{\iota_v}\neq \eta_v\}$. Set
\[
\beta=
\begin{cases}
1, &\eta^\iota\neq \eta,\\
2. &\eta^\iota=\eta.
\end{cases},\quad 
\beta_v=
\begin{cases}
1, &\eta_v^{\iota_v}\neq \eta_v,\\
2. &\eta_v^{\iota_v}=\eta_v.
\end{cases}
\]

(i) For $v\in S_\eta$, set $\eta_v^+=\Ind^{H(F_v)}_{K_v^\times}\eta_v$. For $v\not\in S_\eta$, $\eta_v$ has two irreducible extensions to $H(F_v)$; let $\eta_v^+$ denote the extension with value $1$ at  $\iota_v$ and set $\eta_v^-=\eta_v^+ \sgn$, where $\sgn:H(F_v)/K_v^\times\rar \{\pm 1\}$ is the sign character. Define the local packet 
\begin{equation*}
\Pa_{D_v}(\eta_v)=
\begin{cases}
\{\eta_v^+\}, &v\in S_\eta,\\
\{\eta_v^+, \eta_v^-\}, &v\not\in S_\eta.
\end{cases}
\end{equation*}

(ii) For $\epsilon=(\epsilon_v)$ with almost all $\epsilon_v=1$, set $\eta^\epsilon=\prod_v \eta_v^{\epsilon_v}$. Define the global packet
\[
\Pa_D(\eta)=
\begin{cases}
\{\eta^\epsilon:\epsilon \}, &\eta\neq \eta^\iota,\\
\{\eta^\epsilon: \sgn(\epsilon)=1\}, &\eta=\eta^\iota.
\end{cases}
\]
The representation $\eta^\epsilon\in \Pa_D(\eta)$ is embedded into the space of automorphic forms on $[H]$ via the Eisenstein series map $\Phi\in \eta^\epsilon \rar E(\Phi):=\Phi(h)+\Phi(e^\prime h)$. Put $E(\eta^\epsilon):=\{E(\Phi): \Phi\in \eta^\epsilon\}$, then $E(\eta^\epsilon)\cong \eta^\epsilon$ as $H(\ba)$-representations for $\eta^\epsilon\in \Pa_D(\epsilon)$. Furthermore, $L^2(H(F)\ba^\times\backslash H(\ba))$ is the Hilbert space direct sum of $E(\eta^\epsilon)$, with $\eta^\epsilon\in \Pa_D(\eta)$ and $\eta|_{\ba^\times}=1$.

(iii) Equip $H^c(\ba)\backslash H(\ba)$ with the probability measure. For $\Phi_i\in \eta^\epsilon$ and $\varphi_i=E(\Phi_i)$, define
\[
(\Phi_1,\Phi_2)=\int_{H^c(\ba)\backslash H(\ba)}\Phi_1(h_1)\overline{\Phi_2(h_0)}\dd h_0,\quad (\varphi_1,\varphi_2)_{H}=\int_{H(F)\ba^\times\backslash H(\ba)}\varphi_1(h)\overline{\varphi_2(h)}\dd h.
\]
There is $(\varphi_1,\varphi_2)_H=2^{\beta}(\Phi_1,\Phi_2)$.

\subsection{The packet $\So_D(\eta)$}\label{ss_spd}

For $\eta^\epsilon\in \Pa(\eta)$, define the global theta lift of $E(\eta^\epsilon)$ as
\[
\Theta_\psi(\eta^\epsilon)=\Theta(\eta^\epsilon,\psi):=\{\Theta_\psi(\phi,\varphi): \phi\in \ms(\mv_e(\ba)), \varphi\in E(\eta^\epsilon)\}.
\]
It has central character $\eta|_{\ba^\times}$ and is a nonzero irreducible $G^+(\ba)$-representation in the discrete spectrum of $G^+(F)\backslash G^+(\ba)$. There is $\Theta_\psi(\eta^\epsilon)=\otimes_v \theta(\eta_v^{\epsilon_v},\psi_v)$.

Now we define a representation $\Pi^\epsilon_\eta$ from $\Theta(\eta^\epsilon,\psi)$. First, recall from Section ~\ref{sod} that $G(F)\backslash G(\ba)=G^\ddagger(F)\backslash G^+(\ba)$. For a function $f$ on $G^+(F)\backslash G^+(\ba)$, we associate a function
\[
\wtilde{f}(g)=\sum_{\gamma\in G^+(F)\backslash G^\ddagger(F)}f(\gamma g),\quad g\in G^+(\ba),
\]
which is defined on $G^\ddagger(F)\backslash G^+(\ba)$ and hence naturally regarded as a function on $G(F)\backslash G(\ba)$. Define
\[
\wtilde{\Theta}_\psi(\eta^\epsilon):=\{\wtilde{f}: f\in \Theta_\psi(\eta^\epsilon)\},
\]
which consists of functions on $G(F)\backslash G(\ba)$ and is isomorphic to $\Theta_\psi(\eta^\epsilon)$ as an $G^+(\ba)$-representation.

Second, let $\Pi^\epsilon_\eta$ be the $G(\ba)$-representation generated by $\wtilde{\Theta}_\psi(\eta^\epsilon)$ and define
\[
\So_D(\eta)=\{\Pi^\epsilon_\eta: \eta^\epsilon\in \Pa_D(\eta)\}.
\]
$\Pi^\epsilon_\eta$ and $\Theta_\psi(\eta^\epsilon)$ are cuspidal unless $\eta=\eta^\iota$ and $\{v: \epsilon_v=-1\}\subseteq S_D$. (see \cite[Thm. 12.1]{gs2013}).

The strcuture of $\Pi^\epsilon$ is similar to the split case. Locally, for $v\in S_D$, $\theta(\eta_v^{\epsilon_v},\psi_v)$ is an irreducible $G(F_v)$-representation; for $v\not\in S_D$, $\Ind_{G^+(F_v)}^{G(F_v)}\theta(\eta_v^{\epsilon_v},\psi_v)$ is irreducible. Globally, for a space $\mh$ of functions on $G(\ba)$ and $g^\prime\in G(\ba)$, set $\mh^{g^\prime}:=\{f(\cdot g^\prime): f\in \mh\}$; notice that $G^+(\ba)\backslash G(\ba)=G^\ddagger(F)\backslash G(F)=F^\times/F^\ddagger$. The lemma below is parallel to Lemma ~\ref{l_thetacom} and ~\ref{l_decomp}.
\begin{lemma}
\emph{(i)} For $a\in F^\times/F^\ddagger$, choose $g_a\in G^\ddagger(F)\backslash G(F)$ with $\nu(g_a)=a$, then $\wtilde{\Theta}_{\psi_a}(\eta^\epsilon)=\wtilde{\Theta}_\psi(\eta^\epsilon)^{g_a}$, $a\in F^\times/F^\ddagger$. \emph{(ii)} $\Pi^\epsilon_\eta=\oplus_{a\in F^\times/F^\ddagger} \wtilde{\Theta}_{\psi_a}(\eta^\epsilon)\cong \otimes_v \Ind_{G^+(F_v)}^{G(F_v)}\theta(\eta_v^{\epsilon_v},\psi_v)$.
\end{lemma}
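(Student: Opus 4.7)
The plan is to mirror the arguments given for the split case in Lemmas~\ref{l_thetacom} and~\ref{l_decomp}, carefully adapting them to accommodate the index structure coming from the non-connected group $\U(\mv_e)$ and the chain $G^+(F)\subset G^\ddagger(F)\subset G(F)$ described in Lemma~\ref{product}. Throughout, I will use that $G(F)\backslash G(\ba)=G^\ddagger(F)\backslash G^+(\ba)$, so every function on $G^+(F)\backslash G^+(\ba)$ has a canonical average $\widetilde{f}$ on $G(F)\backslash G(\ba)$.

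For part~(i), first assume $\Pi^\epsilon_\eta$ is cuspidal. The identity $\widetilde{\Theta}_{\psi_a}(\eta^\epsilon)=\widetilde{\Theta}_\psi(\eta^\epsilon)^{g_a}$ will be deduced from Howe duality and the inner product formula for the pair $(G^+,H)$ (which is the analogue of Proposition~\ref{ipf_sp}, proved in identical fashion from the Siegel--Weil formula for skew-Hermitian $\U_1(D)$). At each local place, the twist satisfies $\theta(\eta_v^{\epsilon_v},\psi_v)^{g_{a,v}}\cong \theta(\eta_v^{\epsilon_v},\psi_{v,a})$, since the Weil representation intertwines in that way. Hence $\Theta_{\psi_a}(\Theta_\psi(\eta^\epsilon)^{g_a})$ has nonzero local factor everywhere by the nonvanishing of the local zeta integrals in the inner product formula, and by multiplicity one for $H$ this forces $\Theta_{\psi_a}(\Theta_\psi(\eta^\epsilon)^{g_a})=E(\eta^\epsilon)$. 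Taking $\Theta_{\psi_a}$ back then yields $\Theta_{\psi_a}(\eta^\epsilon)=\Theta_\psi(\eta^\epsilon)^{g_a}$ as $G^+(\ba)$-representations of automorphic forms; averaging over $G^+(F)\backslash G^\ddagger(F)$ gives the tilded version. When $\Pi^\epsilon_\eta$ is noncuspidal, namely $\eta=\eta^\iota$ with $\{v:\epsilon_v=-1\}\subseteq S_D$, both sides are explicit residues of Eisenstein series associated to a character of a Borel-type subgroup, and the identity is checked by direct comparison of the Eisenstein data.

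For part~(ii), by Lemma~\ref{product}(ii) there is a decomposition $G(\ba)=\sqcup_{a\in F^\times/F^\ddagger}G^\ddagger(F)G^+(\ba)g_a$, so
\[
\Pi^\epsilon_\eta=\sum_{a\in F^\times/F^\ddagger}\widetilde{\Theta}_\psi(\eta^\epsilon)^{g_a}=\sum_{a\in F^\times/F^\ddagger}\widetilde{\Theta}_{\psi_a}(\eta^\epsilon),
\]
using (i). Directness of the sum will be proved by exhibiting, for any two distinct classes $a_1,a_2\in F^\times/F^\ddagger$, a local place $v$ at which the two local constituents are non-isomorphic. By the description of $F^\ddagger=\sqcup_i\delta_i\Nm(K^\times)$ in Lemma~\ref{product}(i), there exists $v$ (necessarily in $S_D$ or else $v\notin S_D$ with $a_1 a_2^{-1}\notin \Nm(K_v^\times)$) at which $a_1$ and $a_2$ lie in distinct classes of $F_v^\times/\nu(H(F_v))$; for such a $v$, the local theta lifts $\theta(\eta_v^{\epsilon_v},\psi_{v,a_i})$ (or their inductions to $G(F_v)$ at unramified places of $D$) have non-isomorphic theta correspondents on $H(F_v)$ by the local conservation principle, whence are themselves non-isomorphic. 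Finally, assembling the local pictures gives $\Pi^\epsilon_\eta\cong\otimes_v\Ind_{G^+(F_v)}^{G(F_v)}\theta(\eta_v^{\epsilon_v},\psi_v)$, with the convention that at $v\in S_D$ we have $G^+(F_v)=G(F_v)$ and the induction is trivial; irreducibility of each local factor (already recalled just before the lemma) then gives irreducibility of $\Pi^\epsilon_\eta$.

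The main obstacle will be the bookkeeping in part~(ii): one must track the three groups $G^+\subset G^\ddagger\subset G$ at the rational level against the two local regimes $v\in S_D$ versus $v\notin S_D$, and use the global-to-local passage in $F^\times/F^\ddagger$ supplied by Lemma~\ref{product} to guarantee a separating place. The cuspidal versus noncuspidal dichotomy in part~(i) is by now routine given the parallel treatment for $\GSp_4$.
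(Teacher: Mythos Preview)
Your proposal is correct and mirrors exactly what the paper does: its own proof is the single line ``The proof is the same [as] the proofs of Lemma~\ref{l_thetacom} and~\ref{l_decomp} in the split case,'' and you have spelled out precisely that transfer, including the cuspidal/noncuspidal dichotomy for (i) and the separating-place argument for directness in (ii). One small correction: in your parenthetical for (ii), the separating place $v$ must lie \emph{outside} $S_D$ (since for $v\in S_D$ one has $\nu(H(F_v))=F_v^\times$ and there is nothing to separate), which is guaranteed by the very definition of $F^\ddagger$; otherwise the argument is complete.
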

\begin{proof}
The proof is the same the proofs of Lemma ~\ref{l_thetacom} and ~\ref{l_decomp} in the split case.
\end{proof}

\subsection{The inner product formula}

For functions $f_1, f_2$ on $G^+(F)\backslash G^+(\ba)$ in the discrete spectrum and with the same central characters, define
\begin{equation}\label{gdpairing}
(f_1,f_2)_{G^+}:=\int_{G^+(F)\ba^\times \backslash G^+(\ba)}f_1(g)\overline{f_2(g)}\dd g.
\end{equation}


\begin{proposition}\label{ipf_spd}
Suppose $\Pi^\epsilon_\eta$ is cuspidal. For $\varphi_i\in E(\eta^\epsilon)$, $\phi_i\in \ms(\mv_e(\ba))$ and $f_i=\Theta_\psi(\phi_i,\varphi_i)$, there is
\[
(f_1,f_2)_{G^+}=\frac{L(2, \eta\circ \pr)}{\zeta_F(4)}\prod_v \frac{\zeta_{F_v}(4)}{L(2,\eta_v\circ \pr)}\underset{H_1(F_v)}{\int} \overline{(h_v\circ \varphi_{1,v},\varphi_{2,v})}(\omega_{\psi_v}(h_{v})\phi_{1,v},\phi_{2,v})\dd h_v.
\]
\end{proposition}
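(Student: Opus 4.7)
The plan is to follow the template of Proposition~\ref{ipf_sp}, reducing the similitude inner product to the isometry inner product formula for the pair $(G_1, H_1) = (\U_2(D)_1, \U(\mv_e)_1)$ by fibering over the similitude character. The main differences with the split case are: (i) $H$ is non-connected with the subtler Tamagawa measure recalled in Section~\ref{s_tmeasure}(iv), giving $\Vol([\U(\mv_e)]) = 2^{1-|S_D|}$; (ii) the image of $\nu$ is $\ba^\times_{K,D}$ rather than $\Nm(\ba^\times_K)$; (iii) one must invoke an analogue of the Gan--Qiu--Takeda inner product formula in the quaternionic unitary setting, which is the main input.

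First, I would decompose the left-hand side along the similitude. Set $\mc = \nu(H(\ba))/{\ba^\times}^2\nu(H(F)) = \ba^\times_{K,D}/{\ba^\times}^2 \nu(H(F))$, equipped with the probability measure. For $c \in \mc$, choose $g_c \in G^+(\ba)$ and $h_c \in H(\ba)$ with $\nu(g_c) = \nu(h_c) = c$, and set $\phi_{i,c} = \omega(h_c, g_c)\phi_i$, $\varphi_{i,c}(h_1) = \varphi_i(h_1 h_c)$, $f_{i,c} = \Theta_\psi(\phi_{i,c}, \varphi_{i,c})$. As in (\ref{gpairing}), I expect
\begin{equation*}
(f_1, f_2)_{G^+} = \int_{\mc}(f_{1,c}, f_{2,c})_{G_1}\,\dd c, \qquad (\varphi_1, \varphi_2)_{H} = \int_{\mc}(\varphi_{1,c}, \varphi_{2,c})_{H_1^c}\,\dd c,
\end{equation*}
where the pairings on the right are over $[G_1]$ and $H_1^c(F)\backslash H_1^c(\ba)$ respectively, with the normalization on the $H$ side chosen to absorb the non-connectedness factor $2^{1-|S_D|}$ from the Tamagawa measure on $\U(\mv_e)_\ba$.

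Second, I would invoke the Rallis-type inner product formula for the isometry pair $(G_1, H_1)$ in the cuspidal convergent range: for $\phi_{i,c} \in \ms(\mv_e(\ba))$ and $\varphi_{i,c}$ cuspidal on $[H_1]$,
\begin{equation*}
(f_{1,c}, f_{2,c})_{G_1} = \int_{H_1(\ba)}\overline{(h_1 \circ \varphi_{1,c}, \varphi_{2,c})_{H_1^c}}\,(\omega(h_1)\phi_{1,c}, \phi_{2,c})\,\dd h_1.
\end{equation*}
This is the quaternionic analogue of the input cited from \cite{gqt12} in the split case; it should either appear directly in the literature on the Shimura--Gan--Takeda correspondence for $(\U_2(D), \U_1(D))$ or be a straightforward adaptation of the doubling argument/Siegel--Weil formula. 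The pairing on the Weil representation being $\omega$-invariant, the inner integrand $(\omega(h_1)\phi_{1,c}, \phi_{2,c}) = (\omega(h_1)\phi_1, \phi_2)$ is independent of $c$, so I can commute the $\dd c$ and $\dd h_1$ integrations (absolute convergence coming from cuspidality) and recombine $\int_\mc (h_1 \circ \varphi_{1,c}, \varphi_{2,c})_{H_1^c}\,\dd c = (h_1 \circ \varphi_1, \varphi_2)_H$, yielding
\begin{equation*}
(f_1, f_2)_{G^+} = \int_{H(\ba)/\ba^\times}\overline{(h \circ \varphi_1, \varphi_2)_H}\,(\omega(h)\phi_1, \phi_2)\,\dd h.
\end{equation*}

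Third, the right-hand side decomposes as an Euler product; at almost every place the local integral equals $L(2, \eta_v\circ\pr)/\zeta_{F_v}(4)$ (this is the unramified computation of the Rallis inner product for the one-dimensional skew-Hermitian pair, analogous to Remark~\ref{lipf_sp}), and the global leading constant $L(2, \eta\circ\pr)/\zeta_F(4)$ arises in the usual way. The hard part I anticipate is verifying the normalization: the factor $2^{1-|S_D|}$ in the Tamagawa volume of $[\U(\mv_e)]$, the index $[G^\ddagger(F):G^+(F)] = 2^{|S_D|-2}$, and $\Vol(G^+(F)\ba^\times\backslash G^+(\ba)) = 2^{|S_D|-1}$ noted in Section~\ref{sod} must combine to give exactly the stated leading constant, with no stray power of $2$. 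This bookkeeping is the place where an error is most likely; everything else parallels the split argument in Proposition~\ref{ipf_sp}.
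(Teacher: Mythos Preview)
Your approach is essentially the paper's own: fiber over the similitude, apply the isometry Rallis inner product formula, then recombine. Two points sharpen the argument and fix the normalization muddle you anticipate.

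First, the paper gives $\mc=\ba_{K,D}^\times/{\ba^\times}^2\nu(H(F))$ total volume $2^{|S_D|-1}$, not the probability measure. This is exactly calibrated so that the fibration $[PG^+]\to\mc$ with fiber $[G_1]$ (Tamagawa number $1$) matches $\Vol(G^+(F)\ba^\times\backslash G^+(\ba))=2^{|S_D|-1}$, and simultaneously so that the fibration on the $H$ side matches $\Vol([H_1])=2^{1-|S_D|}$. With this single choice, the pairing on the $H$ side is $(\,,\,)_{H_1}$ over $[H_1]$ (not $[H_1^c]$), the powers of $2$ cancel automatically, and the remaining manipulations are literally identical to the split case---no separate bookkeeping is needed. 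Your detour through $(\,,\,)_{H_1^c}$ and the probability measure is what creates the potential for a stray factor; drop it.

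Second, the isometry inner product formula you need is not left vague in the paper: it follows from the regularized first-term Siegel--Weil formula for quaternionic unitary groups, namely \cite[Thm.~3.3]{yamana13}, by the standard doubling argument. The resulting formula reads
\[
(f_{1,c},f_{2,c})_{G_1}=\int_{H_1(\ba)}\overline{(h_1\circ\varphi_{1,c},\varphi_{2,c})_{H_1}}\,(\omega(h_1)\phi_{1,c},\phi_{2,c})\,\dd h_1,
\]
and after integrating over $\mc$ and using unitarity of $\omega$ you obtain $\int_{H_1(\ba)}\overline{(h_1\circ\varphi_1,\varphi_2)_H}(\omega(h_1)\phi_1,\phi_2)\,\dd h_1$, not an integral over $H(\ba)/\ba^\times$ as you wrote. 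The Euler factorization and unramified value then go through as you say.
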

\begin{proof}

For functions $\varphi_1^\prime,\varphi_2^\prime$ on $[H_1]$, define $(\varphi_1^\prime,\varphi_2^\prime)_{H_1}=\int_{[H_1]}\varphi_1^\prime(h_1)\overline{\varphi_2^\prime(h_1)}\dd h_1$. For functions $f_1^\prime, f_2^\prime$ on $[G_1]$, define $(f_1^\prime,f_2^\prime)_{G_1}=\int_{[G_1]}f_1^\prime(g_1)\overline{f_2^\prime(g_1)}\dd g_1$. From the regularized first-term SW-formula in \cite[Thm. 3.3]{yamana13}, one can apply the standard method to deduce an innner product concerning the lift from $H_1$ to $G_1$: for automorphic forms $\varphi_1^\prime, \varphi_2^\prime$ on $[H_1]$, there are
\begin{equation}\label{ipf_sod}
\int_{[G_1]}\Theta_\psi(\phi_1,\varphi_1^\prime)(g_1)\overline{\Theta_\psi(\phi_2,\varphi_2^\prime)(g_1)}\dd g_1=\int_{H_1(\ba)}\overline{(h_1\circ \varphi_1^\prime,\varphi_2^\prime)_{H_1}}(h_1\circ \phi_1,\phi_2)\dd h_1.
\end{equation}

Give the compact space $\mc:=\ba_{K,D}^\times/{\ba^\times}^2\nu(H(F))$ the Haar measure $\dd c$ with total volume $2^{|S_D|-1}$. For each $c\in \mc$, choose $g_c\in G^+(\ba)$ and $h_c\in H(\ba)$ with $\nu(g_c)=\nu(h_c)$, then
\begin{align}\label{gdpairing2}
\nonumber (f_1,f_2)_{G^+}&=\int_\mc \int_{[G_1]}f_1(g_1g_c)\overline{f_2(g_1g_c)}\dd g_1\dd c,\\
\nonumber (\varphi_1,\varphi_2)_H&=\int_\mc \int_{[H_1]}\varphi_1(h_1h_c)\overline{\varphi_2(h_1h_c)}\dd h_1\dd c.
\end{align}
The remaining argument is the same as in the proof of Proposition ~\ref{ipf_sp}. (In the proof of Proposition ~\ref{ipf_sp}, $\mc$ is defined as $\Nm(\ba_K^\times)/{\ba^\times}^2\Nm(K^\times)$ and given the total volume $1$.)
\end{proof}

\begin{remark}
Choose local theta liftings $\theta_v:\ms(\mv_e(F_v))\otimes \eta_v^{\epsilon_v}\rar \theta(\eta_v^{\epsilon_v},\psi_v)\subset \Pi_{\eta_v}^{\epsilon}$ such that $\Theta(\phi,E(\Phi))=\otimes \theta_v(\phi_v,\Phi_v)$. Proposition ~\ref{ipf_spd} implies that for $f_{i,v}=\theta_{\psi_v}(\phi_v,\Phi_v)$, there are
\[
(f_{1,v},f_{2,v})=c_{\eta_v}\underset{H_1(F_v)}{\int} \overline{(h_v\circ \Phi_{1,v},\Phi_{2,v})}(\omega_{\psi_v}(h_{v})\phi_{1,v},\phi_{2,v})\dd h_v,
\]
where $c_{\eta_v}$ are equal to $\frac{\zeta_{F_v}(4)}{L(2,\eta_v\circ \pr)}$ for almost all $v$ with $\prod_v \frac{c_{\eta_v} L(2,\eta_v\circ \pr)}{\zeta_{F_v}(4)}=\frac{2^{\beta}L(2,\eta\circ \pr)}{\zeta_F(4)}$.
\end{remark}

For functions $f_1,f_2$ in the discrete spectrum of $G(F)\backslash G(\ba)$ and with the same central character, define
\[
(f_1,f_2)_G=\int_{G(F)\ba^\times\backslash G(\ba)}f_1(g)\overline{f_2(g)}\dd g=\int_{G^\ddagger(F)\ba^\times\backslash G^+(\ba)}f_1(g)\overline{f_2(g)}\dd g.
\]
Recall the map $\Theta_\psi(\eta^\epsilon)\rar \wtilde{\Theta}_\psi(\eta^\epsilon)$, $f\rar \wtilde{f}$.
\begin{lemma}\label{ipcompare}
Suppose $\Theta_\psi(\eta^\epsilon)$ is cuspidal and $f_1,f_2\in \Theta_\psi(\eta^\epsilon)$, then $(\wtilde{f}_1,\wtilde{f}_2)_G=(f_1,f_2)_{G^+}$.
\end{lemma}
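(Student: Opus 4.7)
First I would unfold by using that $\tilde{f}_2$ is $G^\ddagger(F)$-invariant. Applying the standard unfolding identity
\[
\int_{G^\ddagger(F)\ba^\times\backslash G^+(\ba)}\sum_{\gamma\in G^+(F)\backslash G^\ddagger(F)}F(\gamma g)\,\dd g=\int_{G^+(F)\ba^\times\backslash G^+(\ba)}F(g)\,\dd g
\]
to $F(g)=f_1(g)\overline{\tilde{f}_2(g)}$, which is $G^+(F)$-invariant, and then expanding $\tilde{f}_2(g)=\sum_\gamma f_2(\gamma g)$ over a system of representatives $\gamma$ of $G^+(F)\backslash G^\ddagger(F)$, one obtains
\[
(\tilde{f}_1,\tilde{f}_2)_G=\int_{G^+(F)\ba^\times\backslash G^+(\ba)}f_1(g)\overline{\tilde{f}_2(g)}\,\dd g=(f_1,f_2)_{G^+}+\sum_{\gamma\neq 1}(f_1,T_\gamma f_2)_{G^+},
\]
where $T_\gamma f_2(g):=f_2(\gamma g)$. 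Since $G^+(F)$ is normal in $G^\ddagger(F)$, each $T_\gamma$ is a well-defined unitary operator on $L^2(G^+(F)\ba^\times\backslash G^+(\ba))$ intertwining the right $G^+(\ba)$-action. The task reduces to showing that every cross term $(f_1,T_\gamma f_2)_{G^+}$ vanishes for $\gamma$ representing a non-trivial coset.

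To handle the cross terms I would follow the template of the proof of Lemma~\ref{l_thetacom}: using the $R(F)$-automorphy of the theta kernel together with the extension of the Weil representation to $R(\ba)$, one identifies $T_\gamma\Theta_\psi(\phi_2,\varphi_2)$ with a theta lift of the form $\Theta_{\psi_{\nu(\gamma)}}(\phi_2^\gamma,\varphi_2^\gamma)$ in which the additive character has been twisted by $\nu(\gamma)\in F^\ddagger$. Consequently $T_\gamma f_2$ lies in the subspace $\Theta_{\psi_{\nu(\gamma)}}(\eta^\epsilon)$ of $L^2(G^+(F)\ba^\times\backslash G^+(\ba))$, and it suffices to check that this subspace is orthogonal to $\Theta_\psi(\eta^\epsilon)$ whenever $\nu(\gamma)\notin\nu(H(F))$, i.e. whenever $\gamma\notin G^+(F)$.

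The main obstacle is establishing this orthogonality. The two subspaces are abstractly isomorphic as $G^+(\ba)$-representations, because at every place $v$ the scalar $\nu(\gamma)$ lies in $\nu(G^+(F_v))$ (equal to $\Nm(K_v^\times)$ for $v\notin S_D$ and to $F_v^\times$ for $v\in S_D$), so orthogonality cannot come from distinct Langlands data. Instead, I expect the $[G^\ddagger(F):G^+(F)]$ cosets in $F^\ddagger/\nu(H(F))$ to parametrize genuinely distinct irreducible subspaces inside a single isotypic component of the cuspidal spectrum of $G^+(\ba)$---an analog, one level finer, of Lemma~\ref{l_decomp}(i)---and distinct irreducible subspaces of the unitary cuspidal spectrum are automatically mutually orthogonal. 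An alternative and more hands-on route is to compute $(f_1,T_\gamma f_2)_{G^+}$ directly using Proposition~\ref{ipf_spd} adapted to the twisted setting: the resulting integral over $H_1(\ba)$ has an $[H]$-part that reduces to a pairing of $\varphi_1$ with a right translate of $\varphi_2$ by an adelic element $h_{\nu(\gamma)}\in H(\ba)$ of similitude $\nu(\gamma)$, and the absence of any $F$-rational lift of $h_{\nu(\gamma)}$ when $\nu(\gamma)\notin\nu(H(F))$ is precisely what forces the integral to vanish via an explicit Fourier computation on $[H]$.
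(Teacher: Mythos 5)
Your unfolding step matches the paper exactly: one reduces to showing $(f_1, T_\gamma f_2)_{G^+}=0$ for each $\gamma$ representing a nontrivial coset in $G^+(F)\backslash G^\ddagger(F)$. The gap is in the two routes you offer for this vanishing, neither of which closes the argument.

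Route (a) rests on the assertion that ``distinct irreducible subspaces of the unitary cuspidal spectrum are automatically mutually orthogonal.'' This is false when the two subspaces are abstractly isomorphic, which, as you yourself observe, is exactly the situation here: if the isotypic component of $L^2_{\mathrm{cusp}}(G^+(F)\mathbb{A}^\times\backslash G^+(\mathbb{A}))$ containing $\Theta_\psi(\eta^\epsilon)$ had multiplicity greater than one, there would be a continuum of distinct irreducible subspaces in it, almost none of which are pairwise orthogonal. Orthogonality of non-isomorphic constituents is automatic from Schur's lemma; orthogonality of isomorphic ones is not, and must be proved by hand. Route (b) is a sketch rather than a proof: Proposition~\ref{ipf_spd} (and the doubling-method formula in \cite{gqt12}, \cite{yamana13} on which it rests) is formulated for two theta lifts built from the same additive character $\psi$, and you would need to actually carry out the ``adaptation to the twisted setting'' and the ``explicit Fourier computation on $[H]$'' to see that they work; as described, the argument does not compile into a proof.

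The paper's proof is more elementary and goes through the Fourier expansion along the unipotent radical $\mathrm{N}=\bigl\{\bigl(\begin{smallmatrix}1&u\\&1\end{smallmatrix}\bigr)\bigr\}$ of the Siegel parabolic of $G$, not along $[H]$. One computes directly from the theta-kernel that, for every $g$, the function $u\mapsto f_1\bigl(\bigl(\begin{smallmatrix}1&u\\&1\end{smallmatrix}\bigr)g\bigr)$ has Fourier support in characters $\psi\bigl(\tfrac{1}{2}\Tr_D(u(\xi,\xi))\bigr)$ with $\xi\in\mv_e^\circ(F)$, while $u\mapsto f_2\bigl(\gamma\bigl(\begin{smallmatrix}1&u\\&1\end{smallmatrix}\bigr)g\bigr)$ has Fourier support in characters $\psi\bigl(\tfrac{1}{2}\Tr_D\bigl(u a^{-1}(\xi,\xi)\bigr)\bigr)$ with $a=\nu(\gamma)$. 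By equation~\eqref{sm2}, the set $\wtilde{q}_e(\mv_e^\circ(F))$ and its rescaling $a^{-1}\wtilde{q}_e(\mv_e^\circ(F))$ are disjoint subsets of $D_0^\circ(F)$ precisely because $a\notin\nu(H(F))$; hence the $[\mathrm{N}]$-integral of $f_1\cdot\overline{T_\gamma f_2}$ vanishes for every $g$, and integrating over $\mathrm{N}(\mathbb{A})\mathbb{A}^\times\backslash G^+(\mathbb{A})$ then kills $(f_1,T_\gamma f_2)_{G^+}$. This Fourier-support disjointness along $\mathrm{N}$ is the missing step your proposal does not supply.
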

\begin{proof}
By unfolding, there is $(\wtilde{f}_1,\wtilde{f}_2)_G=\sum_{\gamma\in G^+(F)\backslash G^\ddagger(F)} (f_1,f_{2,\gamma})_{G^+}$, where $f_{2,\gamma}(g)=f_2(\gamma g)$. It suffices to show that $(f_1,f_2(\gamma\cdot))_{G^+}=0$ when $\gamma\neq [1]$.

Suppose $\gamma\neq [1]$. We may suppose $\gamma=\smalltwomatrix{1}{}{}{a}$, where $a$ is in $F^\ddagger$ but not in $\nu(H(F))$. Choose $h_a\in H(\ba)$ with $\nu(h_a)=a$. Suppose $f_i=\Theta_\psi(\phi_i,E(\Phi_i))$, with $\phi_i\in \ms(\mv_e(\ba))$ and $\varphi_i\in E(\eta^\epsilon)$. The value of $f_i$ on the unipotent subgroup $\{\smalltwomatrix{1}{u}{}{1}\}\subset G(\ba)$ is
\[
f_i\smalltwomatrix{1}{u}{}{1}=\sum_{\xi\in \mv_e(F)}\underset{[H_1]}{\int}\omega_\psi(u)\phi_i(h_1^{-1}\xi)\overline{\varphi_i(h_1)}\dd h_1=\sum_{\xi\in \mv_e^\circ(F)}\underset{[H_1]}{\int}\psi(\frac{1}{2}\Tr_D(u(\xi,\xi)))\phi(h_1^{-1}\xi)\overline{\varphi_i(h_1)}\dd h_1.
\]
Here the summand for $\xi=0$ because $f_i$ are cuspidal. Similarly, with (\ref{wr_s2d}), one computes that
\begin{equation}\label{nvalue}
f_i\big(\gamma\smalltwomatrix{1}{u}{}{1}\big)=\sum_{\xi\in \mv_e^\circ(F)}\int_{[H_1]}\psi(\frac{1}{2}\Tr_D(a^{-1}u(\xi,\xi)))\phi(h_a^{-1}h_1^{-1}\xi)\overline{\varphi_i(h_1h_a)}\dd h_1
\end{equation}

So $f_1\smalltwomatrix{1}{u}{}{1}$ is a sum of characters of shape $\psi(\frac{1}{2}\Tr_D(u(\xi,\xi)))$ and $f_2\big(\gamma\smalltwomatrix{1}{u}{}{1}\big)$ is a sum of characters of the shape $\psi(\frac{1}{2}\Tr_D(ua^{-1}(\xi,\xi)))$, $\xi\in \mv_e^\circ(F)$. Because $a\not\in \nu(H(F))$, these two families of characters have no intersection (c.f. equation (\ref{sm2})), whence
\[
\int_{D_0(F)\backslash D_0(\ba)} f_1\smalltwomatrix{1}{u}{}{1}\overline{f_2\big(\gamma \smalltwomatrix{1}{u}{}{1}\big)}\dd u=0.
\]
It follows that $(f_1,f_{2,\gamma})_{G^+}=0$.
\end{proof}

\subsection{The isomorphism $\SO(V_D)\cong PG$}

One needs to utilize the isomorhism $\SO(V_D)\cong PG$.
Write $V_D=Fv_+\oplus U\oplus Fv_-$, where $v_+=\smalltwomatrix{0}{1_D}{0}{0}$, $v_-=\smalltwomatrix{0}{0}{1_D}{0}$ and $U=D_0$ is embedded into $V_D$ by $X\rar \smalltwomatrix{X}{}{}{\overline{X}}$. There is $q(x^\prime  v_++X+x^\pprime v_-)=x^\prime x^\pprime-\det X$.

Let $P=U\rtimes M$ of $\SO(V_D)$ be the parabolic subgroup that stabilizes the line $Fv_+$. Let $\mrp$ denote the Siegel parabolic subgroup of $PG$, $\mrn$ be its unipotent radical, and $\mrm$ be its Levi subgroup consisting of diagonal matrices. The element $u\in U$ correponds to $\smalltwomatrix{1_2}{u}{}{1_2}\in \mathrm{N}$ and thus we identify $U$ with $\mrn$. The element  $[\smalltwomatrix{A}{}{}{x{\overline{A}^{-1}}}]\in \mathrm{M}$ corresponds to the element $m(A,x)\in M$ that acts by
\[
v_+\rar x^{-1}\Nm(A)\cdot v_+,\quad   X\in U\rar AXA^{-1}\in U,\quad  v_-\rar x\Nm(A)^{-1}\cdot v_-.
\]

For $v_0\in U^\circ(F)$, there is $\psi_{v_0}(u):=\psi(\Tr(v_0u))$ for $u\in [U]$. Write $U=Fv_0\oplus V_0$ and put $E=F\oplus Fv_0\cong F(\sqrt{q(v_0)})$, then there is the following identification,
\begin{align*}\label{groupT}
\SO(V_0)=E^\times/F^\times\subset \SO(U)&\isoto \bt:=\left\{[\smalltwomatrix{A}{}{}{\Nm(A){\bar{A}^{-1}}}]: {\bar{A}}TA=\Nm(A) T\right\}\subset \mrm,\\
[A]&\rar [\smalltwomatrix{A}{}{}{\Nm(A){\bar{A}^{-1}}}]
\end{align*}

\subsection{The global Bessel period functional}

On $\Pi^\epsilon_\eta$, the computation of the global Bessel period functional $P$ with respect to $\psi_{v_0}$ and a character $\chi$ of $[\SO(V_0)]$ is analogous to the split case. Write $E=F\oplus F\cdot {v_0}$.

(i) For a function $f$ on $G^+(F)\backslash G^+(\ba)$, there is $P(f)=\int_{[\SO(V_0)]}\ell_{v_0}(h\circ f)\chi(h)\dd h$, where
\[
\ell_{v_0}(f)=\int_{[D_0]}f\smalltwomatrix{1}{u}{}{1}\psi(\Tr_D(uv_0))\dd u.
\]

(ii) For $\Phi\in \eta^\epsilon,\phi\in \ms(\mv_e(\ba))$ and $\xi\in \mv_e^\circ (F)$, we introduce
\[
P(\phi,\Phi,\xi):=\int_{H_1(\ba)}\overline{\Phi(h)}\phi(h^{-1}\xi)\dd h,\quad P_{v}(\phi_v,\Phi_v,\xi):=\int_{H_1(F_v)}\overline{\Phi_v(h_v)}\phi_v(h_v^{-1}\xi)\dd h_v.
\]

(iii) Recall the map $\wtilde{q}_e:D\rar D_0$, $x\rar \bar{x}ex$ in Section ~\ref{gu1dgroup}. Given $v_0\in U^\circ(F)=D_0^\circ(F)$, there is a unique class $[a_{v_0}]\in F^\times/(\Nm(E^\times)\cup \delta^\prime\Nm(E^\times))$ such that $v_0\in -\frac{a_{v_0}}{2}\wtilde{q}_{v_0}(\mv_{v_0}^\circ)$. Recall that $D=K\oplus Ke^\prime$ and $H(F)=K^\times\sqcup e^\prime K^\times$.

(iv) For $\phi\in \ms(\mv_e(\ba))$, $\Phi\in \eta^\epsilon$ and $h^\prime\in H(\ba)$, write $\leftup{h^\prime}{\phi}(x)=\phi({h^\prime}^{-1}x)$ and $\leftup{h^\prime}{\Phi}(h)=\Phi(h h^\prime)$.

(v) When constructing $\Pi^\epsilon_\eta$, we introduced a map $\Theta_\psi(\eta^\epsilon)\rar \wtilde{\Theta}_\psi(\eta^\epsilon)$ and the latter space generates $\Pi^\epsilon_\eta$. We need a relation between $P(\wtilde{f})$ and $P(f)$ for $f\in \Theta_\psi(\eta^\epsilon)$.

\begin{lemma}\label{pcompare}
Suppose $a\in F^\times$ and $\Theta_{\psi_a}(\eta^\epsilon)$ is cuspidal. When $E\not\cong K$ or $E\cong K$ with $a\neq a_{v_0}$ in $F^\times/F^\ddagger$, then $\ell_{v_0}$ is trivial on $\Theta_{\psi_a}(\eta^\epsilon)$ and $\wtilde{\Theta}_{\psi_a}(\eta^\epsilon)$. When $E\cong K$ and $a=a_{v_0}$ in $F^\times/F^\ddagger$, let $[a^\prime]$ be a class in $F^\ddagger/\nu(H(F))$ satisfying $a=a_{v_0}a^\prime$, then for $f\in \Theta_{\psi_a}(\eta^\epsilon)$, there is $\ell_{v_0}(\wtilde{f})=\ell_{v_0}(f_{\gamma_{a^\prime}})$ and $P(\wtilde{f})=P(f_{\gamma_{a^\prime}})$, where $f_{\gamma_{a^\prime}}(g)=f(\gamma_{a^\prime}g)$ and $\gamma_{a^\prime}=\smalltwomatrix{1}{}{}{a^\prime}$.
\end{lemma}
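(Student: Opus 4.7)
The plan is first to unfold $\wtilde{f}$ and then to isolate which summand can carry the character $\psi_{v_0}$ along $\mrn$. Recall from Section~\ref{ss_spd} that $G(F)\backslash G(\ba)=G^\ddagger(F)\backslash G^+(\ba)$, and by Lemma~\ref{product} the quotient $G^+(F)\backslash G^\ddagger(F)$ admits coset representatives $\{\gamma_{a'}:[a']\in F^\ddagger/\nu(H(F))\}$. Consequently
\[
\wtilde f(g)=\sum_{[a']\in F^\ddagger/\nu(H(F))}f_{\gamma_{a'}}(g),\qquad \ell_{v_0}(\wtilde f)=\sum_{[a']}\ell_{v_0}(f_{\gamma_{a'}}).
\]
Because the action of $\SO(V_0)\subset\mrm$ by right translation commutes with left translation by $\gamma_{a'}$, integrating against $\chi$ gives $P(\wtilde f)=\sum_{[a']}P(f_{\gamma_{a'}})$.

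The core step is to compute the $\mrn$-Fourier expansion of $f_{\gamma_{a'}}$. Writing $f=\Theta_{\psi_a}(\phi,E(\Phi))$ and repeating the derivation of (\ref{nvalue}) with $\psi$ replaced by $\psi_a$---applying the extension formula (\ref{wr_s2d}) to reduce $\omega_{\psi_a}(\gamma_{a'}\smalltwomatrix{1}{u}{}{1},\,h_1h_{a'})$ to $|a'|^{-1}\omega_{\psi_a}(\smalltwomatrix{1}{u/a'}{}{1})$ precomposed with $(h_1h_{a'})^{-1}$-translation---one obtains
\[
f_{\gamma_{a'}}\smalltwomatrix{1}{u}{}{1}=|a'|^{-1}\!\sum_{\xi\in\mv_e^\circ(F)}\!\psi\bigl(\tfrac12\Tr_D(\tfrac{a}{a'}u(\xi,\xi))\bigr)\!\int_{[H_1]}\!\phi(h_{a'}^{-1}h_1^{-1}\xi)\,\overline{E(\Phi)(h_1h_{a'})}\,\dd h_1,
\]
where $h_{a'}\in H(\ba)$ has $\nu(h_{a'})=a'$ (which exists since $F^\ddagger\subset\nu(H(\ba))$), and the $\xi=0$ contribution is killed by cuspidality. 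Integrating against $\psi_{v_0}(u)=\psi(\Tr_D(uv_0))$ then picks out those $\xi\in\mv_e^\circ(F)$ with $(\xi,\xi)=-\tfrac{2a'}{a}v_0$.

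By the orbit description (\ref{sm2}), such $\xi$ exist iff $\Nm(v_0)\in\Nm(e)(F^\times)^2$ (i.e.\ $E\cong K$) and $-\tfrac{2a'}{a}v_0\in\wtilde q_e(\mv_e^\circ)$; since the definition of $a_{v_0}$ places $-2v_0$ in the coset $a_{v_0}\cdot\wtilde q_e(\mv_e^\circ)$, this reduces to $\tfrac{a'a_{v_0}}{a}\in\nu(H(F))$, i.e.\ $a'\equiv a/a_{v_0}\pmod{\nu(H(F))}$. A class $[a']\in F^\ddagger/\nu(H(F))$ solving this congruence exists (and is then unique) iff $a/a_{v_0}\in F^\ddagger$, which is precisely the hypothesis $a=a_{v_0}$ in $F^\times/F^\ddagger$. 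Combining the cases: when $E\not\cong K$ or $a\neq a_{v_0}$ in $F^\times/F^\ddagger$, every summand $\ell_{v_0}(f_{\gamma_{a'}})$ vanishes (in particular $\ell_{v_0}(f)=\ell_{v_0}(f_{\gamma_1})=0$, covering the first assertion of the lemma); otherwise only the distinguished $[a']$ with $a=a_{v_0}a'$ contributes, yielding $\ell_{v_0}(\wtilde f)=\ell_{v_0}(f_{\gamma_{a'}})$ and thus $P(\wtilde f)=P(f_{\gamma_{a'}})$.

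The principal technical hurdle is the Weil-representation bookkeeping in the second step: one must carefully apply (\ref{wr_s2d}) to produce the shifted character argument $u/a'$, and then invoke the $\psi_a$-twist to generate the scaling factor $a/a'$ inside the $\psi$-character. Once this coefficient is correctly identified, matching the constraint $(\xi,\xi)=-\tfrac{2a'}{a}v_0$ with the orbit class of $a_{v_0}$ via (\ref{sm2}), together with the coset calculus $F^\times\supset F^\ddagger\supset\nu(H(F))$ of Lemma~\ref{product}, proceeds formally.
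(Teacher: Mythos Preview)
Your proposal is correct and follows essentially the same route as the paper's proof: unfold $\wtilde f$ over coset representatives of $G^+(F)\backslash G^\ddagger(F)$, compute the $\mrn$-Fourier expansion of each $f_{\gamma_{a'}}$ via the extended Weil representation formula (\ref{wr_s2d}) exactly as in (\ref{nvalue}), and then match the surviving character against the orbit description (\ref{sm2}) to isolate the unique contributing coset. Your write-up is in fact more explicit than the paper's (which concludes with ``from here one easily deduces the lemma''), particularly in spelling out the coset calculus $\frac{a'a_{v_0}}{a}\in\nu(H(F))$ and the passage from $\ell_{v_0}$ to $P$; note only that the factor $|a'|^{-1}$ you carry is identically $1$ since $a'\in F^\times$.
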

\begin{proof}
Choose a set of representations $\{a_i: 1\leq i\leq 2^{|S_D|-2}\}$ of $F^\ddagger/\nu(H(F))$ and write $\gamma_{a_i}=\smalltwomatrix{1}{}{}{a_i}$. Choose $h_{a_i}\in H(\ba)$ satisfyng $\nu(h_{a_i})=a_i$. By definition,
\[
\ell_{v_0}(\wtilde{f})=\sum_{i} \int_{[D_0]}f\big(\gamma_{a_i}\smalltwomatrix{1}{u}{}{1}\big)\psi(\Tr_D(v_0u))\dd u=\sum_i \ell_{v_0}(f_{\gamma_i}),
\]
Here $f_{\gamma_{a_i}}(g)=f(g\gamma_{a_i})$. As we derive (\ref{nvalue}) in the proof of Lemma ~\ref{ipcompare}, there is
\[
f\big(\gamma_{a_i}\smalltwomatrix{1}{u}{}{1}\big)=\sum_{\xi\in \mv_e^\circ(F)}\int_{[H_1]}\psi_a(\frac{1}{2}\Tr_D(a_i^{-1}u(\xi,\xi)))\phi(h_{a_i}^{-1}h_1^{-1}\xi)\overline{\varphi_i(h_1h_a)}\dd h_1.
\]
Note that $(\xi,\xi)$ is $\wtilde{q}_e(\xi)$. So $\ell_{v_0}(f_{\gamma_i})$ is zero if $v_0\not\in -\frac{aa_i^{-1}}{2}\wtilde{q}_e(\mv_e^\circ(F))$. From here, one easily deduces the lemma.
\end{proof}

\begin{proposition}\label{functionalT_sod}
Suppose $\Pi^\epsilon_\eta\in \So_D(\eta)$ is cuspidal.
\begin{itemize}
\item[(i)] When $E\not\cong K$, $\ell_{v_0}$ is zero on $\Pi^\epsilon_\eta$ 
\item[(ii)] When $E\cong K$ and $a\in F^\times/F^\ddagger$, $\ell_{v_0}$ is zero on $\wtilde{\Theta}_{\psi_a}(\eta^\epsilon)$ if $a\neq a_{v_0}$ in $F^\times/F^\ddagger$. 
\item[(iii)] Suppose $E\cong K$ and $f\in  \Theta_{\psi_{a_{v_0}}}(\phi, E(\Phi))$ with $\phi\in \ms(\mv_e(\ba))$ and $\Phi\in \eta^\epsilon$, choose $\xi_{v_0}\in \mv_e(F)$ with $\wtilde{q}_e(\xi_{v_0})=-2v_0/a_{v_0}$, then
\[
\ell_{v_0}(\wtilde{f})=\ell_{v_0}(f)=P(\phi,\Phi,\xi_{v_0})+P(\leftup{e^\prime}{\phi},\leftup{e^\prime}{\Phi},e^\prime\xi_{v_0}).
\]
Furthermore, when $v_0=-\frac{a_{v_0}}{2}\cdot e$ and $\xi_{v_0}=1$, there is
\[
P(\wtilde{f})=P(f)=
\begin{cases}
2^\beta\delta(\chi,\eta)P(\phi,\Phi,\xi_{v_0})+2^\beta\delta(\chi,\eta^\iota)P(\leftup{e^\prime}{\phi},\leftup{e^\prime}{\Phi},e^\prime\xi_{v_0}), &\eta\neq \eta^{\iota},\\
2^\beta\delta(\chi,\eta)P(\phi,\Phi,\xi_{v_0}), &\eta=\eta^\iota.
\end{cases}
\]
\item[(iv)] $P$ is nonzero on $\Pi^\epsilon_\eta$ if and only if $E\cong K$ and $\chi\in\{\eta, \eta^\iota\}$.
\end{itemize}
\end{proposition}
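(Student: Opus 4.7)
The strategy mirrors the proof of Proposition \ref{functionalT} in the split case, but one must carefully track the passage from $\Theta_\psi(\eta^\epsilon)$ on $G^+(F)\backslash G^+(\ba)$ to $\wtilde\Theta_\psi(\eta^\epsilon)$ on $G(F)\backslash G(\ba)$ via the map $f\mapsto \wtilde f$, and one must use the measure decomposition of Lemma \ref{md2p_d} for the fiberation $\wtilde q_e:\mv_e\rar D_0$ in place of Lemma \ref{l_md2}. Parts (i) and (ii) will follow from an unfolding of $\ell_{v_0}$, and parts (iii), (iv) will be reductions to the local functionals $P_v(\phi_v,\Phi_v,\xi)$.

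First I compute $\ell_{v_0}$ on a single summand $\Theta_{\psi_a}(\eta^\epsilon)$. For $f=\Theta_{\psi_a}(\phi,E(\Phi))$, the same unfolding as in the proof of Lemma \ref{ipcompare} (see equation (\ref{nvalue})) gives
\[
f\smalltwomatrix{1}{u}{}{1}=\sum_{\xi\in\mv_e^\circ(F)}\int_{[H_1]}\psi_a\bigl(\tfrac{1}{2}\Tr_D(u\,\wtilde q_e(\xi))\bigr)\phi(h_1^{-1}\xi)\overline{E(\Phi)(h_1)}\dd h_1,
\]
because $\Theta_\psi(\eta^\epsilon)$ is cuspidal and the $\xi=0$ term vanishes. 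Integrating against $\psi(\Tr_D(v_0u))$ over $[D_0]$ forces $-a\wtilde q_e(\xi)/2=v_0$, i.e.\ $\xi\in\Sigma_{-2v_0/a}(F)$. By (\ref{sm2}), this set is empty unless $E\cong K$ and $a\equiv a_{v_0}$ in $F^\times/\nu(H(F))$. Combining with Lemma \ref{pcompare}, this proves (i) and (ii). When $E\cong K$ and $a=a_{v_0}$, the decomposition $H(F)=H_1(F)\sqcup H_1(F)e^\prime$ splits $\Sigma_{-2v_0/a_{v_0}}(F)$ into two $H_1(F)$-orbits, with representatives $\xi_{v_0}$ and $e^\prime\xi_{v_0}$; unfolding the theta series yields the expression $\ell_{v_0}(f)=P(\phi,\Phi,\xi_{v_0})+P(\leftup{e^\prime}{\phi},\leftup{e^\prime}{\Phi},e^\prime\xi_{v_0})$ claimed in (iii).

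For the computation of $P(f)=\int_{[\SO(V_0)]}\ell_{v_0}(h\circ f)\chi(h)\dd h$ with the special choice $v_0=-\tfrac{a_{v_0}}{2}e$ and $\xi_{v_0}=1$, I use the identification $\SO(V_0)\cong E^\times/F^\times\cong K^\times/F^\times$ sitting inside $\bt\subset \mrm$. For $g=[A]\in \SO(V_0)(\ba)$ with associated $h_g\in \ba_K^\times=\GSO(V_K)_\ba$, the Weil representation gives $\omega_{\psi_{a_{v_0}}}(g,h_g)\phi(h_1^{-1}\xi_{v_0})=\phi(h_g^{-1}h_1^{-1}h_g\xi_{v_0})$; the change of variable $h_1\mapsto h_gh_1h_g^{-1}$ converts the Bessel integral into
\[
\int_{H_1(\ba)}\int_{[\SO(V_0)]}\chi(g)\overline{\eta^\tau(h_g)}\,\Phi(\tau h_1)\,\phi(h_1^{-1}\xi_{v_0})\,\dd g\,\dd h_1,\quad \tau\in\{1,e^\prime\},
\]
for the two terms appearing in $E(\Phi)$. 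Executing the inner $\SO(V_0)$-integration by orthogonality of characters on the compact quotient $K_1^\times\ba^\times/K^\times\cong [\SO(V_0)]$ collapses it to the indicator $\delta(\chi,\eta^\tau)$, with the factor $2^\beta$ coming from the volume computations in Section \ref{s_tmeasure}(iv) and the second summand absorbing into the first via $h_1\mapsto e^\prime h_1$ when $\eta=\eta^\iota$. Finally, Lemma \ref{pcompare} yields $P(\wtilde f)=P(f)$, giving the formula in (iii).

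For (iv), the ``only if'' direction is immediate from (i)--(iii). For the ``if'' direction, assuming $E\cong K$ and $\chi=\eta$, I must show that at least one choice of $(\phi,\Phi)$ produces $P_v(\phi_v,\Phi_v,\xi_{v_0})\neq 0$ for the relevant $\xi_{v_0}$ at every place, and hence globally nonzero. This is the same local nonvanishing argument given at the end of the proof of Proposition \ref{functionalT}: when $\eta_v^{\iota_v}\neq\eta_v$ one trivially picks $\Phi_v$ supported on the identity component and $\phi_v$ localized near $\xi_{v_0}$, and when $\eta_v^{\iota_v}=\eta_v$ one factors $\phi_v(x_1\xi_{v_0}+x_2e^\prime\xi_{v_0})=\phi_1(x_1)\phi_2(x_2)$ in the basis $\{\xi_{v_0},e^\prime\xi_{v_0}\}$ of $\mv_e(F_v)$. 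The main technical obstacle will be part (iii), keeping the bookkeeping straight between the measures on $H_1$ (non-connected at split places, connected at nonsplit places of $D$), the volume $2^\beta$, and the contribution of $\wtilde f$ versus $f$ via Lemma \ref{pcompare}; the split-case proof in Proposition \ref{functionalT} provides the exact template.
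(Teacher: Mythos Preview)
Your plan is the paper's own approach --- the paper itself simply says the proof is ``completely analogous to the proof of Proposition~\ref{functionalT}'' --- and your outline has the right ingredients. However, one step in your unfolding of $\ell_{v_0}$ is misargued.

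You claim that ``the decomposition $H(F)=H_1(F)\sqcup H_1(F)e^\prime$ splits $\Sigma_{-2v_0/a_{v_0}}(F)$ into two $H_1(F)$-orbits, with representatives $\xi_{v_0}$ and $e^\prime\xi_{v_0}$.'' This is wrong on several counts. First, $H(F)=K^\times\sqcup K^\times e^\prime$ while $H_1(F)=K_1^\times$, so the coset decomposition as written is false. Second, since $D$ is nonsplit over $F$, the group $\U(\mv_e)(F)$ coincides with its identity component $K_1^\times=H_1(F)$ (see Section~\ref{gu1dgroup}), so $\Sigma_{-2v_0/a_{v_0}}(F)\cong H_1(F)$ is a \emph{single} $H_1(F)$-orbit. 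Third, $e^\prime\xi_{v_0}$ is not even in $\Sigma_{-2v_0/a_{v_0}}$: using $ee^\prime=-e^\prime e$ one computes $\wtilde q_e(e^\prime\xi_{v_0})=\delta^\prime\,\wtilde q_e(\xi_{v_0})$, and $\delta^\prime\notin\Nm(K^\times)$.

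The two terms in $\ell_{v_0}(f)$ arise exactly as in the split case: after unfolding the single $H_1(F)$-orbit one gets $\int_{H_1(\ba)}\overline{E(\Phi)(h_1)}\,\phi(h_1^{-1}\xi_{v_0})\,\dd h_1$, and the Eisenstein expansion $E(\Phi)(h_1)=\Phi(h_1)+\Phi(e^\prime h_1)$ splits this into two pieces. The second piece becomes $P(\leftup{e^\prime}{\phi},\leftup{e^\prime}{\Phi},e^\prime\xi_{v_0})$ after the change of variable $h_1\mapsto e^\prime h_1 {e^\prime}^{-1}$ on $H_1(\ba)$. With this correction the rest of your plan is fine. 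Two minor remarks: the $2^\beta$ comes from $\Vol([\SO(V_0)])=2$ together with the collapsing of the two summands when $\eta=\eta^\iota$, so the relevant reference is Section~\ref{s_tmeasure}(iii) rather than (iv); and Lemma~\ref{md2p_d} plays no role in this global computation --- it is only needed for the local result Proposition~\ref{lbp_spd}.
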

\begin{proof}
The proof is completely analogous to the proof of Proposition ~\ref{functionalT}.
\end{proof}

\subsection{The regularized local Bessel period integral}

The computation of the regularized local Bessel period integral is completely analogous to Proposition ~\ref{lbp_sp}; we have the same inner product formulas (c.f. Prop. ~\ref{ipf_sp} and Prop. ~\ref{ipf_spd}) and the same type of measure decompositions (c.f. Section ~\ref{ss_md2} and ~\ref{fiberation2}).

There is $\Pi^{\epsilon_v}_{\eta_v}=\oplus_{a_v\in F_v^\times\backslash \nu(H(F_v))} \theta(\eta_v^{\epsilon_v},\psi_{v,a_v})$ and
\[
\mpp_v(s;f_{1,v},f_{2,v})=\int_{\SO(V_0)_{F_v}}\ell_{v,T}(h_v\circ f_{1,v},f_{2,v})\chi_v(h_v)\Delta(h_v)^s \dd h_v,
\]
where $\ell_{v,v_0}(f_{1,v},f_{2,v}):=\int_{\bn(F_v)}(\Pi_v(N_v)f_{1,v},f_{2,v})\psi_{v,v_0}(N_v)\dd N_v$ for $f_{1,v},f_{2,v}\in \Pi_{\eta_v}^{\epsilon_v}$. For given $v_0$, there is a unique class $[a_{v,v_0}]\in F_v^\times/\nu(H(F_v))$ such that $v_0\in -\frac{a_{v,v_0}}{2}\wtilde{q}_{v_0}(\mv_{v_0}^\circ(F_v))$.
\begin{proposition}\label{lbp_spd}
Suppose $a_v\in F_v^\times$ and $f_{i,v}=\theta_{\psi_{v,a_v}}(\phi_{i,v},\Phi_{i,v})\in \theta(\eta_v^{\epsilon_v},\psi_{v,a_v})$ with $\phi_{i,v}\in \ms(\mv_e(F_v))$, $\Phi_{i,v}\in \eta_v^{\epsilon_v}$.
\begin{itemize}
\item[(i)] When $E_v\neq K_v$ or $E_v=K_v$ with $a_v\neq a_{v,v_0}$, $\ell_{v,v_0}$ is zero on $\theta(\eta_v^{\epsilon_v},\psi_{v,a_v})$. 
\item[(ii)] When $E_v=K_v$ and $a_v=a_{v,v_0}$, choose $\xi_{v,v_0}\in \mv_e(F_v)$ with $\wtilde{q}_e(\xi_{v,v_0})=-2v_0/a_{v,v_0}$ and set $C_v=2^{{\beta_v}-1}|a_{v,v_0}||2\Nm(v_0)|^{-\frac{1}{2}}c_{\eta_v}c_{E_v}$, then $\ell_{v,v_0}(f_{1,v},f_{2,v})$ is equal to
\begin{equation*}
C_v\cdot 
\begin{cases}
\sum_{h_v\in\{1, e^\prime\}}P_v(\leftup{h_v}{\phi_{1,v}},\leftup{h_v}{\Phi_{1,v}},h_v\xi_{v,v_0})\overline{P_v(\leftup{h_v}{\phi_{2,v}},\leftup{h_v}{\Phi_{2,v}},h_v\xi_{v,v_0})}, &\eta_v^{\iota_v}\neq \eta_v,\\
P_v(\phi_{1,v},\Phi_{1,v},\xi_{v,v_0})\overline{P_v(\phi_{2,v},\Phi_{2,v},\xi_{v,v_0})}, &\eta_v^{\iota}=\eta_v.
\end{cases}
\end{equation*}
Furthermore, when $v_0=-\frac{a_{v,v_0}}{2}\cdot e$ and $\xi_{v,v_0}=1_v$, $\mpp_v(s;f_{1,v},f_{2,v})$ is equal to
\begin{equation*}
C_v\cdot
\begin{cases}
\sum_{h_v\in\{1, e^\prime\}}\mpp(s,\chi_v\overline{\eta_v^{\tau_v}})P_v(\leftup{h_v}{\phi_{1,v}},\leftup{h_v}{\Phi_{1,v}},h_v\xi_{v,v_0})\overline{P_v(\leftup{h_v}{\phi_{2,v}},\leftup{h_v}{\Phi_{2,v}},h_v\xi_{v,v_0})}, &\eta_v\neq \eta_v^{\iota},\\
\mpp(s,\chi_v\overline{\eta_v})P_v(\phi_{1,v},\Phi_{1,v},\xi_{v,v_0})\overline{P_v(\phi_{2,v},\Phi_{2,v},\xi_{v,v_0})}, &\eta_v=\eta_v^\iota.
\end{cases}
\end{equation*}
\end{itemize}
\end{proposition}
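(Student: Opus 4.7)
My plan is to mirror the proof of Proposition~\ref{lbp_sp} verbatim, substituting at each step the analogous ingredients from the quaternionic setting: the inner product formula in Remark after Proposition~\ref{ipf_spd} (playing the role of Remark~\ref{lipf_sp}) and the measure decomposition $\dd x = 2|4\Nm(e)|^{-1}\dd \sigma_y |\Nm(y)|^{-1/2}\dd y$ of Lemma~\ref{md2p_d} (playing the role of Lemma~\ref{l_md2}). Concretely, I first test $\ell_{v,v_0}$ against a test function $\widehat{\lambda}$ with $\lambda\in C^\infty_c(D_0^\circ(F_v))$ so that the matrix coefficient $(\Pi_v(N_v)f_{1,v},f_{2,v})$ is legitimately paired against a Schwartz function on $D_0(F_v)$; substituting the inner product formula yields a quadruple integral over $D_0(F_v)\times H_1(F_v)\times \mv_e(F_v)$ involving the factor $\psi\bigl(\tfrac{1}{2}\Tr_D(a_v u_v(x_v,x_v))\bigr)$.

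Next, I justify swapping the order of integration to pull the $u_v\in D_0(F_v)$-integration inside, so that Fourier inversion (with the self-dual character of second degree) collapses the $N_v$-integration and produces the characteristic function of the sphere $\Sigma_{v,-2v_0/a_v}$ in $\mv_e(F_v)$; the absolute convergence needed to interchange is the same as in the split case, since $(h_v\circ \Phi_{1,v},\Phi_{2,v})$ is compactly supported modulo $F_v^\times$ on $H(F_v)$ and $\phi_{i,v}\in\ms(\mv_e(F_v))$. I then apply Lemma~\ref{md2p_d} to rewrite the resulting integral over $\Sigma_{v,-2v_0/a_v}$ as an integral over $H_1(F_v)$ against $\dd h_v^\prime$, picking up the geometric factor $2|4\Nm(e)|^{-1}c_{K_v}$ together with the Jacobian $|\Nm(-2v_0/a_v)|^{-1/2} = |a_v||2\Nm(v_0)|^{-1/2}$. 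When $E_v\ne K_v$, or $E_v=K_v$ but $a_v\notin a_{v,v_0}\cdot\nu(H(F_v))$, the sphere $\Sigma_{v,-2v_0/a_v}$ is empty and the integral vanishes, giving (i).

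For (ii), the remaining $H_1(F_v)\times H_1(F_v)$-integral splits along the decomposition $\Phi_{i,v}=\Phi_{i,v}|_{K_v^\times}+\Phi_{i,v}|_{K_v^\times e^\prime}$ of vectors in $\eta_v^{\epsilon_v}$. When $\eta_v^{\iota_v}=\eta_v$, the cross term involving $\Phi_{i,v}(\iota_v\cdot)$ coincides with the direct term, collapsing the sum into a single $P_v\overline{P_v}$ factor; when $\eta_v^{\iota_v}\ne\eta_v$, a change of variable $h_v\mapsto e^\prime h_v$ in the cross term converts it into $P_v(\leftup{e^\prime}{\phi_{i,v}},\leftup{e^\prime}{\Phi_{i,v}},e^\prime\xi_{v,v_0})$ and the two terms must be added. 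Combining the normalization constants (a factor $2^{\beta_v-1}$ packaging the $H^c$ vs $H$ index) with the geometric factor $|a_{v,v_0}||2\Nm(v_0)|^{-1/2}c_{\eta_v}c_{E_v}$ gives the stated formula for $\ell_{v,v_0}$. Finally, to obtain the formula for $\mpp_v(s;-)$ in the special coordinates $v_0=-\tfrac{a_{v,v_0}}{2}e$ and $\xi_{v,v_0}=1_v$, I use the $\SO(V_0)_{F_v}$-equivariance
\[
P_v(g_v\circ \phi_{i,v},\Phi_{i,v},\xi_{v,v_0}^{\tau_v}) = \overline{\eta_v^{\tau_v}(g_v)}\,P_v(\phi_{i,v},\Phi_{i,v},\xi_{v,v_0}^{\tau_v}),
\]
coming directly from the definition of $\eta_v^{\epsilon_v}$ on $K_v^\times\subset H(F_v)$ and the Weil action of $H_1$; this converts the $\SO(V_0)_{F_v}$-integral with weight $\chi_v\Delta(h_v)^s$ into $\mpp(s,\chi_v\overline{\eta_v^{\tau_v}})$ as defined in Section~\ref{s_so2i}.

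The main technical hurdle, as in the split case, is the legitimacy of interchanging the $\bn(F_v)\times H_1(F_v)\times \mv_e(F_v)$ integrals: the matrix coefficient $(\Pi_v(N_v)f_{1,v},f_{2,v})$ is not itself $L^1$ on $\bn(F_v)$, so the $\bn(F_v)$-integration must be interpreted via the distribution framework of Section~\ref{subsec_td}, pairing with $\widehat{\lambda}$ rather than $\psi_{v,v_0}$ directly. Once this test-function approach is in place, the ordered integrations become absolutely convergent and Fubini applies; then the identification $\ell_{v,v_0}(f_{1,v},f_{2,v})=W_{f_{1,v},f_{2,v},\psi_v}(v_0)$, with $W$ the smooth function representing $\mf T_{(\Pi_v(u)f_{1,v},f_{2,v})}$ on $D_0^\circ(F_v)$, finishes the argument. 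The rest is bookkeeping of constants, with the factor of $2$ in $\dd \sigma_y = 2|4\Nm(e)|^{-1}\dd h^\sharp$ and the factor $2^{\beta_v-1}$ from Section~\ref{p_go2}(iv) combining cleanly into $2^{\beta_v-1}$.
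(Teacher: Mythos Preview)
Your proposal is correct and follows exactly the approach the paper takes: the paper's own proof simply reads ``The proof is the same as the proof of Proposition~\ref{lbp_sp},'' and you have correctly identified the two substitutions required---the local inner product formula from the Remark after Proposition~\ref{ipf_spd} in place of Remark~\ref{lipf_sp}, and the measure decomposition of Lemma~\ref{md2p_d} in place of Lemma~\ref{l_md2}. Your tracking of the constants (the $2$ from $\dd\sigma_y$, the $|a_{v,v_0}||2\Nm(v_0)|^{-1/2}$ Jacobian, and the $2^{\beta_v-1}$ from the local pairing on $\eta_v^{\epsilon_v}$) is accurate, and the justification of the Fubini step via the test-function pairing is the right way to handle the distributional $U$-integration.
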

\begin{proof}
The proof is the same as the proof of Proposition ~\ref{lbp_sp}.
\end{proof}

The above proposition tells that $\mpp_v(s,-)$ is zero on $\Pi_v^{\epsilon_v}\otimes \Pi_v^{\epsilon_v}$ when $E_v\neq K_v$ and factors through $\theta(\eta_v^{\epsilon_v},\psi_{v,a_{v,v_0}})\otimes \theta(\eta_v^{\epsilon_v},\psi_{v,a_{v,v_0}})$ when $E_v=K_v$. We then set
\[
\mpp^\sharp_v(f_{1,v},f_{2,v})=\frac{\mpp_v(s; f_{1,v},f_{2,v})}{\zeta_{F_v}(2)\zeta_{F_v}(4)L(s,\Pi^{\epsilon_v}_{\eta_v},\chi_v)}\big|_{s=0},\quad  f_{1,v},f_{2,v}\in \Pi_{\eta_v}^{\epsilon_v}.
\]

\subsection{The conclusion}

Proposition ~\ref{lnvs_sp}, Theorem ~\ref{lc_sp_split}, and Theorem ~\ref{bp_sp} hold for a cuspidal member $\Pi$ of $\So_D(\eta)$; with Proposition ~\ref{functionalT_sod} and ~\ref{lbp_spd}, the arguments are the same as in the split case and hence are skipped here. We only note that when deducing the Bessel period formula on $\Pi^\epsilon_\eta$, one uses Lemma ~\ref{ipcompare} and ~\ref{pcompare} to reduce the question to a Bessel period formula on $\Theta_\psi(\eta^\epsilon)$ and the remaining argument is the same as in the proof of Theorem ~\ref{bp_sp}. This reduction is necessary because when $D$ is nonsplit, $\Pi^\epsilon_\eta$ is generated by $\wtilde{\Theta}_\psi(\eta^\epsilon)$ but not $\Theta_\psi(\eta^\epsilon)$.



\def\cprime{$'$} \def\cprime{$'$}

\end{document}